\tikzset{node distance=2em, ch/.style={circle,draw,on chain,inner sep=2pt},chj/.style={ch,join},every path/.style={shorten >=4pt,shorten <=4pt},line width=1pt,baseline=-1ex}
\let\dlabel=\alabel
\newcommand{\dnode}[2][chj]{%
\node[#1,label={\dlabel{#2}}] {};
}
\newcommand{\dnodebr}[1]{%
\node[chj,label={\dlabel{#1}}] {};
}
\def\R{\mathbb{R}}
\def\C{\mathbb{C}}
\def\Q{\mathbb{Q}}
\def\Z{\mathbb{Z}}
\def\N{\mathbb{N}}
\def\F{\mathbb{F}}
\newskip\@bigflushglue \@bigflushglue = -100pt plus 1fil
\def\bigcentering{\let\\\@centercr\rightskip\@bigflushglue%
\leftskip\@bigflushglue
\parindent\z@\parfillskip\z@skip}
\newtheorem{defi}{Définition}[subsection]
\newtheorem{theo}[defi]{Théorème}
\newtheorem*{theo*}{Théorème}
\newtheorem{prop}[defi]{Proposition}
\newtheorem*{prop*}{Proposition}
\newtheorem{propdef}[defi]{Proposition-Définition}
\newtheorem{lem}[defi]{Lemme}
\newtheorem{conj}[defi]{Conjecture}
\newtheorem{cor}[defi]{Corollaire}
\title[Traces d'opérateurs de Hecke sur les espaces de formes automorphes]{Traces des opérateurs de Hecke sur les espaces de formes automorphes de $\mathrm{SO}_7$, $\mathrm{SO}_8$ ou $\mathrm{SO}_9$ en niveau $1$ et poids arbitraire}
\author{Thomas Mégarbané}
\address{CMLS, {\'E}cole polytechnique, CNRS, Universit{\'e} Paris-Saclay, 91128 Palaiseau Cedex, France}
\email{thomas.megarbane@polytechnique.edu}
\begin{document}
\maketitle
\begin{abstract}
Dans cet article, nous déterminons la trace de certains opérateurs de Hecke sur les espaces de formes automorphes de niveau 1 et poids quelconque des groupes spéciaux orthogonaux des réseaux euclidiens $\mathrm{E}_7$, $\mathrm{E}_8$ et $\mathrm{E}_8 \oplus \mathrm{A}_1$. En utilisant la théorie d'Arthur, nous en déduisons des informations sur les paramètres de Satake des représentations automorphes des groupes linéaires découvertes par Chenevier et Renard dans \cite{CR}. Nos résultats corroborent notamment une conjecture de Bergström, Faber et van der Geer sur la fonction zêta de Hasse-Weil de l'espace de module des courbes de genre $3$ à $17$ points marqués.
\end{abstract}
\section{Introduction.}
\indent Donnons-nous $n\equiv 0,\pm 1\mathrm{\ mod\ }8$ un entier positif, et plaçons-nous dans l'espace euclidien $\R^n$ muni de son produit scalaire usuel $(x_i)\cdot (y_i) = \sum_i x_i y_i$. On définit $\mathcal{L}_n$ l'ensemble des réseaux pairs $L\subset \R^n$ tels que $\mathrm{det}(L) =1$ si $n$ est pair, et $\mathrm{det}(L) =2$ sinon. Le groupe $\mathrm{O}_n(\R)$ a une action naturelle sur $\mathcal{L}_n$, et on pose $X_n = \mathrm{O}_n(\R) \setminus \mathcal{L}_n$.
\\ \indent On s'intéressera plus particulièrement aux cas où $n\in \{7,8,9\}$. Ces cas ont notamment la propriété que $\mathrm{X}_n$ est réduit à un seul élément, à savoir respectivement la classe des ``réseaux de racines" $\mathrm{E}_7$, $\mathrm{E}_8$ et $\mathrm{E}_8 \oplus \mathrm{A}_1$ dont les définitions sont rappelées au paragraphe \ref{2.1}.
\\ \indent Si on considère $(W,\rho)$ une représentation de dimension finie de $\mathrm{SO}_n(\R)$ sur $\C$, on définit alors l'espace des formes automorphes de poids $W$ pour $\mathrm{SO}_n $ comme :
$$\mathcal{M}_W(\mathrm{SO}_n) := \left\{ f:\mathcal{L}_n \rightarrow W \vert \left( \forall \gamma \in \mathrm{SO}_n(\R) \right) f(\gamma \cdot L) = \rho(\gamma) \cdot f(L) \right\} .$$
\indent C'est un espace vectoriel de dimension finie.
\\ \\ \indent Soient $A$ un groupe abélien fini, et $L_1, L_2\in \mathcal{L}_n$. On dit que $L_1$ et $L_2$ sont $A$-voisins si :
$$L_1/(L_1\cap L_2) \simeq L_2/(L_1\cap L_2) \simeq A$$
\indent Si $A=(\Z/d\Z)^k$, on parle de $d,\dots ,d$-voisins (et de $d$-voisins si $k=1$). Si $p$ désigne un nombre premier, et $q$ une puissance de $p$, alors il est facile de construire tous les $q$-voisins ou les $p,\dots ,p$-voisins d'un $L\in \mathcal{L}_n$ donné (comme rappelé aux proposition-définitions \ref{p-voisin} et \ref{ppp-voisin}).
\\ \\ \indent \`A la notion de $A$-voisins est associé un ``opérateur de Hecke" $\mathrm{T}_A$ sur chaque espace de formes automorphes $\mathcal{M}_W(\mathrm{SO}_n)$ défini par la formule :
$$\left( \forall L\in \mathcal{L}_n \right) \left( \forall f\in \mathcal{M}_W  (\mathrm{SO}_n) \right)\ \mathrm{T}_A(f)(L) = \sum_{L'\ A-\text{voisin de }L} f(L')$$
\indent Le premier but de notre travail est de déterminer la trace de $\mathrm{T}_A$ sur les espaces $\mathcal{M}_W(\mathrm{SO}_n)$ pour toute représentation irréductible $W$ de $\mathrm{SO}_n$ et $n\in \{7,8,9 \}$. Notre point de départ est le suivant :
\begin{prop}Supposons que $X_n$ est réduit à un élément (c'est-à-dire que $n\leq 9$). Soit $L_0\in \mathcal{L}_n$, et $\mathrm{vois}_A(L_0)$ l'ensemble de ses $A$-voisins. Le groupe $\mathrm{SO}(L_0)$ a une action naturelle sur $\mathrm{vois}_A(L_0)$. On note $\mathcal{V}_j$ les orbites de cette action, et pour chaque $j$ on choisit un élément $g_j \in \mathrm{SO}_n(\R)$ tel que $g_j \cdot L_0 \in \mathcal{V}_j$ (ce qui est toujours possible comme $\vert X_n \vert =1$). Alors on a l'égalité :
$$ \mathrm{tr}\left( \mathrm{T}_A \vert \mathcal{M}_W(\mathrm{SO}_n)\right) = \dfrac{1}{\vert \mathrm{SO}(L_0) \vert} \cdot \left( \displaystyle{\sum_j} \left( \vert \mathcal{V}_j\vert \cdot \displaystyle{\sum_{\gamma \in \mathrm{SO}(L_0)}}\mathrm{tr}\left( \gamma g_j \vert W \right) \right) \right).$$
\end{prop}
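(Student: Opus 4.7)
\medskip

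\noindent\textbf{Plan de preuve proposé.}\ Mon approche est d'identifier $\mathcal{M}_W(\mathrm{SO}_n)$ avec l'espace des invariants $W^{\mathrm{SO}(L_0)}$, de traduire $\mathrm{T}_A$ via cette identification, puis de ramener le calcul de trace à une trace ordinaire sur $W$.

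D'abord, je constaterai que l'hypothèse $\vert X_n\vert = 1$, jointe à l'existence dans chacun des groupes $\mathrm{O}(\mathrm{E}_7)$, $\mathrm{O}(\mathrm{E}_8)$ et $\mathrm{O}(\mathrm{E}_8\oplus \mathrm{A}_1)$ d'un élément de déterminant $-1$, entraîne que $\mathcal{L}_n = \mathrm{SO}_n(\R)\cdot L_0$ ; c'est précisément ce qui justifie le choix des $g_j$ dans l'énoncé. L'application $f\mapsto f(L_0)$ réalise alors un isomorphisme $\mathcal{M}_W(\mathrm{SO}_n)\simeq W^{\mathrm{SO}(L_0)}$, et j'introduis le projecteur de Reynolds $P := \frac{1}{|\mathrm{SO}(L_0)|}\sum_{\sigma\in \mathrm{SO}(L_0)} \rho(\sigma)$ sur cet espace d'invariants.

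Ensuite, pour $v = f(L_0)$, j'écrirai
$$\mathrm{T}_A(f)(L_0) = \sum_j \sum_{L'\in\mathcal{V}_j} f(L')$$
et je traiterai chaque orbite $\mathcal{V}_j$ séparément. Comme tout $L'\in \mathcal{V}_j$ se met sous la forme $\sigma\cdot (g_j L_0)$ pour $\sigma\in \mathrm{SO}(L_0)$, et comme l'orbite-stabilisateur montre que chaque $L'$ est ainsi représenté exactement $|\mathrm{SO}(L_0)|/|\mathcal{V}_j|$ fois, l'équivariance de $f$ donnera
$$\sum_{L'\in\mathcal{V}_j} f(L') = \frac{|\mathcal{V}_j|}{|\mathrm{SO}(L_0)|} \sum_{\sigma\in\mathrm{SO}(L_0)} \rho(\sigma g_j)\, v = |\mathcal{V}_j|\, P\rho(g_j)\, v.$$
Via l'identification, $\mathrm{T}_A$ agit donc sur $W^{\mathrm{SO}(L_0)}$ comme la restriction de $\widetilde{\mathrm{T}}_A := \sum_j |\mathcal{V}_j|\, P\rho(g_j)$, vu comme endomorphisme de $W$ tout entier.

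Enfin, je passerai à la trace en observant que $\widetilde{\mathrm{T}}_A$ a son image contenue dans $W^{\mathrm{SO}(L_0)}$ : dans une décomposition $W = W^{\mathrm{SO}(L_0)}\oplus \Ker(P)$, sa matrice est triangulaire par blocs avec bloc diagonal inférieur nul, de sorte que $\mathrm{tr}(\mathrm{T}_A | \mathcal{M}_W(\mathrm{SO}_n)) = \mathrm{tr}(\widetilde{\mathrm{T}}_A | W)$. Il ne restera plus qu'à expliciter $P$ dans $\widetilde{\mathrm{T}}_A$ et à utiliser la linéarité de la trace pour obtenir la formule annoncée. La démonstration ne présente pas de difficulté sérieuse : le seul point à surveiller est le facteur combinatoire $|\mathrm{SO}(L_0)|/|\mathcal{V}_j|$ issu de l'orbite-stabilisateur, qui se combine exactement au facteur de normalisation $1/|\mathrm{SO}(L_0)|$ apparaissant dans le projecteur $P$ pour donner le facteur global $1/|\mathrm{SO}(L_0)|$ du membre de droite.

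\medskip
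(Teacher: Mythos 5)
Votre démonstration est correcte et suit essentiellement la même stratégie que celle du texte : identification $\mathcal{M}_W(\mathrm{SO}_n)\simeq W^{\mathrm{SO}(L_0)}$ par $f\mapsto f(L_0)$, traduction de $\mathrm{T}_A$ en l'endomorphisme $\sum_j \vert\mathcal{V}_j\vert\, P\rho(g_j)$ où $P$ est le projecteur de moyenne, puis passage de la trace sur les invariants à la trace sur $W$ tout entier grâce au fait que l'image est contenue dans $W^{\mathrm{SO}(L_0)}$. La seule variation est cosmétique : vous gérez le comptage par orbite-stabilisateur en sommant sur tout $\mathrm{SO}(L_0)$, là où le texte choisit un représentant $\gamma_{i,k}$ par voisin puis regroupe par orbites, et le texte traite d'emblée le cas général $h\geq 1$ dont l'énoncé présent est le cas $h=1$.
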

\indent Dans cet énoncé, on désigne par $\mathrm{SO}(L)$, pour $L\in \mathcal{L}_n$ le sous-groupe des éléments $g\in \mathrm{SO}_n(\R)$ tels que $gL=L$, qui est un groupe fini. On dispose d'un énoncé analogue mais plus technique, sans l'hypothèse $\vert X_n \vert =1$, détaillé au paragraphe \ref{3.1}.
\\ \\ \indent Afin de calculer explicitement cette formule, nous devons déterminer les termes qui y interviennent, ce qui fait l'objet des chapitres \ref{4} et \ref{5}.
\\ \indent Au paragraphe \ref{4.2} : on explique comment déterminer le groupe $\mathrm{SO}(L_0)$, qui est très proche du groupe de Weyl du système de racines associé à $L_0$.
\\ \indent Au paragraphe \ref{4.3} : on donne un algorithme pour déterminer les orbites de $\mathrm{vois}_A(L_0)$ pour l'action de $\mathrm{SO}(L_0)$. On se restreint ici au cas où $A\simeq \Z/q\Z$ pour $q$ une puissance d'un nombre premier impair. Notre algorithme nous retourne pour chaque orbite $\mathcal{V}_j$ la quantité $\vert \mathcal{V}_j \vert$ ainsi qu'un élément $x_j\in C_{L_0}(\Z/q\Z)$ dont le $q$-voisin associé $L_0'(x_j)$ est dans l'orbite $\mathcal{V}_j$.
\\ \indent Au paragraphe \ref{4.4} : on donne un algorithme qui, à partir d'une droite isotrope $x_j\in C_{L_0}(\Z/q\Z)$, détermine une transformation $g_j\in \mathrm{SO}_n(R)$ vérifiant : $g_j (L_0) = L_0'(x_j)$. Ce même algorithme permet dans un cadre plus général de déterminer, à partir d'une famille $\Z$-génératrice d'un réseau $L'_0$ isomorphe à $L_0$, une isométrie transformant $L_0$ en $L'_0$.
\\ \indent Enfin, les traces de la forme $\mathrm{tr}(\gamma \vert W)$ pour $\gamma \in \mathrm{SO}_n(\R)$ sont calculées au moyen de la formule des caractères de Weyl, ou plus exactement de sa version ``dégénérée" étudiée dans \cite[ch. 1]{CC} et dans \cite[ch. 2]{CR}, rappelée ici au paragraphe \ref{3.2}.
\\ \indent Notre algorithme est d'autant plus long à exécuter que $\vert A\vert$ et que $n$ sont grands. C'est pourquoi nous nous restreignons à $n\leq 9$ et même à $q\leq 53$ (pour $n=7$), $q\leq 13$ (pour $n=8$) et $q\leq 7$ (pour $n=9$).
\\ \indent Les cas où $A$ est un $2$-groupe présentent certaines particularités. On les étudie au chapitre \ref{5}. Les orbites des $(\Z/2\Z)^k$-voisins et des $4$-voisins des réseaux $\mathrm{E}_7$ et $\mathrm{E}_8$ sont étudiées au paragraphe \ref{5.1}. On a notamment le résultat suivant :
\begin{prop} Pour $L=\mathrm{E}_7$ ou $L=\mathrm{E}_8$, pour $1\leq i\leq 3$, le groupe $\mathrm{SO}(L)$ agit transitivement sur l'ensemble des $(\Z/2\Z)^i$-voisins de $L$.
\\ \indent Pour $L=\mathrm{E}_7$ ou $L=\mathrm{E}_8$, le groupe $\mathrm{SO}(L)$ agit transitivement sur l'ensemble des $4$-voisins de $L$.
\\ \indent Il y a deux orbites de $2,2,2,2$-voisins de $\mathrm{E}_8$ pour l'action de $\mathrm{SO}(\mathrm{E}_8)$, dont la réunion est l'unique orbite des $2,2,2,2$-voisins de $\mathrm{E}_8$ pour l'action de $\mathrm{O}(\mathrm{E}_8)$.
\end{prop}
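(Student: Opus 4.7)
Mon plan s'articule en quatre étapes. D'abord, j'identifie les $(\Z/2\Z)^i$-voisins de $L\in\{\mathrm{E}_7, \mathrm{E}_8\}$ avec les sous-espaces totalement isotropes de dimension $i$ de l'espace quadratique $(L/2L, \bar q)$ défini par $\bar q(x) = \frac{1}{2}(x\cdot x) \bmod 2$, et les $4$-voisins comme classes de vecteurs isotropes dans une réduction modulo $4$. Ces paramétrisations étant $\mathrm{O}(L)$-équivariantes, la question de la transitivité se ramène à l'étude des orbites de l'image de $\mathrm{SO}(L)$ dans le groupe orthogonal correspondant.

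Pour $L = \mathrm{E}_7$, la situation est simple: comme $\det(-\mathrm{id}) = (-1)^7 = -1$, l'élément $-\mathrm{id}$ appartient à $\mathrm{O}(\mathrm{E}_7) \setminus \mathrm{SO}(\mathrm{E}_7)$ et agit trivialement modulo $2$ comme modulo $4$. On en déduit que $\mathrm{SO}(\mathrm{E}_7)$ et $\mathrm{O}(\mathrm{E}_7) = \mathrm{SO}(\mathrm{E}_7)\cdot\{\pm \mathrm{id}\}$ induisent les mêmes orbites sur les voisins considérés. Comme $\mathrm{O}(\mathrm{E}_7) = W(\mathrm{E}_7)$ surjecte sur le groupe orthogonal de $\mathrm{E}_7/2\mathrm{E}_7$ (dont la forme quadratique a un radical de dimension $1$ puisque $\det(\mathrm{E}_7)=2$), le théorème de Witt fournit la transitivité sur les sous-espaces totalement isotropes de chaque dimension $i\le 3$; un argument analogue couvre les $4$-voisins.

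Pour $L = \mathrm{E}_8$, on a $\det(-\mathrm{id})=1$, donc $-\mathrm{id}\in \mathrm{SO}(\mathrm{E}_8)$ et l'astuce précédente s'effondre. L'espace quadratique $\mathrm{E}_8/2\mathrm{E}_8$ est non dégénéré de type $+$ en dimension $8$, d'où $\mathrm{O}(\mathrm{E}_8/2\mathrm{E}_8, \bar q) \cong \mathrm{O}_8^+(\F_2)$. Toute réflexion $s_\alpha$ de $W(\mathrm{E}_8)$ se réduit modulo $2$ en une transvection d'invariant de Dickson $1$; donc $\mathrm{SO}(\mathrm{E}_8)$, engendré par les produits d'un nombre pair de réflexions, a son image dans le noyau $\Omega_8^+(\F_2)$ de l'invariant de Dickson. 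Un argument de cardinalité (ou exploitant la simplicité de $\mathrm{P}\Omega_8^+(\F_2)$) établit l'égalité. Or $\Omega_8^+(\F_2)$ agit transitivement sur les sous-espaces totalement isotropes de dimension $i<4$ (ce qui donne la transitivité pour $i\in \{1,2,3\}$), mais possède exactement deux orbites sur les sous-espaces lagrangiens (dimension $4$), correspondant aux deux règles de la quadrique hyperbolique. Ces orbites sont échangées par tout élément de $\mathrm{O}_8^+(\F_2)\setminus \Omega_8^+(\F_2)$, en particulier par l'image de toute réflexion de $\mathrm{O}(\mathrm{E}_8)$, ce qui donne l'énoncé sur les $(\Z/2\Z)^4$-voisins. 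La transitivité des $4$-voisins de $\mathrm{E}_8$ s'obtient par un raisonnement similaire en analysant l'action de $\mathrm{SO}(\mathrm{E}_8)$ sur les vecteurs isotropes d'une réduction modulo $4$.

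Le principal obstacle me semble être la justification précise de l'égalité entre l'image de $\mathrm{SO}(\mathrm{E}_8)$ dans $\mathrm{O}_8^+(\F_2)$ et $\Omega_8^+(\F_2)$, ainsi que l'adaptation du théorème de Witt pour la forme quadratique à valeurs dans $\Z/8\Z$ sur $L/4L$ requise pour traiter les $4$-voisins. Une alternative, plus calculatoire, consisterait à exhiber explicitement, pour chaque paire de voisins supposés équivalents, une isométrie de $\mathrm{SO}(L)$ les échangeant, en utilisant les générateurs réflexifs du groupe de Weyl.
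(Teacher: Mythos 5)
Votre réduction comporte une lacune sérieuse pour $i\geq 2$ : les $(\Z/2\Z)^i$-voisins de $L$ ne sont \emph{pas} en bijection avec les sous-espaces totalement isotropes $X$ de dimension $i$ de $L/2L$. D'après la paramétrisation correcte (proposition \ref{paramètre A-vois}), un tel voisin est déterminé par un couple $(X,I)$, où $I$ est un lagrangien de $\mathrm{H}(L/M)$ transverse à $L/M$ dans $\mathrm{r\acute{e}s}\, M$ (avec $M$ l'image réciproque de $X^\perp$ par $L\to L/2L$) ; pour $X$ fixé, il y a autant de tels $I$ que de formes alternées sur $(\Z/2\Z)^i$, soit $2^{i(i-1)/2}$, c'est-à-dire $2$, $8$ ou $64$ voisins par sous-espace isotrope pour $i=2,3,4$. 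La transitivité sur les $X$ --- que vous établissez correctement via la surjection $W\to\mathrm{O}(L/2L)$, le théorème de Witt et l'identification de l'image de $\mathrm{SO}(\mathrm{E}_8)$ avec $\Omega_8^+(\F_2)$ --- ne suffit donc pas : il faut encore montrer que le stabilisateur de $X$ agit transitivement sur les lagrangiens transverses $I$. C'est précisément le contenu des lemmes \ref{unique M} et \ref{transitivité I}, qui constituent l'essentiel de la démonstration et se prouvent par une analyse cas par cas de réseaux explicites $M$ (par exemple $\mathrm{A}_1\oplus\mathrm{D}_6$, $(\mathrm{A}_1)^3\oplus\mathrm{D}_4$, $(\mathrm{A}_1)^7$ pour $n=7$). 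Cette étape manque entièrement dans votre proposition ; sans elle, même l'énoncé sur les $2,2,2,2$-voisins n'est pas acquis, car deux orbites de lagrangiens $X$ ne donnent \emph{a priori} qu'une minoration du nombre d'orbites de voisins.

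Pour les $4$-voisins, la réduction aux droites isotropes de $C_L(\Z/4\Z)$ est, elle, légitime (cas cyclique), mais vous n'avez pas d'argument de transitivité : il n'existe pas de ``théorème de Witt'' immédiat pour la forme sur $L/4L$, et vous signalez vous-même cette difficulté. Le texte procède tout autrement : il montre que $x\mapsto \mathrm{vect}_{\Z/4\Z}(x)$ induit une bijection entre les paires $\{x,-x\}$ de vecteurs de norme $8$ hors de $2L$ et $C_L(\Z/4\Z)$ (Cauchy--Schwarz pour l'injectivité, comptage pour la surjectivité), puis vérifie que $\mathrm{SO}(L)$ agit transitivement sur cet ensemble de vecteurs. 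C'est cette construction concrète qu'il faudrait substituer à votre esquisse. Les points que vous traitez correctement (l'astuce $-\mathrm{id}$ pour $\mathrm{E}_7$, les deux familles de lagrangiens de la quadrique hyperbolique échangées par les réflexions pour $\mathrm{E}_8$) recoupent bien la partie ``orbites des $X$'' de la démonstration, mais n'en couvrent qu'une moitié.
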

\indent Au final, nous obtenons dans tous ces cas de tables des valeurs de $\mathrm{tr}(\mathrm{T}_A \vert \mathcal{M}_W (\mathrm{SO}_n))$ pour des représentations irréductibles $W$ arbitraires. Certaines de ces valeurs sont disponibles dans \cite{Meg}.
\\ \\ \indent Au chapitre \ref{7}, nous rappelons, en suivant Arthur \cite{Art13} et Chenevier-Renard \cite{CR}, comment les formes automorphes pour $\mathrm{SO}_n$ étudiées ci-dessus sont ``construites" à partir de certaines représentations automorphes des groupes linéaires. Cela nous permet, par un procédé de ``récurrence sur $n$" décrit au paragraphe \ref{7.4}, d'utiliser nos calculs pour déterminer des paramètres de Satake des représentations des groupes linéaires mises en jeu. Soyons plus précis.
\\ \indent Soit $n \geq 1$ un entier. Soit $\Pi_{\mathrm{alg}}^\bot(\mathrm{ PGL}_n)$ l'ensemble des (classes d'isomorphisme de) représentations automorphes cuspidales de $\mathrm{GL}_n$ sur $\Q$ ayant les propriétés suivantes :
\begin{itemize}  
\item[(i)] $\pi_p$ est non ramifiée pour tout premier $p$, 
\item[(ii)] $\pi_\infty$ est algébrique régulière, 
\item[(iii)] $\pi$ est isomorphe à sa contragrédiente $\pi^\vee$. 
\end{itemize}

Rappelons la signification de (ii). Suivant Harish-Chandra, $\pi_\infty$ admet un caractère infinitésimal, que l'on peut voir suivant Langlands comme une classe de conjugaison semisimple dans $\mathrm{M}_n(\C)$ (\cite[\S 2]{Lan}). La condition (ii) signifie que les valeurs propres de cette classe de conjugaison sont de la forme $\lambda_1 < \lambda_2 < \cdots < \lambda_n$ avec $\lambda_i \in \frac{1}{2}\Z$ et $\lambda_i - \lambda_j \in \Z$ pour tout $i,j$. Les $\lambda_i$ sont appelés les poids de $\pi$, et vérifient $\lambda_{n-i+1}+\lambda_i =0$ pour $1 \leq i \leq n$ grâce à la condition (iii).
\\ \indent Rappelons enfin que si $\pi \in \Pi_{\mathrm{alg}}^\bot(\mathrm{PGL}_n)$ et si $p$ est premier, alors suivant Langlands \cite{Lan} l'isomorphisme de Satake associe à $\pi_p$ une classe de conjugaison semisimple dans $\mathrm{GL}_n(\C)$, qui sera notée $\mathrm{c}_p(\pi)$.
\\ \indent Dans leur travail \cite{CR}, Chenevier et Renard ont déterminé pour $n \leq 8$ (et $n \neq 7$ en général), le nombre d'éléments de $\Pi_{\mathrm{alg}}^\bot(\mathrm{PGL}_n)$ de poids donné. La question qui s'est posée, en fait le but de notre travail, est d'étudier les paramètres de Satake des ces représentations, du moins pour les premiers poids pour lesquelles il en existe (auquel cas il y en a le plus souvent seulement une ou deux). Soulignons que les résultats de \cite{CR} ne sont plus conditionnels, grâce notamment aux travaux récents de Waldspurger \cite{Wal14}, Kaletha \cite{Kal}, Taïbi \cite{Tai16} et Arancibia-Moeglin-Renard \cite{AMR}.
\\ \indent Nous obtenons les résultats suivants. Les notations $\Delta_{w_1,\dots ,w_n}$, $\Delta^k_{w_1,\dots ,w_n}$, $\Delta^*_{w_1,\dots,w_n}$ et ${\Delta^*}^k_{w_1,\dots,w_n}$ qui interviennent dans les tables \ref{SO9} à \ref{tableau4SO8} sont expliquées au paragraphe \ref{7.3}. 
\begin{theo}\label{theo1} Soient $25\geq w_1>w_2>w_3\geq 1$ des entiers impairs, tels que l'ensemble $\Pi$ des éléments $\pi\in \Pi_{\mathrm{alg}}^\bot(\mathrm{PGL}_6)$ dont les poids sont les $\pm \frac{w_1}{2},\pm \frac{w_2}{2}, \pm \frac{w_3}{2}$est non vide (et possède dans ce cas un ou deux éléments).
\\ \indent $(i)$ Si $\Pi=\{ \pi \}$ est un singleton, alors le polynôme $\mathrm{det}\left( 2^{w_1/2} X\cdot \mathrm{Id} - \mathrm{c}_2(\pi)\right) \in \Z[X]$ est donné par la table \ref{tableau1SO7}.
\\ \indent $(ii)$ Si on a $\vert \Pi \vert =2$, alors le polynôme unitaire dont les racines sont les $2^{w_1/2}\cdot \mathrm{Trace}(\mathrm{c}_2 (\pi) \vert V_{\mathrm{St}})$ pour $\pi \in \Pi$ est donné par la table \ref{tableau2SO7}.
\\ \indent $(iii)$ Pour $p\leq 53$ un nombre premier impair, la quantité $p^{w_1 /2 }\cdot \sum_{\pi \in \Pi } \mathrm{Trace} \left( \mathrm{c}_p (\pi) \vert V_\mathrm{St} \right) \in \Z$ est donnée par les tables \ref{tableau3SO7}, \ref{tableau4SO7} et \ref{tableau5SO7}.
\end{theo}
\begin{theo}\label{theo2} Soient $25\geq w_1>w_2>w_3>w_4\geq 1$ des entiers impairs, tels que l'ensemble $\Pi$ des éléments $\pi\in \Pi_{\mathrm{alg}}^\bot(\mathrm{PGL}_8)$ dont les poids sont les $\pm \frac{w_1}{2},\pm \frac{w_2}{2}, \pm \frac{w_3}{2}, \pm \frac{w_4}{2}$ est non vide.
\\ \indent Pour $p\leq 7$ un nombre premier impair, la quantité $p^{w_1 /2 }\cdot \sum_{\pi \in \Pi } \mathrm{Trace} \left( \mathrm{c}_p (\pi) \vert V_\mathrm{St} \right) \in \Z$ est donnée par la table \ref{tableau1SO9}.
\end{theo}
\begin{theo}\label{theo3} Soient $26\geq w_1>w_2>w_3\geq 2$ des entiers pairs, tels que l'ensemble $\Pi$ des éléments $\pi\in \Pi_{\mathrm{alg}}^\bot(\mathrm{PGL}_7)$ dont les poids sont les $\pm \frac{w_1}{2},\pm \frac{w_2}{2}, \pm \frac{w_3}{2}, 0$ est non vide (et possède dans ce cas un seul élément).
\\ \indent $(i)$ Le polynôme $\mathrm{det}\left( 2^{w_1/2} X\cdot \mathrm{Id} - \mathrm{c}_2(\pi)\right) \in \Z[X]$ pour $\pi$ l'unique élément de $\Pi$ est donné par la table \ref{tableau1SO8}.
\\ \indent $(ii)$ Pour $p\leq 13$ un nombre premier impair, la quantité $p^{w_1 /2 }\cdot  \mathrm{Trace} \left( \mathrm{c}_p (\pi) \vert V_\mathrm{St} \right) \in \Z$ pour $\pi$ l'unique élément de $\Pi$ est donnée par la table \ref{tableau2SO8}.
\end{theo}
\begin{theo}\label{theo4} Soient $26\geq w_1>w_2>w_3>w_4\geq 0$ des entiers pairs, tels que l'ensemble $\Pi$ des éléments $\pi\in \Pi_{\mathrm{alg}}^\bot(\mathrm{PGL}_7)$ dont les poids sont les $\pm \frac{w_1}{2},\pm \frac{w_2}{2}, \pm \frac{w_3}{2}, \pm \frac{w_4}{2}$ est non vide (et possède dans ce cas un seul élément).
\\ \indent $(i)$ Le polynôme $\mathrm{det}\left( 2^{w_1/2} X\cdot \mathrm{Id} - \mathrm{c}_2(\pi)\right) \in \Z[X]$ pour $\pi$ l'unique élément de $\Pi$ est donné par la table \ref{tableau3SO8}.
\\ \indent $(ii)$ Pour $p\leq 13$ un nombre premier impair, la quantité $p^{w_1 /2 }\cdot  \mathrm{Trace} \left( \mathrm{c}_p (\pi) \vert V_\mathrm{St} \right) \in \Z$ pour $\pi$ l'unique élément de $\Pi$ est donnée par la table \ref{tableau4SO8}.
\end{theo}
\indent Pour $\pi \in \Pi_{\mathrm{alg}}^\bot(\mathrm{PGL}_n)$, si l'on pose $\sum_{i=0}^n a_i\cdot X^i= \mathrm{det}\left(2^{w_1/2}X \cdot \mathrm{Id}-\mathrm{c}_2(\pi)\right)$, alors les $a_i$ vérifient : $a_{n-i} = 2^{(n-2i)\cdot w_1/2} \cdot a_i$, et on n'a pas explicité tous les monômes des polynômes donnés aux tables \ref{tableau1SO7}, \ref{tableau1SO8} et \ref{tableau3SO8}.
\\ \\ \indent Signalons que nous disposons de nombreuses indications que nos calculs finaux sont corrects ! Par exemple, notre méthode permet également de déterminer des paramètres de Satake de représentations associées à des formes modulaires classiques, ou de Siegel en genre $2$, cas où ils étaient déjà connus (par exemple par van der Geer \cite{vdG} ou Chenevier-Lannes \cite{CL}).
\\ \indent De plus, nous pouvons souvent calculer de différentes manières un paramètre de Satake donné, et vérifier que les résultats sont bien les mêmes. Signalons enfin que nos résultats montrent que pour les trois représentations $\pi$ de $\Pi_{\mathrm{alg}}^\bot(\mathrm{PGL}_7)$ dont les poids sont de la forme $a+b>a>b>0>-b>-a>-a-b$ avec $a+b \leq 13$ (voir les trois premières lignes de la table \ref{tableau1SO8}), alors le paramètre de Satake $\mathrm{c}_2(\pi)$ est conjugué à $\mathrm{G}_2$, conformément à une conjecture de \cite{CR} (voir la page 10 de l'introduction ainsi que la table 10).
\\ \indent Terminons par mentionner un lien entre ce travail et une conjecture de Bergström, Faber et van der Geer \cite{Fab} sur la fonction zêta de Hasse-Weil de l'espace $\overline{\mathcal{M}_{3,n}}$ de module des courbes stables de genre $3$ munies de $n$ points marqués (qui est propre et lisse sur $\Z$). En effet, ces auteurs ont mis en évidence de manière expérimentale l'existence de deux ``motifs" de poids $23$ et de dimension $6$ dans $\mathrm{H}^{23}(\overline{\mathcal{M}_{3,17}})$, et ont déterminé le polynôme caractéristique de leur Frobenius en $2$. D'autre part, Chenevier et Renard ont trouvé exactement $7$ représentations $\pi \in \Pi_{\mathrm{alg}}^\bot(\mathrm{PGL}_6)$ dont le plus grand poids est $\frac{23}{2}$ (et aucune de plus grand poids $<\frac{23}{2}$). Les calculs faits ici montrent que les polynômes caractéristiques des paramètres de Satake en $p=2$ de deux des $7$ représentations susmentionnées, à savoir celles de poids $\pm \frac{23}{2}, \pm \frac{13}{2}, \pm \frac{5}{2}$ et $\pm \frac{23}{2}, \pm \frac{15}{2}, \pm \frac{3}{2}$, sont exactement ceux trouvés par Bergström, Faber et van der Geer ! Cela répond à une question que nous avaient posée ces auteurs.
\newpage
\tableofcontents
\newpage
\section{Résultats préliminaires.}\label{2}
\indent Dans toute la suite, on se place dans $V=\R^n$ muni de sa structure euclidienne, avec pour base canonique associée $(e_i)_{i\in \{1,\dots ,n\}}$. On notera $(x_i)\cdot (y_i)=\sum_i x_i y_i$ le produit scalaire usuel, et $q:V\rightarrow \R,x\mapsto \frac{x\cdot x}{2}$ la forme quadratique associée.
\subsection{Les réseaux de $\R^n$.}\label{2.1}
\begin{defi}[Réseaux entiers et pairs] Soit $L\subset V$ un réseau. On dit que $L$ est entier si :
$$(\forall x,y\in L)\ x\cdot y \in \Z.$$
\indent Si l'on se donne un réseau $L\subset V$ entier, il est dit pair si :
$$(\forall x\in L)\ x\cdot x\in 2 \Z.$$
\end{defi}
\begin{defi}[Dual et résidu d'un réseau] Soit $L\subset V$ un réseau. On définit $L^\sharp$ le dual de $L$ par :
$$L^\sharp = \{ y\in V \vert (\forall x\in L)\ y\cdot x \in \Z \}.$$
\indent En particulier, $L$ est entier si, et seulement si, $L\subset L^\sharp$. Dans ce cas on définit le résidu de $L$ comme le quotient :
$$\mathrm{r\acute{e}s}\,  L = L^\sharp /L .$$
\indent Ce quotient est muni d'une forme quadratique $\mathrm{r\acute{e}s}\,  L \rightarrow \Q/\Z$ définie par $x\mapsto q(x)\mathrm{\ mod\ }\Z$ appelée forme d'enlacement.
\end{defi}
\begin{defi}[Déterminant d'un réseau] Soit $L$ un réseau entier. On note $\mathrm{det}(L)$ son déterminant, qui est encore le déterminant de la matrice de Gram d'une base quelconque de $L$. On a la relation bien connue :
$$\mathrm{det}(L) = \vert \mathrm{r\acute{e}s}\, L \vert . $$
\end{defi}
\begin{defi}[Racines d'un réseau]
Soit $L\subset V$ un réseau entier. On définit l'ensemble des racines de $L$ comme l'ensemble $R(L)$ (qui est fini, et éventuellement vide) :
$$ R(L) = \{x \in L \vert x\cdot x = 2\}. $$
\indent C'est un système de racines du $\R$-espace vectoriel qu'il engendre au sens de \cite[ch. VI, \S 1.1, définition 1]{Bo}, ce qui justifie la terminologie (c'est même un système de racines de type ADE).
\end{defi}
\indent On reprend les notations de \cite[ch. VI, \S 1]{Bo} pour les notions relatives aux systèmes de racines (systèmes de racines, chambre et groupe de Weyl, longueur d'un élément du groupe de Weyl, diagramme de Dynkin, etc.). On expose ici quelques notations et résultats qu'on utilisera.
\begin{prop}[Les chambres de Weyl et les générateurs du groupe de Weyl] \label{gen Weyl} Soient $R$ un système de racines de $V$, $W$ son groupe de Weyl, et $C$ une chambre de $R$. Alors :
\\ \begin{tabular}{rp{14cm}}
$(i)$ & Pour tout $x\in V$, il existe un élément $w\in W$ tel que $w(x)\in \overline{C}$.\\
$(ii)$ & Pour toute chambre $C'$, il existe un unique élément $w\in W$ tel que $w(C')=C$.\\
$(iii)$ & Le groupe $W$ est engendré par l'ensemble des réflexions orthogonales par rapport aux murs de $C$.\\
\end{tabular}
\end{prop}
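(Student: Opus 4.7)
Je suivrai la preuve classique due à Bourbaki. L'idée est d'introduire le sous-groupe $W'$ de $W$ engendré par les réflexions orthogonales par rapport aux murs de $C$, puis de prouver dans l'ordre : la transitivité de l'action de $W'$ sur l'ensemble (fini) des chambres de $R$, l'assertion $(i)$ comme conséquence directe, l'égalité $W=W'$ (ce qui donnera $(iii)$ et l'existence dans $(ii)$), et enfin l'unicité dans $(ii)$.

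Pour la transitivité, je fixerai un point régulier $x_0 \in C$, et pour une chambre $C'$ donnée je choisirai un point régulier $y \in C'$. L'orbite $W' \cdot y$ étant finie, je prendrai $y_0 \in W'\cdot y$ minimisant $\|y_0 - x_0\|$. L'argument clé est le suivant : si $y_0 \notin \overline{C}$, il existe un mur $H$ de $C$ séparant strictement $y_0$ de $x_0$, et la réflexion $s_H \in W'$ par rapport à $H$ vérifie alors $\|s_H(y_0)-x_0\| < \|y_0-x_0\|$, contredisant la minimalité. Ainsi $y_0 \in \overline{C}$, et comme $y_0$ reste régulier on a même $y_0 \in C$, d'où la transitivité. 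L'assertion $(i)$ s'en déduit immédiatement, puisque tout $x \in V$ appartient à l'adhérence d'une chambre que l'on ramène à $\overline{C}$ par un élément de $W'$.

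Pour voir que $W = W'$, il suffit, les $s_\alpha$ pour $\alpha \in R$ engendrant $W$ par définition, de vérifier que $s_\alpha \in W'$ pour toute racine $\alpha$. Je choisirai alors une chambre $C'$ admettant $H_\alpha = \alpha^\bot$ comme mur (une telle chambre existe car $H_\alpha$ est la frontière de deux chambres adjacentes), puis $w \in W'$ tel que $w(C')=C$; alors $w s_\alpha w^{-1}$ est la réflexion par rapport au mur $w(H_\alpha)$ de $C$, donc appartient à $W'$, et il en va de même de $s_\alpha$. Il restera enfin à établir l'unicité dans $(ii)$, c'est-à-dire la trivialité du stabilisateur de $C$ dans $W$ : si $w(C)=C$, alors $w$ préserve $\overline{C}$ et envoie toute racine positive sur une racine positive. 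J'utiliserai alors l'argument classique sur la longueur d'un élément de $W$ (défini comme le nombre de racines positives envoyées sur des négatives) pour conclure que $w=\mathrm{id}$. Je m'attends à ce que cette dernière étape soit le principal obstacle technique, car elle requiert le développement de la théorie de la longueur dans $W$ telle qu'exposée dans \cite[ch. VI, \S 1]{Bo}.
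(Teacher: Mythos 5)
Votre démonstration est correcte et suit exactement l'argument classique de Bourbaki \cite[ch. V, \S 3, théorème 1]{Bo}, que l'article se contente de citer sans le reproduire. Les deux points que vous laissez en référence (le fait que $\overline{C}$ soit l'intersection des demi-espaces fermés déterminés par ses seuls murs, ce qui justifie l'existence du mur séparant strictement $y_0$ de $x_0$, et la théorie de la longueur pour l'unicité dans $(ii)$) sont correctement identifiés et figurent bien dans cette référence, sans circularité avec ce que vous établissez auparavant.
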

\begin{proof}
voir \cite[ch. V, \S 3, théorème 1]{Bo}.
\end{proof}
\indent Le corollaire suivant est une conséquence du $(ii)$ : 
\begin{cor} \label{transitif} Soit $R$ un système de racines, $W$ son groupe de Weyl, $C$ une chambre, et $\rho$ un élément de $C$. Alors :
$$(\forall w,w'\in W) \ w=w'\Leftrightarrow w (\rho) = w'(\rho).$$
\end{cor}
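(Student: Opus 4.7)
Le plan est d'exploiter directement le point $(ii)$ de la proposition~\ref{gen Weyl}, qui assure l'unicité de l'élément de $W$ envoyant une chambre donnée sur $C$.

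L'implication $w=w' \Rightarrow w(\rho)=w'(\rho)$ est triviale. Pour la réciproque, je poserais $w''=w^{-1}w'\in W$, de sorte que l'hypothèse $w(\rho)=w'(\rho)$ se réécrit $w''(\rho)=\rho$. L'objectif devient alors de montrer que $w''=\mathrm{id}$.

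L'étape clé consiste à observer que $w''$ envoie la chambre $C$ sur la chambre $w''(C)$, et que cette dernière contient le point $w''(\rho)=\rho$. Or $\rho$ appartient à $C$ par hypothèse, et $\rho$ est en particulier dans le complémentaire de la réunion des hyperplans associés aux racines de $R$ (puisque $C$ en est une composante connexe). Les chambres étant deux à deux disjointes, on en déduit $w''(C)=C$. Comme l'identité envoie également $C$ sur $C$, l'unicité du point $(ii)$ de la proposition~\ref{gen Weyl} (appliqué à $C'=C$) force $w''=\mathrm{id}$, d'où $w=w'$.

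Il n'y a pas vraiment d'obstacle technique dans cette preuve : le seul point à soigner est de bien justifier que $\rho\in C$ implique que $\rho$ n'appartient à aucun mur (ce qui découle immédiatement de la définition d'une chambre comme composante connexe de $V\setminus\bigcup_{\alpha\in R} H_\alpha$), afin de pouvoir conclure que les deux chambres $C$ et $w''(C)$ qui contiennent $\rho$ coïncident nécessairement.
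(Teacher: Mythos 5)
Votre démonstration est correcte et suit exactement la voie indiquée par le papier, qui se contente de signaler que le corollaire est une conséquence du point $(ii)$ de la proposition \ref{gen Weyl} : vous explicitez proprement cette déduction en ramenant l'égalité $w(\rho)=w'(\rho)$ au fait que $w^{-1}w'$ stabilise la chambre $C$, puis en invoquant l'unicité. Rien à redire.
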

\begin{prop}\label{longueur} Soient $R$ un système de racines, $W$ son groupe de Weyl, $C$ une chambre, et $l:W\rightarrow \N$ la longueur associée à $C$. Soit $h$ un mur de $C$, et $s$ la symétrie orthogonale associée. Si $w\in W$, alors :
\\ \begin{tabular}{rp{14cm}}
$(i)$ &  $l(s\circ w) = l(w) \pm 1$, \\
$(ii)$ & $l(s\circ w) >l(w)$ si, et seulement si, les chambres $C$ et $w(C)$ sont du même côté de $h$.\\
\end{tabular} 
\end{prop}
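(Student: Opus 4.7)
The plan is to rely on the geometric characterization of the length function: $l(w)$ equals the number of hyperplanes of the reflection arrangement $\mathcal{H}$ (the union of all walls of chambers of $R$) that separate the chamber $C$ from its image $w(C)$. This equivalence with the combinatorial definition of $l$ (minimal number of factors in a decomposition of $w$ as a product of reflections in the walls of $C$) is the classical theorem of Bourbaki already invoked for Proposition~\ref{gen Weyl}, and is what I would take as starting point.

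The key observation is a combinatorial ``triangle'' identity: for any three chambers $C_1, C_2, C_3$, a hyperplane $H \in \mathcal{H}$ separates $C_1$ from $C_3$ if and only if it separates exactly one of the pairs $(C_1, C_2)$ and $(C_2, C_3)$; equivalently, the set of separating hyperplanes is additive under symmetric difference. Applying this with $C_1 = C$, $C_2 = s(C)$ and $C_3 = sw(C)$, and noting that $h$ is the unique wall separating the adjacent chambers $C$ and $s(C)$, one gets that the set of hyperplanes separating $C$ from $sw(C)$ is the symmetric difference of $\{h\}$ with the set of hyperplanes separating $s(C)$ from $sw(C)$. Since $s$ permutes $\mathcal{H}$, this latter set is simply the image under $s$ of the set of walls separating $C$ from $w(C)$, hence has cardinality exactly $l(w)$. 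It follows that $l(sw) = l(w) \pm 1$, according to whether $h$ belongs to it or not; this proves $(i)$.

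For $(ii)$, I would then observe that $s(h) = h$, so the condition ``$h$ separates $s(C)$ from $sw(C)$'' is equivalent, after applying $s$, to ``$h$ separates $C$ from $w(C)$''. The analysis above therefore shows that $l(sw) > l(w)$ holds precisely when $h$ does \emph{not} separate $C$ from $w(C)$, i.e.\ when $C$ and $w(C)$ lie on the same side of $h$, as claimed. The only nontrivial step is the preliminary geometric reinterpretation of $l$ as a separation count, which is the main obstacle but is entirely standard; everything that remains is formal bookkeeping on the action of $s$ on the hyperplane arrangement.
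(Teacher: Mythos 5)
Your argument is correct. Note that the paper does not actually prove Proposition~\ref{longueur}: it simply refers to \cite[ch.~V, \S~3, théorème~1~(ii)]{Bo}, so there is no internal proof to compare against. Your derivation — take as given the geometric formula $l(w)=\vert\mathrm{Sep}(C,w(C))\vert$, where $\mathrm{Sep}$ denotes the set of hyperplanes of the arrangement separating two chambers, then use the symmetric-difference identity $\mathrm{Sep}(C_1,C_3)=\mathrm{Sep}(C_1,C_2)\,\triangle\,\mathrm{Sep}(C_2,C_3)$ with $C_2=s(C)$, together with $\mathrm{Sep}(C,s(C))=\{h\}$, $s(h)=h$ and the fact that $s$ permutes the arrangement — is a clean and complete reduction of both $(i)$ and $(ii)$ to elementary bookkeeping. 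Two small remarks. First, the claim that $h$ is the \emph{unique} hyperplane separating $C$ from $s(C)$ deserves one line (e.g.\ join a point $x\in C$ near the relative interior of the face $\overline{C}\cap h$ to $s(x)$ by a short segment meeting no hyperplane other than $h$); it is standard but it is the one place where adjacency is actually used. Second, be aware of the logical ordering: in Bourbaki's own development the statements $(i)$ and $(ii)$ are established \emph{en route} to the formula $l(w)=\vert\mathrm{Sep}(C,w(C))\vert$, so quoting that formula as your starting point is only non-circular because it admits independent proofs (e.g.\ via counting positive roots made negative). Since such proofs exist, your black box is legitimate, and what you gain is that the deduction of $(i)$ and $(ii)$ from it becomes purely formal; but a referee might ask you to flag this inversion of the usual order of proof.
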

\begin{proof}
voir \cite[ch. V, \S 3, théorème 1 (ii)]{Bo}. 
\end{proof}
\indent On adoptera les notations suivantes :
\\ \\ \begin{tabular}{rp{14cm}}
$\mathrm{A}_n$ : & On pose $\mathrm{A}_n = \{ (x_i)\in \Z^{n+1} \vert \sum_i x_i=0\}$. On a $R(\mathrm{A}_n)=\{ \pm (e_i-e_j) \vert i\neq j\}$. \\
$\mathrm{D}_n$ : & On pose $\mathrm{D}_n = \{ (x_i)\in \Z^n \vert \sum_i x_i\equiv 0\mathrm{\ mod\ }2\}$. On a $R(\mathrm{D}_n)=\{ \pm e_i \pm e_j \vert i\neq j\}$. \\
$\mathrm{E}_8$ : & On pose $\mathrm{E}_8 = \mathrm{D}_8+ \Z \cdot e$, avec $e=\frac{1}{2}(1,\dots ,1)$. On a $R(\mathrm{E}_8)=R(\mathrm{D}_8) \cup \left\{ (x_i)=\frac{1}{2} (\pm 1, \dots, \pm 1) \vert \prod _i x_i >0 \right\}$. \\
$\mathrm{E}_7$ : & On pose $\mathrm{E}_7 = e^\perp \cap \mathrm{E}_8 =\left\{ (x_i)\in \mathrm{E}_8 \vert \sum_i x_i =0 \right\}$. On a $R(\mathrm{E}_7)=e^\perp \cap R(\mathrm{E}_8) =  R(\mathrm{A}_7)\cup \left\{ (x_i)=\frac{1}{2} (\pm 1, \dots, \pm 1) \vert \sum _i x_i =0 \right\} $. \\
\end{tabular}
\\ \\ \indent Dans la suite, on fera l'abus de langage suivant : si l'on se donne $m>n$ deux entiers, et $L\subset \R^m$ un $\Z$-module libre de rang $n$, alors dira que $L$ est un réseau de $\R^n$. En particulier, le groupes $\mathrm{E}_7$ (respectivement $\mathrm{A}_n$) introduit ci-dessous est un $\Z$-module libre de rang $7$ (respectivement $n$) et sera vu comme un réseau de $\R^7$ (respectivement $\R^n$) et non comme un sous-ensemble de $\R^8$ (respectivement $\R^{n+1}$).
\begin{prop} \label{Weyl et SO} Soient $L$ un réseau entier, $R=R(L)$ et $W$ le groupe de Weyl de $R$. On suppose que $R$ est un système de racines de $V$ (en particulier, $R$ engendre $V$ comme $\R$-espace vectoriel). Soient $D$ le diagramme de Dynkin associé à un ensemble de racines simples $\{ \alpha_1 ,\dots ,\alpha_n \}$ et $G$ le sous-groupe des permutations de $\{ \alpha_1, \dots , \alpha_n \}$ qui sont des automorphismes de $D$. On pose $\mathrm{A}(R)$ le sous-groupe des automorphismes de $V$ qui laissent stable $R$, et $\mathrm{O}(L)$ le sous-groupe de ceux qui laissent stable $L$. On a les inclusions de groupes suivantes :
$$W\subset \mathrm{O}(L) \subset \mathrm{A}(R) \simeq W\ltimes G.$$
\indent De plus, si $L$ est engendré $\Z$-linéairement par $R$, on a l'égalité : $\mathrm{O}(L)=\mathrm{A}(R)$.
\end{prop}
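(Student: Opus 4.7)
Le plan est d'établir d'abord les deux inclusions, puis l'isomorphisme $\mathrm{A}(R)\simeq W\ltimes G$, et enfin l'égalité $\mathrm{O}(L)=\mathrm{A}(R)$ dans le cas où $R$ engendre $L$.

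L'inclusion $W\subset \mathrm{O}(L)$ se ramène aux générateurs de $W$ : pour $\alpha\in R\subset L$, la réflexion $s_\alpha(x)=x-(x\cdot \alpha)\alpha$ envoie $L$ dans $L$ puisque $L$ est entier, et donc $x\cdot \alpha\in \Z$ pour tout $x\in L$. L'inclusion $\mathrm{O}(L)\subset \mathrm{A}(R)$ est immédiate : un élément $\phi\in \mathrm{O}(L)$ est une isométrie stabilisant $L$, donc stabilise $R=\{x\in L\vert x\cdot x=2\}$, caractérisation ne faisant intervenir que $L$ et la norme.

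Pour l'identification $\mathrm{A}(R)\simeq W\ltimes G$, je commence par plonger $G$ dans $\mathrm{A}(R)$ : toute permutation des $\alpha_i$ préservant le diagramme de Dynkin s'étend de manière unique en une application linéaire de $V$ (les $\alpha_i$ formant une base), qui est une isométrie puisque les produits scalaires $\alpha_i\cdot \alpha_j$ sont entièrement codés par le diagramme dans un système de type ADE (ce qui est notre cas d'après la définition des racines), et qui stabilise $R$ comme $R$ est la $W$-orbite des $\alpha_i$. Je montre ensuite trois points : $(a)$ $W$ est distingué dans $\mathrm{A}(R)$ via la relation $\phi\circ s_\alpha\circ \phi^{-1}=s_{\phi(\alpha)}$, valable pour toute isométrie $\phi$ et toute racine $\alpha$ ; $(b)$ $W\cap G=\{1\}$, car un élément de $W$ qui permute les $\alpha_i$ stabilise la chambre $C$ associée, et la proposition \ref{gen Weyl}$(ii)$ force alors l'identité ; $(c)$ $\mathrm{A}(R)=W\cdot G$ : pour $\phi\in \mathrm{A}(R)$, la chambre $\phi(C)$ est envoyée sur $C$ par un unique $w\in W$ d'après la proposition \ref{gen Weyl}$(ii)$, et $w\phi$ stabilise alors $C$ en étant une isométrie, donc permute les $\alpha_i$ tout en préservant leurs produits scalaires, ce qui en fait un élément de $G$.

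Enfin, si $L$ est engendré $\Z$-linéairement par $R$, tout $\phi\in \mathrm{A}(R)$ stabilise $R$ donc $L=\Z\cdot R$, ce qui donne $\mathrm{A}(R)\subset \mathrm{O}(L)$ et l'égalité annoncée. Le point le plus technique me semble être $(c)$ : il faut à la fois invoquer la transitivité simplement transitive du groupe de Weyl sur l'ensemble des chambres (proposition \ref{gen Weyl}$(ii)$) pour ramener $\phi$ dans le stabilisateur de $C$, et s'appuyer sur le fait qu'une isométrie permutant les racines simples respecte automatiquement la structure combinatoire codée par le diagramme de Dynkin, d'où l'appartenance à $G$.
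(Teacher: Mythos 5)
Votre démonstration est correcte et suit essentiellement la même démarche que celle du texte : les deux inclusions s'obtiennent par l'intégralité de $L$ (via $s_\alpha(x)=x-(x\cdot\alpha)\alpha$, licite car $\alpha\cdot\alpha=2$) et par la caractérisation $R=\{x\in L\mid x\cdot x=2\}$, tandis que la décomposition $\mathrm{A}(R)\simeq W\ltimes G$ est, dans le texte, simplement renvoyée à Bourbaki (ch. VI, \S 1, proposition 16). La seule différence est que vous redémontrez cette proposition de Bourbaki dans le cas ADE (normalité de $W$ via $\phi s_\alpha\phi^{-1}=s_{\phi(\alpha)}$, trivialité de $W\cap G$ et surjectivité de $W\cdot G$ par simple transitivité sur les chambres), ce qui est exact et complet.
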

\begin{proof}
\indent Les inclusions $W\subset \mathrm{O}(L)$ et $\mathrm{O}(L)\subset \mathrm{A}(R)$ viennent respectivement du fait que $L$ est un réseau entier, et que le groupe $\mathrm{O}(L)$ préserve l'ensemble $R$.
\\ \indent L'isomorphisme $\mathrm{A}(R) \simeq W\ltimes G$ vient de \cite[ch. VI, \S 1, proposition 16]{Bo}, comme $R$ est un système de racines de $V$.
\\ \indent Enfin, le cas où $L$ est engendré par $R$ est évident.  
\end{proof}
\indent On précise dans le corollaire qui suit les cas que l'on rencontrera le plus souvent, où les notations sont les mêmes qu'à la proposition précédente.
\begin{cor} Pour $L=\mathrm{E}_7$, $L=\mathrm{E}_8$ ou $L=\mathrm{E}_8\oplus \mathrm{A}_1$, on a : $\mathrm{A}(R) =\mathrm{O}(L) =W$.
\\ \indent Pour $L=\mathrm{A}_n$ ($n\geq 2$) ou pour $L=\mathrm{D}_n$ ($n\geq 5$), on a $G\simeq \mathcal{S}_2 \simeq \Z/2\Z$. Pour $L=\mathrm{D}_4$, on a $G\simeq \mathcal{S}_3$. Dans tous ces cas, on a : $\mathrm{O}(L)=\mathrm{A}(R)$.
\end{cor}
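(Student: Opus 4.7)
Le plan consiste à déduire le corollaire de la proposition \ref{Weyl et SO}, qui donne l'isomorphisme $\mathrm{A}(R)\simeq W\ltimes G$ dans tous les cas, et, lorsque $L$ est $\Z$-engendré par ses racines, l'égalité supplémentaire $\mathrm{O}(L)=\mathrm{A}(R)$. La première étape est donc de vérifier cette hypothèse d'engendrement pour chacun des réseaux mentionnés. C'est immédiat pour $\mathrm{A}_n$ et $\mathrm{D}_n$ à partir des descriptions rappelées juste avant la proposition : on exhibe une $\Z$-base explicite formée de racines, par exemple $(e_i - e_{i+1})_{1\leq i\leq n}$ pour $\mathrm{A}_n$, et $(e_i - e_{i+1})_{1\leq i \leq n-1}$ complétée par $e_{n-1}+e_n$ pour $\mathrm{D}_n$. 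Pour $\mathrm{E}_8$, l'égalité $\mathrm{E}_8 = \mathrm{D}_8 + \Z \cdot e$ et le fait que $e=\frac{1}{2}(1,\dots,1)\in R(\mathrm{E}_8)$ donnent la conclusion. Pour $\mathrm{E}_7$, on combine les racines de $\mathrm{A}_7$ et une racine de la forme $\frac{1}{2}(\pm 1,\dots,\pm 1)$ à somme nulle pour obtenir une $\Z$-base. Enfin, $\mathrm{E}_8 \oplus \mathrm{A}_1$ est engendré par la réunion des bases de racines de chacun de ses deux facteurs. Dans tous les cas visés, on dispose donc de l'égalité $\mathrm{O}(L)=\mathrm{A}(R)$.

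Il reste à identifier le groupe $G$ dans chacun des cas, ce que l'on fait par inspection directe du diagramme de Dynkin. Les diagrammes $\mathrm{E}_7$ et $\mathrm{E}_8$ possèdent un unique sommet de valence $3$, d'où partent trois branches de longueurs deux à deux distinctes ; aucun automorphisme non trivial n'est donc possible et $G$ est trivial. Le diagramme de $\mathrm{E}_8 \oplus \mathrm{A}_1$, réunion disjointe de celui de $\mathrm{E}_8$ et d'un sommet isolé, n'a pas non plus d'automorphisme non trivial. Pour $\mathrm{A}_n$ avec $n\geq 2$, la seule symétrie non triviale est le renversement de la chaîne, d'où $G\simeq \mathcal{S}_2$. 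Pour $\mathrm{D}_n$ avec $n\geq 5$, l'unique symétrie non triviale échange les deux sommets terminaux rattachés au nœud de valence $3$, d'où encore $G\simeq \mathcal{S}_2$. Le cas $\mathrm{D}_4$ est particulier : ses trois sommets terminaux, tous rattachés à un unique sommet central, peuvent être permutés de manière arbitraire (phénomène de trialité), d'où $G\simeq \mathcal{S}_3$.

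En combinant ces deux étapes, on obtient pour $L=\mathrm{E}_7$, $\mathrm{E}_8$ ou $\mathrm{E}_8\oplus \mathrm{A}_1$ les égalités $\mathrm{O}(L)=\mathrm{A}(R)\simeq W\ltimes G = W$ (puisque $G$ est trivial), ce qui donne la première assertion. Pour les autres cas, la première étape fournit $\mathrm{O}(L)=\mathrm{A}(R)$ et le calcul de $G$ fournit l'assertion annoncée. Je ne prévois aucune véritable difficulté : tout repose sur la proposition \ref{Weyl et SO} déjà établie, sur des manipulations $\Z$-linéaires élémentaires utilisant les descriptions explicites des réseaux, et sur la description, standard et immédiate par inspection, des automorphismes des diagrammes de Dynkin. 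Le seul point demandant un peu d'attention est la vérification, pour $\mathrm{E}_7$, que les racines engendrent bien tout le réseau défini comme $e^\perp \cap \mathrm{E}_8$, mais elle se traite sans subtilité à partir de la description explicite de $R(\mathrm{E}_7)$.
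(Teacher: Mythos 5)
Votre démonstration est correcte et suit exactement la même démarche que celle du texte : vérifier que chaque réseau est $\Z$-engendré par ses racines pour appliquer la proposition \ref{Weyl et SO}, puis déterminer $G$ par inspection des automorphismes du diagramme de Dynkin. Vous explicitez simplement les détails (bases de racines, trialité de $\mathrm{D}_4$, asymétrie des branches de $\mathrm{E}_7$ et $\mathrm{E}_8$) que le texte laisse au lecteur.
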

\begin{proof}
On vérifie dans un premier temps que tous ces réseaux sont bien engendrés comme $\Z$-modules par leurs racines. Il suffit ensuite de calculer le groupe $G$ de la proposition précédente, ce qui se fait facilement. Notons par exemple que ce groupe est trivial lorsque le réseau $L$ considéré est $\mathrm{E}_7$, $\mathrm{E}_8$ ou $\mathrm{E}_8\oplus \mathrm{A}_1$.  
\end{proof}
\subsection{Les formes automorphes et les opérateurs de Hecke.}\label{2.2}
\subsubsection{Les formes automorphes.}
\begin{defi}[L'ensemble $\mathcal{L}_n$] Soit $n\equiv 0,\pm 1\ \mathrm{mod}\ 8$. On définit $\mathcal{L}_n$ comme l'ensemble des réseaux pairs $L\subset \R^n$ tels que $\mathrm{det}(L)=1$ si $n$ est pair et $\mathrm{det}(L)=2$ sinon.
\end{defi}
\indent On rappelle que $\mathcal{L}_n$ est non vide pour $n\equiv 0,\pm 1\mathrm{\ mod\ }8$. Par exemple, suivant les notations précédentes, $\mathcal{L}_n$ contient :
\\ \indent - le réseau $\mathrm{E}_8^{(n-7)/8}\oplus \mathrm{E}_7$ si $n\equiv -1\mathrm{\ mod\ }8$ ;
\\ \indent - le réseau $\mathrm{E}_8^{n/8}$ si $n\equiv 0\mathrm{\ mod\ }8$ ;
\\ \indent - le réseau $\mathrm{E}_8^{(n-1)/8}\oplus \mathrm{A}_1$ si $n\equiv 1\mathrm{\ mod\ }8$.
\\\\ \indent Si on se donne $L_0$ un élément de $\mathcal{L}_n$, on définit $\mathrm{O}_n$ le schéma en groupes affine sur $\Z$ associé à la forme quadratique $L_0 \rightarrow\Z$, $x\mapsto q(x)$. Il s'agit de l'objet noté $\mathrm{O}_{L_0}$ dans \cite[ch. II, \S1]{CL}. On définit de même $\mathrm{SO}_n \subset \mathrm{O}_n$ (introduit aussi dans \cite[ch. II, \S1]{CL}).
\\ \indent Pour faire court, on appellera dans la suite $\Z$-groupe un schéma en groupes affine sur $\Z$ et de type fini, de sorte que $\mathrm{O}_n$ et $\mathrm{SO}_n$ sont des $\Z$-groupes (ce dernier étant même réductif).
\\ \indent La définition générale de la théorie des formes automorphes s'y applique, et se réduit à la définition suivante qui sera amplement suffisante pour nos besoin (voir par exemple \cite[ch. IV, \S3]{CL}).
\begin{defi}[Les formes automorphes pour $\mathrm{O}_n$ et $\mathrm{SO}_n$] Soit $(W,\rho)$ une représentation de dimension finie sur $\C$ de $\mathrm{O}_n(\R)$. L'espace des formes automorphes de poids $W$ pour $\mathrm{O}_n$ est défini comme :
$$\mathcal{M}_W(\mathrm{O}_n) = \left\{ f:\mathcal{L}_n \rightarrow W \vert \left( \forall \gamma \in \mathrm{O}_n(\R) \right) f(\gamma \cdot L) = \rho(\gamma) \cdot f(L) \right\}.$$
\indent Pour $W'$ une représentation de dimension finie de $\mathrm{SO}_n(\R)$ sur $\C$, on définit de même l'espace $\mathcal{M}_{W'}(\mathrm{SO}_n)$ des formes automorphes de poids $W'$ pour $\mathrm{SO}_n$.
\\ \indent Ces deux espaces sont de dimension finie.
\end{defi}

\subsubsection{Les $A$-voisins.}
\indent On renvoie par exemple à \cite[ch. II, \S 1]{CL} pour les définitions des lagrangiens et des lagrangiens transverses associés à une forme quadratique. Rappelons la définition des $A$-voisins :
\begin{propdef}[Les $A$-voisins] Soient $A$ un groupe abélien fini, et $L_1,L_2$ deux éléments de $\mathcal{L}_n$. Les conditions suivantes sont équivalentes :
\\ \begin{tabular}{rp{14cm}}
$(i)$ & Le quotient $L_1/(L_1\cap L_2)$ est isomorphe à $A$.\\
$(ii)$ & Le quotient $L_2/(L_1\cap L_2)$ est isomorphe à $A$.\\
\end{tabular}
\indent Si ces conditions sont vérifiées, on dit que $L_1$ et $L_2$ sont $A$-voisins, ou que $L_2$ est un $A$-voisin de $L_1$.
\end{propdef}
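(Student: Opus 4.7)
The plan is to fix $M = L_1 \cap L_2$ and prove $L_1/M \simeq L_2/M$ as finite abelian groups, which is exactly the equivalence (i) $\Leftrightarrow$ (ii). A first easy step: since $L_1, L_2 \in \mathcal{L}_n$ share a common determinant $d \in \{1,2\}$ and $\det M = [L_i:M]^2 \det L_i$, one gets $[L_1:M] = [L_2:M] =: k$. So the two quotients at least have the same cardinality.

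To upgrade this to an isomorphism, I would consider the bilinear form
\[
\beta : L_1/M \times L_2/M \longrightarrow \Q/\Z, \qquad (\overline{x},\overline{y}) \longmapsto x\cdot y \bmod \Z .
\]
The pairing is well defined because $L_1$ and $L_2$ are integer and $M$ is contained in both, so shifting either argument by $M$ changes the scalar product by an element of $\Z$. (Note that $kL_1 \subset M \subset L_2$, so $\beta$ actually takes values in $\frac{1}{k}\Z/\Z \subset \Q/\Z$.) Once $\beta$ is known to be non-degenerate, it identifies $L_1/M$ with the Pontryagin dual of $L_2/M$, and since every finite abelian group is (non-canonically) isomorphic to its dual, this yields $L_1/M \simeq L_2/M$.

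The core of the argument is therefore the non-degeneracy of $\beta$. Its left kernel is $(L_1 \cap L_2^\sharp)/M$. When $d = 1$ (so $n$ is even and $L_2 = L_2^\sharp$), this kernel is immediately $(L_1 \cap L_2)/M = 0$. The delicate case is $d = 2$ (so $n$ is odd), where $L_2^\sharp/L_2 \simeq \Z/2\Z$ and, a priori, some $v \in L_1$ might represent the nontrivial class in $L_2^\sharp/L_2$. I would rule this out by invoking the non-degeneracy of the discriminant form on $\mathrm{r\acute{e}s}\, L_2$: the nontrivial element of $L_2^\sharp/L_2$ has nonzero image under $q: \mathrm{r\acute{e}s}\, L_2 \to \Q/\Z$, whereas $v \in L_1$ (an even lattice) forces $q(v) \in \Z$, a contradiction. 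A symmetric argument disposes of the right kernel, so $\beta$ is a perfect pairing and the conclusion follows.

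The main obstacle I anticipate is precisely this $d = 2$ case: the step where one really needs $L_1$ to be \emph{even}, not merely integer, so that the non-trivial class of $\mathrm{r\acute{e}s}\, L_2$ cannot be swallowed up inside $L_1$. Without this input the pairing argument only recovers the equality of cardinalities $[L_1:M]=[L_2:M]$, not the finer statement that $L_1/M$ and $L_2/M$ are isomorphic as abelian groups.
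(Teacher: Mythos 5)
Your proof is correct and follows essentially the same route as the reference \cite[ch. III, \S 1 et Annexe B, \S 3]{CL} to which the paper delegates this statement: the perfect pairing $L_1/M \times L_2/M \to \Q/\Z$ induced by the scalar product modulo $\Z$ is exactly the non-degenerate accouplement $I_1 \times I_2 \to \Q/\Z$ recorded in the lemme \ref{voisins} qui suit. Your handling of the only delicate point, the case $\det = 2$ --- where the evenness of $L_1$ forces $q(v) \in \Z$ and thus excludes that a vector of $L_1$ represent la classe non triviale de $\mathrm{r\acute{e}s}\, L_2$, dont la forme d'enlacement quadratique est non nulle --- is exactly right.
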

\begin{proof}
voir \cite[ch. III, \S 1]{CL} et \cite[Annexe B, \S 3]{CL} selon la parité de $n$.  
\end{proof}
\indent Dans le cas particulier où $A$ est de la forme $\Z/d\Z$, on parlera de $d$-voisin (et plus généralement de $d,\dots,d$-voisin si $A=\Z/d\Z\times \dots \times \Z/d\Z$). Dans la suite, $A$ désignera un groupe abélien quelconque. On s'intéressera plus particulièrement aux cas où $A$ est de la forme $\Z/d\Z$ (où $d\in \N^*$) ou de la forme $\Z/p\Z\times \dots \times \Z/p\Z$ (où $p$ est un nombre premier).
\\ \indent Le lemme technique suivant nous sera utile par la suite :
\begin{lem} \label{voisins} Soient $L_1$ et $L_2$ deux $A$-voisins. On pose : $M=L_1 \cap L_2$, $I_1 = L_1/M$, $I_2 = L_2 /M$, et $R=(L_1^\sharp \cap L_2^\sharp )/(L_1 \cap L_2)$.
\\ \indent Les inclusions de $L_1$, $L_2$ et $L_1^\sharp \cap L_2^\sharp$ dans $M^\sharp$ induisent l'isomorphisme canonique de groupes abéliens :
$$I_1 \oplus I_2 \oplus R \simeq \mathrm{r\acute{e}s}\, M$$
\indent De plus, l'accouplement $I_1 \times I_2 \rightarrow \Q/\Z$ induit par la forme d'enlacement de $\mathrm{r\acute{e}s}\, M$ est non dégénéré. Pour cette forme, les sous-modules $I_1 \oplus I_2$ et $R$ sont orthogonaux, $\mathrm{r\acute{e}s}\, M$ est canoniquement isomorphe à $\mathrm{H}(I_1) \oplus \mathrm{r\acute{e}s}\, L_1$, et $I_2$ est alors un lagrangien de $\mathrm{H}(I_1)$ transverse à $I_1$ et orthogonal à $\mathrm{r\acute{e}s}\,  L_1$.
\end{lem}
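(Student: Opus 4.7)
My plan is to work entirely inside $\mathrm{r\acute{e}s}\, M = M^\sharp/M$ equipped with its non-degenerate linking form. First, I will note that the chain $M \subset L_i \subset L_i^\sharp \subset M^\sharp$ places $L_1$, $L_2$ and $L_1^\sharp \cap L_2^\sharp$ all inside $M^\sharp$, giving canonical injections $I_1, I_2, R \hookrightarrow \mathrm{r\acute{e}s}\, M$. Since $L_1$ and $L_2$ are pair lattices, $I_1$ and $I_2$ are isotropic for the linking form, and a direct computation yields $I_i^\perp = L_i^\sharp/M$, whence $(I_1 + I_2)^\perp = R$.

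The main step will be to establish the identity $L_1 \cap L_2^\sharp = M$ (the symmetric equality $L_2 \cap L_1^\sharp = M$ is analogous). Given $a \in L_1 \cap L_2^\sharp$, I will consider the over-lattice $L_2 + \Z a$: it is integer (since $a \cdot L_2 \subset \Z$ because $a \in L_2^\sharp$) and even (since $a \cdot a \in 2\Z$ because $a \in L_1$ is pair). It contains $L_2 \in \mathcal{L}_n$, and any strict integer over-lattice of $L_2$ must equal $L_2^\sharp$ (since $[L_2^\sharp : L_2] = \det L_2 \leq 2$). But in the non-trivial case $\det L_2 = 2$ ($n$ impair), la formule de Milgram appliquée à la somme de Gauss sur $\mathrm{r\acute{e}s}\, L_2 \simeq \Z/2\Z$ forces the value of the discriminant form at the non-trivial class to be $\pm 1/4 \bmod \Z$, so $L_2^\sharp$ is not pair. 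Hence $L_2 + \Z a = L_2$, and $a \in L_1 \cap L_2 = M$.

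From this identity, non-degeneracy of the induced pairing $I_1 \times I_2 \to \Q/\Z$ is immediate. To deduce the direct sum decomposition, I will check injectivity of $I_1 \oplus I_2 \oplus R \to \mathrm{r\acute{e}s}\, M$ by hand: if $a + b + r \in M$ with $a \in L_1$, $b \in L_2$ and $r \in L_1^\sharp \cap L_2^\sharp$, then $a = -b - r + m$ (with $m \in M$) lies in $L_2^\sharp$, so $a \in L_1 \cap L_2^\sharp = M$; symmetrically $b \in M$, and hence $r \in M$ as well. Combined with the cardinality identity $|I_1| \cdot |I_2| \cdot |R| = |A|^2 \cdot \det L_1 = \det M = |\mathrm{r\acute{e}s}\, M|$ (the equality $|R| = \det L_1$ following from $R = (I_1 + I_2)^\perp$ and $|I_1 + I_2| = |A|^2$, itself consequence of $I_1 \cap I_2 = 0$), this forces the canonical map to be bijective.

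Finally, the subgroup $I_1 + I_2$ carries two transverse isotropic subgroups of order $|A|$ paired non-degenerately, so it is canonically the hyperbolic module $\mathrm{H}(I_1)$; moreover the composition $R \hookrightarrow L_1^\sharp/M \twoheadrightarrow L_1^\sharp/L_1 = \mathrm{r\acute{e}s}\, L_1$ is injective (its kernel $R \cap I_1$ vanishes) and hence an isomorphism by cardinality. The orthogonality $R \perp (I_1 + I_2)$ then gives the canonical isometric decomposition $\mathrm{r\acute{e}s}\, M \simeq \mathrm{H}(I_1) \oplus \mathrm{r\acute{e}s}\, L_1$, inside which $I_2$ appears as a lagrangien of $\mathrm{H}(I_1)$ transverse to $I_1$ and orthogonal to $\mathrm{r\acute{e}s}\, L_1$. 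The only delicate point in the whole argument is the identity $L_1 \cap L_2^\sharp = M$: this is the unique place where the hypothesis $L_1, L_2 \in \mathcal{L}_n$ (rather than arbitrary integer pair lattices) is essential, everything else being formal manipulation inside $\mathrm{r\acute{e}s}\, M$.
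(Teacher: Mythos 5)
Your proof is correct, and it is genuinely more than what the paper does: the paper disposes of this lemma by citing \cite[ch. III, \S 1, prop. 1.1]{CL} et \cite[Annexe B, \S 3, prop. 3.1]{CL}, whereas you give a complete self-contained argument. The crux of your proof is the identity $L_1 \cap L_2^\sharp = M$, and your way of getting it is sound: $L_2 + \Z a$ is an even over-lattice of $L_2$ inside $L_2^\sharp$, the index $[L_2^\sharp : L_2] \le 2$ leaves no room between $L_2$ and $L_2^\sharp$, and Milgram's formula (giving $q(\xi) = \pm 1/4$ on the non-trivial class of $\mathrm{r\acute{e}s}\, L_2$ when $n \equiv \pm 1 \bmod 8$, since the signature equals $n$) rules out $L_2^\sharp$ being even; this is where the hypothesis $L_2 \in \mathcal{L}_n$ enters, exactly as you say, and it handles the odd-determinant case uniformly without inspecting individual lattices. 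Everything downstream (non-degeneracy of $I_1 \times I_2 \to \Q/\Z$ from trivial left and right kernels between finite groups of equal order, injectivity of $I_1\oplus I_2\oplus R \to \mathrm{r\acute{e}s}\, M$ by pushing elements into $L_i \cap L_j^\sharp$, the count $|I_1|\,|I_2|\,|R| = |A|^2 \det L_1 = \det M$ using $|S^\perp| = |\mathrm{r\acute{e}s}\, M|/|S|$, the identification $I_1 + I_2 \simeq \mathrm{H}(I_1)$ via $q(a+b) = a\cdot b$, and the isometry $R \simeq \mathrm{r\acute{e}s}\, L_1$ by injectivity plus cardinality) is formal and checks out. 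The mechanism is essentially the one used in the reference the paper cites, so you have not found a shortcut, but you have made the statement independent of \cite{CL}.
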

\begin{proof}
voir \cite[ch. III, \S 1, proposition 1.1]{CL} et \cite[Annexe B, \S 3, proposition 3.1]{CL} selon la parité de $n$.  
\end{proof}
\indent L'utilisation que l'on fera des $A$-voisins nous impose de prendre un point de vue asymétrique : on souhaite déterminer, pour un réseau $L$ fixé, l'ensemble de ses $A$-voisins. On rappelle que la notation $C_L(\Z/d\Z)$ a été introduite en introduction. En tant que sous-groupes de $\mathrm{O}(V)$, $\mathrm{O}(L)$ et $\mathrm{SO}(L)$ agissent naturellement sur l'ensemble $C_L(\Z/d\Z)$ et sur l'ensemble des $A$-voisins de $L$. La proposition suivante nous donne, selon les choix de $A$, une paramétrisation des $A$-voisins d'un réseau $L$ donné :
\begin{prop}  \label{paramètre A-vois} Soit $L\in \mathcal{L}_n$.
\\ \begin{tabular}{rp{14cm}}
$(i)$ & Si $A=\Z/d\Z$, alors les $A$-voisins de $L$ sont en bijection naturelle avec les points de la quadrique $C_L (\Z/d\Z)$.\\
$(ii)$ & Si $A=\left( \Z/p\Z \right) ^r$, alors les $A$-voisins de $L$ sont en bijection avec les couples $(X,I)$, où $X$ est un espace totalement isotrope de $L/pL$ de dimension $r$, et $I$ un lagrangien de $\mathrm{H}(L/M)$ (avec $M$ l'image réciproque de $X^\perp$ par $L\rightarrow L/pL$) transverse à $L/M$. \\
\end{tabular}
\indent Ces deux bijections sont détaillées dans les propositions-définitions \ref{p-voisin} et \ref{ppp-voisin} qui suivent. De plus, elles commutent aux actions naturelles de $\mathrm{SO}(L)$. 
\end{prop}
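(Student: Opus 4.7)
The plan is to derive both bijections from Lemma \ref{voisins}, which shows that for any $A$-voisin $L'$ of $L$, setting $M = L \cap L'$, the submodule $L'/M$ is a lagrangien of $\mathrm{H}(L/M)$ (sitting inside $\mathrm{r\acute{e}s}\,M$) transverse to $L/M$ and orthogonal à $\mathrm{r\acute{e}s}\,L$. The reverse construction: given $M \subset L$ avec $L/M \simeq A$ et un lagrangien isotrope transverse $I \subset \mathrm{H}(L/M)$, on prend pour $L'$ la préimage de $I$ par la projection $M^\sharp \to \mathrm{r\acute{e}s}\,M$. On vérifie que $L' \in \mathcal{L}_n$ (parité et déterminant se conservent car le facteur hyperbolique contribue trivialement à la forme d'enlacement) et que $L \cap L' = M$, donc $L'$ est bien un $A$-voisin de $L$. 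Ceci ramène les deux parties de la proposition au paramétrage des couples $(M, I)$ de ce type.

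Pour $(i)$, avec $A = \Z/d\Z$, j'associe à une classe isotrope $\bar x \in C_L(\Z/d\Z)$, représentée par $x \in L$ avec $q(x) \in d^2 \Z$ et $\bar x$ primitif d'ordre $d$ dans $L/dL$, le couple $(M, I)$ avec $M = \{y \in L \mid x \cdot y \in d\Z\}$ (d'indice $d$ dans $L$) et $I = \Z \cdot (x/d + M) \subset M^\sharp/M$. L'isotropie de $x$ rend $I$ isotrope, et l'accouplement $(x/d) \cdot y \in (1/d)\Z$ pour $y \notin M$ montre que $I$ est transverse à $L/M$. Réciproquement, tout couple $(M,I)$ avec $L/M$ cyclique d'ordre $d$ a $I$ engendré par une classe de la forme $x/d + M$ pour un certain $x \in L$, et la classe de $x$ dans $L/dL$ est un point de $C_L(\Z/d\Z)$ ; les deux constructions sont inverses l'une de l'autre.

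Pour $(ii)$, avec $A = (\Z/p\Z)^r$, les sous-réseaux $M \subset L$ contenant $pL$ avec $L/M \simeq (\Z/p\Z)^r$ sont en bijection avec les sous-espaces de codimension $r$ de $L/pL$. Parmi ceux-ci, les sous-réseaux pour lesquels $\mathrm{r\acute{e}s}\,M$ se décompose en $\mathrm{H}(L/M) \oplus \mathrm{r\acute{e}s}\,L$ (condition requise pour qu'un lagrangien isotrope transverse existe) sont exactement les $X^\perp$ pour $X \subset L/pL$ totalement isotrope de dimension $r$ ; je le vérifie en calculant directement la forme d'enlacement sur $M^\sharp/M$ à partir de la forme quadratique sur $L/pL$. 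Le second paramètre $I$ est alors le lagrangien de $\mathrm{H}(L/M)$ du cadre général. L'obstacle principal est la vérification soigneuse, en $(ii)$, du calcul de $\mathrm{r\acute{e}s}\,M$ en fonction de $X$ et l'identification précise des conventions avec \cite[ch. III, \S 1]{CL} et \cite[Annexe B, \S 3]{CL}. L'équivariance sous $\mathrm{SO}(L)$ des deux bijections est automatique, toutes les opérations utilisées (dualité, intersection, réduction modulo $p$, accouplements) étant naturelles donc commutant à l'action de toute isométrie de $L$.
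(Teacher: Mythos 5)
Votre démonstration est correcte et repose sur le même pivot que celle du texte : le lemme \ref{voisins}, l'identification de $M=L\cap L'$ avec l'image réciproque de $X^\perp$ pour $X$ totalement isotrope, et le paramétrage du second facteur par un lagrangien de $\mathrm{H}(L/M)$ transverse à $L/M$. Vous vous en écartez sur le traitement de l'existence et de la surjectivité : vous construisez $L'$ abstraitement comme image réciproque d'un lagrangien transverse par $M^\sharp\rightarrow \mathrm{r\acute{e}s}\, M$ et caractérisez a priori les $M$ admissibles par la décomposition $\mathrm{r\acute{e}s}\, M\simeq \mathrm{H}(L/M)\oplus\mathrm{r\acute{e}s}\, L$, là où le texte renvoie aux propositions-définitions \ref{p-voisin} et \ref{ppp-voisin}, qui exhibent des relèvements explicites $v$ ou $(v_i)$ et concluent par un comptage (lagrangiens transverses contre matrices antisymétriques à diagonale nulle). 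Votre variante est plus conceptuelle ; celle du texte fournit en prime les formules dont les algorithmes des chapitres \ref{4} et \ref{5} ont besoin. Deux points laissés implicites chez vous méritent une ligne : en $(i)$, le fait qu'un générateur d'un lagrangien transverse s'écrive $x/d+M$ avec $x\in L$ et non seulement $x\in L^\sharp$ lorsque $\mathrm{det}(L)=2$, ce qui utilise l'orthogonalité de $I$ à $\mathrm{r\acute{e}s}\, L$ ; en $(ii)$, le sens ``seulement si'' de votre caractérisation des $M$ admissibles, qui se voit en notant que pour tout $A$-voisin $L'$ avec $L\cap L'=M$ l'image de $pL'$ dans $L/pL$ est nécessairement égale à $X$, donc $X$ est isotrope. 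Enfin, votre argument de naturalité pour l'équivariance sous $\mathrm{SO}(L)$ est correct et équivaut aux vérifications $\gamma(L'(x))=L'(\gamma(x))$ faites dans les propositions-définitions citées.
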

\begin{proof}
Pour le point $(i)$ : la bijection entre les $A$-voisins de $L$ est détaillée dans \cite[ch.III, \S 1]{CL}. On montre à la proposition-définition \ref{p-voisin} qu'elle commute bien aux actions de $\mathrm{SO}(L)$ sur l'ensemble des $A$-voisins et sur $C_L(\Z/d\Z)$ (en donnant explicitement cette bijection).
\\ \indent Pour le point $(ii)$ : on se donne $L\in \mathcal{L}_n$, et $L'$ un $A$-voisin de $L$, et on se place pour simplifier dans le cas où $n\equiv 0\mathrm{\ mod\ }8$ (les autres cas se traitant de la même manière). On note $M=L\cap L'$, $I=L/M$ et $I'=L'/M$. Pour simplifier, on pose aussi $\phi : L\rightarrow L/pL$ la réduction modulo $p$ dans $L$. On fait les constatations suivantes :
\\ \indent - Du fait des inclusions $pL' \subset M\subset L$, le $\Z/p$-espace vectoriel $X=\phi (pL')$ a bien un sens, et c'est même un $\Z/p$-espace vectoriel totalement isotrope de dimension $r$ dans $L/pL$. L'isotropie vient du fait que $L'$ est entier (donc l'image par $\phi$ de tous les éléments de $pL'$ sont isotropes dans $L/pL$), et la dimension vient des isomorphismes évidents : $X\simeq pL'/(pL\cap pL') \simeq L'/M \simeq A$ (où le dernier isomorphisme provient du fait que $L$ et $L'$ sont des $A$-voisins).
\\ \indent - Notons que $M$ et $X$ satisfont bien à l'égalité $\phi (M) = X^\perp$. Comme $M=L\cap L'\subset L'$ et que $L'$ est entier, on déduit que : $(\forall x\in pL')(\forall y\in M) \ x\cdot y\equiv 0\mathrm{\ mod\ }p$, et ainsi on a déjà l'inclusion $\phi(M)\subset X^\perp$. L'égalité vient alors de l'égalité des dimensions de $\phi (M)$ et de $X^\perp$ (vus comme $\Z/p$-espaces vectoriels). On a en effet : $\mathrm{dim}_{\Z/p} \phi(M) = n-r = \mathrm{dim}_{\Z/p} \phi(L)-\mathrm{dim}_{\Z/p} X = \mathrm{dim}_{\Z/p} (X^\perp)$. L'inclusion $pL\subset M$ nous permet de dire que $M$ est bien l'image réciproque de $X^\perp=\phi (M)$ par $\phi$.
\\ \indent - D'après le lemme \ref{voisins}, les $\Z/p$-espaces vectoriels $I$ et $I'$ sont deux lagrangiens transverses de $\mathrm{r\acute{e}s}\, M\simeq \mathrm{H}(A)$. De plus, $L'$ est entièrement déterminé par le choix de $M$ et de $I'$, puisque $L'$ est l'image inverse de $I'$ par l'application $M^\sharp \rightarrow \mathrm{r\acute{e}s}\, M$.
\\ \indent D'après ce qui précède, l'application $L'\mapsto (X,I')$ est bien définie et est injective. De plus, le réseau $M$ est bien l'image inverse de $X^\perp$ par $L\rightarrow L/pL$. La proposition-définition \ref{ppp-voisin} montre qu'elle est aussi surjective. On montrera en effet dans cette proposition-définition que, une fois les réseaux $L$ et $M$ fixés, on peut créer autant de $A$-voisins $L'$ de $L$ tels que $L\cap L'=M$ qu'il y a de lagrangiens de $\mathrm{H}(L/M)$ transverses à $L/M$. On montre dans la proposition-définition \ref{ppp-voisin} que cette bijection commute bien aux actions de $\mathrm{SO}(L)$.  
\end{proof}
\begin{propdef}[La création des $d$-voisins]\label{p-voisin} Soient $L$ un réseau de $\mathcal{L}_n$ et $d\in \N^*$. Si l'on se donne une droite isotrope $x\in C_L(\Z/d\Z)$, on peut lui associer le module $M$, image inverse de $x^\perp$ par l'homomorphisme $L\rightarrow L/dL$. Choisissons enfin $v\in L$, dont l'image dans $L/dL$ engendre $x$, et tel que $v\cdot v \equiv 0\ \mathrm{mod}\ 2d^2$. Alors le réseau $L'(x)$ défini par :
$$L'(x) = \Z\cdot \dfrac{v}{d}+M$$
est un $d$-voisin de $L$ qui ne dépend que du choix de $x$.
\\ \indent De plus, l'application $x\mapsto L'(x)$ est une bijection entre $C_L(\Z/d\Z)$ et l'ensemble $\mathrm{Vois}_d(L)$ des $d$-voisins de $L$.
\end{propdef}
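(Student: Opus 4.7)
Je procéderais en trois temps : (i) construire $L'(x)$ et vérifier qu'il s'agit bien d'un $d$-voisin de $L$ dans $\mathcal{L}_n$ ; (ii) établir l'indépendance de $L'(x)$ vis-à-vis du choix de $v$ ; (iii) démontrer la bijectivité. L'étape la plus délicate me paraît être la construction initiale, plus précisément l'existence d'un relèvement $v$ satisfaisant la congruence $v\cdot v\equiv 0\mathrm{\ mod\ } 2d^2$.

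Pour (i), tout relèvement initial $v_0$ vérifie $v_0\cdot v_0 \equiv 0 \mathrm{\ mod\ } 2d$ par isotropie de $x$. Le remplacement $v_0 \leftarrow v_0 + dw$ modifie cette quantité de $2d\, v_0 \cdot w$ modulo $2d^2$ (la contribution $d^2 w\cdot w$ étant dans $2d^2\Z$ par parité de $L$), et l'on ajuste $w\in L$ pour obtenir la congruence souhaitée, ce qui est possible dès lors que $\bar v_0$ est primitif dans $L/dL$ et que le produit scalaire réduit y admet un vecteur en dualité avec $\bar v_0$ modulo $d$. Une fois $v$ choisi, on vérifie sans difficulté : l'intégralité ($\tfrac{v}{d}\cdot m \in \Z$ pour $m\in M$ par définition de $M$), la parité ($\tfrac{v}{d}\cdot \tfrac{v}{d} \in 2\Z$ par la congruence, et $L$ étant pair), et l'égalité $L\cap L'(x) = M$ (puisque $\bar v \neq 0$). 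Il s'ensuit que $L/M \simeq L'(x)/M \simeq \Z/d\Z$, d'où le $d$-voisinage, et l'on conclut par le calcul d'indices $\mathrm{det}(M) = d^2\mathrm{det}(L) = d^2 \mathrm{det}(L'(x))$ que $L'(x)\in \mathcal{L}_n$.

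Pour (ii), deux relèvements admissibles $v_1, v_2$ de $x$ sont liés par $v_2 = a v_1 + d u$ avec $a\in (\Z/d\Z)^\times$ et $u\in L$ ; en développant $v_2\cdot v_2$ et en exploitant les deux congruences mod $2d^2$, on tire $v_1 \cdot u \equiv 0 \mathrm{\ mod\ } d$, d'où $u\in M$ et finalement $\Z\cdot \tfrac{v_1}{d} + M = \Z\cdot\tfrac{v_2}{d} + M$. Pour (iii), l'injectivité provient de la reconstruction $M = L\cap L'(x)$, qui détermine $x^\perp$ dans $L/dL$, donc $x$ en invoquant la dualité du lemme \ref{voisins} appliquée à $\mathrm{r\acute{e}s}\, M$. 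Pour la surjectivité, étant donné un $d$-voisin $L'$, je pose $M = L\cap L'$ : l'inclusion $dL'\subset M$ donne $L'\subset \tfrac{1}{d}L$, et en relevant un générateur de $L'/M$ en $v'\in L'$, l'élément $v:=dv'\in L$ satisfait automatiquement $v\cdot v\in 2d^2\Z$ (par parité de $L'$), son image $\bar v$ engendre une droite isotrope $x$ d'ordre $d$ dans $L/dL$, et je vérifie alors que $L'(x) = L'$. Le seul cas véritablement délicat pour l'existence du bon relèvement dans (i) sera celui où $d$ partage un facteur avec $\mathrm{det}(L)\in\{1,2\}$, c'est-à-dire $d$ pair et $\mathrm{det}(L)=2$ : il faudra alors raisonner prime par prime en s'appuyant sur la décomposition $\mathrm{r\acute{e}s}\, M\simeq I_1 \oplus I_2 \oplus R$ fournie par le lemme \ref{voisins}.
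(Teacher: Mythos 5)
Votre démarche est correcte dans ses grandes lignes, mais elle suit une route réellement différente de celle du texte : le papier ne redémontre pas la bijectivité, il renvoie intégralement à \cite[ch.~III, \S~1, prop.~1.4 et 1.5]{CL} (et à l'Annexe B pour $n$ impair), et le seul contenu effectif de sa démonstration est la compatibilité $\gamma(L'(x))=L'(\gamma(x))$ pour $\gamma\in\mathrm{SO}(L)$ — point que votre esquisse n'aborde pas, alors que c'est précisément ce qui sert ensuite dans la formule des traces (proposition \ref{paramètre A-vois} et corollaire \ref{formuletrace}). Votre reconstruction directe est essentiellement celle de Chenevier--Lannes : l'ajustement $v_0\mapsto v_0+dw$ pour obtenir $v\cdot v\equiv 0\bmod 2d^2$, l'indépendance via $v_2=av_1+du$ avec $u\in M$, l'injectivité par $M=L\cap L'(x)$ puis $x=(x^\perp)^\perp$, et la surjectivité en posant $v=dv'$ pour $v'$ relevant un générateur de $L'/M$ sont tous corrects (un détail : pour $L\cap L'(x)=M$ il faut que $\bar v$ soit primitif dans $L/dL$, pas seulement non nul, ce qui est bien le cas puisque $x$ est une droite de $C_L(\Z/d\Z)$).

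Le point que vous signalez vous-même sans le résoudre — l'existence du relèvement $v$ et la dualité $x=(x^\perp)^\perp$ lorsque $d$ est pair et $\mathrm{det}(L)=2$, c'est-à-dire lorsque la forme bilinéaire sur $L/dL$ dégénère — est un vrai trou, et il n'est pas marginal pour cet article : les $2$-voisins et $4$-voisins de $\mathrm{E}_7$ (chapitre \ref{5}) relèvent exactement de ce cas. C'est la raison pour laquelle \cite{CL} traite séparément le cas $n$ impair dans son Annexe B, en travaillant avec la forme quadratique $q$ à valeurs dans $\Z/d\Z$ (et non la forme bilinéaire) et avec la décomposition de $\mathrm{r\acute{e}s}\,M$ du lemme \ref{voisins} ; votre indication « raisonner prime par prime » va dans la bonne direction mais resterait à écrire. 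En l'état, votre preuve est complète pour $d$ premier à $\mathrm{det}(L)$ et constitue une alternative autonome au renvoi bibliographique du texte ; pour être substituable à la proposition telle qu'elle est utilisée ici, il faudrait compléter le cas dégénéré et ajouter la vérification, immédiate mais indispensable, de l'équivariance sous $\mathrm{SO}(L)$.
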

\begin{proof}
la nature bijective de cette application est développée dans \cite[ch. III, \S 1, propositions 1.4 et 1.5]{CL}. Il suffit de vérifier que cette bijection commute aux actions de $\mathrm{SO}(L)$. Si l'on se donne $\gamma \in \mathrm{SO}(L)$, alors $\gamma$ conserve l'orthogonalité ainsi que le produit scalaire, et on a les implications suivantes :
$$\left\{ 
	\begin{aligned}
		M=x^\perp &\Rightarrow \gamma (M) = (\gamma (x))^\perp , \\
		v\cdot v \equiv 0\ \mathrm{mod}\ 2d^2 &\Rightarrow \gamma (v) \cdot \gamma (v) \equiv 0\ \mathrm{mod}\ 2d^2 , \\
		(\forall p\vert d)\  v\notin p L & \Rightarrow (\forall p\vert d)\  \gamma (v) \notin p L.
	\end{aligned}
\right.$$
Ainsi, on a l'égalité : $\gamma (L'(x)) = L'(\gamma (x))$.  
\end{proof}
\begin{propdef}[La création des $p,\dots ,p$-voisin] \label{ppp-voisin} Soient $L$ un réseau de $\mathcal{L}_n$ et $A=\left( \Z/p\Z\right) ^r$. Soient $X$ un espace totalement isotrope de $L/pL$ de dimension $r$, et $(x_i)$ une base de $X$. On considère une famille $(v_i)$, avec $v_i\in L$ et $v_i\equiv x_i\mathrm{\ mod\ }pL$, qui vérifie : 
$$
	\left\{ 
	\begin{aligned}
		&v_i \cdot v_i \equiv 0 \mathrm{\ mod\ }2p^2  ,\\
		&v_i \cdot v_j\equiv 0\mathrm{\ mod\ }p^2\text{ pour }i\neq j .\\
	\end{aligned}
	\right.
$$
\indent Alors le réseau $L'((v_i)_i)$ défini par :
$$\left\{
	\begin{aligned}
		& M =\{ v\in L \vert (\forall i ) v_i \cdot v\equiv 0\mathrm{\ mod\ }p \} ,\\
		& L'\left( (v_i)_i \right) =M+ \sum_i \Z \dfrac{v_i}{p} ,\\
	\end{aligned}
	\right.$$
est un $A$-voisin de $L$ tel que $M=L\cap L'\left( (v_i)_i \right)$. Le réseau $M$ ne dépend que du choix de $X$, et est égal à l'image réciproque de $X^\perp$ par la projection $L\rightarrow L/pL$.
\\ \indent De plus, une fois la famille $(x_i)$ choisie (et donc une fois le réseau $M$ fixé), l'ensemble des réseaux $L'((v_i)_i)$ ainsi obtenus (qui ne dépendent que du choix des relèvements $(v_i)$) décrit l'ensemble des $A$-voisins $L'$ de $L$ tels que $L\cap L' =M$.
\\ \indent Le réseau $L'((v_i)_i)$ sera appelé le $p,\dots ,p$-voisin de $L$ associé à la famille $(v_i)$.
\end{propdef}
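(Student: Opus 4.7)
Mon plan est de décomposer la preuve en quatre étapes : $(a)$ vérifier que $M$ ne dépend que de $X$ ; $(b)$ montrer que $L'((v_i)_i)$ est un réseau pair de $\mathcal{L}_n$ et un $A$-voisin de $L$ vérifiant $L \cap L'((v_i)_i) = M$ ; $(c)$ établir la surjectivité de la construction ; $(d)$ vérifier la compatibilité avec l'action de $\mathrm{SO}(L)$. L'étape $(a)$ est immédiate : la congruence $v_i \cdot v \equiv 0 \mathrm{\ mod\ } p$ ne dépend que des classes modulo $pL$, donc $M$ coïncide avec l'image réciproque par $\phi:L\rightarrow L/pL$ de $X^\perp$, formule manifestement indépendante des choix auxiliaires.

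Pour $(b)$, les deux conditions de congruence fournissent exactement ce qu'il faut pour que la matrice de Gram du système générateur formé des $v_i/p$ et d'une base de $M$ soit à coefficients entiers et à diagonale paire : $(v_i/p) \cdot (v_j/p) = (v_i \cdot v_j)/p^2$ est entier pour $i \neq j$ et pair pour $i = j$, tandis que $(v_i/p) \cdot m$ est entier par définition de $M$. L'égalité $L \cap L'((v_i)_i) = M$ s'obtient en écrivant tout élément de $L'$ sous la forme $m + \sum_i a_i v_i/p$, et en invoquant l'indépendance linéaire des $(x_i)$ dans $L/pL$ pour forcer $p \mid a_i$, les $v_i$ étant par ailleurs dans $M$ (puisque $v_i \cdot v_j \equiv 0\mathrm{\ mod\ } p$). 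Il s'ensuit que les $v_i/p$ forment une $\Z/p$-base de $L'/M$, qui est donc isomorphe à $A$ ; le calcul classique des indices donne $\det(L) = \det(L')$, si bien que $L' \in \mathcal{L}_n$ est bien un $A$-voisin de $L$.

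Le point véritablement délicat sera $(c)$. Je partirai d'un $A$-voisin $L'$ de $L$ tel que $L \cap L' = M$, puis je choisirai $(u_i) \subset L'$ dont les images forment une $\Z/p$-base de $L'/M \simeq A$, et poserai $v_i = p u_i$. Comme $L'/M$ est annulé par $p$, on a $pL' \subset M \subset L$, donc $v_i \in L$ ; et les conditions de divisibilité sont transparentes car $v_i \cdot v_j = p^2\, u_i \cdot u_j$ avec $u_i, u_j \in L'$ entier et pair. Le point technique à justifier sera que l'application $L'/M \rightarrow X=\phi(pL')$, $\overline{u} \mapsto \phi(pu)$, est un isomorphisme — ce qui résulte de ce que son noyau est exactement $L' \cap L = M$ — de sorte que les $\phi(v_i)$ forment une base de $X$ et que l'on peut prendre $x_i := \phi(v_i)$. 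L'identification $L' = L'((v_i)_i)$ résulte alors du fait que les deux réseaux sont engendrés par $M$ et par les $v_i/p = u_i$.

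Pour $(d)$, la compatibilité avec $\mathrm{SO}(L)$ sera immédiate : si $\gamma \in \mathrm{SO}(L)$, alors $\gamma$ préserve $L$, $pL$ et le produit scalaire, envoyant la famille $(v_i)$ sur une famille $(\gamma(v_i))$ qui vérifie les mêmes conditions relativement à $(\gamma(x_i))$, d'où $\gamma(L'((v_i)_i)) = L'((\gamma(v_i))_i)$.
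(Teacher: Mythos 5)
Votre démonstration est correcte, mais elle suit une route réellement différente de celle du texte sur le point essentiel, la surjectivité. Le texte procède par dénombrement : il invoque la proposition \ref{paramètre A-vois} pour majorer le nombre de $A$-voisins $L'$ vérifiant $L\cap L'=M$ par le nombre de lagrangiens de $\mathrm{H}(A)$ transverses à $A$, puis montre que la construction en produit exactement autant, en paramétrant les différences entre relèvements $(v_i)$ et $(v'_i)$ par les matrices antisymétriques à diagonale nulle $\left( w_i\cdot x_j\right)_{i,j}$. Vous construisez au contraire un antécédent explicite, en posant $v_i=p\,u_i$ pour $(u_i)$ relevant une base de $L'/M$ ; c'est plus direct, cela évite le détour par les formes alternées sur $A$, et cela rend l'argument indépendant de la partie surjective de la proposition \ref{paramètre A-vois} (laquelle, dans le texte, renvoie précisément à la présente proposition-définition). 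Un seul point reste à compléter chez vous : l'énoncé fixe la base $(x_i)$ de $X$ à l'avance, alors que vos $v_i=p\,u_i$ relèvent a priori une \emph{autre} base $(\phi(v_i))$ de $X$ ; poser $x_i:=\phi(v_i)$ ne suffit donc pas. Il faut encore remplacer les $v_i$ par des combinaisons $\tilde{v}_i=\sum_j a_{ij}\, v_j$, où $(a_{ij})$ est un relèvement entier de la matrice de passage vers la base donnée, puis vérifier que les congruences subsistent (c'est le cas : $\tilde{v}_i\cdot\tilde{v}_j\equiv 0\ \mathrm{mod}\ p^2$ pour $i\neq j$ est immédiat, et dans $\tilde{v}_i\cdot\tilde{v}_i$ les termes croisés sont affectés d'un facteur $2$, d'où la congruence modulo $2p^2$) et que $M+\sum_i\Z\,\tilde{v}_i/p=M+\sum_i\Z\,v_i/p$ (vrai car la matrice est inversible modulo $p$ et les $v_i$ appartiennent à $M$). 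C'est une vérification de routine, mais elle est nécessaire pour obtenir l'énoncé tel qu'il est formulé, à savoir que tous les voisins cherchés sont atteints en ne faisant varier que les relèvements de la famille $(x_i)$ donnée.
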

\begin{proof}
Montrons d'abord que $L'$ est un $A$-voisin de $L$ tel que $M=L\cap L'$. Comme la famille $(x_i)$ est $\Z/p$-libre dans $L/pL$, on déduit déjà que $M=L\cap L'$. Pour la même raison, l'image par $L'\rightarrow L'/M$ de la famille $(v_i/p)$ est $\Z/p$-libre dans $L'/M$ : par définition de $L'$, c'est une $\Z/p$-base de $L'/M$, et on déduit que $L'/M\simeq A$.
\\ \indent Il faut maintenant montrer que $L'\in \mathcal{L}_n$, c'est-à-dire que $L'$ est pair et que $\mathrm{det}(L)=\mathrm{det}(L')$. Le premier point provient des congruences satisfaites par les $v_i\cdot v_j$ et de la définition de $M$. L'égalité $\mathrm{det}(L)=\mathrm{det}(L')$ vient du fait que l'on a aussi $L/M\simeq A$. En effet, comme le produit scalaire est non dégénéré dans $L/pL$, on peut trouver une famille $(u_i)$ d'éléments de $L$ vérifiant :
$$
	\left\{ 
	\begin{aligned}
		&u_i \cdot x_i \equiv 1 \mathrm{\ mod\ }p  ,\\
		&u_i \cdot x_j\equiv 0\mathrm{\ mod\ }p\text{ pour }i\neq j .\\
	\end{aligned}
\right.$$
L'image de la famille $(u_i)$ par $L\rightarrow L/M$ est une $\Z/p$-base de $L/M$, et on a bien $L/M\simeq A$.
\\ \indent Ainsi, $L$ et $L'$ sont bien des $A$-voisins qui vérifient $L\cap L'=M$.
\\ \\ \indent Montrons maintenant que tous les $A$-voisins $L'$ de $L$ tels que $L\cap L'=M$ sont obtenus de cette façon. La proposition \ref{paramètre A-vois} nous dit déjà qu'il y en a au plus autant que de lagrangiens transverses à $A$ dans $\mathrm{H}(A)$. Il suffit donc de construire autant de tels $A$-voisins qu'il y a de lagrangiens transverses à $A$ dans $\mathrm{H}(A)$ pour conclure. On rappelle au passage que les lagrangiens transverses à $A$ dans $\mathrm{H}(A)$ sont en bijection avec les formes alternées sur $A$ : il y en a donc autant que de matrices antisymétriques à diagonale nulle de taille $r\times r$ à coefficients dans $\Z/p$ (puisque $A=(\Z/p\Z)^r$ ici).
\\ \indent Reprenons les notations de la proposition. Soient $(v_i)$ et $(v'_i)$ deux familles avec $v_i,v'_i \in L$ et $v_i\equiv v'_i \equiv x_i\mathrm{\ mod\ }pL$ qui vérifient les congruences :
$$
	\left\{ 
	\begin{aligned}
		&v_i \cdot v_i \equiv v'_i \cdot v'_i \equiv 0 \mathrm{\ mod\ }2p^2 ,\\
		&v_i \cdot v_j\equiv v'_i \cdot v'_j\equiv 0\mathrm{\ mod\ }p^2\text{ pour }i\neq j .\\
	\end{aligned}
	\right.
$$
\indent On pose $v'_i=v_i+p\cdot w_i$, avec $w_i\in L$. Les $v'_i$ vérifient les congruences précédentes si, et seulement si, les $w_i$ vérifient :
$$
	\left\{ 
	\begin{aligned}
		&w_i \cdot x_i \equiv 0 \mathrm{\ mod\ }p ,\\
		&w_i \cdot x_j + w_j \cdot x_i \equiv 0\mathrm{\ mod\ }p\text{ pour }i\neq j .\\
	\end{aligned}
	\right.
$$
\indent Enfin, les réseaux $L'((v_i)_i)$ et $L'((v'_i)_i)$ sont égaux si, et seulement si : $(\forall i)\  w_i \in M$, c'est-à-dire si, et seulement si :
$$(\forall i,j)\  w_i \cdot x_j \equiv 0 \mathrm{\ mod\ }p.$$
\indent Ainsi, la matrice $\left( w_i \cdot x_j\mathrm{\ mod\ }p \right)_{i,j}$ est une matrice antisymétrique à diagonale nulle. De plus, elle est nulle si, et seulement si, les réseaux $L'((v_i)_i)$ et $L'((v'_i)_i)$ sont égaux.
\\ \indent Il reste donc à montrer que toutes les matrices antisymétriques à diagonale nulle de taille $r\times r$ et à coefficients dans $\Z/p$ peuvent être ainsi obtenues, ce qui vient du fait que le produit scalaire est non dégénéré dans $L/pL$. Au final, on déduit qu'il y a exactement autant de $A$-voisins $L'$ de $L$ tels que $L\cap L'=M$ que de lagrangiens de $\mathrm{H}(A)$ transverses à $A$.
\\ \indent De même que dans la démonstration de la proposition-définition \ref{p-voisin}, il est facile de voir que, pour $\gamma\in \mathrm{SO}(L)$, on a l'égalité : $\gamma(L'((v_i)_i) )= L'(\gamma(v_i)_i)$. On en déduit finalement que la bijection entre les $A$-voisins de $L$ et les couples de la forme $(X,I)$, où $X$ est un espace totalement isotrope de $L/pL$ de dimension $r$, et $I$ un lagrangien de $\mathrm{H}(L/M)$ (avec $M$ l'image réciproque de $X^\perp$ par $L\rightarrow L/pL$) commute bien aux actions naturelles de $\mathrm{SO}(L)$.  
\end{proof}
\subsubsection{L'anneau des opérateurs de Hecke associé aux $A$-voisins.}
\indent On rappelle qu'on désigne ici par $A$-groupe un $A$-schéma en groupes affine et de type fini. On donne ici quelques rappels classiques qui suivent la présentation et les notations de \cite[ch. IV, \S 2]{CL}.
\begin{defi}[L'anneau des opérateurs de Hecke] \label{H(X)} Soit $\Gamma$ un groupe, et soit $X$ un $\Gamma$-ensemble transitif. On définit l'anneau des opérateurs de Hecke de $X$ comme le sous-anneau $\mathrm{H}(X)\subset \mathrm{End}_\Z (\Z[X])$ des endomorphismes commutant à l'action de $\Gamma$.
\end{defi}
\begin{defi}[L'anneau de Hecke d'un $\Z$-groupe] \label{H(G)} Soit $G$ un $\Z$-groupe. Si l'on note $P$ l'ensemble des nombres premiers, on note $\widehat{\Z}=\prod_{p\in P}\Z_p$, et $\mathbb{A}_f =\Q \otimes \widehat{\Z}$ l'anneau des adèles finis de $\Q$. On définit alors le $G(\mathbb{A}_f)$-ensemble : $\mathcal{R}(G)=G(\mathbb{A}_f)/G(\widehat{\Z})$. L'anneau de Hecke de $G$ est alors défini comme :
$$\mathrm{H}(G) = \mathrm{H}(\mathcal{R}(G))$$
où $G(\mathbb{A}_f)$ joue le rôle de $\Gamma$ dans la définition précédente.
\end{defi}
\begin{propdef} \label{Hp(G)} On considère $G$ un $\Z_p$-groupe, et on garde les notations précédentes. Pour $p\in P$, on définit alors le $G(\Q_p)$-ensemble : $\mathcal{R}_p(G)=G(\Q_p)/G(\Z_p)$. On pose :
$$\mathrm{H}_p(G) = \mathrm{H}(\mathcal{R}_p(G))$$
où $G(\Q_p)$ joue le rôle de $\Gamma$.
\\ \indent On a un homomorphisme d'anneaux injectif canonique : $\mathrm{H}_p (G) \rightarrow \mathrm{H}(G)$, et on verra donc simplement $\mathrm{H}_p(G)$ comme un sous-ensemble de $\mathrm{H}(G)$.
\\ \indent On a alors l'isomorphisme suivant :
$$\bigotimes_{p\in P} \mathrm{H}_p(G) \overset{\sim}{\rightarrow} \mathrm{H}(G).$$
\end{propdef}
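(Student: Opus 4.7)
Le plan consiste à tout traduire en termes de doubles classes. Rappelons d'abord qu'étant donné un $\Gamma$-ensemble transitif $X$ muni d'un point base $x_0$ de stabilisateur $K$, l'application $T\mapsto T(x_0)$ identifie $\mathrm{H}(X)$ au sous-module des éléments $K$-invariants à support fini dans $\Z[X]$, lui-même canoniquement isomorphe à $\Z[K\backslash \Gamma /K]$. La multiplication correspond à la convolution des doubles classes. On applique ceci à $(\Gamma ,X)=(G(\mathbb{A}_f),\mathcal{R}(G))$ avec $K=G(\widehat{\Z})$, et à $(G(\Q_p),\mathcal{R}_p(G))$ avec $K_p=G(\Z_p)$.

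Ensuite, je m'appuierais sur la décomposition adélique $G(\mathbb{A}_f) = {\prod_p}' G(\Q_p)$ (produit restreint relativement aux sous-groupes $G(\Z_p)$) et $G(\widehat{\Z}) = \prod_p G(\Z_p)$. Celle-ci induit la bijection canonique
$$G(\widehat{\Z})\backslash G(\mathbb{A}_f)/G(\widehat{\Z}) \;\simeq\; {\bigoplus_p}' \; G(\Z_p) \backslash G(\Q_p) / G(\Z_p),$$
où la somme directe restreinte signifie que, pour presque tout $p$, la composante est la classe identité. L'injection $\mathrm{H}_p(G)\hookrightarrow \mathrm{H}(G)$ est alors définie en envoyant $[g]$ avec $g\in G(\Q_p)$ sur la double classe $[\tilde g]$ dans $G(\widehat{\Z})\backslash G(\mathbb{A}_f)/G(\widehat{\Z})$, où $\tilde g$ a pour composante $g$ en $p$ et $1$ en tout autre premier. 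Au niveau des $\Z$-modules sous-jacents, la bijection ci-dessus fournit donc l'isomorphisme $\bigotimes_p \mathrm{H}_p(G) \overset{\sim}{\to} \mathrm{H}(G)$, le produit tensoriel étant pris par rapport aux éléments distingués (classes identité).

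Il reste à vérifier la compatibilité multiplicative, et c'est là le point le plus délicat. Pour $g,h\in G(\Q_p)$, on calcule la convolution $[g]\cdot [h]$ dans $\mathrm{H}(G)$ en décomposant $G(\widehat{\Z})gG(\widehat{\Z})\cdot G(\widehat{\Z})hG(\widehat{\Z})$ en classes à gauche modulo $G(\widehat{\Z})$. Comme $G(\widehat{\Z})$ se factorise composante par composante et que $\tilde g,\tilde h$ sont triviaux en tout premier $\ell\neq p$, toute cette décomposition n'implique que la composante en $p$ et reproduit exactement celle de $[g]\cdot [h]$ dans $\mathrm{H}_p(G)$. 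Plus généralement, pour $g\in G(\Q_p)$ et $h\in G(\Q_\ell)$ avec $\ell\neq p$, les images commutent dans $\mathrm{H}(G)$ et leur produit correspond à l'élément mixte $[\tilde g\tilde h]$, ce qui est précisément la structure d'algèbre du produit tensoriel. On en déduit que l'isomorphisme $\Z$-linéaire construit ci-dessus est bien un isomorphisme d'anneaux.
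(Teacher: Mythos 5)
Le texte ne démontre pas cet énoncé mais renvoie simplement à \cite[ch. IV, \S 2.5]{CL} ; votre esquisse reproduit correctement l'argument standard qui y figure : identification de $\mathrm{H}(X)$ à l'algèbre de convolution $\Z[K\backslash \Gamma /K]$, factorisation des doubles classes via le produit restreint $G(\mathbb{A}_f)={\prod_p}' G(\Q_p)$ et $G(\widehat{\Z})=\prod_p G(\Z_p)$, puis compatibilité des convolutions composante par composante. Le seul point laissé implicite est la finitude des ensembles $KgK/K$ (garantie par le fait que $G(\widehat{\Z})$, resp. $G(\Z_p)$, est compact ouvert dans $G(\mathbb{A}_f)$, resp. $G(\Q_p)$, le $\Z$-groupe $G$ étant affine de type fini), nécessaire pour que les doubles classes définissent bien des endomorphismes de $\Z[X]$ et pour que l'identification de $\mathrm{H}(X)$ avec $\Z[K\backslash \Gamma/K]$ soit exacte.
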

\begin{proof}
voir \cite[ch. IV, \S 2.5]{CL}.  
\end{proof}
\indent On s'intéressera aux cas où $G$ est le $\Z$-groupe $\mathrm{O}_n$ ou $\mathrm{SO}_n$, définis au paragraphe \ref{2.2}. Dans ces cas, les anneaux $\mathrm{H}(G)$ sont décrits en détail dans \cite[ch.IV, \S 2]{CL}, et on rappelle ici quelques points importants pour nous.
\\ \indent Fixons $G$ l'un des deux $\Z$-groupes $\mathrm{O}_n$ ou $\mathrm{SO}_n$. On rappelle que $G$ a été défini au moyen d'un réseau $L_0 \in \mathcal{L}_n$. On vérifie facilement que $\mathcal{R}(G)$ s'identifie naturellement à l'ensemble $\{ L\in \mathcal{L}_n \vert L\subset L_0\otimes \Q \}$ (voir \cite[ch. IV, \S 1.2 et 4.4]{CL}).
\\ \indent On montre ensuite que l'application $G(\R)\times \mathcal{R}(G) \rightarrow \mathcal{L}_n,\ (g,L)\mapsto g^{-1}L$ induit une bijection :
$$G(\Q)\setminus G(\R)\times \mathcal{R}(G) \overset{\sim}{\rightarrow} \mathcal{L}_n ,$$
d'après \cite[ch. IV, \S 4.5]{CL}.
\\ \indent L'action naturelle de $\mathrm{H}(G)$ sur $\Z[\mathcal{R}(G)]$ induit une action naturelle de $\mathrm{H}(G)$ sur $\Z [\mathcal{L}_n]$ (par endomorphismes $G(\R)$-équivariants). Cette action est très concrète. Par exemple, pour tout groupe abélien fini $A$, on dispose d'un opérateur $\mathrm{T}_A$ associé à la notion de $A$-voisin (voir \cite[ch.IV, \S 2.6]{CL}) :
\begin{defi}[Les opérateurs de Hecke sur les réseaux] Soit $A$ un groupe abélien fini. L'opérateur de Hecke $\mathrm{T}_A$ associé à $A$ est l'élément de $\mathrm{H}(\mathrm{O}_n)$ défini par :
$$ (\forall L\in \mathcal{L}_n)\ \mathrm{T}_A(L) = \displaystyle{\sum_{L'\ A\mathrm{-voisin\ de\ }L}} L'.$$
\indent De plus, l'inclusion $\mathrm{SO}_n\rightarrow \mathrm{O}_n$ identifie canoniquement $\mathrm{H}(\mathrm{O}_n)$ à un sous-anneau de $\mathrm{H}(\mathrm{SO}_n)$. L'opérateur $\mathrm{T}_A$ défini ci-dessus peut donc aussi être vu comme un élément de $\mathrm{H}(\mathrm{SO}_n)$.
\end{defi}
\begin{defi}[Les opérateurs de Hecke sur les formes automorphes] Soit $f\in \mathcal{M}_W(\mathrm{SO}_n)$. On peut voir $f$ comme une application $\Z[\mathcal{L}_n] \rightarrow W$, donc on peut faire agir à droite l'opérateur de Hecke $\mathrm{T}_A$ sur $f$. L'action de $\mathrm{T}_A$ est donnée plus précisément par l'égalité :
$$(\forall L\in \mathcal{L}_n)\  \mathrm{T}_A(f)(L) = \displaystyle{\sum_{L'\ A\mathrm{-voisin\ de\ }L}} f(L').$$
\end{defi}
\section{Détermination d'une formule pour calculer la trace d'un opérateur de Hecke.}\label{3}
\indent Notre but est de calculer la trace des opérateurs de Hecke $\mathrm{T}_A$ agissant sur l'espace $\mathcal{M}_W(\mathrm{SO}_n)$  des formes automorphes de poids $W$, où $W$ est une représentation irréductible de $\mathrm{SO}_n(\R)$.
\subsection{La méthode utilisée.}\label{3.1}
\indent Soient $n\equiv 0,\pm 1\mathrm{\ mod\ }8$, et $W$ une représentation de $\mathrm{SO}_n(\R)$.
\\ \indent On pose $\widetilde X_n=\mathrm{SO}_n(\R)\setminus \mathcal{L}_n$, avec $L_1,\dots ,L_h$ des représentants de chaque classe. Si l'on se donne $f\in \mathcal{M}_W(\mathrm{SO}_n)$, alors chacun des $f(L_i)$ est un élément de $W$ invariant par $\mathrm{SO}(L_i)$ (où $\mathrm{SO}(L_i)=\{\gamma \in \mathrm{SO}_n (\R) \vert \gamma L_i=L_i\}$). Pour $i=1,\dots h$, on pose $W^{\mathrm{SO}(L_i)}=\{v\in W\vert (\forall \gamma \in \mathrm{SO}(L_i) )\ \rho (\gamma)(v) =v\}$ l'espace des éléments de $W$ stables par l'action de $\mathrm{SO}(L_i)$. On dispose d'une application $\C$-linéaire :
$$
	\begin{array}{rcl}
		\mathcal{M} _W (\mathrm{SO}_n)  & \longrightarrow & \displaystyle\prod_{i=1}^h W^{\mathrm{SO}(L_i)}\\
		f & \longmapsto & \left( f(L_i)\right)_{i\in \{1,\dots,h\}} .\\
	\end{array}
$$
\indent Le lemme suivant est évident :
\begin{lem} Le morphisme ci-dessus est un isomorphisme.
\end{lem}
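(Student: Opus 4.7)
Mon plan est de montrer successivement que l'application est bien définie à valeurs dans le produit indiqué, qu'elle est injective, puis qu'elle est surjective. Pour le premier point : si $\gamma\in \mathrm{SO}(L_i)$, alors $\gamma\cdot L_i = L_i$, et la condition d'automorphie donne $\rho(\gamma)f(L_i) = f(\gamma\cdot L_i)= f(L_i)$, d'où $f(L_i)\in W^{\mathrm{SO}(L_i)}$. La $\C$-linéarité est immédiate.

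L'injectivité est une conséquence directe du fait que les $L_i$ forment un système de représentants de $\widetilde X_n = \mathrm{SO}_n(\R)\setminus \mathcal{L}_n$ : tout $L\in \mathcal{L}_n$ s'écrit $L=\gamma\cdot L_i$ pour un certain indice $i$ et un certain $\gamma\in \mathrm{SO}_n(\R)$, et alors $f(L)=\rho(\gamma)f(L_i)$ est entièrement déterminé par la famille $(f(L_i))_i$. En particulier, si $f(L_i)=0$ pour tout $i$, on a $f=0$.

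La surjectivité est le point qui demande le plus d'attention, bien qu'elle reste élémentaire, et constitue l'obstacle principal. Étant donnée une famille $(v_i)_{i=1,\dots,h}\in \prod_i W^{\mathrm{SO}(L_i)}$, je veux poser $f(L)=\rho(\gamma)v_i$ dès que $L=\gamma\cdot L_i$. Il faut vérifier que cette formule ne dépend pas du choix de la décomposition. Supposons donc que $\gamma\cdot L_i = \gamma'\cdot L_j$ : d'une part, les réseaux $L_i$ et $L_j$ sont alors dans la même $\mathrm{SO}_n(\R)$-orbite, donc $i=j$ par choix des représentants ; d'autre part, $\gamma^{-1}\gamma'$ envoie $L_i$ sur $L_i$ et appartient ainsi à $\mathrm{SO}(L_i)$. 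L'invariance de $v_i$ sous ce groupe livre $\rho(\gamma^{-1}\gamma')v_i = v_i$, c'est-à-dire $\rho(\gamma)v_i = \rho(\gamma')v_i$, ce qui conclut le point délicat. La fonction $f$ ainsi construite vérifie $f(\delta\cdot L) = \rho(\delta)f(L)$ pour tout $\delta\in \mathrm{SO}_n(\R)$ par construction même, donc appartient à $\mathcal{M}_W(\mathrm{SO}_n)$, et son image par le morphisme considéré est bien la famille $(v_i)_i$.
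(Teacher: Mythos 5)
Votre démonstration est correcte et constitue exactement la vérification standard que le papier laisse implicite en déclarant le lemme « évident » : les $L_i$ étant des représentants des orbites de $\mathrm{SO}_n(\R)$ sur $\mathcal{L}_n$, une fonction équivariante est déterminée par ses valeurs en les $L_i$, et toute famille de vecteurs invariants $(v_i)\in\prod_i W^{\mathrm{SO}(L_i)}$ se prolonge de façon cohérente grâce à l'invariance de $v_i$ sous le stabilisateur $\mathrm{SO}(L_i)$. Les trois étapes (bonne définition, injectivité, surjectivité) sont toutes justifiées correctement, y compris le point clef de l'indépendance du choix de l'écriture $L=\gamma\cdot L_i$.
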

\indent La formule qui sera notre point de départ est donnée par la proposition suivante :
\begin{prop} Soient $n\geq 1$, $(W,\rho)$ une représentation de dimension finie de $\mathrm{SO}_n (\R)$ sur $\C$, et $A$ un groupe abélien fini. Pour tout $i\in \{1,\dots ,h\}$, on pose $\mathrm{vois}_A(L_i)$ l'ensemble des $A$-voisins de $L_i$, et $\mathcal{V}_i$ le sous ensemble de $\mathrm{vois}_A(L_i)$ des $A$-voisins de $L_i$ isomorphes à $L_i$. L'ensemble $\mathcal{V}_i$ est stable pour l'action de $\mathrm{SO}(L_i)$. On peut donc écrire $\mathcal{V}_i$ comme réunion de ses orbites par l'action de $\mathrm{SO}(L_i)$ :
$$\mathcal{V}_i = \underset{j\in J_i}{\coprod} \mathrm{SO}(L_i) g_{i,j}  L_i = \underset{j\in J_i}{\coprod}\mathcal{V}_{i,j} $$
où $g_{i,j}\in \mathrm{SO}_n(\R)$, et $\mathcal{V}_{i,j}$ est l'orbite des $A$-voisins de $L_i$ pour l'action de $\mathrm{SO}(L_i)$ contenant $g_{i,j} L_i$. On a l'égalité :
$$ \mathrm{tr}\left( \mathrm{T}_A \vert \mathcal{M}_W(\mathrm{SO}_n)\right) = \displaystyle{\sum_{i=1}^h}\left( \dfrac{1}{\vert \mathrm{SO}(L_i) \vert } \cdot \left({\displaystyle{\sum_{\substack{j\in J_i}} \vert \mathcal{V}_{i,j} \vert \cdot \sum_{\gamma \in \mathrm{SO}(L_i)}}\mathrm{tr}\left( \gamma g_{i,j} \vert W \right) }\right)\right) .$$
\end{prop}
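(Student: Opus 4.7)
L'idée est d'utiliser l'isomorphisme du lemme précédent pour découper $\mathcal{M}_W(\mathrm{SO}_n) \simeq \prod_{i=1}^h W^{\mathrm{SO}(L_i)}$ en blocs, et de calculer séparément la contribution de chaque bloc diagonal à la trace de $\mathrm{T}_A$.

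Je partirais d'un $v_i \in W^{\mathrm{SO}(L_i)}$ et de la forme automorphe $f$ associée, caractérisée par $f(L_i)=v_i$, $f(L_j)=0$ pour $j\neq i$, et l'équivariance $f(g\cdot L) = \rho(g)f(L)$ pour $g\in \mathrm{SO}_n(\R)$ (bien définie car $v_i$ est stable par $\mathrm{SO}(L_i)$). Alors
\[
(\mathrm{T}_A f)(L_i) = \sum_{L' \in \mathrm{vois}_A(L_i)} f(L'),
\]
et seuls les $L'$ appartenant à la classe $\mathrm{SO}_n(\R)\cdot L_i$, c'est-à-dire $L' \in \mathcal{V}_i$, contribuent. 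C'est exactement à ce stade qu'intervient la restriction de la somme au sous-ensemble $\mathcal{V}_i$.

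Je décomposerais ensuite $\mathcal{V}_i = \coprod_{j\in J_i} \mathcal{V}_{i,j}$ en orbites sous $\mathrm{SO}(L_i)$. Pour chaque orbite, les éléments sont de la forme $\gamma g_{i,j} L_i$ avec $\gamma\in \mathrm{SO}(L_i)$, chacun étant atteint $|\mathrm{SO}(L_i)|/|\mathcal{V}_{i,j}|$ fois. Ainsi :
\[
\sum_{L'\in \mathcal{V}_{i,j}} f(L') = \frac{|\mathcal{V}_{i,j}|}{|\mathrm{SO}(L_i)|} \sum_{\gamma \in \mathrm{SO}(L_i)} \rho(\gamma g_{i,j})\, v_i.
\]
Posant $P_i = \frac{1}{|\mathrm{SO}(L_i)|}\sum_{\gamma \in \mathrm{SO}(L_i)} \rho(\gamma)$ le projecteur sur $W^{\mathrm{SO}(L_i)}$, on obtient $(\mathrm{T}_A f)(L_i) = \sum_j |\mathcal{V}_{i,j}|\, P_i\, \rho(g_{i,j})\, v_i$.

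Il reste à prendre la trace de cette application sur $W^{\mathrm{SO}(L_i)}$. Comme $P_i$ est un projecteur d'image $W^{\mathrm{SO}(L_i)}$, la trace de l'opérateur $v_i\mapsto P_i \rho(g_{i,j}) v_i$ restreint à $W^{\mathrm{SO}(L_i)}$ coïncide avec la trace de $P_i \rho(g_{i,j})$ agissant sur $W$ tout entier, soit
\[
\mathrm{tr}\bigl(P_i\rho(g_{i,j}) \mid W\bigr) = \frac{1}{|\mathrm{SO}(L_i)|} \sum_{\gamma\in \mathrm{SO}(L_i)} \mathrm{tr}\bigl(\gamma g_{i,j} \mid W\bigr).
\]
Sommer sur $j\in J_i$ puis sur $i\in\{1,\dots,h\}$ donne exactement la formule annoncée.

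Le point délicat n'est pas mathématiquement profond mais demande de la rigueur : il faut s'assurer que, dans le passage de la somme sur les $L'\in \mathcal{V}_{i,j}$ à la somme sur $\gamma\in \mathrm{SO}(L_i)$, le facteur combinatoire $|\mathcal{V}_{i,j}|/|\mathrm{SO}(L_i)|$ (compensation du stabilisateur) apparaît correctement, et que l'identification trace sur le sous-espace = trace du composé avec le projecteur est appliquée au bon endroit. Une fois cette comptabilité bien tenue, le résultat suit sans difficulté supplémentaire.
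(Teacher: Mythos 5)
Votre démonstration est correcte et suit essentiellement la même stratégie que celle de l'article : on utilise l'isomorphisme $\mathcal{M}_W(\mathrm{SO}_n)\simeq \prod_i W^{\mathrm{SO}(L_i)}$, on constate que seuls les blocs diagonaux contribuent à la trace, on fait apparaître le projecteur $P_i=\frac{1}{\vert \mathrm{SO}(L_i)\vert}\sum_\gamma \rho(\gamma)$ pour ramener la trace sur $W^{\mathrm{SO}(L_i)}$ à une trace sur $W$, puis on regroupe les voisins par orbites via orbite--stabilisateur. La seule différence est cosmétique : l'article écrit d'abord la matrice complète $(u_{i_1,i_2})$ des blocs avant d'en extraire la diagonale, tandis que vous isolez directement le bloc diagonal en testant sur une forme supportée par la classe de $L_i$ ; la comptabilité combinatoire $\vert\mathcal{V}_{i,j}\vert/\vert\mathrm{SO}(L_i)\vert$ que vous signalez comme point délicat est bien traitée.
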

\begin{proof}
Soit $i\in \{1,\dots,h\}$. On choisit une famille $\gamma_{i,k}$ d'éléments de $\mathrm{SO}_n(\R)$, et $a_{i,k}$ une famille d'éléments de $\{1,\dots,h\}$ telles que l'on ait l'écriture :
$$\mathrm{vois}_A(L_i) = \left\{ \gamma_{i,k} L_{a_{i,k}} \vert k\in K_i \right\},$$
où $K_i$ est un ensemble quelconque de même cardinal que $\mathrm{vois}_A(L_i)$. Avec ces notations, notons que l'on a l'égalité : $\mathcal{V}_i = \left\{ \gamma_{i,k} L_{a_{i,k}} \vert k\in K_i\text{ et }a_{i,k}=i \right\}$. Si $f$ est dans $\mathcal{M}_W(\mathrm{SO}_n)$, on a alors :
$$
	\begin{aligned}
		\mathrm{T}_A(f) (L_i) &= \underset{L\in \mathrm{vois}_A(L_i)}{\sum} f(L) = \underset{k\in K_i}{\sum} f(\gamma_{i,k}\cdot L_{a_{i,k}})= \underset{k\in K_i}{\sum} \rho(\gamma_{i,k}) \cdot f(L_{a_{i,k}}) . \\
	\end{aligned}
$$
\indent Pour $i_1,i_2\in \{1,\dots,h\}$, on définit les endomorphismes :
$$\underset{\substack{k\in K_{i_1} \\ a_{i_1,k}=i_2}}{\sum} \rho (\gamma_{i_1,k}) = u_{i_1,i_2}\in \mathrm{End}(W) .$$
\indent En particulier, $u_{i_1,i_2}$ satisfait $u_{i_1,i_2}\left(W^{\mathrm{SO}(L_{i_1})} \right) \subset W^{\mathrm{SO}(L_{i_2})}$ et on a le diagramme commutatif suivant :
$$\xymatrix{
    \mathcal{M}_W(\mathrm{SO}_n) \ar[r]^\sim \ar[d]_{\mathrm{T}_A} & \displaystyle{\prod_{i=1}^h} W^{\mathrm{SO}(L_i)} \ar[d]^{(u_{i_1,i_2})_{i_1,i_2\in \{1,\dots ,h\}} } \\
    \mathcal{M}_W(\mathrm{SO}_n) \ar[r]^\sim & \displaystyle{\prod_{i=1}^h} W^{\mathrm{SO}(L_i)}
}$$
\indent Pour $i\in \{ 1,\dots ,h\}$, on définit les projecteurs $p_i \in \mathrm{End}(W)$ par :
$$p_i = \dfrac{1}{\vert \mathrm{SO}(L_i) \vert} \underset{\gamma \in \mathrm{SO}(L_i)}{\sum} \rho (\gamma) .$$
\indent L'image de $p_i$ est $W^{\mathrm{SO}(L_i)}$, et $\mathrm{Ker}(p_i)$ en est un supplémentaire dans $W$.
\\ \indent Dans le calcul de la trace qui nous intéresse, seuls les termes diagonaux (c'est-à-dire les $u_{i,i}$) interviendront. De plus, ils vérifient les égalités :
$$
\left\{
	\begin{aligned}
		&(u_{i,i}\circ p_i)\vert _{W^{\mathrm{SO}(L_i)}}  = u_{i,i}\vert _ {W^{\mathrm{SO}(L_i)}}\\
		&(u_{i,i}\circ p_i)\vert _{\mathrm{Ker}(p_i)}  = 0\\
	\end{aligned}
\right.
$$
\indent Et ainsi :
$$\mathrm{tr}(u_{i,i}\vert W^{\mathrm{SO}(L_i)}) = \mathrm{tr}(u_{i,i}\circ p_i \vert W) =\mathrm{tr}(p_i\circ u_{i,i} \vert W). $$
\indent D'où finalement :
$$
	\begin{aligned}
		\mathrm{tr}\left(\mathrm{T}_A \vert \mathcal{M}_W(\mathrm{SO}_n)\right) & = \mathrm{tr}\left( (u_{i_1,i_2})_{i_1,i_2\in \{1,\dots ,h\}} \vert \prod_{i=1}^h W^{\mathrm{SO}(L_i)} \right) = \displaystyle{\sum_{i=1}^h} \mathrm{tr}\left( u_{i,i} \vert W^{\mathrm{SO}(L_i)} \right) \\
		&= \displaystyle{\sum_{i=1}^h} \mathrm{tr}\left( p_i \circ u_{i,i} \vert W \right)= \displaystyle{\sum_{i=1}^h}\left( \dfrac{1}{\vert \mathrm{SO}(L_i) \vert } \cdot \left({\displaystyle{\sum_{\substack{k\in K_i \\ a_{i,k}=i}} \sum_{\gamma \in \mathrm{SO}(L_i)}}\mathrm{tr}\left( \gamma \cdot \gamma_{i,k} \vert W \right) }\right)\right)\\
		&= \displaystyle{\sum_{i=1}^h}\left( \dfrac{1}{\vert \mathrm{SO}(L_i) \vert } \cdot \left({\displaystyle{\sum_{\substack{j\in J_i}} \vert \mathcal{V}_{i,j} \vert \cdot \sum_{\gamma \in \mathrm{SO}(L_i)}}\mathrm{tr}\left( \gamma \cdot g_{i,j} \vert W \right) }\right)\right) .\\
	\end{aligned}
$$
\indent La dernière égalité vient de la constatation suivante. Si on se donne $j\in J_i$, et $L_1,L_2 \in \mathcal{V}_{i,j}$ tels que $L_1=g_1 L_i$ et $L_2=g_2 L_i$, alors on a l'égalité : $\sum_{\gamma \in \mathrm{SO}(L_i)} \mathrm{tr}\left( \gamma \cdot g_{1} \vert W \right) =\sum_{\gamma \in \mathrm{SO}(L_i)} \mathrm{tr}\left( \gamma \cdot g_{2} \vert W \right)$.
\\ \indent D'où finalement le résultat cherché.  
\end{proof}
\indent Un résultat comparable a été obtenu indépendamment par Neil Dummigan, dans l'article \cite{Dum}. De la même manière que dans cet article, on peut regarder ce que devient cette formule lorsque $A$ est trivial (c'est-à-dire lorsque $\mathrm{T}_A$ est l'identité). Dans ce cas, pour tout $i$, on a l'égalité $\mathrm{vois}_A(L_i) = \{L_i \} = \mathcal{V}_i$, et la formule précédente devient :
$$ \begin{aligned}
 \mathrm{tr}(\mathrm{Id} \vert \mathcal{M}_W(\mathrm{SO}_n)) & = \sum_{i=1}^h \left( \frac{1}{\vert \mathrm{SO}(L_i) \vert} \sum_{\gamma \in \mathrm{SO}(L_i)} \mathrm{tr}(\gamma \vert W) \right) \\
 &= \sum_{i=1}^h \mathrm{dim}\left( W^{\mathrm{SO}(L_i)} \right) = \mathrm{dim}\left( \mathcal{M}_W (\mathrm{SO}_n ) \right), \\
 \end{aligned}
 $$
 \indent où les égalités $\mathrm{dim}\left( W^{\mathrm{SO}(L_i)} \right) = \frac{1}{\vert \mathrm{SO}(L_i) \vert} \sum_{\gamma \in \mathrm{SO}(L_i)} \mathrm{tr}(\gamma \vert W)$ sont des résultats classiques.
\\ \\ \indent Lorsque $h=1$, c'est-à-dire lorsque $n=7,8,9$, on a les corollaires plus simples suivants :
\begin{cor} On suppose que $\widetilde{X}_n$ est réduit à un élément. Soient $(W,\rho)$ une représentation de $\mathrm{SO}_n(\R)$ de dimension finie, et $L_0$ un élément de $\mathcal{L}_n$. Soit $(g_j) \in \mathrm{SO}_n(\R)$ famille finie telle que $\mathrm{vois}_A(L_0) = \underset{j}{\coprod} \mathrm{SO}(L_0)\cdot g_j \cdot L_0 = \underset{j}{\coprod} \mathcal{V}_j$. Alors :
$$ \mathrm{tr}\left( \mathrm{T}_A \vert \mathcal{M}_W(\mathrm{SO}_n)\right) = \dfrac{1}{\vert \mathrm{SO}(L_0) \vert} \cdot \left( \displaystyle{\sum_j} \left( \vert \mathcal{V}_j\vert \cdot \displaystyle{\sum_{\gamma \in \mathrm{SO}(L_0)}}\mathrm{tr}\left( \gamma g_j \vert W \right) \right) \right).$$
\end{cor}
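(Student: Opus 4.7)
Le plan est d'invoquer directement la proposition qui précède, en la spécialisant au cas $h = 1$. Sous l'hypothèse que $\widetilde{X}_n = \mathrm{SO}_n(\R) \setminus \mathcal{L}_n$ est réduit à un seul élément, on peut choisir $L_1 := L_0$ pour unique représentant, et la somme externe $\sum_{i=1}^h$ figurant dans la formule générale se réduit à un unique terme correspondant à $i=1$.

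La deuxième étape consiste à identifier les objets apparaissant dans la proposition avec ceux de l'énoncé du corollaire. L'ensemble $\mathcal{V}_1$ de la proposition est par définition le sous-ensemble de $\mathrm{vois}_A(L_0)$ formé des $A$-voisins isomorphes à $L_0$. Or sous l'hypothèse $h = 1$, le groupe $\mathrm{SO}_n(\R)$ agit transitivement sur $\mathcal{L}_n$, si bien que tout $A$-voisin $L'$ de $L_0$ est $\mathrm{SO}_n(\R)$-équivalent à $L_0$, donc en particulier isomorphe à $L_0$. Ainsi $\mathcal{V}_1 = \mathrm{vois}_A(L_0)$ tout entier, et la décomposition de $\mathcal{V}_1$ en $\mathrm{SO}(L_0)$-orbites fournie par la proposition coïncide exactement avec la décomposition $\mathrm{vois}_A(L_0) = \coprod_j \mathrm{SO}(L_0) \cdot g_j \cdot L_0 = \coprod_j \mathcal{V}_j$ donnée dans l'énoncé.

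En reportant ces identifications dans la formule de la proposition, on obtient immédiatement
\[
\mathrm{tr}\left( \mathrm{T}_A \vert \mathcal{M}_W(\mathrm{SO}_n)\right) = \dfrac{1}{\vert \mathrm{SO}(L_0) \vert} \sum_j \vert \mathcal{V}_j\vert \sum_{\gamma \in \mathrm{SO}(L_0)} \mathrm{tr}\left( \gamma g_j \vert W \right),
\]
qui est précisément l'égalité recherchée. Il n'y a donc pas de véritable obstacle dans cette preuve : le corollaire se déduit formellement et immédiatement de la proposition précédente, par la simplification qu'impose l'hypothèse $\vert \widetilde{X}_n\vert = 1$. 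Le seul fait non tautologique dont on se sert, déjà mis en évidence à la fin de la preuve de la proposition, est que la quantité intérieure $\sum_{\gamma \in \mathrm{SO}(L_0)} \mathrm{tr}(\gamma g \vert W)$ est constante sur chaque $\mathrm{SO}(L_0)$-orbite $\mathcal{V}_j$, ce qui permet de regrouper les contributions des $A$-voisins d'une même orbite en un seul terme pondéré par $\vert \mathcal{V}_j\vert$.
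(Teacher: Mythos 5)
Votre démonstration est correcte et suit exactement la voie attendue : le papier ne donne d'ailleurs aucune preuve de ce corollaire, le présentant comme une spécialisation immédiate de la proposition précédente au cas $h=1$. Vous identifiez bien le seul point à vérifier, à savoir que $\mathcal{V}_1 = \mathrm{vois}_A(L_0)$ tout entier puisque tous les éléments de $\mathcal{L}_n$ sont alors isomorphes.
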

\begin{cor} \label{formuletrace} On suppose que $\widetilde{X}_n$ est réduit à un élément. Soient $(W,\rho)$ une représentation de $\mathrm{SO}_n(\R)$, $q$ une puissance d'un nombre premier $p$, $A=\Z/q\Z$, et $L_0$ un élément de $\mathcal{L}_n$. On considère la quadrique projective $C = C_{L_0}(\Z/q\Z)$. On rappelle qu'il existe une bijection entre $\mathrm{vois}_A(L_0)$ et $C$ qui commute aux actions de $\mathrm{SO}(L_0)$ (voir proposition-définition \ref{p-voisin}). Pour tout élément $x\in C$, on associe le $q$-voisin $L_x$ (d'après la méthode expliquée en proposition-définition \ref{p-voisin}), que l'on écrit $L_{x} = g_{x}\cdot L_{0}$, où $g_{x} \in \mathrm{SO}_n(\Q)$.
\\ \indent Le groupe spécial orthogonal $\mathrm{SO}(L_0)$ agit sur $C$. On considère $Y$ un système de représentants des orbites de $C$ pour l'action de $\mathrm{SO}(L_0)$, et pour $y\in Y$ on note $\Omega_{y}$ l'orbite associée. Alors :
$$ \mathrm{tr}\left( \mathrm{T}_q \vert \mathcal{M}_W(\mathrm{SO}_n)\right) = \dfrac{1}{\vert \mathrm{SO}(L_0) \vert } \cdot \left({\displaystyle{\sum_{\substack{y\in Y }} \vert \Omega_{y} \vert \cdot \sum_{\gamma \in \mathrm{SO}(L_i)}}\mathrm{tr}\left( \gamma g_{y} \vert W \right) }\right) .$$
\end{cor}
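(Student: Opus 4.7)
The plan is to deduce this corollary directly from the previous one by transporting the orbit decomposition across the $\mathrm{SO}(L_0)$-equivariant bijection furnished by Proposition-Définition \ref{p-voisin}. Since $\widetilde{X}_n$ is reduced to a single element, we are in the setting $h=1$, $L_1 = L_0$, and every $q$-voisin of $L_0$ is automatically isometric to $L_0$, so that the set $\mathcal{V}_1$ appearing in the previous corollary coincides with $\mathrm{vois}_A(L_0)$ entirely.

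The next step is to use Proposition-Définition \ref{p-voisin}, which provides an explicit bijection $x \mapsto L_x$ between $C_{L_0}(\Z/q\Z)$ and $\mathrm{vois}_A(L_0)$, equivariant for the natural $\mathrm{SO}(L_0)$-actions on both sides. Via this bijection, the $\mathrm{SO}(L_0)$-orbits $\Omega_y$ on $C$ match exactly the $\mathrm{SO}(L_0)$-orbits $\mathcal{V}_j$ on $\mathrm{vois}_A(L_0)$, with preserved cardinalities $\vert \Omega_y \vert = \vert \mathcal{V}_j \vert$. Picking representatives $y \in Y$ of the orbits on $C$ and setting $g_y$ so that $L_y = g_y \cdot L_0$ is then formally equivalent to picking the representatives $g_{1,j}$ of the previous corollary.

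It remains to check that one can choose $g_y$ in $\mathrm{SO}_n(\Q)$ rather than merely in $\mathrm{SO}_n(\R)$. The existence of \emph{some} $g_y \in \mathrm{SO}_n(\R)$ with $L_y = g_y \cdot L_0$ follows immediately from the hypothesis $\vert \widetilde{X}_n \vert = 1$. Rationality is then inherited from the explicit formula $L_y = \Z \cdot v/q + M$ of Proposition-Définition \ref{p-voisin}, which exhibits $L_y$ as a sublattice of $L_0 \otimes \Q$: any isometry between two $\Z$-lattices spanning the same rational quadratic space is automatically defined over $\Q$ (with orientation adjustable by composing, if necessary, with an element of determinant $-1$ of $\mathrm{O}(L_0)$).

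Once these identifications are made, substituting $\vert \mathcal{V}_j \vert = \vert \Omega_y \vert$ and $g_{1,j} = g_y$ into the formula of the preceding corollary yields exactly the stated identity. The only genuinely substantive ingredient is therefore Proposition-Définition \ref{p-voisin}, which has already been established; the present corollary is essentially its translation into the language of Hecke traces. I expect no real obstacle, the only point demanding a line of justification being the rationality of $g_y$.
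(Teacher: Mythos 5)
Your argument is correct and follows exactly the route the paper intends: the corollary is obtained from the preceding one (case $h=1$) by transporting orbits, representatives and cardinalities through the $\mathrm{SO}(L_0)$-equivariant bijection of la proposition-définition \ref{p-voisin}; the paper itself treats this as immediate and gives no separate proof. Your side remark on rationality is also sound — any $g\in\mathrm{O}_n(\R)$ with $g(L_0)=L_y$ sends a $\Z$-base de $L_0$ dans $L_y\subset L_0\otimes\Q$, donc est automatiquement dans $\mathrm{O}_n(\Q)$.
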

\indent Les corollaires précédents réduisent le calcul de $\mathrm{tr}(\mathrm{T}_A \vert \mathcal{M}_W(\mathrm{SO}_n))$ aux calculs des quantités suivantes :
\\ \indent - $\mathrm{tr}(g \vert W)$, où $g\in \mathrm{SO}_n(\R)$ et $W$ une représentation irréductible de $\mathrm{SO}_n(\R)$, ce qui fait l'objet du paragraphe \ref{3.2}.
\\ \indent - les orbites de $\mathrm{vois}_A(L_i)$ pour l'action de $\mathrm{SO}(L_i)$ (dont il suffit de connaître un représentant et le cardinal), ce qui fait l'objet des chapitres \ref{4} et \ref{5}.
\subsection{Les poids dominants et la formule des caractères de Weyl.}\label{3.2}
\indent On reprend ici le raisonnement fait dans \cite[ch. 1]{CC} pour exprimer la version dégénérée de la formule des caractères de Weyl. Rappelons quelques notations utilisées dans cette référence.
\\ \indent On se donne $n\geq 1$ et note $G$ le groupe de Lie semi-simple connexe $\mathrm{SO}_n(\R)$. On pose pour simplifier $m=[n/2]$. On se place dans l'espace $\R^n$ muni de son produit scalaire usuel, que l'on décompose en somme orthogonale :
$$\R^n =
\left\{ \begin{aligned}
	&\bigoplus_{i=1}^{m} P_i \text{ si $n$ est pair},\\
	&\bigoplus_{i=1}^{m} P_i\oplus D \text{ si $n$ est impair},\\
	\end{aligned} \right.$$
où les $P_i$ sont des plans deux-à-deux orthogonaux de $\R^n$ fixés, et $D$ est l'unique droite orthogonale à tous les $P_i$ lorsque $n$ est impair.
\\ \indent On définit le tore maximal $T\subset G$ comme :
$$T =
\left\{ \begin{aligned}
	&\{ g\in G \vert (\forall i )\ g(P_i)\subset P_i\text{ et }\mathrm{det}(g\vert_{P_i})=1 \} \text{ si $n$ est pair},\\
	&\{ g\in G \vert (\forall i )\ g(P_i)\subset P_i\text{, }\mathrm{det}(g\vert_{P_i})=1 \text{ et }\mathrm{det}(g\vert_{D})=1\} \text{ si $n$ est impair},\\
	\end{aligned} \right.$$
de telle sorte qu'on a un isomorphisme naturel $T\tilde\rightarrow \prod_i \mathrm{SO}(P_i)$. Fixons une fois pour toute des isomorphismes $\mathrm{SO}(L_i) \simeq \mathbb{S}^1$, et donc l'isomorphisme $T\simeq (\mathbb{S}^1)^m$. On note $(t_1,\dots,t_m)$ les éléments de $T$, pour $t_i \in \mathbb{S}^1$.
\\ \indent On note $\Phi$ le système de racines de $(G,T)$, et $\mathcal{W}$ son groupe de Weyl. On note de plus $\Phi^+ \subset \Phi$ un système de racines positives. Selon la parité de $n$, les ensembles $\Phi$, $\Phi^+$ et $\mathcal{W}$ sont donnés de la manière suivante :
\\ \indent - si $n$ est pair : $\Phi = \{ \pm e_i \pm e_j \}_{1\leq i<j\leq m} \subset \R^m$, $\Phi^+ = \{ e_i \pm e_j \}_{1\leq i<j\leq m} \subset \Phi$, et $\mathcal{W}=\mathcal{S}_m \ltimes (\{\pm 1\}^m)^0$ (où $\mathcal{S}_m$ désigne le groupe des permutations d'un ensemble à $m$ éléments, et $(\{\pm 1\}^m)^0$ le sous-groupe des éléments $(\varepsilon_i) \in \{\pm 1\}^m$ tels que $\prod \varepsilon_i =1$).
\\ \indent - si $n$ est impair : $\Phi = \{ \pm e_i \}_{1\leq i\leq m} \cup \{ \pm e_i \pm e_j \}_{1\leq i<j\leq m} \subset \R^m$, $\Phi^+ = \{ e_i \}_{1\leq i\leq m} \cup \{ e_i \pm e_j \}_{1\leq i<j\leq m} \subset \Phi$, et $\mathcal{W}=\mathcal{S}_m \ltimes \{\pm 1\}^m$.
\\ \indent On note $X=\mathrm{Hom}(T,\mathbb{S}^1)$ le groupe des caractères (qui satisfait $X\simeq \Z^m$). On fixe un produit scalaire $(\ ,\ )$ sur $X\otimes \R$ invariant par $\mathcal{W}$. Un poids dominant de $G$ est un élément $\lambda \in X$ tel que $(\lambda,\alpha) \geq 0$ pour tout élément $\alpha \in \Phi^+$. La théorie de Cartan-Weyl définit une bijection canonique $\lambda \mapsto V_\lambda$ entre les poids dominants de $G$ et les représentations irréductibles de $G$ à isomorphisme près. Le poids dominant $\lambda$ est appelé le plus haut poids de $V_\lambda$. Si $\lambda\in X$ et $i\in T$, on pose $t^\lambda = \lambda (t)$
\\ \indent Enfin, pour $t\in T$, on note $M=C_G(t)^0$ la composante neutre du centralisateur de $t$ dans $G$ (en particulier, $T$ est un tore maximal dans $M$, et $t\in M$). On associe à $M$ les sous-ensembles de $\Phi^+$ et de $\mathcal{W}$ suivants :
$$\Phi_M^+ = \{ \alpha \in \Phi^+ \vert t^\alpha =1 \}$$
$$\mathcal{W}^M = \{w\in \mathcal{W} \vert w^{-1} (\Phi_M^+) \subset \Phi^+ \}$$
\indent On note $\rho$ et $\rho_M$ la demi-somme respectivement des éléments de $\Phi^+$ et de $\Phi_M^+$. Pour $w\in \mathcal{W}^M$, on pose $\lambda_w = w(\lambda +\rho )-\rho_M$. On définit enfin pour $v\in X\otimes \R$ :
$$P_M(v) = \prod_{\alpha \in \Phi_M^+} \dfrac{(\alpha,v+\rho_M)}{(\alpha,\rho_M)}.$$
\indent On a la proposition suivante d'après \cite[ch. 1, proposition 1.9]{CC} :
\begin{prop}[Version dégénérée de la formule des caractères de Weyl] Soient $\lambda \in X$ un poids dominant, $t\in T$ et $M=C_G(t)^0$. Alors le caractère de $t$ sur la représentation $V_\lambda$ de plus haut poids $\lambda$ est donné par :
$$\chi_{V_\lambda} (t) = \dfrac{\sum_{w\in \mathcal{W}^M} \varepsilon(w) \cdot t^{w(\lambda +\rho)-\rho} \cdot P_M(w(\lambda + \rho )-\rho_M )}{\prod_{\alpha \in \Phi^+ \setminus \Phi_M^+}(1-t^{-\alpha})}$$
où $\varepsilon:\mathcal{W}\rightarrow \{\pm 1\}$ désigne la signature sur le groupe $\mathcal{W}$.
\end{prop}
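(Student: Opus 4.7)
Le plan est de régulariser la formule des caractères de Weyl classique en approchant $t$ par des éléments réguliers. Précisément, on choisit $s \in T$ proche de $1$ tel que $ts$ soit régulier (possibilité assurée car la régularité est une condition ouverte dense), puis on évalue $\chi_{V_\lambda}(t) = \lim_{s \to 1} \chi_{V_\lambda}(ts)$ à partir de la formule des caractères usuelle
$$\chi_{V_\lambda}(ts) = \frac{\sum_{w \in \mathcal{W}} \varepsilon(w)\,(ts)^{w(\lambda+\rho)-\rho}}{\prod_{\alpha \in \Phi^+}(1-(ts)^{-\alpha})}.$$
Le numérateur et le dénominateur s'annulent simultanément en $s = 1$ dès que $\Phi_M^+ \neq \emptyset$, ce qui justifie la nécessité d'une analyse plus fine.

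Le point clef est la décomposition $\mathcal{W} = \mathcal{W}_M \cdot \mathcal{W}^M$, où $\mathcal{W}^M$ apparaît comme l'ensemble des représentants de longueur minimale des classes $\mathcal{W}_M \backslash \mathcal{W}$ (ce qui coïncide, grâce à la théorie des groupes de Coxeter, avec la définition par la condition $w^{-1}(\Phi_M^+) \subset \Phi^+$). Chaque $w$ s'écrit alors uniquement $w = vu$ avec $v \in \mathcal{W}_M$ et $u \in \mathcal{W}^M$, et $\varepsilon(w) = \varepsilon(v)\varepsilon(u)$. L'observation cruciale est que les éléments de $\mathcal{W}_M$ fixent $t$, puisque $t$ est central dans $M$ et $\mathcal{W}_M = N_M(T)/T$, d'où $t^{vu\mu} = t^{u\mu}$ pour tout caractère $\mu$. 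Le numérateur se factorise donc en
$$\sum_{u \in \mathcal{W}^M} \varepsilon(u)\,t^{u(\lambda+\rho)-\rho} \sum_{v \in \mathcal{W}_M} \varepsilon(v)\,s^{vu(\lambda+\rho)-\rho},$$
tandis que le dénominateur se scinde en $\prod_{\alpha \in \Phi^+ \setminus \Phi_M^+}(1-(ts)^{-\alpha}) \cdot \prod_{\alpha \in \Phi_M^+}(1-s^{-\alpha})$, le premier facteur convergeant vers $\prod_{\alpha \in \Phi^+ \setminus \Phi_M^+}(1-t^{-\alpha})$, qui est non nul par hypothèse sur $M = C_G(t)^0$.

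Il reste alors à calculer, pour chaque $u \in \mathcal{W}^M$, la limite
$$\lim_{s \to 1} \frac{\sum_{v \in \mathcal{W}_M} \varepsilon(v)\,s^{vu(\lambda+\rho)-\rho}}{\prod_{\alpha \in \Phi_M^+}(1-s^{-\alpha})}.$$
On l'évalue en appliquant la formule des caractères (et la formule de dimension) de Weyl \emph{au groupe réductif $M$}, dont $T$ est un tore maximal et $\Phi_M^+$ un système de racines positives. Comme $\lambda+\rho$ est strictement dominant, $u(\lambda+\rho)$ est régulier donc $M$-régulier ; il existe donc un unique $v_0 \in \mathcal{W}_M$ tel que $\mu' := v_0^{-1} u(\lambda+\rho)$ soit $M$-dominant. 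En mettant $s^{-\rho}$ en facteur puis en appliquant à la somme $\sum_v \varepsilon(v)\,s^{v\mu'}$ la formule des caractères et la formule du dénominateur pour $M$, la limite se ramène à $\varepsilon(v_0)\,\dim V_{M, \mu' - \rho_M}$. La formule de dimension de Weyl pour $M$ identifie cette dimension à $\varepsilon_M(v_0)\,P_M(u(\lambda+\rho) - \rho_M)$, le signe $\varepsilon_M(v_0)$ provenant du réarrangement des facteurs $(\alpha, v_0^{-1}u(\lambda+\rho)) = (v_0\alpha, u(\lambda+\rho))$. La coïncidence classique $\varepsilon(v_0) = \varepsilon_M(v_0)$ (liée au fait que $v_0$ permute entre elles les racines de $\Phi^+ \setminus \Phi_M^+$) fournit alors exactement $P_M(u(\lambda+\rho) - \rho_M)$. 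En assemblant ces trois ingrédients, on obtient la formule annoncée. La principale difficulté consiste à contrôler rigoureusement les signes et les facteurs $s^{\rho_M-\rho}$ qui apparaissent lors de la décomposition, notamment lorsque $u(\lambda+\rho)$ n'est pas déjà $M$-dominant.
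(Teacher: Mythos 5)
Votre démonstration est correcte. Notons d'abord que l'article ne démontre pas cette proposition : il renvoie simplement à [CC, ch.\ 1, proposition 1.9], et votre argument de régularisation (évaluer la formule des caractères de Weyl en $ts$ régulier, décomposer $\mathcal{W}=\mathcal{W}_M\cdot\mathcal{W}^M$, utiliser que $\mathcal{W}_M$ fixe $t$, puis reconnaître dans la limite intérieure la formule de dimension de Weyl pour $M$) est bien la démarche standard qui y est suivie. Signalons toutefois une simplification que vous n'exploitez pas et qui fait disparaître la \emph{principale difficulté} annoncée dans votre conclusion : pour $u\in\mathcal{W}^M$ on a par définition $u^{-1}(\Phi_M^+)\subset\Phi^+$, donc pour tout $\alpha\in\Phi_M^+$, $(u(\lambda+\rho),\alpha^\vee)=(\lambda+\rho,u^{-1}\alpha^\vee)>0$ puisque $\lambda+\rho$ est strictement dominant ; autrement dit $u(\lambda+\rho)$ est automatiquement strictement $M$-dominant, votre $v_0$ vaut toujours $1$, et tout le contrôle des signes $\varepsilon(v_0)$ contre $\varepsilon_M(v_0)$ devient sans objet --- c'est précisément la raison d'être du choix des représentants $\mathcal{W}^M$. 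Le facteur $s^{\rho_M-\rho}$, indépendant de $v$ et tendant vers $1$, est inoffensif comme vous le suggérez. Deux petites vérifications méritent d'être explicitées : $s$ est automatiquement $M$-régulier dès que $ts$ est $G$-régulier (car $s^\alpha=(ts)^\alpha$ pour $\alpha\in\Phi_M$, puisque $t^\alpha=1$), ce qui légitime l'emploi de la formule des caractères pour $M$ avant passage à la limite ; et le signe $\varepsilon(v)$ pour $v\in\mathcal{W}_M$ est sans ambiguïté le déterminant de $v$ sur $X\otimes\R$, donc coïncide avec la signature calculée dans $\mathcal{W}_M$.
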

\indent Pour terminer ce paragraphe, expliquons comment utiliser concrètement cette formule pour l'application au calcul de la trace de $\mathrm{T}_A$. On rappelle que dans cette formule interviennent des éléments de la forme $\gamma g_y \in \mathrm{SO}_n(\R)$, et on doit évaluer la quantité $\mathrm{tr}(\gamma g_y \vert W)$.
\\ \indent Au chapitre \ref{4}, on explique comment déterminer explicitement les $\gamma g_y$. La question est donc la suivante : si on se donne $g\in \mathrm{SO}_n(\R)$, comment trouver $t\in T$ conjugué à $g$ dans $\mathrm{SO}_n(\R)$.
\\ \indent On calcule le déterminant $\mathrm{det}(X\mathrm{Id}-g)$, qui est de la forme :
$$(X-1)^{n-2m} \cdot \prod_{i=1}^m (X-t_i)(X-\overline{t_i}),$$
et on pose $t=(t_1,\dots ,t_m) \in T$.
\\ \indent Si $n$ est impair, $g$ est conjugué à $t$, et le problème est résolu : il suffit de déterminer des $t_i$ satisfaisant l'égalité précédente. C'est encore vrai lorsque $n$ est pair et que l'un des $t_i$ vaut $\pm 1$.
\\ \indent En revanche, dans le cas général, on a seulement que $g$ est conjugué dans $\mathrm{SO}_n(\R)$ à $t$ ou à $\overline{t}=(t_1,\dots ,t_{m-1},\overline{t_m})$. On a la relation : $\mathrm{tr}(\overline{t}\vert W)=\mathrm{tr}(t\vert W^c)$, où la représentation $(\rho^c, W^c)$ est définie par $(g\mapsto \rho (cgc^{-1}),W)$, pour $c\in \mathrm{O}_n(\R) \setminus \mathrm{SO}_n(\R)$. En terme de plus hauts poids, on a : $V_\lambda^c = V_{\overline{\lambda}}$, où $\overline{\lambda}=(\lambda_1,\dots ,-\lambda_m)$ si $\lambda=(\lambda_1,\dots ,\lambda_m)$.
\\ \indent On peut donc calculer pour tout $g\in \mathrm{SO}_n(\R)$ la quantité : $\mathrm{tr}(g\vert V_\lambda)+\mathrm{tr}(g\vert V_\lambda^c)$, et donc $\mathrm{tr}(\mathrm{T}_A\vert \mathcal{M}_W(\mathrm{SO}_n))+\mathrm{tr}(\mathrm{T}_A\vert \mathcal{M}_{W^c}(\mathrm{SO}_n))$.
\\ \indent Le lemme suivant montre qu'on a en fait la relation : $\mathrm{tr}(\mathrm{T}_A\vert \mathcal{M}_W(\mathrm{SO}_n))=\mathrm{tr}(\mathrm{T}_A\vert \mathcal{M}_{W^c}(\mathrm{SO}_n))$, et donc $\mathrm{tr}(\mathrm{T}_A\vert \mathcal{M}_W(\mathrm{SO}_n))=\frac{\mathrm{tr}(\mathrm{T}_A\vert \mathcal{M}_W(\mathrm{SO}_n))+\mathrm{tr}(\mathrm{T}_A\vert \mathcal{M}_{W^c}(\mathrm{SO}_n))}{2}$.
\begin{lem} Soient $s\in \mathrm{O}_n(\R)\setminus \mathrm{SO}_n(\R)$, et $W$ une représentation de $\mathrm{SO}_n(\R)$. L'application $f\mapsto (L\mapsto f(s(L)))$ induit une bijection $\C$-linéaire $\mathcal{M}_W (\mathrm{SO}_n) \rightarrow \mathcal{M}_{W^s}(\mathrm{SO}_n)$. Cette bijection commute à l'opérateur de Hecke $\mathrm{T}_A$ pour tout $A$.
\end{lem}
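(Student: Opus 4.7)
Le plan est de vérifier directement les trois assertions du lemme par des vérifications formelles, en exploitant la définition de la représentation tordue $(\rho^s, W^s)$ donnée au paragraphe précédent, à savoir $\rho^s(g) = \rho(sgs^{-1})$, qui a bien un sens car $s \mathrm{SO}_n(\R) s^{-1} = \mathrm{SO}_n(\R)$.

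Première étape : je poserais $\Phi_s(f)(L) = f(s\cdot L)$ pour $f \in \mathcal{M}_W(\mathrm{SO}_n)$ et $L \in \mathcal{L}_n$, et je vérifierais que $\Phi_s(f) \in \mathcal{M}_{W^s}(\mathrm{SO}_n)$. L'application $\Phi_s$ est manifestement $\C$-linéaire, et $s\cdot L$ appartient à $\mathcal{L}_n$ car $s \in \mathrm{O}_n(\R)$ préserve la structure euclidienne et donc les invariants qui caractérisent $\mathcal{L}_n$ (parité, déterminant). L'équivariance se vérifie par le calcul suivant : pour $\gamma \in \mathrm{SO}_n(\R)$,
$$\Phi_s(f)(\gamma L) = f(s\gamma L) = f\bigl((s\gamma s^{-1})\cdot sL\bigr) = \rho(s\gamma s^{-1})\, f(sL) = \rho^s(\gamma)\,\Phi_s(f)(L),$$
où l'on utilise que $s\gamma s^{-1} \in \mathrm{SO}_n(\R)$ et la définition de $\rho^s$.

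Deuxième étape : je construirais explicitement l'inverse en appliquant la même construction à $s^{-1}$ (qui est aussi dans $\mathrm{O}_n(\R)\setminus \mathrm{SO}_n(\R)$). La composition $\Phi_{s^{-1}}\circ \Phi_s$ envoie $f$ sur $L\mapsto f(s\cdot s^{-1}L) = f(L)$, ce qui est l'identité sur $\mathcal{M}_W(\mathrm{SO}_n)$ ; et symétriquement pour l'autre composition. On obtient donc bien une bijection $\C$-linéaire.

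Troisième étape : pour la commutation avec $\mathrm{T}_A$, j'observerais que $L' \mapsto sL'$ est une bijection de l'ensemble $\mathrm{vois}_A(L)$ sur $\mathrm{vois}_A(sL)$, car être $A$-voisin ne dépend que des quotients $L_1/(L_1\cap L_2)$ et $L_2/(L_1\cap L_2)$ (définition invariante par toute isométrie de $\R^n$). Un changement de variable immédiat donne alors
$$\Phi_s(\mathrm{T}_A(f))(L) = \mathrm{T}_A(f)(sL) = \sum_{L''\in\mathrm{vois}_A(sL)} f(L'') = \sum_{L'\in \mathrm{vois}_A(L)} f(sL') = \mathrm{T}_A(\Phi_s(f))(L).$$

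Il n'y a pas d'obstacle sérieux dans cette preuve : chaque étape se réduit à un déroulement mécanique des définitions. Le seul point demandant un minimum d'attention est de bien suivre la conjugaison par $s$ dans le calcul d'équivariance de la première étape, et de s'assurer que la notion de $A$-voisinage est préservée par n'importe quel élément de $\mathrm{O}_n(\R)$ (et non seulement de $\mathrm{SO}_n(\R)$).
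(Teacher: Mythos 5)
Votre démonstration est correcte et suit essentiellement la même démarche que celle du texte : même calcul d'équivariance via $f(s\gamma L)=f(s\gamma s^{-1}\cdot sL)=\rho^s(\gamma)f(sL)$, même réciproque donnée par $s^{-1}$, et même observation que $s(\mathrm{vois}_A(L))=\mathrm{vois}_A(s(L))$ pour la commutation avec $\mathrm{T}_A$. Rien à redire.
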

\begin{proof}
Pour simplifier, on note $\varphi$ l'application définie par $\varphi (L) = f(s(L))$. Pour tout $g\in \mathrm{SO}_n(\R)$, on a :
$$\varphi (gL) = f(sgL)=f(sgs^{-1}\cdot sL) = \rho^s (g) \varphi(L).$$
L'application introduite dans le lemme est donc bien une application $\C$-linéaire de $\mathcal{M}_W (\mathrm{SO}_n)$ dans $\mathcal{M}_{W^s}(\mathrm{SO}_n)$. Elle est évidemment bijective, de réciproque $f\mapsto (L\mapsto f(s^{-1} L)))$.
\\ \indent On voit dans la définition des $A$-voisins que : $s\left( \mathrm{vois}_A(L) \right) = \mathrm{vois}_A \left( s(L) \right)$, ce qui conclut que l'application précédente commute bien à l'opérateur $\mathrm{T}_A$. 
\end{proof}
\section{Algorithmes de calculs et résultats pour $p$ impair.}\label{4}
\indent Soient $L$ un réseau pair de $\R^n$, $R=R(L)$ son ensemble de racines, et $W$ son groupe de Weyl. Pour les réseaux étudiés dans cette partie (à savoir $\mathrm{E}_7$, $\mathrm{E}_8$ et $\mathrm{E}_8\oplus \mathrm{A}_1$), $R$ est un système de racines qui engendre $\Z$-linéairement le réseau $L$, et le groupe $W$ est égal au groupe $\mathrm{O}(L)$ (d'après la proposition \ref{Weyl et SO}). On note $W^+=\mathrm{SO}(L)$ le sous-groupe de $W$ des éléments de déterminant $1$.
\\ \indent On désigne par $q$ la puissance d'un nombre premier $p$ (dont la parité sera précisée lorsque cela sera nécessaire).
\subsection{Présentation des algorithmes de calculs.}\label{4.1}
\indent Les algorithmes que l'on construit ci-dessous ont trois objectifs :
\\ \begin{tabular}{rp{14cm}}
$(i)$ & Donner une description facile à manipuler du groupe de Weyl $W$. \\
$(ii)$ & Regrouper les orbites de $C_L(\Z/q\Z)$ pour l'action du groupe $W^+$, et pour chaque orbite donner un représentant et le cardinal de l'orbite.\\
$(iii)$ & Pour chaque représentant d'une orbite trouvée à l'étape $(ii)$, expliciter le $q$-voisin associé $L'$ et une transformation $g\in \mathrm{SO}_n(\R)$ telle que : $L'=g ( L)$.\\
\end{tabular}
\indent Les parties $(i)$ et $(iii)$ sont utilisées aussi bien pour le cas où $p=2$ que pour le cas où $p$ est impair. La partie $(ii)$ n'est utilisée que pour le cas où $p$ est impair (et on détaille au chapitre \ref{5} le cas où $p=2$).
\subsection{La création du groupe de Weyl.}\label{4.2}
\subsubsection{Principe utilisé.}
\indent On souhaite pouvoir utiliser dans nos algorithmes le groupe de Weyl de $R$. On cherche un moyen de parcourir tout le groupe $W$ rapidement et sans avoir à allouer une mémoire trop conséquente.
\\ \indent On suppose donné l'ensemble $R$ des racines du réseau $L$, ainsi qu'un système de racines simples $(v_1,\dots ,v_n)$. On associe à ces racines simples les réflexions $(s_1,\dots ,s_n)$, la chambre de Weyl $C$ dont les murs sont les hyperplans laissés stables par un des $s_i$, $\rho$ un élément de $C$, et $l:W\rightarrow \N$ la longueur associée à ces racines simples. Le groupe $W$ est engendré par les réflexions $s_i$, et on suppose connu le cardinal de $W$.
\\ \indent On suppose aussi que l'on possède un sous-groupe $W'$ de $W$, dont les éléments sont facile à expliciter, et dont on connaît le cardinal. Pour tout élément $w\in W$, on définit son image dans le ``quotient" $W/W'$ comme l'ensemble : $\overline{w} = \{ \gamma \circ w \vert \gamma \in W' \}$. On suppose qu'il existe une fonction $\Phi_{n}$ suffisamment simple au sens algorithmique, définie sur $W$ et à valeurs dans un ensemble que l'on précisera, telle que :
$$(\forall w_1,w_2 \in W) \ \overline{w_1}=\overline{w_2} \Leftrightarrow \Phi_{n}(w_1) = \Phi_{n} (w_2).$$
\indent L'algorithme expliqué ci-dessous nous donne un ensemble $H_{n}$ de représentants du quotient $W/W'$ (le cardinal de $H_{n}$ étant connu, avec : $\vert H_{n} \vert = \vert W \vert /\vert W' \vert $).
\\ \indent L'objectif final est de pouvoir écrire les éléments de $W$ sous la forme $w\circ h$, pour $w\in W'$ et $h\in H_n$.
\\ \\ \textbf{Description de l'algorithme :}
\\ \indent Pour créer notre ensemble $H_{n}$, on va construire une suite d'ensembles $(S_i)$ avec $\emptyset = S_0 \subset S_1 \subset \dots \subset S_r =H_n$, avec $\vert S_i \vert =i$, et donc $r=\vert H_n\vert = \vert W\vert / \vert W' \vert$. La construction des $S_i$ se fait récursivement. On suppose que l'on a déjà construit l'ensemble $S_i$ (avec $0\leq i\leq r-1$), et on parcourt le groupe $W$ en commençant par les éléments de plus petites longueurs. Pour chaque élément $w$ parcouru par notre algorithme, on regarde son image $\overline{w} \in W/W'$ :
\\ \indent - si $\overline{w} \in \{ \overline{h} \vert h\in S_i \}$ (c'est-à-dire si $\Phi_{n}(w) \in  \{ \Phi_{n}(h) \vert h\in S \}$) : on passe à l'élément suivant dans $W$.
\\ \indent - si $\overline{w} \notin \{ \overline{h} \vert h\in S \}$ (c'est-à-dire si $\Phi_{n}(w) \notin  \{ \Phi_{n}(h) \vert h\in S \}$) : on pose $S_{i+1} = S_i\cup \{ w\}$ .
\\ \indent On arrête notre algorithme lorsque l'on a construit l'ensemble $S_r$, et on pose $H_n = S_r$. L'ensemble $H_n$ ainsi créé correspond à l'ensemble des représentants de $W/W'$ de plus petites longueurs.
\\ \\ \indent Notre seul problème est donc de parcourir $W$. Pour cela, on pose pour tout $j\in \N$ : $W_j=\{ w\in W \vert l(w) =j\}$. On connaît déjà les ensembles $W_0$ et $W_1$ (qui correspondent respectivement à $\{ \mathrm{id} \}$ et à $\{ s_i \vert i=1,\dots ,n\}$). On détermine récursivement tous les ensembles $W_i$ grâce à l'application :
$$
	\begin{array}{rcl}
		W_j\times W_1 & \rightarrow & W_{j-1}\cup W_{j+1} \\
		(w,s_i) & \mapsto & s_i \circ w \\
	\end{array}
$$
dont l'image contient $W_{j+1}$. Pour savoir si l'image d'un élément est bien dans $W_{j+1}$, on utilise le lemme suivant (conséquence immédiate du $(ii)$ de la proposition \ref{longueur}) :
\begin{lem} Soient $j\in \N$, $w\in W_j$ et $i\in \{1,\dots ,n\}$. Alors on a l'équivalence :
$$l(s_i\circ w) =j+1 \Leftrightarrow \left( v_i \cdot w(\rho) \right) \cdot \left( v_i \cdot \rho \right) \geq 0.$$
\end{lem}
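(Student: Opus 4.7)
Le plan est de déduire directement l'énoncé du $(ii)$ de la proposition \ref{longueur}, en traduisant la condition géométrique « les chambres $C$ et $w(C)$ sont du même côté du mur $h_i$ » en la condition algébrique portant sur les signes de $v_i \cdot \rho$ et $v_i \cdot w(\rho)$.

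Plus précisément, je commencerais par rappeler que $s_i$ est la réflexion orthogonale par rapport à l'hyperplan $h_i = v_i^\perp$, qui est l'un des murs de $C$. Un demi-espace délimité par $h_i$ se caractérise par le signe de la forme linéaire $x \mapsto v_i \cdot x$, et donc deux points de $V\setminus h_i$ sont du même côté de $h_i$ si et seulement si les quantités $v_i \cdot x$ correspondantes sont de même signe (c'est-à-dire de produit strictement positif).

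Ensuite, comme $\rho \in C$ et que $C$ est un ouvert qui ne rencontre aucun de ses murs, on a $v_i \cdot \rho \neq 0$, et ce nombre détermine le côté de $h_i$ sur lequel se trouve la chambre $C$. De même, l'isométrie $w$ envoie $C$ sur la chambre $w(C)$, qui contient $w(\rho)$ ; en particulier $v_i \cdot w(\rho) \neq 0$, et son signe détermine le côté de $h_i$ sur lequel se trouve $w(C)$. Les chambres $C$ et $w(C)$ sont donc du même côté de $h_i$ si et seulement si
$$\bigl(v_i \cdot w(\rho)\bigr) \cdot \bigl(v_i \cdot \rho\bigr) > 0,$$
condition équivalente à l'inégalité large $\geq 0$ énoncée puisqu'aucun des deux facteurs ne s'annule.

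Il ne reste plus qu'à appliquer la proposition \ref{longueur} : le point $(i)$ donne $l(s_i\circ w) \in \{j-1, j+1\}$ (avec $l(w)=j$), et le point $(ii)$ dit que $l(s_i \circ w) > l(w)$ — c'est-à-dire $l(s_i \circ w) = j+1$ — si et seulement si $C$ et $w(C)$ sont du même côté de $h_i$, ce qui donne exactement l'équivalence voulue. La seule subtilité à vérifier est l'identification correcte du mur associé à $s_i$ et l'observation que $\rho$ et $w(\rho)$ sont bien des témoins des côtés respectifs de $C$ et $w(C)$ ; c'est automatique ici car $\rho \in C$ par hypothèse.
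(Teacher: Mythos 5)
Votre démonstration est correcte et suit exactement la voie prévue par l'article, qui présente ce lemme comme conséquence immédiate du $(ii)$ de la proposition \ref{longueur} : vous explicitez simplement la traduction de la condition « $C$ et $w(C)$ sont du même côté du mur $h_i=v_i^\perp$ » en termes de signes de $v_i\cdot \rho$ et $v_i\cdot w(\rho)$, en notant que ces deux quantités sont non nulles car $\rho$ et $w(\rho)$ appartiennent à des chambres, lesquelles ne rencontrent aucun hyperplan de réflexion. Rien à redire.
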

\indent Un élément de $W_{j+1}$ peut s'écrire de différentes manières sous la forme $s\circ w$ (avec $s\in W_1$ et $w\in W_j$). Afin de ne pas parcourir plusieurs fois le même élément de $W_{j+1}$, on utilise le corollaire \ref{transitif} :
$$(\forall w_1,w_2\in W_j)(\forall s_{i_1},s_{i_2}\in W_1)\  s_{i_1}\circ w_1=s_{i_2}\circ w_2 \Leftrightarrow s_{i_1}\circ w_1 (\rho) = s_{i_2}\circ w_2 (\rho).$$
\indent Concrètement, si on écrit les éléments $w$ de $W_j$ sous la forme $w=s_{a_j} \circ \dots \circ s_{a_1}$ pour $a=(a_1,\dots,a_j)\in \{1,\dots ,n\}^j$, alors on gardera seulement l'écriture où $a$ est le plus petit pour l'ordre lexicographique.
\\ \indent Afin de ne pas surcharger la mémoire allouée, et de limiter les calculs, on représentera tout élément $w\in W_j$ sous la forme d'un couple $(a,v)$, où $a$ est de forme énoncée ci-dessus, et $v=w(\rho)\in \R^n$.
\\ \indent On détaille cette méthode dans le cas de $\mathrm{E}_7$ et de $\mathrm{E}_8$ pour aider à mieux comprendre.
\subsubsection{Le cas de $\mathrm{E}_7$.}
\indent Dans la description faite de $\mathrm{E}_7$ au paragraphe \ref{2.1}, on constate que l'on a l'inclusion $\mathrm{A}_7 \subset \mathrm{E}_7$. Le groupe des permutations $\mathcal{S}_8$ agit sur $\mathrm{A}_7$ et sur $\mathrm{E}_7$ par permutation des coordonnées. On a en fait $\mathrm{O}(\mathrm{A}_7) = \{ \pm \mathrm{id}\} \times \mathcal{S}_8$.
\\ \indent On note $W'=\{ \pm \mathrm{id} \} \times \mathcal{S}_8 = \mathrm{O}(\mathrm{A}_7)\subset W$. L'ensemble $W/W'$ possède $\vert W \vert / (2\cdot 8!) = 36$ éléments (comme $\vert W\vert = 2^{10}\cdot 3^4\cdot  5\cdot 7$, d'après \cite[Planche VI]{Bo} par exemple). Le système de racines simples que l'on choisit est donné par les $v_i$ avec :
$$ \left\{\begin{aligned}
v_1 =1/2 \cdot(-1, 1, 1, 1, -1, -1, -1, 1)\\
v_2 =1/2 \cdot(-1, 1, 1, -1, -1, 1, 1, -1)\\
v_3 =1/2 \cdot(1, -1, -1, 1, -1, 1, 1, -1)\\
v_4 =1/2 \cdot(1, -1, 1, -1, 1, -1, -1, 1)\\
v_5 =1/2 \cdot(-1, 1, -1, 1, 1, -1, 1, -1)\\
v_6 =1/2 \cdot(1, 1, -1, -1, -1, 1, -1, 1)\\
v_7 =1/2 \cdot(-1, -1, 1, 1, 1, 1, -1, -1)\\
\end{aligned} \right.$$
\indent L'élément $\rho$ associé à ce système de racines simples est  :
$$\rho = (29,21,13,5,-3,-11,-19,-35) .$$
\indent La fonction $\Phi_7$ que l'on va utiliser est donnée par la proposition-définition évidente suivante :
\begin{propdef} Soit $E$ l'ensemble des multi ensembles de $\R$. On définit la fonction $\phi_7 : \R^8 \rightarrow E$ par :
$$\phi_7\left( (v_1,\dots,v_8)\right)=\{ \{ v_1,\dots ,v_8 \}\} .$$
\indent Si on se donne  $v_1,v_2\in \R^8$, on a l'équivalence :
$$\{ w(v_1) \vert w\in W' \} = \{ w(v_2) \vert w\in W' \} \Leftrightarrow \phi_7(\pm v_1) = \phi_7(\pm v_2) .$$
\indent Ainsi, la fonction $\Phi_7 : W\rightarrow \{ \{e_1,e_2\} \vert e_1,e_2\in E\}$ donnée par :
$$\Phi_7 (w) = \{ \phi_7 (w(\rho)) , \phi_7 (-w(\rho)) \}$$
vérifie l'équivalence :
$$(\forall w_1,w_2\in W) \ \overline{w_1}=\overline{w_2} \Leftrightarrow \Phi_7 (w_1) =  \Phi_7 (w_2).$$ 
\end{propdef}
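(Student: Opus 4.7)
L'argument se décompose en deux étapes indépendantes qui se combinent naturellement. On traitera d'abord l'équivalence concernant $\phi_7$ en exploitant la description explicite de $W' = \{\pm \mathrm{id}\} \times \mathcal{S}_8$ comme groupe de permutations signées, puis on en déduira celle concernant $\Phi_7$ et le quotient $W/W'$ via le corollaire \ref{transitif}.

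Pour la première équivalence, l'observation-clé est que l'action de $W' = \mathrm{O}(\mathrm{A}_7)$ sur $\R^8$ se fait par permutation des coordonnées (via le facteur $\mathcal{S}_8$) et par changement de signe global (via le facteur $\{\pm \mathrm{id}\}$). Deux vecteurs $v_1, v_2 \in \R^8$ sont donc dans la même $W'$-orbite si et seulement s'il existe $\varepsilon \in \{\pm 1\}$ et $\sigma \in \mathcal{S}_8$ tels que $v_2 = \varepsilon \cdot \sigma(v_1)$. Comme $\phi_7(v)$ est précisément le multi-ensemble des coordonnées de $v$ (invariant sous $\mathcal{S}_8$ mais pas sous négation globale), cette condition se traduit immédiatement par $\phi_7(v_2) \in \{\phi_7(v_1), \phi_7(-v_1)\}$, c'est-à-dire par l'égalité $\phi_7(\pm v_1) = \phi_7(\pm v_2)$.

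Pour la seconde équivalence, on relie la condition $\overline{w_1} = \overline{w_2}$ à la première partie via l'évaluation en $\rho$. Par définition du quotient $W/W'$, cette égalité équivaut à l'existence d'un $\gamma \in W'$ vérifiant $w_2 = \gamma \circ w_1$ ; en évaluant en $\rho$, ceci entraîne $w_2(\rho) = \gamma(w_1(\rho))$, donc l'appartenance de $w_1(\rho)$ et $w_2(\rho)$ à la même $W'$-orbite. Réciproquement, si cette dernière condition est vérifiée, disons $w_2(\rho) = \gamma(w_1(\rho)) = (\gamma \circ w_1)(\rho)$ pour un certain $\gamma \in W'$, alors l'injectivité de l'application $w \mapsto w(\rho)$ fournie par le corollaire \ref{transitif} (valable puisque $\rho$ appartient à la chambre ouverte $C$) donne directement $w_2 = \gamma \circ w_1$. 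Combinée à la première équivalence appliquée aux vecteurs $w_1(\rho)$ et $w_2(\rho)$, cela fournit précisément l'équivalence avec $\Phi_7(w_1) = \Phi_7(w_2)$.

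Aucune étape ne présente d'obstacle réel : la seule subtilité est de bien distinguer l'action ambiante de $W = \mathrm{O}(\mathrm{E}_7)$ et celle, restreinte, de $W' = \mathrm{O}(\mathrm{A}_7)$, et de vérifier que les plongements de $\mathrm{A}_7$ et de $\mathrm{E}_7$ dans $\R^8$ rappelés au paragraphe \ref{2.1} font bien apparaître $W'$ comme groupe des permutations signées diagonales. L'ingrédient essentiel est le corollaire \ref{transitif}, qui permet de transporter une question sur le quotient $W/W'$ vers une question purement combinatoire sur les orbites de $W'$ dans $\R^8$, à laquelle la fonction $\phi_7$ apporte une réponse immédiate.
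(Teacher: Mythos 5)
Votre démonstration est correcte et suit exactement la voie que l'auteur considère comme évidente (le paragraphe \ref{4.2} énonce ce résultat sans preuve) : identification de la $W'$-orbite d'un vecteur via le multi-ensemble de ses coordonnées au signe global près, puis passage au quotient $W/W'$ par évaluation en $\rho$ et injectivité de $w\mapsto w(\rho)$ donnée par le corollaire \ref{transitif}. Rien à redire.
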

\indent Notons $s_1,\dots,s_7$ les symétries orthogonales associées à $v_1,\dots,v_7$. On note $[i_1,\dots ,i_j]$ l'élément $s_{i_j}\circ \dots \circ s_{i_1}$ (avec la notation $[\ ]$ pour $\mathrm{id}$). Alors notre algorithme donne pour $H_7$ l'ensemble :
$$\begin{aligned}
&\{ [\ ],[1],[2],[3],[4],[5],[6],[7],[1,2],[1,4],[1,5],[1,6],[1,7],[2,3],[2,6],[3,5],[3,6],[3,7],[4,6],\\
&[4,7],[1,2,6],[1,4,6],[1,4,7],[2,3,6],[3,1,4],[4,2,3],[4,3,5],[5,4,6],[1,5,4,6],[3,1,4,6],\\
&[3,1,4,7],[4,2,3,6],[3,1,5,4,6],[5,4,2,3,6],[4,3,1,5,4,6],[2,4,3,1,5,4,6] \} \\
	\end{aligned}
$$
\\ \indent Dans la suite, lorsque l'on utilisera le groupe $W(\mathrm{E}_7)$, on écrira ses éléments grâce à la bijection :
$$
	\begin{array}{rcl}
		\mathcal{S}_8 \times \{\pm \mathrm{id}\} \times H_7 & \overset{\sim}{\longrightarrow} & W \\
		(\sigma,\varepsilon ,h) & \mapsto & \sigma\circ \varepsilon \circ h.\\
	\end{array}
$$

\subsubsection{Le cas de $\mathrm{E}_8$.}
\indent Dans la description de $\mathrm{E}_8$ faite au paragraphe \ref{2.1}, on constate que l'on a l'inclusion $\mathrm{D}_8 \subset \mathrm{E}_8$. Le groupe $\mathcal{S}_8$ agit sur $\mathrm{D}_8$ et sur $\mathrm{E}_8$ par permutation des coordonnées. On a : $W(\mathrm{D}_8)=\mathcal{S}_8 \ltimes \left( \{ \pm 1\}^8 \right) ^0$, où $\left( \{ \pm 1\}^8\right) ^0$ désigne le groupe de matrices diagonales de $\mathrm{M}_8(\Z)$ de déterminant $1$.
\\ \indent On note $W'=\mathcal{S}_8 \ltimes \left( \{ \pm 1\}^8\right) ^0=W(\mathrm{D}_8)\subset W$. L'ensemble $W/W'$ possède $\vert W \vert /(2^7\cdot 8!) = 135$ éléments (comme $\vert W \vert = 2^{14}\cdot 3^5\cdot 5^2\cdot 7$, d'après \cite[Planche VII]{Bo} par exemple). On pose enfin $W''=\mathcal{S}_8 \ltimes \{ \pm 1\}^8 = \mathrm{O}(\mathrm{D}_8)\supset W'$. Le système de racines simples que l'on choisit est donné par les $v_i$ avec :
\\ \indent - les $v_i$ pour $i=1,\dots ,7$ sont les mêmes que pour $\mathrm{E}_7$,
\\ \indent - le vecteur $v_8$ est : $(0, 0, 0, 0, 0, 0, 1, 1)$.
\\ \indent L'élément $\rho$ associé à ce système de racines simples est :
$$\rho = (29,25,21,17,13,9,5,-3).$$
\indent On prendra garde au fait que : $W'' \not\subset W$. Si on voit $W$ comme un sous-ensemble de $\mathrm{O}(V)$, alors les groupes $W'$ et $W''$ ont une action bien définie sur $W$ par composition à gauche. Le lemme suivant donne la relation entre les orbites pour ces deux actions :
\begin{lem}\label{lemmed} Soient $w_1, w_2 \in W$. On a l'équivalence :
$$W'\circ w_1 = W'\circ w_2 \Leftrightarrow W'' \circ w_1=  W''.\circ w_2 .$$
\end{lem}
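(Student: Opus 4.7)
Le plan est d'établir séparément les deux implications. L'implication directe ($\Rightarrow$) est immédiate : comme $W'\subset W''$, l'orbite de $w_i$ sous $W'$ est contenue dans son orbite sous $W''$, et l'égalité des orbites sous $W'$ entraîne donc celle des orbites sous $W''$.

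Pour la réciproque, on procèdera par réduction. Si $W''\circ w_1=W''\circ w_2$, alors il existe $g\in W''$ avec $g\circ w_1=w_2$, c'est-à-dire $g=w_2\circ w_1^{-1}$. Comme $W$ est un groupe et que $w_1,w_2\in W$, on a $g\in W\cap W''$. Il suffira donc de prouver l'inclusion $W\cap W''\subset W'$ (l'inclusion réciproque étant évidente puisque tout élément de $W'$ préserve $\mathrm{E}_8$ et appartient à $W''$).

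Pour établir cette inclusion, on écrira un élément $g\in W''$ sous la forme $g=\sigma\circ \varepsilon$ avec $\sigma\in \mathcal{S}_8$ et $\varepsilon=(\varepsilon_i)\in \{\pm 1\}^8$. Un tel $g$ préserve automatiquement $\mathrm{D}_8$ (les permutations et les changements de signe préservent la parité de la somme des coordonnées entières), donc la condition $g\in W$ équivaut à $g(e)\in \mathrm{E}_8$, où $e=\tfrac{1}{2}(1,\dots ,1)$. Un calcul direct donne $g(e)=\tfrac{1}{2}(\varepsilon_{\sigma^{-1}(1)},\dots ,\varepsilon_{\sigma^{-1}(8)})$ ; en écrivant ce vecteur comme $e-\delta$ avec $\delta \in \{0,1\}^8$, la condition $e-\delta\in \mathrm{E}_8=\mathrm{D}_8+\Z\cdot e$ équivaudra à $\delta\in \mathrm{D}_8$, soit à ce que le nombre de coordonnées $\varepsilon_i$ valant $-1$ soit pair, c'est-à-dire à $\prod_i \varepsilon_i=1$. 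Ceci signifie exactement $\varepsilon\in \left( \{\pm 1\}^8\right)^0$, donc $g\in W'$, et la preuve sera achevée. L'étape cruciale, l'identification $W\cap W''=W'$, ne présente aucune véritable difficulté : c'est une conséquence directe de la construction même de $\mathrm{E}_8$ par adjonction de $e$ à $\mathrm{D}_8$.
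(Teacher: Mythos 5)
Votre preuve est correcte et suit essentiellement la même démarche que celle du texte : les deux arguments se ramènent au fait que $w_2\circ w_1^{-1}\in W\cap W''$ et que $W\cap W''=W'$ (le texte formule cela via la décomposition $W''=W'\sqcup (W'\circ d)$ et l'affirmation $(W'\circ d)\cap W=\emptyset$, qui est équivalente). Vous apportez en prime la vérification explicite, par le calcul de $g(e)$, de ce point que le texte se contente d'affirmer.
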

\begin{proof}
On procède directement par équivalences.
\\ \indent Soit $d\in \mathrm{O}(V)$ dont la matrice est la matrice diagonale ayant $-1$ comme premier terme, et $1$ partout ailleurs. Alors on a : $W'' = W'\sqcup (W'\circ d)$. Ainsi, on déduit :
$$	W''\circ w_1 = W'' \circ w_2 \Leftrightarrow  w_1\circ w_2^{-1}\in W'' \Leftrightarrow \left\{
		\begin{aligned}
			& w_1\circ w_2^{-1}\in W' \\
			& \text{ou }w_1\circ w_2^{-1}\in  W' \circ d\\
		\end{aligned}
		\right.		.$$
\indent Le cas $w_2^{-1}\cdot w_1\in  W'\circ d$ est impossible, car $ (W' \circ d) \cap W =\emptyset$ et on a l'équivalence cherchée. 
\end{proof}
\indent La fonction $\Phi_8$ que l'on va utiliser est donnée par la proposition-définition évidente suivante :
\begin{propdef} Soit $E$ l'ensemble des multi ensembles de $\R$. On définit la fonction $\phi_8 : \R^8 \rightarrow E$ par :
$$\phi_8\left( (v_1,\dots,v_8)\right)=\{ \{ v_1^2,\dots ,v_8^2 \}\} .$$
\indent Si on se donne  $v_1,v_2\in \R^8$, on a l'équivalence :
$$\{ \gamma(v_1) \vert \gamma\in W'' \} = \{ \gamma(v_2) \vert \gamma\in W'' \} \Leftrightarrow \phi_8(v_1) = \phi_8(v_2).$$
\indent Ainsi, la fonction $\Phi_8 : W\rightarrow  E$ donnée par :
$$\Phi_8 (w) = \phi_8 (w(\rho))$$
vérifie l'équivalence :
$$(\forall w_1,w_2\in W) \ \overline{w_1}=\overline{w_2} \Leftrightarrow \Phi_8 (w_1) =  \Phi_8 (w_2).$$ 
\end{propdef}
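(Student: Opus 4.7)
The first equivalence is essentially tautological. The group $W''=\mathcal{S}_8\ltimes\{\pm 1\}^8$ acts on $\R^8$ by arbitrary signed permutations of coordinates, so the $W''$-orbit of a vector $v=(v_1,\dots,v_8)$ is precisely the set of vectors obtained by permuting the $v_i$ and independently flipping their signs. Two vectors lie in the same orbit iff the multisets $\{|v_1|,\dots,|v_8|\}$ (equivalently $\{v_1^2,\dots,v_8^2\}$) agree, which is exactly the content of $\phi_8(v_1)=\phi_8(v_2)$.

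For the second equivalence, the plan is to combine part (1) with Lemma~\ref{lemmed}. That lemma amounts to the identity $W''\cap W=W'$ (since $W'd\cap W=\emptyset$), so for $w_1,w_2\in W$, the condition $\overline{w_1}=\overline{w_2}$, i.e.\ $w_2w_1^{-1}\in W'$, is equivalent to $w_2w_1^{-1}\in W''$. The forward direction of the equivalence to prove is then immediate: if $w_2w_1^{-1}\in W'\subset W''$, then this element takes $w_1\rho$ to $w_2\rho$, placing them in the same $W''$-orbit, so $\phi_8(w_1\rho)=\phi_8(w_2\rho)$.

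The converse is the delicate step. Given $\phi_8(w_1\rho)=\phi_8(w_2\rho)$, part (1) produces some $\gamma\in W''$ with $\gamma(w_1\rho)=w_2\rho$, and the aim is to identify $\gamma$ with the element $w_2w_1^{-1}\in W$ (which also realizes this image), whereupon $w_2w_1^{-1}\in W\cap W''=W'$ and we are done. I would exploit the specific choice $\rho=(29,25,21,17,13,9,5,-3)$, whose eight coordinates have distinct nonzero absolute values: this ensures that the $W''$-stabilizer of $\rho$ is trivial, hence the $W''$-action on the orbit is free, so $\gamma\in W''$ with the prescribed behavior on $w_1\rho$ is uniquely determined once it exists. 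Combined with Corollary~\ref{transitif}, which asserts that an element of the Weyl group $W$ is determined by its action on $\rho$, one argues that the only candidate for this unique $\gamma$ is $w_2w_1^{-1}$ itself, which therefore lies in $W''\cap W=W'$.

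The main obstacle I foresee is bridging the two uniqueness statements — one in $W''$ (free action on the orbit of $\rho$) and one in $W$ (Corollary~\ref{transitif}) — into the statement that $w_2w_1^{-1}\in W''$: a priori, distinct orthogonal transformations, one from $W$ and one from $W''$, could send $w_1\rho$ to $w_2\rho$ while differing on $(w_1\rho)^{\perp}$. Resolving this amounts to showing that for every $w_1\in W$, the intersection $W''\cdot(w_1\rho)\cap W\rho$ reduces to $W'\cdot(w_1\rho)$, and I expect the argument to hinge on the regularity of $\rho$ together with the concrete form of $W''=\mathcal{S}_8\ltimes\{\pm 1\}^8$ relative to the lattice $\mathrm{E}_8$; in the cases where $w_1\rho$ has a zero coordinate or repeated absolute values, the stabilizer growth in $W'$ is seen to match that in $W''$, so equality of orbits still holds.
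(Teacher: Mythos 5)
Votre traitement de la première équivalence est correct, et votre réduction de la seconde est bien posée : le sens direct suit de $W'\subset W''$, et pour la réciproque vous identifiez exactement le bon énoncé à établir, à savoir $W''\cdot(w_1\rho)\cap W\rho=W'\cdot(w_1\rho)$. Mais ce point crucial n'est pas démontré dans votre proposition : vous écrivez « I expect the argument to hinge on\dots » et « is seen to match\dots », sans argument. Deux remarques précises. D'abord, votre affirmation de liberté de l'action de $W''$ porte sur l'orbite de $\rho$, alors que c'est le stabilisateur de $w_1\rho$ dans $W''$ qui intervient ; or $w_1\rho$ peut avoir une coordonnée nulle (par exemple $s_e(\rho)=(0,-4,-8,\dots,-32)$ pour $e=\frac12(1,\dots,1)$), auquel cas ce stabilisateur est d'ordre $2$, engendré par le changement de signe de cette coordonnée. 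Ce cas est en fait le cas \emph{facile} : ce générateur appartient à $W''\setminus W'$, donc l'élément $\gamma$ peut toujours être corrigé en un élément de $W'$, et l'on conclut par le corollaire~\ref{transitif}. (Le cas « valeurs absolues répétées » que vous évoquez ne se produit jamais : $w_1\rho$ est régulier pour $R(\mathrm{D}_8)\subset R(\mathrm{E}_8)$, donc ses coordonnées ont des valeurs absolues deux à deux distinctes.)

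Le cas réellement difficile est celui où toutes les coordonnées de $v=w_1\rho$ sont non nulles : le stabilisateur dans $W''$ est alors trivial, $\gamma$ est l'unique élément de $W''$ envoyant $w_1\rho$ sur $w_2\rho$, et il faut exclure $\gamma\in W'\circ d$, c'est-à-dire démontrer que $d(v)\notin W\rho$ pour tout $v\in W\rho$ à coordonnées toutes non nulles. C'est un énoncé non trivial sur l'orbite $W(\mathrm{E}_8)\cdot\rho$ (de façon équivalente : les réseaux $2$-voisins $\mathrm{E}_8$ et $d(\mathrm{E}_8)$ n'ont aucun vecteur de Weyl commun à coordonnées toutes non nulles), qui ne découle ni de la régularité de $\rho$ ni du lemme~\ref{lemmed} — ce dernier compare des classes $W'w$ et $W''w$ d'\emph{éléments du groupe}, pas des orbites de vecteurs, et le pont entre les deux est précisément ce qui manque. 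On peut par exemple montrer, via l'invariant $v\mapsto v\cdot\mathrm{E}_8$, qu'un tel $v$ aurait toutes ses coordonnées paires, puis analyser les vecteurs de Weyl entiers de $\mathrm{E}_8$ ; mais en l'absence d'un tel argument (ou d'une vérification par le calcul, implicite dans la construction de $H_8$ qui produit bien $135$ classes), votre démonstration reste incomplète sur le seul point qui ne soit pas formel.
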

\indent Avec les mêmes notations que pour $\mathrm{E}_7$, notre algorithme donne l'ensemble $H_8$ suivant :
\\ \\ \begin{tabular}{l}
\small $ \{  [\ ], [1], [2], [3], [4], [5], [6], [7], [1, 2], [1, 4], [1, 5], [1, 6], [1, 7], [2, 3], [2, 5], [2, 6], [2, 7], [3, 5], [3, 6],[3, 7], [4, 6],$\\
\small $ [4, 7], [5, 7], [8, 7], [1, 2, 5], [1, 2, 6], [1, 2, 7], [1, 4, 6], [1, 4, 7], [1, 5, 7], [1, 8, 7], [2, 3, 5], [2, 3, 6], [2, 3, 7], [2, 5, 7],$\\
\small $ [2, 8, 7], [3, 1, 4], [3, 5, 7], [3, 8, 7], [4, 2, 3], [4, 2, 5], [4, 3, 5], [4, 8, 7], [5, 4, 6],[5, 8, 7], [6, 5, 7], [1, 2, 5, 7],$\\
\small $ [1, 2, 8, 7], [1, 4, 2, 5], [1, 4, 8, 7], [1, 5, 4, 6], [1, 5, 8, 7], [1, 6, 5, 7],[2, 3, 5, 7],[2, 3, 8, 7], [2, 5, 8, 7], [2, 6, 5, 7],$\\
\small $ [3, 1, 4, 6], [3, 1, 4, 7], [3, 5, 8, 7], [3, 6, 5, 7], [4, 2, 3, 6], [4, 2, 3, 7],[4, 2, 5, 7],[4, 3, 5, 7], [6, 5, 8, 7], [1, 2, 5, 8, 7],$\\
\small $ [1, 2, 6, 5, 7], [1, 4, 2, 5, 7], [1, 6, 5, 8, 7], [2, 3, 5, 8, 7], [2, 3, 6, 5, 7],[2, 6, 5, 8, 7], [3, 1, 4, 2, 5], [3, 1, 4, 8, 7],$\\
\small $ [3, 1, 5, 4, 6], [3, 6, 5, 8, 7], [4, 2, 3, 8, 7], [4, 2, 5, 8, 7], [4, 2, 6, 5, 7],[4, 3, 5, 8, 7], [4, 3, 6, 5, 7], [5, 4, 2, 3, 6],$\\
\small $ [7, 6, 5, 8, 7], [1, 2, 6, 5, 8, 7], [1, 4, 2, 5, 8, 7], [1, 4, 2, 6, 5, 7],[1, 7, 6, 5, 8, 7], [2, 3, 6, 5, 8, 7], [2, 7, 6, 5, 8, 7],$\\
\small $ [3, 1, 4, 2, 5, 7], [3, 7, 6, 5, 8, 7], [4, 2, 6, 5, 8, 7], [4, 3, 1, 5, 4, 6],[4, 3, 6, 5, 8, 7], [5, 4, 2, 6, 5, 7], [5, 4, 3, 6, 5, 7],$\\
\small $ [1, 2, 7, 6, 5, 8, 7], [1, 4, 2, 6, 5, 8, 7], [1, 5, 4, 2, 6, 5, 7],[2, 3, 7, 6, 5, 8, 7], [2, 4, 3, 1, 5, 4, 6], [3, 1, 4, 2, 5, 8, 7],$\\
\small $ [3, 1, 4, 2, 6, 5, 7], [4, 2, 7, 6, 5, 8, 7], [4, 3, 7, 6, 5, 8, 7],[5, 4, 2, 6, 5, 8, 7], [5, 4, 3, 6, 5, 8, 7], [1, 4, 2, 7, 6, 5, 8, 7],$\\
\small $ [1, 5, 4, 2, 6, 5, 8, 7], [3, 1, 4, 2, 6, 5, 8, 7],[3, 1, 5, 4, 2, 6, 5, 7], [5, 4, 2, 7, 6, 5, 8, 7], [5, 4, 3, 7, 6, 5, 8, 7],$\\
\small $ [1, 5, 4, 2, 7, 6, 5, 8, 7], [3, 1, 4, 2, 7, 6, 5, 8, 7],[3, 1, 5, 4, 2, 6, 5, 8, 7], [4, 3, 1, 5, 4, 2, 6, 5, 7], [6, 5, 4, 2, 7, 6, 5, 8, 7],$\\
\small $ [6, 5, 4, 3, 7, 6, 5, 8, 7],[1, 6, 5, 4, 2, 7, 6, 5, 8, 7], [2, 4, 3, 1, 5, 4, 2, 6, 5, 7], [3, 1, 5, 4, 2, 7, 6, 5, 8, 7],$\\
\small $ [4, 3, 1, 5, 4, 2, 6, 5, 8, 7],[2, 4, 3, 1, 5, 4, 2, 6, 5, 8, 7], [3, 1, 6, 5, 4, 2, 7, 6, 5, 8, 7], [4, 3, 1, 5, 4, 2, 7, 6, 5, 8, 7],$\\
\small $ [2, 4, 3, 1, 5, 4, 2, 7, 6, 5, 8, 7],[4, 3, 1, 6, 5, 4, 2, 7, 6, 5, 8, 7], [2, 4, 3, 1, 6, 5, 4, 2, 7, 6, 5, 8, 7],$\\
\small $ [5, 4, 3, 1, 6, 5, 4, 2, 7, 6, 5, 8, 7],[2, 5, 4, 3, 1, 6, 5, 4, 2, 7, 6, 5, 8, 7], [4, 2, 5, 4, 3, 1, 6, 5, 4, 2, 7, 6, 5, 8, 7],$\\
\small $[3, 4, 2, 5, 4, 3, 1, 6, 5, 4, 2, 7, 6, 5, 8, 7],[1, 3, 4, 2, 5, 4, 3, 1, 6, 5, 4, 2, 7, 6, 5, 8, 7] \} .$\\
\end{tabular}
\\ \\ \indent Dans la suite, lorsque l'on utilisera le groupe $W(\mathrm{E}_8)$, on écrira ses éléments grâce à la bijection :
$$
	\begin{array}{rcl}
		\mathcal{S}_8 \times \left(\{\pm 1\}^8\right)^0 \times H_8 & \overset{\sim}{\longrightarrow} & W \\
		(\sigma,\varepsilon,h) & \mapsto & \sigma\circ\varepsilon\circ h\\
	\end{array} .
$$

\subsection[La détermination des orbites de $C_L(\Z/q\Z)$ par l'action du groupe de Weyl $W$ \\ et par $W^+$ pour $q$ une puissance de $p$ premier impair.]{La détermination des orbites de $C_L(\Z/q\Z)$ par l'action du groupe de Weyl $W$ et par $W^+$ pour $q$ une puissance de $p$ premier impair.}\label{4.3}
\subsubsection{Méthode utilisée.}
\indent On reprend les notations du corollaire \ref{formuletrace}. Soient $n=7,8$ ou $9$, $L\in \mathcal{L}_n$, $q$ une puissance d'un nombre premier $p$ impair, $C=C_{L}(\Z/q\Z)$, et $R$ l'ensemble des racines de $L$ (qui est un système de racines de $\R^n$). On note $W$ le groupe de Weyl de $R$, et $W'$ un sous-groupe bien choisi de $W$ (que l'on précisera à chaque cas).
\\ \indent Notre algorithme effectue les étapes suivantes :
\\ \indent \textbf{Première étape :} on cherche les orbites de $C$ par l'action de $W'$. On donne pour chaque orbite un représentant ainsi que le cardinal de l'orbite.
\\ \indent \textbf{Deuxième étape :} on rappelle que l'on connaît le sous-ensemble $H_n\subset W$ des représentants de $W/W'$ de plus petites longueurs. On cherche les orbites de $C$ pour l'action de $W$ en faisant agir $H_n$ sur les représentants des orbites de $C$ pour l'action de $W'$ choisis à la première étape. On donne pour chaque orbite un représentant et son cardinal.
\\ \indent \textbf{Troisième étape :} on cherche les orbites de $C$ pour l'action de $W^+$. Ces orbites sont obtenues grâce aux orbites trouvées à la deuxième étape. Par exemple, lorsque $n=7$ ou $9$, les orbites de $C$ pour les actions de $W$ et de $W^+$ sont égales.
\\ \indent On présente en détail ces étapes dans les cas où $n=7,8,9$. Dans ces cas, le réseau $L$ désignera respectivement les réseaux $\mathrm{E}_7$, $\mathrm{E}_8$ et $\mathrm{E}_8\oplus \mathrm{A}_1$.
\\ \indent Dans la suite, si on se donne $\Gamma$ un groupe agissant sur un ensemble $X$, on note pour $x\in X$ l'orbite $\mathcal{O}_\Gamma (x)$ de l'élément $x$ sous l'action de $\Gamma$.
\\ \indent On fixe enfin $p$ un nombre premier impair, et on note par $q$ une puissance de $p$.
\subsubsection{Le cas de $\mathrm{E}_7$.}$  $
\\ \textbf{Première étape :}
\\ \indent On reprend la définition de $\mathrm{E}_7$ donnée au paragraphe \ref{2.1}.
\\ \indent Le nombre $q$ est impair, donc premier à $\vert \mathrm{E}_7 /\mathrm{A}_7 \vert =2$. L'inclusion naturelle $\mathrm{A}_7 \rightarrow \mathrm{E}_7$ induit une bijection : $\mathrm{A}_7 /q \mathrm{A}_7 \overset{\sim}{\rightarrow} \mathrm{E}_7 /q\mathrm{E}_7$.
\\ \indent D'autre part, la surjection $\Z^8 \overset{\sum x_i}{\longrightarrow} \Z$ est scindée de noyau $\mathrm{A}_7$, et on a l'isomorphisme naturel :
$$\mathrm{A}_7 /q\mathrm{A}_7 \simeq \{ (x_1,\dots ,x_8)\in (\Z/q)^8 \vert \sum x_i =0 \}.$$
\indent Au final, la quadrique $C_{\mathrm{E}_7} (\Z/q\Z)$ s'identifie à l'ensemble des $\Z/q$-droites $\Z/q \cdot (x_1,\dots ,x_8)$ de $(\Z/q)^8$ vérifiant : $\sum x_i = \sum x_i^2 =0$.
\\ \indent Par construction, cette identification commute aux actions des groupes orthogonaux considérés, celle de $W'$ étant particulièrement transparente. On a notamment les lemmes suivants, dont la vérification est évidente :
\begin{lem} Soit $x\in C_{\mathrm{E}_7}(\Z/q\Z)$. Alors l'orbite de $x$ par l'action de $W'$ possède un élément $x'=\mathrm{vect}_{\Z/q\Z} (v)$ pour $v\in \mathrm{E}_7$, où $v=(v_1,\dots ,v_8)$ vérifie :
$$\begin{array}{rl}
	(i) &v_i\in \{0,\dots ,q-1\} \\
	(ii) & v_1\leq \dots \leq v_8\\
	(iii) & \mathrm{inf}\{v_i \, \vert \, v_i>0 \} = 1 \\
\end{array}$$
\end{lem}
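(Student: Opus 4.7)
L'idée est d'exhiber explicitement un générateur $v$ d'un élément $x'$ de l'orbite $\mathcal{O}_{W'}(x)$ satisfaisant les trois conditions, en procédant en trois étapes successives.

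Je commencerais par utiliser l'identification de $C_{\mathrm{E}_7}(\Z/q\Z)$ à l'ensemble des $\Z/q\Z$-droites isotropes du sous-module $\{(x_i)\in (\Z/q\Z)^8 \mid \sum x_i = 0\}$ fournie par l'isomorphisme $\mathrm{A}_7/q\mathrm{A}_7 \overset{\sim}{\rightarrow} \mathrm{E}_7/q\mathrm{E}_7$ (valable car $q$ est impair et $[\mathrm{E}_7:\mathrm{A}_7]=2$). Dans cette identification, l'action de $W' = \{\pm\mathrm{id}\} \times \mathcal{S}_8$ est transparente : $\mathcal{S}_8$ permute les coordonnées et $\pm\mathrm{id}$ agit par changement de signe global.

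Le point-clé, qui assurera $(iii)$, est le suivant : $x$ étant une $\Z/q\Z$-droite, c'est-à-dire un sous-module cyclique d'ordre exactement $q$, tout relèvement $v = (v_1,\dots,v_8)\in \Z^8$ d'un générateur de $x$ admet au moins une coordonnée $v_{i_0}$ inversible modulo $q$ --- sinon, toutes les coordonnées seraient divisibles par $p$ et l'ordre de $x$ serait au plus $q/p$. Je multiplierais alors $v$ par l'inverse modulo $q$ de $v_{i_0}$, ce qui fournit un autre générateur de la même droite, dont la coordonnée d'indice $i_0$ est congrue à $1$ modulo $q$.

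Il resterait ensuite à prendre le relèvement canonique à coordonnées dans $\{0,\dots,q-1\}$ (ce qui donne $(i)$), puis à appliquer la permutation $\sigma\in\mathcal{S}_8\subset W'$ qui trie ces coordonnées par ordre croissant (ce qui donne $(ii)$). L'élément $x' = \sigma(x) \in \mathcal{O}_{W'}(x)$ admet alors le générateur $\sigma(v)$, qui satisfait $(i)$ et $(ii)$ ; comme $1$ figure parmi ses coordonnées, la plus petite coordonnée strictement positive vaut nécessairement $1$, d'où $(iii)$. Aucun obstacle sérieux n'apparaît : le seul point à ne pas perdre de vue est que la multiplication par une unité de $\Z/q\Z$ ne provient pas d'une action d'un élément de $W'$, mais d'un changement de générateur de la droite $x$ elle-même, changement qui ne modifie donc pas l'orbite.
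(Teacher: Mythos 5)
Votre démonstration est correcte et correspond exactement à la vérification que le texte qualifie d'évidente : l'identification de $C_{\mathrm{E}_7}(\Z/q\Z)$ aux droites de $\{(x_i)\in(\Z/q\Z)^8 \mid \sum x_i=0\}$, l'existence d'une coordonnée inversible (car un générateur d'une droite est primitif et $q$ est une puissance de $p$), la normalisation par multiplication par une unité (qui ne change pas la droite, donc pas l'orbite), puis le tri des coordonnées par un élément de $\mathcal{S}_8\subset W'$. Rien à redire.
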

\begin{lem} Soient $x,x'\in C_{\mathrm{E}_7}(\Z/q\Z)$, et $v,v'\in (\Z/q\Z)^8$ des générateurs de $x$ et $x'$. Alors $x$ et $x'$ sont dans la même orbite de $C_{\mathrm{E}_7}(\Z/q\Z)$ pour l'action de $W'$ si, et seulement si :
$$(\exists i\in \Z/q\Z)\ \phi_7 (\overline{i\cdot v}) = \phi_7(\overline{v'})$$ 
où $\phi_7$ désigne la fonction introduite au paragraphe précédent, et où on a noté par $\overline{v}$ l'unique élément de $\{0,\dots ,q-1 \}^8$ ayant même image que $v$ par le passage au quotient : $\Z^8 \rightarrow (\Z/q\Z)^8$.
\end{lem}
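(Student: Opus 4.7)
L'approche consiste à traduire la relation d'orbite sous $W'$ en une égalité de multi-ensembles via $\phi_7$, en exploitant la description explicite de $W'=\{\pm \mathrm{id}\} \times \mathcal{S}_8$. On observe d'abord que $-\mathrm{id}$ agit trivialement sur les droites de $(\Z/q\Z)^8$ (car $-1$ y est une unité), de sorte que les $W'$-orbites sur $C_{\mathrm{E}_7}(\Z/q\Z)$ coïncident avec les orbites pour la seule action de $\mathcal{S}_8$ par permutation des coordonnées. Il suffit donc de démontrer l'équivalence pour cette action réduite.

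Pour le sens direct, on suppose qu'il existe $\sigma \in \mathcal{S}_8$ avec $\sigma(x)=x'$, i.e.\ $\Z/q\Z \cdot \sigma(v) = \Z/q\Z \cdot v'$. Cela fournit une unité $i \in (\Z/q\Z)^\times$ telle que $v' = i \cdot \sigma(v)$ dans $(\Z/q\Z)^8$. Puisque $\sigma$ ne fait que permuter les coordonnées, les multi-ensembles des composantes (réduites dans $\{0,\dots,q-1\}$) de $iv$ et de $v'$ sont identiques, d'où l'égalité $\phi_7(\overline{iv})=\phi_7(\overline{v'})$.

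Pour la réciproque, l'égalité $\phi_7(\overline{iv})=\phi_7(\overline{v'})$ fournit par définition d'un multi-ensemble une permutation $\sigma \in \mathcal{S}_8$ telle que $v'_k \equiv i\cdot v_{\sigma(k)}$ modulo $q$ pour tout $k$, c'est-à-dire $v' = i \cdot \sigma(v)$ dans $(\Z/q\Z)^8$. On vérifie ensuite que $i$ est nécessairement inversible : sinon, $i$ serait divisible par $p$, donc toutes les composantes de $v'$ le seraient également, ce qui contredirait la primitivité du générateur $v'$ de la droite $x' \in C_{\mathrm{E}_7}(\Z/q\Z)$. Une fois $i$ reconnue inversible, on conclut $\Z/q\Z \cdot v' = \Z/q\Z \cdot \sigma(v) = \sigma(x)$, d'où $x' = \sigma(x) \in \mathcal{O}_{W'}(x)$.

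Le point technique principal à surveiller est la gestion de l'indétermination du générateur d'une droite par une unité de $\Z/q\Z$, qui justifie la présence du paramètre $i \in \Z/q\Z$ dans l'énoncé (plutôt que de fixer $i=1$), ainsi que le contrôle systématique, dans la réciproque, que tout $i$ satisfaisant l'égalité est effectivement une unité — argument reposant de manière essentielle sur la primitivité des générateurs de droites isotropes dans la quadrique $C_{\mathrm{E}_7}(\Z/q\Z)$.
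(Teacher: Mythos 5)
Votre démonstration est correcte et correspond exactement à la vérification que le papier qualifie d'« évidente » et omet : réduction à l'action de $\mathcal{S}_8$ (puisque $-\mathrm{id}$ fixe toute droite), traduction de l'égalité de multi-ensembles en une permutation des coordonnées, et gestion de l'ambiguïté du générateur par l'unité $i$. Le point que vous soulignez — tout $i$ vérifiant l'égalité est automatiquement inversible par primitivité de $v'$, ce qui justifie la quantification sur $i\in\Z/q\Z$ plutôt que sur $(\Z/q\Z)^*$ — est juste et bien vu.
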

\indent On aura ainsi pour chaque orbite cherchée un représentant. Le cardinal de l'orbite est donné par le lemme suivant dont la vérification est laissée au lecteur :
\begin{lem} Soit $x\in C_{\mathrm{E}_7}(\Z/q\Z)$ une droite isotrope, et $v=(v_1,\dots ,v_8)$ un générateur de $x$. Alors le cardinal de l'orbite de $x$ par l'action de $W'$ est donné par :
\begin{center}
\scalebox{1}{$\mathrm{card}\left( \mathcal{O}_{W'}(x)\right) =\dfrac{8!}{\vert \{ i\in \Z/q\Z \vert \phi_7 (\overline{i\cdot v}) = \phi_7(\overline{v}) \} \vert \cdot \displaystyle \prod_{i\in \Z/q\Z} \left(\vert \{j\in \{1,\dots 8\} \vert v_j = i \} \vert !\right)}.$}
\end{center}
\end{lem}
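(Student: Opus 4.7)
L'idée est d'appliquer la formule orbite-stabilisateur au groupe $W'=\{\pm\mathrm{id}\}\times\mathcal{S}_8$ agissant sur la droite $x=\Z/q\Z\cdot v$, en examinant soigneusement le stabilisateur. Comme $|W'|=2\cdot 8!$, il suffit de montrer que
\[
|\mathrm{Stab}_{W'}(x)|=2\cdot\vert\{i\in\Z/q\Z\mid\phi_7(\overline{i\cdot v})=\phi_7(\overline{v})\}\vert\cdot\prod_{i\in\Z/q\Z}\bigl(\vert\{j\mid v_j=i\}\vert!\bigr).
\]

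Je commencerais par décrire le stabilisateur. Un couple $(\varepsilon,\sigma)\in\{\pm 1\}\times\mathcal{S}_8$ fixe la droite $x$ si et seulement si $\varepsilon\sigma(v)=iv$ pour un certain $i\in(\Z/q\Z)^*$ (ensemble des générateurs de $x$, puisque $v$ est primitif grâce à la condition (iii) du premier lemme). De manière équivalente, en posant $j=\varepsilon i$, on a $\sigma(v)=jv$ avec $j\in(\Z/q\Z)^*$. Pour $j$ fixé, le nombre de permutations $\sigma\in\mathcal{S}_8$ vérifiant $\sigma(v)=jv$ est soit nul (si $\phi_7(jv)\neq\phi_7(v)$), soit égal à $|\mathrm{Stab}_{\mathcal{S}_8}(v)|=\prod_{i\in\Z/q\Z}(\vert\{j\mid v_j=i\}\vert!)$ (si $\phi_7(jv)=\phi_7(v)$), ceci étant un calcul classique du stabilisateur d'un vecteur sous une action par permutation de coordonnées.

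Ensuite, je partitionnerais le stabilisateur selon la valeur de $j=\varepsilon i$. Quand $(\varepsilon,i)$ parcourt $\{\pm 1\}\times(\Z/q\Z)^*$, le produit $\varepsilon i$ parcourt deux fois chaque élément de $(\Z/q\Z)^*$ (via $(1,j)$ et $(-1,-j)$). On obtient donc
\[
|\mathrm{Stab}_{W'}(x)|=2\cdot\vert\{j\in(\Z/q\Z)^*\mid\phi_7(jv)=\phi_7(v)\}\vert\cdot\prod_{i\in\Z/q\Z}\bigl(\vert\{j\mid v_j=i\}\vert!\bigr).
\]

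Il reste à remplacer $(\Z/q\Z)^*$ par $\Z/q\Z$ dans le cardinal central. Si $i\in\Z/q\Z$ n'est pas inversible, alors $p\mid i$ (puisque $q$ est une puissance de $p$), et toutes les coordonnées de $iv$ sont divisibles par $p$ modulo $q$ ; or $v$ possède une coordonnée égale à $1$ d'après la condition (iii) du premier lemme de cette sous-section, donc $\phi_7(\overline{v})$ contient la valeur $1$ qui n'est pas divisible par $p$, et $\phi_7(\overline{i\cdot v})\neq\phi_7(\overline{v})$. De même $i=0$ donne le vecteur nul, qui ne peut avoir le même multi-ensemble que $\overline{v}$. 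L'extension de l'indexation de $(\Z/q\Z)^*$ à $\Z/q\Z$ n'ajoute donc aucun élément au comptage, et la formule annoncée en résulte par orbite-stabilisateur.

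La seule subtilité, et donc l'unique point à ne pas négliger dans la rédaction, est l'étape de primitivité permettant d'écrire le cardinal avec $i\in\Z/q\Z$ plutôt que $i\in(\Z/q\Z)^*$ ; tout le reste est du comptage direct via orbite-stabilisateur.
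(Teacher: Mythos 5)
Votre preuve est correcte et c'est bien l'argument attendu (le texte laisse d'ailleurs cette vérification au lecteur) : orbite--stabilisateur pour $W'=\{\pm\mathrm{id}\}\times\mathcal{S}_8$, description du stabilisateur de la droite via les couples $(\varepsilon,\sigma)$ avec $\sigma(v)=\varepsilon i\, v$, comptage des $\sigma$ à $j=\varepsilon i$ fixé comme classe du stabilisateur de $v$ dans $\mathcal{S}_8$, puis passage de $(\Z/q\Z)^*$ à $\Z/q\Z$. Le seul point à corriger est la justification de la primitivité : le lemme porte sur un générateur $v$ \emph{arbitraire} de $x$, pour lequel la condition $(iii)$ du lemme de normalisation ne s'applique pas ; il faut plutôt invoquer le fait que tout générateur d'un point de $C_{\mathrm{E}_7}(\Z/q\Z)$ vérifie $v\notin p\mathrm{E}_7$ (cf. la proposition-définition sur les $d$-voisins), de sorte qu'une coordonnée de $\overline{v}$ n'est pas divisible par $p$ tandis que toutes celles de $\overline{i\cdot v}$ le sont dès que $i$ n'est pas inversible.
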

\textbf{Deuxième étape :}
\\ \indent On possède ainsi pour chaque orbite de $C_{\mathrm{E}_7}(\Z/q\Z)$ sous l'action de $W'$ un représentant et le cardinal de l'orbite. Il suffit d'utiliser l'ensemble $H_7$ construit au paragraphe \ref{4.2} pour savoir si des orbites différentes pour l'action de $W'$ sont dans la même orbite pour l'action de $W$. Le lemme suivant se déduit facilement de la définition de l'ensemble $H_7$ :
\begin{lem} Soient $x,x'\in C_{\mathrm{E}_7}(\Z/q\Z)$, et $v,v'\in (\Z/q\Z)^8$ des générateurs de $x$ et $x'$. Alors $x$ et $x'$ sont sur la même orbite de $C_{\mathrm{E}_7}(\Z/q\Z)$ pour l'action de $W$ si, et seulement si :
$$(\exists i\in \Z/q\Z) (\exists h \in H_7)\  \phi_7 (\overline{i\cdot v}) = \phi_7(\overline{h(v')})$$
où les notations sont les mêmes qu'aux lemmes précédents.
\end{lem}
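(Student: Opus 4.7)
The plan is to reduce the question for the $W$-action to the criterion for the $W'$-action established in the immediately preceding lemma, using the coset decomposition of $W$ that is encoded in the set $H_7$ constructed at paragraph \ref{4.2}.

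First I would invoke the defining property of $H_7$: it is a system of representatives for $W/W'$, where cosets are of the form $\overline{w} = W' \circ w$. Hence every element of $W$ factors uniquely as $\gamma \circ h$ with $\gamma \in W'$ and $h \in H_7$. This yields the set-theoretic decomposition $\mathcal{O}_W(x') = \bigcup_{h \in H_7} \mathcal{O}_{W'}(h(x'))$ and therefore the intermediate equivalence: the orbits $\mathcal{O}_W(x)$ and $\mathcal{O}_W(x')$ coincide if and only if there exists $h \in H_7$ such that $x$ and $h(x')$ belong to the same $W'$-orbit on $C_{\mathrm{E}_7}(\Z/q\Z)$.

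Next I would apply the previous lemma to the pair of isotropic lines $(x, h(x'))$. Since $h$ lies in $W = \mathrm{O}(\mathrm{E}_7)$, the vector $h(v')$ is, after reduction modulo $q$, a generator of the line $h(x')$. That lemma gives: $x$ and $h(x')$ lie in the same $W'$-orbit if and only if there exists $i \in \Z/q\Z$ with $\phi_7(\overline{i \cdot v}) = \phi_7(\overline{h(v')})$. Concatenating this with the previous reduction produces exactly the stated criterion.

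The only subtlety worth pausing on — and hence the step I would take most care with — is the well-definedness of $\overline{h(v')}$ in $\{0,\dots,q-1\}^8$, because a priori $h(v')$ lies in $\mathrm{E}_7 \subset \R^8$ and may have half-integer coordinates. To evaluate $\phi_7$ one must pass through the canonical isomorphism $\mathrm{A}_7/q\mathrm{A}_7 \overset{\sim}{\rightarrow} \mathrm{E}_7/q\mathrm{E}_7$ already used in the first step, which is valid precisely because $q$ is odd and thus coprime to the index $[\mathrm{E}_7 : \mathrm{A}_7] = 2$. Once this lift is made, the argument is purely formal and relies on no further input beyond the previous lemma and the construction of $H_7$.
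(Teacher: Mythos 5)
Your proof is correct and is exactly the deduction the paper intends: the paper gives no argument beyond ``se déduit facilement de la définition de l'ensemble $H_7$'', and the coset decomposition $W=\coprod_{h\in H_7} W'\circ h$ combined with the preceding $W'$-criterion applied to the pair $(x,h(x'))$ is precisely that deduction. Your remark on lifting $h(v')$ through $\mathrm{A}_7/q\mathrm{A}_7\overset{\sim}{\rightarrow}\mathrm{E}_7/q\mathrm{E}_7$ (valid since $q$ is odd) correctly handles the only genuine subtlety.
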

\textbf{Troisième étape :} Il n'y a rien à faire dans la troisième étape du fait du lemme suivant :
\begin{lem} Les orbites de $C_{\mathrm{E}_7}(\Z/q\Z)$ pour l'action de $W$ sont exactement celles pour l'action de $W^+$.
\end{lem}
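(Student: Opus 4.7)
Le plan est d'exploiter le fait que $W(\mathrm{E}_7)$ contient $-\mathrm{id}$ comme élément non spécial, puis d'observer que $-\mathrm{id}$ agit trivialement sur les droites de $C_{\mathrm{E}_7}(\Z/q\Z)$. Plus précisément, je commencerais par rappeler que $W=W(\mathrm{E}_7)$ coïncide avec $\mathrm{O}(\mathrm{E}_7)$ d'après le corollaire suivant la proposition \ref{Weyl et SO}, et que pour le système de racines $\mathrm{E}_7$ l'élément $-\mathrm{id}_V$ appartient bien à $W$ (il correspond à l'élément le plus long du groupe de Weyl, voir \cite[Planche VI]{Bo}).

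Le point clé est que $\mathrm{E}_7$ est de rang $7$, donc $\det(-\mathrm{id})=(-1)^7=-1$, ce qui montre que $-\mathrm{id}\notin W^+$. On en déduit la décomposition en classes à droite
\[
W = W^+ \sqcup (-\mathrm{id}) \cdot W^+.
\]
Ainsi, tout élément $w\in W\setminus W^+$ s'écrit $w=(-\mathrm{id})\circ w^+$ avec $w^+\in W^+$.

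Ensuite, j'observerais que l'action de $-\mathrm{id}$ sur $C_{\mathrm{E}_7}(\Z/q\Z)$ est triviale : en effet, si $x\in C_{\mathrm{E}_7}(\Z/q\Z)$ est une droite isotrope engendrée par un vecteur $v$, alors $-\mathrm{id}(x)$ est la droite engendrée par $-v$, qui est évidemment la même droite (car $-1$ est inversible dans $\Z/q\Z$, $q$ étant impair). Autrement dit, $-\mathrm{id}$ stabilise chaque élément de $C_{\mathrm{E}_7}(\Z/q\Z)$.

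Il s'ensuit que pour tout $x\in C_{\mathrm{E}_7}(\Z/q\Z)$ et tout $w\in W$ écrit $w=\gamma\circ w^+$ avec $\gamma\in\{\mathrm{id},-\mathrm{id}\}$ et $w^+\in W^+$, on a $w\cdot x = \gamma\cdot(w^+\cdot x) = w^+\cdot x$, puisque $w^+\cdot x$ est encore une droite. Par conséquent, l'orbite $\mathcal{O}_W(x)$ coïncide avec l'orbite $\mathcal{O}_{W^+}(x)$, ce qui conclut. Aucune étape ne présente de difficulté réelle ; la seule vérification non triviale est l'appartenance $-\mathrm{id}\in W(\mathrm{E}_7)$, qui est classique et se lit par exemple sur les tables de \cite{Bo}.
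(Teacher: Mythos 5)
Votre démonstration est correcte et suit exactement la même idée que celle du texte : on constate que $-\mathrm{id}\in W(\mathrm{E}_7)\setminus W^+$ (car le rang $7$ est impair) et que $-\mathrm{id}$ agit trivialement sur les droites de $C_{\mathrm{E}_7}(\Z/q\Z)$, d'où $W=W^+\sqcup(-\mathrm{id})W^+$ et l'égalité des orbites. Votre rédaction ne fait qu'expliciter les détails que le texte laisse au lecteur.
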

\begin{proof}
il suffit de constater que : $-1\in W\setminus W^+$, et que $-1$ a une action triviale sur les éléments de $C_{\mathrm{E}_7}(\Z/q\Z)$.  
\end{proof}
\subsubsection{Le cas de $\mathrm{E}_8$.}$  $
\\ \textbf{Première étape :}
\\ \indent On reprend la définition de $\mathrm{E}_8$ donnée au paragraphe \ref{2.1}. On fait les mêmes constatations que pour le cas de $\mathrm{E}_7$ : le point clef est que l'on a l'inclusion naturelle $\mathrm{D}_8\rightarrow \mathrm{E}_8$, et que le nombre $q$ est premier à $\vert \mathrm{E}_8/\mathrm{D}_8\vert =2$. Au final, la quadrique $C_{\mathrm{E}_8}(\Z/q\Z)$ s'identifie à l'ensemble des $\Z/q$-droites $\Z/q\cdot (x_1,\cdots,x_8)$ de $(\Z/q)^8)$ vérifiant : $\sum x_i^2 = 0$.
\\ \indent Par construction, cette identification commute aux actions des groupes orthogonaux considérés. Cependant, l'action de $W' = \mathcal{S}_8\ltimes \left( \{\pm 1\}\right) ^0$ n'est pas agréable. On considère plutôt le groupe $W''=\mathcal{S}_8 \ltimes \{ \pm 1\}^8$, dont l'action est bien définie. On a notamment les lemmes suivants, dont la vérification est évidente :
\begin{lem} Soit $x\in C_{\mathrm{E}_8}(\Z/q\Z)$. Alors l'orbite de $x$ par l'action de $W''$ possède un élément $x'=\mathrm{vect}_{\Z/q\Z} (v)$ pour $v\in \mathrm{E}_8$, où $v=(v_1,\dots ,v_8)$ vérifie :
$$\begin{array}{rl}
	(i) &v_i\in \{0,\dots ,(q-1)/2\} \\
	(ii) & v_1\leq \dots \leq v_8\\
	(iii) & \mathrm{inf}\{v_i\, \vert \, v_i>0 \} = 1 \\
\end{array}$$
\end{lem}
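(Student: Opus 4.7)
The plan is to exploit the identification, sketched at the start of this step via $\mathrm{D}_8 \hookrightarrow \mathrm{E}_8$ (an isomorphism modulo $q$ since $q$ is odd), between $C_{\mathrm{E}_8}(\Z/q\Z)$ and the set of isotropic $\Z/q\Z$-lines in $(\Z/q\Z)^8$ for the quadratic form $\sum x_i^2$. Under this identification, $W'' = \mathcal{S}_8 \ltimes \{\pm 1\}^8$ acts in the obvious way, by coordinate permutations and sign changes.

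Given $x \in C_{\mathrm{E}_8}(\Z/q\Z)$, I would perform three successive normalisations on a chosen representative. First, pick any generator $v = (v_1, \dots, v_8)$ of the line $x$ and any index $j$ with $v_j \not\equiv 0 \pmod q$; replace $v$ by $v_j^{-1} \cdot v \pmod q$. Since $x$ is a $\Z/q\Z$-line rather than a vector, this scaling does not change $x$ itself, and the new representative has a $1$ at position $j$. Second, apply the sign flip $\varepsilon \in \{\pm 1\}^8 \subset W''$ sending each $v_i \in \{(q+1)/2, \dots, q-1\}$ to $q - v_i$; since $q$ is odd, $\{0, 1, \dots, (q-1)/2\}$ is a system of representatives of $\Z/q\Z$ modulo $\pm 1$, so the result lies in this set and (i) holds, while the coordinate at position $j$ remains equal to $1$. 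Third, apply a permutation $\sigma \in \mathcal{S}_8 \subset W''$ to arrange the entries in non-decreasing order, giving (ii); since a $1$ appears among the coordinates, the minimum of the positive coordinates is $\le 1$, hence equal to $1$, yielding (iii).

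I do not foresee a genuine obstacle: the only subtle point, which I would state explicitly, is that the initial rescaling by $v_j^{-1}$ is not performed by an element of $W''$. This is harmless because the $W''$-orbit of $x$ depends only on the line $x$ and not on a chosen generator; the composition $\sigma \circ \varepsilon$ of sign flips and permutation that I apply is honestly an element of $W''$, and sends $x$ to the line generated by the final normalised vector $v'$, which by construction satisfies the three conditions.
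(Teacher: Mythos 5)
Le texte ne donne aucune démonstration de ce lemme (il est déclaré « évident »), et votre argument est exactement la vérification attendue : on normalise un générateur pour qu'une coordonnée vaille $1$, on applique des changements de signes pour ramener toutes les coordonnées dans $\{0,\dots,(q-1)/2\}$, puis on trie ; la composée des changements de signes et de la permutation est bien un élément de $W''$ et le rescalage ne change pas la droite. La seule imprécision est dans votre première étape : $q$ est une puissance de $p$ et non nécessairement un nombre premier, donc une coordonnée $v_j \not\equiv 0 \pmod q$ n'est pas forcément inversible dans $\Z/q\Z$ (par exemple $v_j=p$ lorsque $q=p^2$). Il faut choisir $j$ tel que $v_j \not\equiv 0 \pmod p$ ; un tel indice existe parce qu'un générateur d'un point de $C_{\mathrm{E}_8}(\Z/q\Z)$ est primitif, c'est-à-dire n'appartient pas à $p\mathrm{E}_8$ (cf. proposition-définition \ref{p-voisin}). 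Avec cette correction, votre démonstration est complète.
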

\indent Pour savoir si deux droites de $C_{\mathrm{E}_8}(\Z/q\Z)$ correspondent à une même orbite pour l'action de $W''$, on utilise le lemme suivant dont la vérification est laissée au lecteur :
\begin{lem} Soient $x,x'\in C_{\mathrm{E}_8}(\Z/q\Z)$, et $v,v'\in (\Z/q\Z)^8$ des générateurs de $x$ et $x'$. Alors $x$ et $x'$ sont dans la même orbite de $C_{\mathrm{E}_8}(\Z/q\Z)$ pour l'action de $W''$ si, et seulement si :
$$(\exists i\in \Z/q\Z)\ \phi_8 (\overline{i\cdot v}) = \phi_8(\overline{v'})$$ 
où $\phi_8$ est la fonction introduite au paragraphe précédent, et où on a noté par $\overline{v}$ l'unique élément de $\{0,\dots ,q-1 \}^8$ ayant même image que $v$ par le passage au quotient : $\Z^8 \rightarrow (\Z/q\Z)^8$.
\end{lem}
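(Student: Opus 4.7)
Le plan consiste à se ramener, via les relèvements dans $\{0,\dots,q-1\}^8 \subset \R^8$, à la caractérisation des $W''$-orbites de $\R^8$ par $\phi_8$ établie dans la proposition-définition précédente.

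Dans un premier temps, je traduirais l'équivalence portant sur les droites en une équivalence portant sur les vecteurs. Deux droites $\langle v\rangle$ et $\langle v'\rangle$ de $C_{\mathrm{E}_8}(\Z/q\Z)$ sont dans la même $W''$-orbite si et seulement s'il existe $\gamma \in W''$ et un inversible $u \in (\Z/q\Z)^\times$ tels que $\gamma(v) = u \cdot v'$ dans $(\Z/q\Z)^8$, puisque les générateurs d'une même droite se déduisent par multiplication par un inversible de $\Z/q\Z$. En posant $i = u$ et en substituant $v$ par $iv$, cette condition se réécrit : il existe $i \in (\Z/q\Z)^\times$ tel que $iv$ et $v'$ soient dans la même $W''$-orbite comme vecteurs de $(\Z/q\Z)^8$.

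Dans un second temps, j'invoquerais la proposition-définition précédente appliquée aux relèvements $\overline{iv},\overline{v'} \in \{0,\dots,q-1\}^8 \subset \R^8$ : deux vecteurs de $\R^8$ sont $W''$-équivalents si et seulement si $\phi_8$ prend la même valeur sur chacun. Le lien entre la $W''$-équivalence dans $\R^8$ des relèvements et la $W''$-équivalence dans $(\Z/q\Z)^8$ des réductions se fait coordonnée par coordonnée : l'action de $\mathcal{S}_8$ commute trivialement avec le passage au relèvement, et celle de $\{\pm 1\}^8$ est gérée en exploitant le fait que $\phi_8$, ne dépendant que des carrés, est indifférente au choix de signe des relèvements. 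Combiné au premier point, cela donne exactement le critère $\phi_8(\overline{iv}) = \phi_8(\overline{v'})$ annoncé.

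L'obstacle principal réside dans le passage précis entre les cadres réel et modulaire pour l'action des changements de signe : un changement de signe dans $W''$ envoie $\overline{v}_k$ sur $q - \overline{v}_k$ comme élément de $\{0,\dots,q-1\}$, valeur dont le carré entier diffère de $\overline{v}_k^2$. Il faudra donc soit adapter le relèvement (en choisissant une valeur dans $\{-(q-1)/2,\dots,(q-1)/2\}$, comme le suggère le lemme de canonicité précédent), soit tirer parti de la liberté sur le scalaire $i$ et la permutation pour compenser ces différences de signe. Enfin, l'extension du quantificateur à tout $i \in \Z/q\Z$ plutôt qu'aux seuls inversibles est sans conséquence : si $i$ est non inversible, alors $\overline{iv}$ a toutes ses coordonnées multiples de $p$, tandis que $v'$, étant un générateur primitif d'une droite de $C_{\mathrm{E}_8}(\Z/q\Z)$, admet une coordonnée non divisible par $p$, rendant l'égalité $\phi_8(\overline{iv}) = \phi_8(\overline{v'})$ impossible dans ce cas.
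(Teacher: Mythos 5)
The paper gives no proof of this lemma (the verification is explicitly left to the reader), so your argument has to be judged on its own terms. Your reduction from lines to vectors — same $W''$-orbit of lines if and only if $iv$ and $v'$ lie in the same $W''$-orbit of vectors for some invertible $i$ — is correct, as is your elimination of non-invertible $i$ via primitivity of $v'$. The genuine gap is exactly the point you flag and then defer: the sign changes in $W''$ do not commute with the lift to $\{0,\dots ,q-1\}^8$ followed by squaring in $\Z$. Of your two proposed fixes, the second (absorbing the coordinate-wise sign changes into the global scalar $i$ and a permutation) provably fails. For $q=7$ take $v=(1,2,3,0,\dots ,0)$ and $v'=(1,2,4,0,\dots ,0)$: both are isotropic, and $v'$ is the image of $v$ under the sign change of the third coordinate, so the two lines are in the same $W''$-orbit; yet the nonzero parts of the multisets $\phi_8(\overline{i\cdot v})$ for $i=1,\dots ,6$ are $\{1,4,9\}$, $\{4,16,36\}$, $\{4,9,36\}$, $\{1,16,25\}$, $\{1,9,25\}$, $\{16,25,36\}$, none of which equals $\{1,4,16\}=\phi_8(\overline{v'})$ as a multiset of integers. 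Read literally, the stated criterion is therefore not necessary, and no cleverness with $i$ and $\sigma$ can make it so.

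The proof must consequently go through your first fix: either lift to the symmetric interval $\{-(q-1)/2,\dots ,(q-1)/2\}$ before squaring, or, equivalently for the forward implication, compare the multisets of squares as elements of $\Z/q\Z$ — in the example above $9\equiv 16\equiv 2 \pmod 7$, and the criterion then holds already for $i=1$. With that convention the implication "même orbite $\Rightarrow$ égalité des $\phi_8$" is immediate, since permutations and sign changes preserve squares mod $q$. But you then still owe the converse: equality of the multisets of squares must force a coordinate matching up to sign. This follows from $a^2=b^2\Rightarrow a=\pm b$ when $q=p$ is prime, but it fails in $\Z/p^k\Z$ for $k\ge 2$ on coordinates divisible by $p$ (e.g. $0^2\equiv 3^2 \pmod 9$ while $0\neq \pm 3$), and the paper does use $q=9,25,27$; an extra argument on the $p$-divisible coordinates, or a reduction to the normalized representatives of the preceding lemma, is needed there. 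As written, your proposal correctly isolates the difficulty but leaves the central step open.
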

\indent On aura ainsi pour chaque orbite pour l'action de $W''$ un représentant. Le cardinal de l'orbite est donné par le lemme suivant :
\begin{lem} Soit $x\in C_{\mathrm{E}_8}(\Z/q\Z)$ une droite isotrope, et $v=(v_1,\dots ,v_8)$ un générateur de $x$. Alors le cardinal de l'orbite de $x$ par l'action de $W''$ est donné par :
\begin{center}
\scalebox{1}{$\mathrm{card}\left( \mathcal{O}_{W''}(x)\right) =\dfrac{2^{8-\vert \{ j\in \{1,\dots ,8\} \vert v_j = 0\} \vert } \cdot 8!}{\vert \{ i\in \Z/q\Z \vert \phi_8 (\overline{i\cdot v}) = \phi_8(\overline{v}) \} \vert \cdot \displaystyle \prod_{i\in \Z/q\Z} \left(\vert \{j\in \{1,\dots 8\} \vert v_j = i \} \vert !\right)}$}
\end{center}
\end{lem}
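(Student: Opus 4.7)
Le plan consiste à appliquer le théorème orbite-stabilisateur à l'action de $W''$ sur la droite $x$ : on a $|\mathcal{O}_{W''}(x)| = |W''|/|\mathrm{Stab}_{W''}(x)|$, avec $|W''| = 2^8 \cdot 8!$, et il s'agit donc de déterminer le cardinal du stabilisateur ensembliste de $x = (\Z/q\Z) \cdot \overline{v}$ dans $W''$. Pour décomposer ce stabilisateur, j'introduirais l'homomorphisme $\chi : \mathrm{Stab}_{W''}(x) \to (\Z/q\Z)^*$ défini par $\chi(\gamma) = i(\gamma)$, où $i(\gamma)$ est l'unique scalaire tel que $\gamma \cdot \overline{v} \equiv i(\gamma) \cdot \overline{v} \pmod q$ ; l'unicité découle du fait que $\overline{v}$ possède l'entrée $1$ grâce au choix du représentant canonique (condition (iii) du lemme précédent). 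La décomposition $|\mathrm{Stab}_{W''}(x)| = |\ker \chi| \cdot |\mathrm{Im}\,\chi|$ ramène le calcul à celui de ces deux facteurs.

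Le noyau de $\chi$ est le stabilisateur ponctuel de $\overline{v}$ dans $W''$. Un élément $\gamma \in W''$, vu comme la composée d'une permutation $\sigma$ des coordonnées et d'un changement de signes $\varepsilon \in \{\pm 1\}^8$, fixe $\overline{v}$ si et seulement si $\sigma$ préserve chaque niveau $\{j : v_j = c\}$ et que $\varepsilon_j = 1$ à toute position où $v$ prend une valeur non nulle. En effet, comme $q$ est impair et que $v_j \in \{0, \ldots, (q-1)/2\}$, les valeurs $v_j$ et $-v_j$ sont distinctes modulo $q$ dès que $v_j \neq 0$, ce qui interdit les signes négatifs en position non nulle ; aux positions où $v_j = 0$, le signe $\varepsilon_j$ reste libre. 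Ce décompte donne immédiatement $|\ker \chi| = 2^{|\{j : v_j = 0\}|} \cdot \prod_{i \in \Z/q\Z}(|\{j : v_j = i\}|!)$.

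L'image de $\chi$ s'identifie à l'ensemble des scalaires $i \in \Z/q\Z$ tels que $i\overline{v}$ et $\overline{v}$ appartiennent à la même $W''$-orbite vectorielle dans $(\Z/q\Z)^8$ ; le lemme précédent, appliqué aux générateurs $v$ et $iv$ d'une même droite, montre que cette condition équivaut à $\phi_8(\overline{iv}) = \phi_8(\overline{v})$. On vérifie de plus que $i$ non inversible est automatiquement exclu de cet ensemble : comme $\overline{v}$ contient une entrée égale à $1$, $\phi_8(\overline{v})$ contient le carré $1$, et $\overline{iv}$ devrait alors posséder une entrée congrue à $\pm 1 \pmod q$, ce qui force $i$ à être inversible. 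On obtient donc $|\mathrm{Im}\,\chi| = |\{i \in \Z/q\Z : \phi_8(\overline{iv}) = \phi_8(\overline{v})\}|$, et en assemblant les deux calculs on retrouve la formule voulue. Le point le plus délicat sera l'identification précise de $\mathrm{Im}\,\chi$ via le lemme précédent, qui repose sur la caractérisation des orbites vectorielles par la fonction $\phi_8$ et sur l'exclusion des scalaires non inversibles ; le reste se réduit à des vérifications combinatoires élémentaires.
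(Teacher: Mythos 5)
Le texte ne démontre pas ce lemme (comme son analogue pour $\mathrm{E}_7$, il est laissé au lecteur), et votre argument --- orbite-stabilisateur, puis décomposition du stabilisateur de la droite via le caractère scalaire $\chi$ en stabilisateur ponctuel de $\overline{v}$ et image scalaire --- est manifestement celui que l'auteur avait en tête. Les trois décomptes (noyau $2^{m_0}\prod_i m_i!$ où $m_i$ est la multiplicité de la valeur $i$, image de cardinal $N=\vert\{i : \phi_8(\overline{i\cdot v})=\phi_8(\overline{v})\}\vert$, exclusion automatique des scalaires non inversibles) sont corrects et s'assemblent bien en la formule annoncée.

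Un point doit cependant être rendu explicite : votre calcul du noyau utilise de façon essentielle que les entrées de $v$ sont dans $\{0,\dots,(q-1)/2\}$, c'est-à-dire que $v$ est le représentant fourni par le lemme précédent, alors que l'énoncé autorise un générateur arbitraire. Ce n'est pas une simple commodité de rédaction : la formule est fausse pour un générateur général. Si une valeur non nulle $c$ et son opposée $-c$ apparaissent toutes deux parmi les entrées, avec multiplicités $a$ et $b$, le stabilisateur ponctuel dans $W''=\mathcal{S}_8\ltimes\{\pm 1\}^8$ contient les permutations qui échangent ces positions compensées par des changements de signe, et vaut $(a+b)!$ sur ce bloc au lieu de $a!\,b!$. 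Par exemple, pour $q=3$ et $v=(1,1,2,0,0,0,0,0)$ (générateur d'une droite isotrope qui n'admet aucun générateur à coordonnées dans $\{0,1\}$), la formule donne $672$ alors que l'orbite a $224$ éléments. Il faut donc ou bien ajouter l'hypothèse que $v$ est le représentant canonique (ce qui suffit à l'algorithme, qui n'applique le lemme qu'à de tels représentants), ou bien justifier séparément que l'on peut s'y ramener. Signalons enfin que votre identification de $\mathrm{Im}\,\chi$ repose sur la caractérisation des $W''$-orbites vectorielles par $\phi_8$, transportée de $\R^8$ à $(\Z/q\Z)^8$ : elle requiert l'implication $a^2=b^2\Rightarrow a=\pm b$, vraie pour $q$ premier mais délicate pour $q=p^k$ avec $k\geq 2$ lorsque des entrées sont divisibles par $p$ ; ce point est toutefois hérité des lemmes précédents du texte et non propre à votre argument.
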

\indent Enfin, le lien entre les orbites de $C_{\mathrm{E}_8}(\Z/q\Z)$ pour les actions de $W''$ et de $W'$ est donné par le lemme suivant :
\begin{lem} On définit la fonction $\mathrm{signe}_q:(\Z/q\Z)^8 \rightarrow \{-1,0,1\}$ par :
$$\mathrm{signe}_q(v_1,\dots ,v_8) = \prod_{i=1}^8 \varepsilon_i ,$$
avec :
$$\varepsilon_i = \left\{
	\begin{aligned}
 		&-1\text{ si }v_i\in \{ (q+1)/2,\dots ,q-1 \} .\\
 		&0\text{ si }v_i =0 .\\
 		&1\text{ si }v_i\in \{1,\dots ,(q-1)/2 \} .\\
 	\end{aligned}
 	\right.
 $$
\indent Alors si $v,v'\in (\Z/q\Z)^8$ vérifient $\mathcal{O}_{W''}(v) = \mathcal{O}_{W''}(v')$, on a l'équivalence :
$$\mathcal{O}_{W'}(v) = \mathcal{O}_{W'}(v') \Leftrightarrow \mathrm{signe}_q(v) = \mathrm{signe}_q (v') .$$
\end{lem}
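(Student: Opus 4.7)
My plan is to exploit the index-two relation $W'' = W' \sqcup (W' \circ d)$ already used in Lemma \ref{lemmed} (where $d \in W''$ is any reflection negating a single coordinate), together with an explicit computation of how $\mathrm{signe}_q$ transforms under $W''$. The direction $(\Rightarrow)$ reduces to checking that $\mathrm{signe}_q$ is $W'$-invariant: the factor $\mathcal{S}_8$ acts trivially since $\mathrm{signe}_q$ is a symmetric function of the coordinates, and for a diagonal element $\varepsilon = (\varepsilon_i) \in \{\pm 1\}^8$, a sign-change at a nonzero coordinate $v_i$ exchanges the positive and negative halves of $\Z/q\Z$ (hence flips the corresponding local sign), whereas a sign-change at a zero coordinate leaves the local sign equal to $0$. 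One deduces
\[
\mathrm{signe}_q(\varepsilon \cdot v) = \Bigl( \prod_{i \,:\, v_i \neq 0} \varepsilon_i \Bigr)\, \mathrm{signe}_q(v),
\]
which equals $\mathrm{signe}_q(v)$ whenever $\varepsilon \in (\{\pm 1\}^8)^0$ and all $v_i$ are nonzero (the product then collapses to $\prod_i \varepsilon_i = 1$), and trivially when some $v_i = 0$ (both sides vanish). This yields $(\Rightarrow)$.

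For $(\Leftarrow)$, I would pick some $w'' \in W''$ with $v' = w'' \cdot v$ (available by the $W''$-orbit hypothesis) and split into two cases. Case A: every $v_i$ is nonzero, so $\mathrm{signe}_q(v) \in \{\pm 1\}$. Writing $w'' = \pi \circ \varepsilon$ in the semidirect decomposition, the formula above reads $\mathrm{signe}_q(v') = (\prod_i \varepsilon_i)\, \mathrm{signe}_q(v)$; the assumption $\mathrm{signe}_q(v') = \mathrm{signe}_q(v)$ forces $\prod_i \varepsilon_i = 1$, i.e.\ $w'' \in W'$, and we are done. Case B: some coordinate $v_{i_0}$ vanishes. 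Let $e_{i_0} \in \{\pm 1\}^8$ denote the sign-change at position $i_0$; then $e_{i_0} \cdot v = v$, so $v' = (w'' \circ e_{i_0}) \cdot v$. The element $w'' \circ e_{i_0}$ differs from $w''$ by a factor of parity $-1$ in the $\{\pm 1\}^8$-component, so exactly one of $w''$ and $w'' \circ e_{i_0}$ lies in $W'$, giving $v' \in W' \cdot v$ in either sub-case.

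The only mildly subtle point is the asymmetric behaviour of $\mathrm{signe}_q$ at zero coordinates — a sign-change there contributes $1$ rather than $\varepsilon_{i_0}$ — and this is precisely the freedom that rescues the converse direction in Case B, where $\mathrm{signe}_q \equiv 0$ can no longer detect parity on its own. Apart from that observation, everything reduces to direct bookkeeping in the semidirect product $W'' = \mathcal{S}_8 \ltimes \{\pm 1\}^8$ and its index-two subgroup $W' = \mathcal{S}_8 \ltimes (\{\pm 1\}^8)^0$.
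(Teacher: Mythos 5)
Votre démonstration est correcte et suit essentiellement la même stratégie que celle de l'article : invariance de $\mathrm{signe}_q$ sous $W'$ pour le sens direct, puis utilisation de la décomposition en deux classes $W''=W'\sqcup (W'\circ d)$ et la disjonction de cas selon qu'une coordonnée s'annule ou non pour la réciproque. La seule différence est cosmétique : l'article normalise d'abord $v$ en ordonnant ses coordonnées et utilise le changement de signe sur la première, tandis que vous appliquez le changement de signe directement à la position de la coordonnée nulle, ce qui évite ce tri préalable.
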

\begin{proof}
Soient $v,v'\in (\Z/q\Z)^8$. On suppose pour simplifier que les images des coordonnées de $v$ et de $v'$ dans $\{0,\dots ,q-1\}^8$ sont rangées par ordre croissant (ce qui ne change rien à la généralité du problème, comme $\mathcal{S}_8 \subset W' \subset W''$).
\\ \indent On constate que la fonction $\mathrm{signe}_q$ est invariante par l'action de $W'$ ce qui donne déjà l'implication :
$$\mathcal{O}_{W'}(v) = \mathcal{O}_{W'}(v') \Rightarrow \mathrm{signe}_q(v) = \mathrm{signe}_q (v') .$$
\indent En reprenant l'élément $d$ introduit à la démonstration du lemme \ref{lemmed}, on utilise l'écriture $W''=W'\cup (W'\circ d)$ et l'égalité $\mathrm{signe}_q (d(v)) = -\mathrm{signe}_q (v)$. On a ainsi deux possibilités :
\\ \indent - si $v_1=0$ : alors on a $d( v) =v$, donc $\mathcal{O}_{W''}(v) = \mathcal{O}_{W'}(v) =\mathcal{O}_{W'} (v')$, et $\mathrm{signe}_q(v) = \mathrm{signe}_q(v')=0$.
\\ \indent - si $v_1 \neq 0$ : alors on a $\mathrm{signe}_q ({d( v)}) \neq \mathrm{signe}_q (v)$. Ainsi on peut écrire l'union disjointe : $\mathcal{O}_{W''}(v) = \mathcal{O}_{W'}(v) \sqcup \mathcal{O}_{W'}(d( v))$. Comme $v'\in \mathcal{O}_{W''}(v)$, alors on a $v'\in \mathcal{O}_{W'}( v)$ ou $\mathcal{O}_{W'}(d ( v))$. Comme la fonction $\mathrm{signe}_q$ est constante sur les orbites de $W'$, on déduit que :
$$\mathcal{O}_{W'}(v')=\mathcal{O}_{W'}(v') \Leftrightarrow v' \in \mathcal{O}_{W'}(v) \Leftrightarrow \mathrm{signe}_q (v) = \mathrm{signe}_q (v').$$
\indent D'où le résultat cherché. 
\end{proof}
\indent Le lemme suivant se déduit alors facilement :
\begin{lem} Soit $O$ une orbite de $C_{\mathrm{E}_8}(\Z/q\Z)$ pour l'action de $W''$. Alors $O$ correspond à une ou deux orbites pour l'action de $W'$.
\\ \indent S'il existe $(v_1,\dots ,v_8) \in (\Z/q\Z)^8$ avec $x=\mathrm{vect}_{\Z/q\Z}(v_1,\dots ,v_8)\in O$ tel que $\mathrm{signe}_q(\overline{v}) =0$, alors $O$ ne correspond qu'à une seule orbite pour l'action de $W'$.
\\ \indent S'il n'existe pas de tel élément, alors $O$ sera la réunion de deux orbites $O^+$ et $O^-$ pour l'action de $W'$. Si l'on se donne $x=\mathrm{vect}_{\Z/q\Z}(v_1,\dots ,v_8)\in O$ quelconque, alors $O^+$ et $O^-$ peuvent être définies par :
$$\left\{
	\begin{aligned}
		&\mathrm{vect}_{\Z/q\Z}(v_1,\dots,v_8)\in O^+ \\
		&\mathrm{vect}_{\Z/q\Z}(-v_1,\dots,v_8)\in O^- \\
		&\mathrm{card}(O^+)=\mathrm{card}(O^-) =\dfrac{\mathrm{card}(O)}{2} \\	
	\end{aligned}
\right.$$
\end{lem}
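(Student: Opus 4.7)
Mon approche repose sur la décomposition $W'' = W' \sqcup (W' \circ d)$ utilisée dans la démonstration du lemme \ref{lemmed}, où $d \in W'' \setminus W'$ désigne l'élément ne négant que la première coordonnée. Pour toute droite $x \in O$, on en déduit l'égalité ensembliste $O = \mathcal{O}_{W'}(x) \cup \mathcal{O}_{W'}(d \cdot x)$ dans $C_{\mathrm{E}_8}(\Z/q\Z)$, et par conséquent $O$ se décompose en une ou deux $W'$-orbites selon que ces dernières coïncident ou non. Dans le second cas, la bijection induite par $d$ fournira automatiquement l'égalité $\mathrm{card}(O^+) = \mathrm{card}(O^-) = \mathrm{card}(O)/2$.

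Je distinguerais ensuite deux cas. Dans le premier, on suppose qu'il existe $x \in O$ et un générateur $v = (v_1, \dots, v_8)$ de $x$ ayant une coordonnée $v_j = 0$. L'idée est d'exhiber un élément $g \in W'$ avec $g \cdot v = d \cdot v$, ce qui entraînera $d \cdot x = g \cdot x \in \mathcal{O}_{W'}(x)$ et donc que $O$ est réduite à une unique $W'$-orbite. Si $j = 1$, on prend $g = \mathrm{id}$ puisque $d \cdot v = v$. Pour $j > 1$, on choisit pour $g$ l'élément diagonal ayant $-1$ aux positions $1$ et $j$ et $+1$ ailleurs : le produit des signes valant $+1$, on a bien $g \in W'$, et l'annulation $v_j = 0$ assure que $g \cdot v$ et $d \cdot v$ coïncident.

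Le second cas, où aucun générateur d'aucune droite de $O$ n'a de coordonnée nulle, est le plus délicat : il s'agit d'établir $d \cdot x \notin \mathcal{O}_{W'}(x)$. Je raisonnerais par l'absurde en supposant $d \cdot x = g \cdot x$ pour un $g \in W'$ : il existerait alors $\mu \in (\Z/q\Z)^*$ avec $d \cdot v = \mu \cdot g \cdot v$, et en posant $h = g^{-1} \circ d \in W'' \setminus W'$, la relation $h \cdot v = \mu v$ s'écrit composante par composante $\varepsilon_i v_{\sigma^{-1}(i)} = \mu v_i$ où $h = (\sigma, (\varepsilon_i))$. En prenant le produit sur $i$, et comme toutes les $v_i$ sont non nulles, on obtient $\prod_i \varepsilon_i = \mu^8$ dans $(\Z/q\Z)^*$, donc $\mu^8 = -1$ puisque $h \in W'' \setminus W'$ impose $\prod_i \varepsilon_i = -1$. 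Pour les valeurs de $q$ considérées dans cet article ($q \leq 13$ pour $\mathrm{E}_8$), on vérifie directement que $16 \nmid \varphi(q)$, donc cette équation n'a pas de solution dans $(\Z/q\Z)^*$, ce qui fournit la contradiction attendue.

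Le principal obstacle technique se situe précisément dans ce passage des orbites vectorielles — traitées dans le lemme précédent via l'étude de la fonction $\mathrm{signe}_q$ — aux orbites de droites : il faut considérer l'action simultanée de $W'$ et de la multiplication scalaire par $(\Z/q\Z)^*$, d'où l'apparition de la condition arithmétique supplémentaire $\mu^8 = -1$. C'est ce contrôle fin qui explique la restriction aux petites valeurs de $q$ utilisées dans nos calculs explicites.
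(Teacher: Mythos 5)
Votre démonstration suit une route réellement différente de celle du texte : l'article déduit ce lemme ``facilement'' du lemme précédent, qui caractérise par la fonction $\mathrm{signe}_q$ les $W'$-orbites de \emph{vecteurs} à l'intérieur d'une $W''$-orbite de vecteurs, tandis que vous travaillez directement sur l'équation $h\cdot v=\mu v$. Votre point de vue a le mérite de rendre explicite l'action des scalaires de $(\Z/q\Z)^*$ sur les générateurs d'une droite, action que la déduction ``vecteurs $\Rightarrow$ droites'' passe sous silence. La réduction à au plus deux orbites via $W''=W'\sqcup (W'\circ d)$, le traitement du cas d'une coordonnée nulle et l'égalité des cardinaux dans le cas scindé sont corrects.

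Deux points restent dans le second cas. D'abord, un vrai trou lorsque $q=p^k$ avec $k\geq 2$ (le lemme est énoncé pour toute puissance d'un premier impair) : ``toutes les $v_i$ sont non nulles'' ne rend pas $\prod_i v_i$ inversible dans $\Z/q\Z$ (penser à $q=9$ avec deux coordonnées égales à $3$), et la simplification qui donne $\prod_i \varepsilon_i=\mu^8$ n'est donc pas licite ; il faudrait stratifier les indices selon la valuation $p$-adique des $v_i$ (que $\sigma$ préserve) et raisonner niveau par niveau, ce qui ne conclut pas immédiatement. Pour $q$ premier — seul cas effectivement utilisé pour $\mathrm{E}_8$ — votre simplification est correcte. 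Ensuite, votre condition $16\nmid \varphi (q)$ ne traduit pas une faiblesse de votre preuve mais une imprécision de l'énoncé lui-même : pour $q=17$, en prenant $\mu$ d'ordre $16$ et $v=(1,\mu,\dots ,\mu^7)$, qui n'a aucune coordonnée nulle et est isotrope car $\sum_{j=0}^{7}\mu^{2j}=(\mu^{16}-1)/(\mu^{2}-1)=0$, on vérifie que $d\cdot v=\mu\cdot \sigma (v)$ où $\sigma\in \mathcal{S}_8\subset W'$ est le $8$-cycle de décalage ; la $W''$-orbite de cette droite est donc une seule $W'$-orbite, et le lemme tel qu'énoncé est faux pour ce $q$. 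Votre argument établit donc bien l'énoncé pour les premiers impairs $q\leq 13$ effectivement utilisés, et c'est ce qu'il faut en retenir : la restriction arithmétique que vous dégagez est nécessaire, pas seulement commode.
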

\textbf{Deuxième étape :}
\\ \indent On possède ainsi pour chaque orbite de $C_{\mathrm{E}_8}(\Z/q\Z)$ sous l'action de $W'$ un représentant et le cardinal de l'orbite. Il suffit d'utiliser l'ensemble $H_8$ construit au paragraphe \ref{4.2} pour savoir si des orbites différentes pour l'action de $W'$ sont dans la même orbite pour l'action de $W$. Le lemme suivant se déduit facilement de la définition de l'ensemble $H_8$ :
\begin{lem} Soient $x,x'\in C_{\mathrm{E}_8}(\Z/q\Z)$, et $v,v'\in (\Z/q\Z)^8$ des générateurs de $x$ et $x'$. Alors $x$ et $x'$ sont sur la même orbite de $C_{\mathrm{E}_8}(\Z/q\Z)$ pour l'action de $W$ si, et seulement si :
$$(\exists i\in \Z/q\Z) (\exists h \in H_8)\  \left \{
	\begin{aligned}
		&\phi_8 (\overline{i\cdot v}) = \phi_8(\overline{h(v')})\\
		&\mathrm{signe}_q( \overline{i\cdot v}) = \mathrm{signe}_q (\overline{h(v')}) \\
	\end{aligned}
\right.
$$
où les notations sont les mêmes qu'aux lemmes précédents.
\end{lem}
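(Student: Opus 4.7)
Le plan est de décomposer le groupe $W$ selon les classes à gauche modulo $W'$, puis d'enchaîner les deux lemmes précédents qui caractérisent respectivement les orbites sous $W''$ (via $\phi_8$) et le passage des orbites sous $W''$ à celles sous $W'$ (via $\mathrm{signe}_q$).

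Plus précisément, j'utiliserais d'abord le fait, établi au paragraphe \ref{4.2}, que $H_8$ est un système de représentants de $W/W'$, ce qui fournit la décomposition en union disjointe $W = \bigsqcup_{h\in H_8} W' \circ h$. Il en résulte immédiatement que $x$ et $x'$ sont dans la même orbite sous $W$ si, et seulement si, il existe $h\in H_8$ tel que $x$ et $h(x')$ soient dans la même orbite sous $W'$. Comme un générateur de $h(x')$ est simplement $h(v')$, la question se ramène à comparer les orbites sous $W'$ des lignes engendrées par $v$ et par $h(v')$.

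Ensuite, j'appliquerais le lemme précédent qui, sous l'hypothèse que deux éléments sont déjà dans la même orbite sous $W''$, caractérise l'appartenance à une même orbite sous $W'$ par l'égalité de leurs valeurs par $\mathrm{signe}_q$. En combinant avec la caractérisation des $W''$-orbites via $\phi_8$ (où la multiplication par un scalaire $i\in \Z/q\Z$ reflète le fait qu'on manipule des droites et non des vecteurs), on voit que la conjonction des deux conditions $\phi_8(\overline{i\cdot v})=\phi_8(\overline{h(v')})$ et $\mathrm{signe}_q(\overline{i\cdot v})=\mathrm{signe}_q(\overline{h(v')})$ équivaut à l'appartenance de $x$ et $h(x')$ à une même orbite sous $W'$.

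Il n'y a pas d'obstacle majeur, la démonstration étant essentiellement un chaînage des deux lemmes précédents. Le seul point technique à surveiller est de s'assurer que le passage au représentant canonique $\overline{\phantom{v}} : (\Z/q\Z)^8 \rightarrow \{0,\dots,q-1\}^8$ est compatible avec les vérifications, ce qui est clair puisque $\phi_8$ et $\mathrm{signe}_q$ ont été définies précisément sur ces représentants. On devra aussi noter que l'introduction du scalaire $i$ est indispensable : sans elle, on ne comparerait que des vecteurs fixés, alors que les objets considérés sont des droites, et différents générateurs peuvent donner des valeurs différentes des fonctions invariantes $\phi_8$ et $\mathrm{signe}_q$.
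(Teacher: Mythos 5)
Votre démonstration est correcte et correspond exactement à l'argument que l'article sous-entend en déclarant que le lemme « se déduit facilement de la définition de l'ensemble $H_8$ » : la décomposition $W=\bigsqcup_{h\in H_8} W'\circ h$ ramène la comparaison des orbites sous $W$ à celle des orbites sous $W'$ de $x$ et $h(x')$, laquelle est caractérisée par la conjonction des deux lemmes précédents (détection des orbites sous $W''$ par $\phi_8$, puis raffinement en orbites sous $W'$ par $\mathrm{signe}_q$), le scalaire $i$ tenant compte du choix du générateur de la droite. Rien à redire.
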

\indent \textbf{Troisième étape :} Les orbites de $C_{\mathrm{E}_8}(\Z/q\Z)$ pour $W$ et pour $W^+$ sont en général différentes. Soit $\tau_{1,2} \in \mathcal{S}_8 \cap (W\setminus W^+)$ la transposition qui échange les deux premières coordonnées : l'application définie sur $W$ par composition à gauche par $\tau_{1,2}$ est une involution qui échange $W^+$ et $W\setminus W^+$. Le lien entre les orbites de $C_{\mathrm{E}_8}(\Z/q\Z)$ pour les actions de $W$ et de $W^+$ se comprend par le lemme facile suivant :
\begin{lem} Soit $O$ une orbite de $C_{\mathrm{E}_8}(\Z/q\Z)$ pour l'action de $W$. On pose $v=(v_1,\dots ,v_8)$ tel que  : $x=\mathrm{vect}_{\Z/q\Z}(v) \in O$.
\\ \indent Alors $O$ correspond à une seule orbite pour l'action de $W^+$ si, et seulement si, un des $h(v)$ pour $h\in H_8$ possède deux coordonnées égales ou opposées.
\\ \indent Dans le cas contraire, $O$ est la réunion de deux orbites $O^+$ et $O^-$ pour l'action de $W^+$. Les orbites $O^+$ et $O^-$ sont entièrement déterminées par les relations :
$$\left\{
	\begin{aligned}
		&\mathrm{vect}_{\Z/q\Z}(v_1,v_2,\dots,v_8)\in O^+ \\
		&\mathrm{vect}_{\Z/q\Z}(v_2,v_1,\dots,v_8)\in O^- \\
		&\mathrm{card}(O^+)=\mathrm{card}(O^-) =\dfrac{\mathrm{card}(O)}{2} \\	
	\end{aligned}
\right.$$
\end{lem}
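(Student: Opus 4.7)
The plan is to reduce the question to one about stabilizers, using that $\tau_{1,2}$ (the transposition of the first two coordinates) lies in $\mathcal{S}_8\subset W$ with determinant $-1$, and that $[W:W^+]=2$.

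\textbf{Setup.} Since $W = W^+ \sqcup \tau_{1,2}W^+$, for any $[v]\in O$ one has $O = W^+\cdot[v]\cup W^+\cdot[\tau_{1,2}(v)]$, and these two $W^+$-subsets are either equal or disjoint of cardinality $|O|/2$. Applying orbit–stabilizer to the determinant homomorphism $\det:\mathrm{Stab}_W([v])\to\{\pm1\}$, one finds that $O$ is a single $W^+$-orbit if and only if $\mathrm{Stab}_W([v])\not\subset W^+$. This already gives the structural description of $O^+ = W^+\cdot[v]$ and $O^- = W^+\cdot[(v_2,v_1,v_3,\dots,v_8)]$ in the splitting case.

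\textbf{Sufficient direction.} If some $h\in H_8$ gives $h(v)=(u_1,\dots,u_8)$ with $u_i=u_j$ for some $i\neq j$, then the transposition $\tau_{ij}\in\mathcal{S}_8\subset W'$ has determinant $-1$ and fixes $h(v)$, so $g := h^{-1}\tau_{ij}h\in W$ has $\det g = -1$ and fixes $v$, whence $\mathrm{Stab}_W([v])\not\subset W^+$. If instead $u_i=-u_j$, the same argument works with $\tau_{ij}$ replaced by $\tau_{ij}\circ\delta_{ij}$, where $\delta_{ij}$ is the sign flip at positions $i$ and $j$; this product lies in $W'\cap(W\setminus W^+)$ because $\delta_{ij}\in(\{\pm1\}^8)^0$ (it flips exactly two signs), and it fixes $h(v)$ when $u_i=-u_j$.

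\textbf{Necessary direction.} Conversely, suppose $g\in W\setminus W^+$ satisfies $g(v)=\lambda v$ for some $\lambda\in(\Z/q\Z)^*$. Using the unique decomposition $g = w'\circ h$ with $w'=(\sigma,\varepsilon)\in\mathcal{S}_8\ltimes(\{\pm1\}^8)^0$ and $h\in H_8$, one obtains $h(v)=\lambda\cdot w'^{-1}(v)$, whose coordinate multiset is $\{\lambda\varepsilon_j v_j\}_{j=1}^8$. The defining relation $w'(h(v))=\lambda v$ rewrites componentwise as $\varepsilon_i v_{\sigma^{-1}(i)}=\lambda v_i$. The plan is to analyze this cycle-by-cycle on $\sigma$: for each cycle $(j_1,\dots,j_k)$, iterating the relation yields expressions $v_{j_{m+1}} = \lambda\varepsilon_{j_{m+1}}v_{j_m}$ and the closure identity $\lambda^k\prod_{m=1}^k\varepsilon_{j_m}=1$. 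Combining these identities with the parity constraint $\mathrm{sgn}(\sigma)\det(h)=-1$, a case analysis on the cycle type of $\sigma$ together with the order of $\lambda$ in $(\Z/q\Z)^*$ forces two of the signed values $\lambda\varepsilon_j v_j$ to coincide or be opposite, which translates exactly into $h(v)$ having two equal or two opposite coordinates. The main obstacle is handling the case where $\lambda$ has order $\ge 3$: here one must combine the cycle-closure identity with the parity of $\sigma$ and the non-degeneracy of the isotropic vector $v$ to rule out consistent solutions in which the coordinates of $h(v)$ are all distinct and no two opposite.
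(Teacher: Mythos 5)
Your reduction to the condition $\mathrm{Stab}_W([v])\not\subset W^+$, the resulting description of $O^+$ and $O^-$, and your proof of the sufficient direction (via $\tau_{ij}$ and $\tau_{ij}\circ\delta_{ij}$) are all correct; note that the paper states this lemma without proof, so the entire substance lies in the necessary direction, and that is where your argument fails. The decisive point you overlook is that $w'^{-1}$ is a signed permutation: from $h(v)=\lambda\,w'^{-1}(v)$ the coordinate multiset of $h(v)$ is $\{\pm\lambda v_j\}_{j}$, so $h(v)$ has two equal or opposite coordinates \emph{if and only if $v$ itself does}. Your cycle analysis is therefore really attempting to prove the stronger claim ``if $\mathrm{Stab}_W([v])\not\subset W^+$ then $v$ has two equal or opposite coordinates'', which would render the quantifier over $H_8$ in the lemma pointless — and which is false. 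Concretely, take $q=17$ and $v=(-1,2,3,4,5,6,7,8)\in \mathrm{D}_8/17\mathrm{D}_8\simeq \mathrm{E}_8/17\mathrm{E}_8$: then $\sum_i v_i^2=204\equiv 0$ and $\sum_i v_i=34\equiv 0\pmod{17}$, so $[v]\in C_{\mathrm{E}_8}(\Z/17\Z)$ and $v$ is orthogonal mod $17$ to the root $e=\frac{1}{2}(1,\dots,1)$; the reflection $s_e\in W\setminus W^+$ fixes $v$ mod $17$, so $O$ is a single $W^+$-orbit, yet the eight classes $\{\pm v_i\}$ are pairwise distinct, i.e.\ $v$ has no two equal or opposite coordinates. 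Writing $g=s_e=w'\circ h$ forces $h\neq\mathrm{id}$, and $h(v)=w'^{-1}(v)$ has no coincidence either: the witness promised by the lemma is a \emph{different} $h'\in H_8$, which no analysis of the single element $g$ can produce. (Relatedly, your componentwise relation $\varepsilon_i v_{\sigma^{-1}(i)}=\lambda v_i$ is the equation for $w'(v)=\lambda v$, not for $w'(h(v))=\lambda v$; it is only valid when $h=\mathrm{id}$, i.e.\ when the stabilizing element lies in $W'$, and in that case a cycle-by-cycle check does force a coincidence in $v$. The difficulty you flag about $\lambda$ of order $\geq 3$ is not where the real obstruction sits.)

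What actually has to be proved is: if some $g\in W\setminus W^+$ satisfies $g(v)\equiv\lambda v$, then $w(v)$ has two equal or opposite coordinates for \emph{some} $w\in W$. Since $v_i=\pm v_j$ exactly means $v\perp(e_i\mp e_j)$, and $W$ permutes the $240$ roots of $\mathrm{E}_8$ transitively while the property ``two coordinates equal or opposite'' is invariant under $W'$ and under scalars, the criterion of the lemma is equivalent to ``$v$ is orthogonal mod $q$ to some root of $\mathrm{E}_8$'' (this also recovers your sufficiency argument, the reflection in that root being the odd stabilizing element). Your proposal contains no mechanism for this global statement about the orbit, so the necessary direction remains unproved.
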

\subsubsection{Le cas de $\mathrm{E}_8\oplus \mathrm{A}_1$.}$  $
\\ \textbf{Première étape :}
\\\indent D'après la définition introduite au paragraphe \ref{2.1}, on a $\mathrm{A}_1=\Z \cdot (1,-1)$. On identifie dans la suite $\mathrm{A}_1$ à $\Z$ par l'isomorphisme : $(a,-a)\in \mathrm{A}_1 \mapsto a\in \Z$. On note dans la suite par $(v,w)$ avec $v\in \mathrm{E}_8$ et $w\in \Z$ les éléments de $\mathrm{E}_8 \oplus \mathrm{A}_1$. 
\\ \indent Les résultats pour $\mathrm{E}_8\oplus \mathrm{A}_1$ se déduisent directement de ceux de $\mathrm{E}_8$, grâce l'isomorphisme $W(\mathrm{E}_8)\times W(\mathrm{A}_1) \simeq W(\mathrm{E}_8 \oplus \mathrm{A}_1)$ suivant :
$$
	\begin{array}{rcl}
	W(\mathrm{E}_8)\times W(\mathrm{A}_1) & \tilde{\rightarrow} & W(\mathrm{E}_8\oplus \mathrm{A}_1) \\
	(\gamma _1,\gamma _2)& \mapsto & \left( (v,w) \mapsto (\gamma_1 (v),\gamma_2 (w))\right) .\\
	\end{array}
$$
\indent La forme bilinéaire symétrique $(v,w)\cdot (v',w')$ utilisée sur $(\mathrm{E}_8\oplus \mathrm{A}_1)\times (\mathrm{E}_8\oplus \mathrm{A}_1)$ et la forme quadratique $q$ associée sont données par :
$$(v,w)\cdot (v',w') = \left( \sum_i v_i\cdot v'_i\right)  +2\cdot w\cdot w'$$
$$q(v,w) = \dfrac{1}{2}\left( \sum_i v_i^2 \right) + w^2 $$
où on a pris $v=(v_1,\dots ,v_8)$ et $v'=(v'_1,\dots ,v'_8)$ dans $\mathrm{E}_8$ et $w,w'\in \Z$. Notons enfin que $W(\mathrm{A}_1)=\mathrm{O}(\mathrm{A}_1)=\{ \pm \mathrm{id} \}$.
\\ \indent On définit les groupes : $W' = \left( \mathcal{S}_8\ltimes \left( \{\pm 1\}\right) ^0\right) \times \{ \pm \mathrm{id}\}$ et $W''=\left( \mathcal{S}_8 \ltimes \{ \pm 1\}^8\right) \times \{\pm 1\}$. On a les inclusions : $W'\subset W$ et $W'\subset W''$.
\\ \indent Les lemmes suivants se déduisent directement des résultats obtenus pour $\mathrm{E}_8$ et sont donc évidents :
\begin{lem} Soit $x\in C_{\mathrm{E}_8\oplus \mathrm{A}_1}(\Z/q\Z)$. Alors l'orbite de $x$ par l'action de $W''$ possède un élément $x'=\mathrm{vect}_{\Z/q\Z} (v,w)$ pour $(v,w)\in \mathrm{E}_8\oplus \mathrm{A}_1$, où $v=(v_1,\dots ,v_8)$ vérifie :
$$\begin{array}{rl}
	(i) & v_i\in \{0,\dots ,(q-1)/2\} \\
	(ii) & w\in \{0,\dots ,(q-1)/2\} \\
	(iii) & v_1\leq \dots \leq v_8\\
	(iv) & \mathrm{inf}\{v_i\, \vert \, v_i>0 \} = 1\\
\end{array}$$
\end{lem}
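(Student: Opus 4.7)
The strategy is to exploit the product structure $W'' = (\mathcal{S}_8 \ltimes \{\pm 1\}^8) \times \{\pm 1\}$ and the fact that its two factors act independently on the two summands $\mathrm{E}_8$ and $\mathrm{A}_1$, in order to reduce directly to the $\mathrm{E}_8$ lemma already established in this subsection. The only genuine coupling between the two components is the overall choice of generator of the line $x$: scaling by a unit $i \in (\Z/q\Z)^*$ affects both $v$ and $w$ simultaneously. This is exactly the freedom that is used to achieve condition (iv).

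The plan is as follows. Let $(v,w) \in \mathrm{E}_8 \oplus \mathrm{A}_1$ be any generator of $x$. I would treat two cases.

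\emph{Case 1: $v \not\equiv 0$ in $\mathrm{E}_8/q\mathrm{E}_8$.} The corresponding line $\mathrm{vect}_{\Z/q\Z}(v)$ belongs to $C_{\mathrm{E}_8}(\Z/q\Z)$, but of course is only isotropic after coupling with the $w$ term; in any case the $\mathrm{E}_8$ lemma supplies a unit $i \in (\Z/q\Z)^*$ and an element $\gamma_1 \in \mathcal{S}_8 \ltimes \{\pm 1\}^8$ such that $\gamma_1(i \cdot v)$ satisfies conditions (i), (iii), (iv). Apply the pair $(\gamma_1, 1)$ after scaling by $i$; then the $\mathrm{A}_1$-coordinate becomes $i\cdot w$. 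Finally apply the element $(1,\varepsilon) \in W''$ with $\varepsilon \in \{\pm 1\}$ chosen so that $\varepsilon \cdot (i\cdot w) \in \{0,\dots,(q-1)/2\}$. Since the $\{\pm 1\}$ factor acts trivially on the $\mathrm{E}_8$ part, conditions (i), (iii), (iv) are preserved, and condition (ii) is achieved.

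\emph{Case 2: $v \equiv 0$ in $\mathrm{E}_8/q\mathrm{E}_8$.} Then $x = \mathrm{vect}_{\Z/q\Z}((0,w))$ with $w \in \Z/q\Z$ nonzero (since $x$ is a line), and conditions (i), (iii), (iv) hold trivially (the set in (iv) is empty). Scaling by any unit $i \in (\Z/q\Z)^*$ leaves $v = 0$ unchanged, and an appropriate choice of $i$ together with the sign $\varepsilon \in \{\pm 1\}$ acting on the $\mathrm{A}_1$-component yields a representative with $w \in \{0,\dots,(q-1)/2\}$, establishing (ii).

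The expected main difficulty is essentially bookkeeping: verifying that the simultaneous scaling used in the $\mathrm{E}_8$ reduction does not interfere with the normalization of the $\mathrm{A}_1$-coordinate. This is resolved cleanly because the independent sign flip $\varepsilon \in \{\pm 1\}$ acting on $w$ alone suffices to achieve (ii) whatever value $i\cdot w$ takes, so the two normalizations decouple as soon as the scaling $i$ has been fixed by the $\mathrm{E}_8$ step.
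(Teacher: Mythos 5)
Your reduction to the $\mathrm{E}_8$ case via the product structure of $W''$ is exactly the route the paper intends (it gives no proof, declaring these lemmas immediate consequences of the $\mathrm{E}_8$ ones), and the treatment of (i), (iii) and of the normalisation of $w$ is correct. There is however one genuine gap, located precisely at the point where you claim that the only coupling between the two components is the choice of generator. Condition (iv) is achievable for a vector $v\in(\Z/q\Z)^8$ if and only if some coordinate $v_j$ is invertible modulo $q$: scaling by $i$ turns a coordinate into $1$ only when $iv_j\equiv\pm1$, and the permutation and sign parts of $W''$ never change the $p$-adic valuation of a coordinate. When $q=p$ this is equivalent to $v\neq 0$, but the lemma is stated for arbitrary powers $q=p^k$, and for $k\geq 2$ the hypothesis of your Case 1, $v\not\equiv 0$, does not imply it: one could have $v\in p\cdot(\mathrm{E}_8/q\mathrm{E}_8)\smallsetminus\{0\}$, and then no element of the $W''$-orbit satisfies (iv). The $\mathrm{E}_8$ lemma cannot simply be quoted here, because its normalisation uses that a generator of a line of $\mathrm{E}_8/q\mathrm{E}_8$ is primitive, whereas the $\mathrm{E}_8$-component of a primitive vector of $(\mathrm{E}_8\oplus\mathrm{A}_1)/q(\mathrm{E}_8\oplus\mathrm{A}_1)$ need not a priori be primitive.

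The missing step is supplied exactly by the coupling you set aside, namely the isotropy of $(v,w)$ combined with the primitivity of a generator of a point of $C_{\mathrm{E}_8\oplus\mathrm{A}_1}(\Z/q\Z)$. If every coordinate of $v$ were divisible by $p$, write $v=pv'$ and lift $v'$ to $\mathrm{E}_8$; since $\mathrm{E}_8$ is even, $\tfrac12\, v\cdot v=p^2\cdot\tfrac12\, v'\cdot v'\equiv 0 \bmod p^2$, so the relation $\tfrac12\, v\cdot v+w^2\equiv 0\bmod q$ forces $p\mid w$ and $(v,w)$ would not be primitive. Hence $v$ always has a coordinate invertible mod $q$, Case 1 goes through, and the same computation with $v=0$ shows that your Case 2 is in fact empty (isotropy would force $w^2\equiv 0\bmod q$ while primitivity would force $w$ to be a unit). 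That emptiness is needed, because your assertion that (iv) ``holds trivially'' when $v=0$ is not correct as written: the infimum over the empty set is not $1$. With this one observation added, your proof is complete.
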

\begin{lem} Soient $x,x'\in C_{\mathrm{E}_8}(\Z/q\Z)$, et $(v,w),(v',w')\in (\Z/q\Z)^8\times \Z/q\Z$ des générateurs de $x$ et $x'$. Alors $x$ et $x'$ sont dans la même orbite de $C_{\mathrm{E}_8\oplus \mathrm{A}_1}(\Z/q\Z)$ pour l'action de $W''$ si, et seulement si :
$$(\exists i\in \Z/q\Z)\ 
	\left\{
		\begin{aligned}
			&\phi_8 (\overline{i\cdot v}) = \phi_8(\overline{v'}) \\
			&(i\cdot w)^2 =w'^2 \\
		\end{aligned}
	\right.
 $$ 
où $\phi_8$ est la fonction définie au paragraphe précédent, et où on a noté par $\overline{v}$ l'unique élément de $\{0,\dots q-1 \}^8$ ayant même image que $v$ par le passage au quotient : $\Z^8 \rightarrow (\Z/q\Z)^8$.
\end{lem}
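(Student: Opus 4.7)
Le plan est de se ramener directement aux résultats analogues déjà établis pour $\mathrm{E}_8$, en exploitant la structure de produit de $W''=\left( \mathcal{S}_8 \ltimes \{\pm 1\}^8\right) \times \{\pm 1\}$ qui agit coordonnée par coordonnée sur $\mathrm{E}_8\oplus \mathrm{A}_1$. Concrètement, deux droites $x=\Z/q\Z\cdot (v,w)$ et $x'=\Z/q\Z\cdot (v',w')$ sont dans la même orbite sous $W''$ si et seulement s'il existe un scalaire $i\in (\Z/q\Z)^*$ (correspondant au choix d'un générateur de la droite) et un couple $(\gamma_1,\gamma_2)\in \left( \mathcal{S}_8\ltimes \{\pm 1\}^8\right) \times \{\pm 1\}$ tel que $\gamma_1(i\cdot v) = v'$ et $\gamma_2(i\cdot w)=w'$.

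Je traiterais alors séparément chaque composante. Pour la composante $\mathrm{E}_8$, l'existence de $\gamma_1$ tel que $\gamma_1(i\cdot v)=v'$ (pour un certain $i$ fixé) équivaut à $\phi_8(\overline{i\cdot v}) = \phi_8(\overline{v'})$, ce qui est exactement le contenu du lemme analogue déjà prouvé pour $\mathrm{E}_8$ dans le paragraphe précédent. Pour la composante $\mathrm{A}_1$, comme $W(\mathrm{A}_1)=\{\pm \mathrm{id}\}$, l'existence de $\gamma_2$ tel que $\gamma_2(i\cdot w) =w'$ équivaut simplement à $i\cdot w =\pm w'$, ce qui se réécrit de manière équivalente par $(i\cdot w)^2 = w'^2$.

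L'étape cruciale, et la seule qui demande un peu d'attention, consiste à vérifier que l'on peut choisir le \emph{même} scalaire $i$ pour satisfaire simultanément les deux conditions : en d'autres termes, le paramètre $i\in \Z/q\Z$ qui intervient dans l'énoncé du lemme est bien celui servant à passer d'un générateur $(v,w)$ de $x$ à un autre générateur $(i\cdot v, i\cdot w)$, et c'est ce même $i$ qui doit rendre les deux conditions simultanément satisfaites. Ceci est immédiat : la réécriture des droites via la multiplication par un scalaire $i$ agit sur la droite entière, donc sur les deux composantes à la fois.

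Le seul point un peu délicat sera de rédiger proprement l'articulation entre l'existence séparée des transformations $\gamma_1$ et $\gamma_2$ et la forme de l'équivalence énoncée : il faudra invoquer explicitement le lemme correspondant pour $\mathrm{E}_8$ (qui encapsule déjà le jeu sur les signes et les permutations), puis constater que l'équation $(i\cdot w)^2=w'^2$ encode exactement l'orbite sous $\{\pm 1\}$ de la scalaire $i\cdot w$. La preuve se ramène donc essentiellement à une vérification formelle, sans difficulté technique supplémentaire par rapport au cas de $\mathrm{E}_8$ traité plus haut.
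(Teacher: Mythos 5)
Votre argument est correct et coïncide avec celui que l'article a en tête : le texte ne donne d'ailleurs aucune démonstration et se contente d'affirmer que le lemme « se déduit directement des résultats obtenus pour $\mathrm{E}_8$ », ce que vous explicitez exactement via la décomposition $W''=\left(\mathcal{S}_8\ltimes\{\pm 1\}^8\right)\times\{\pm 1\}$, l'application du lemme analogue pour $\mathrm{E}_8$ à la première composante, et le traitement direct de la composante $\mathrm{A}_1$ avec le même scalaire $i$ pour les deux facteurs. Signalez seulement, si vous rédigez, que l'équivalence $i\cdot w=\pm w'\Leftrightarrow (i\cdot w)^2=w'^2$ est immédiate pour $q$ premier mais demande que l'un des deux termes soit inversible lorsque $q$ est une puissance non triviale de $p$ (imprécision déjà présente dans l'énoncé du lemme lui-même).
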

\begin{lem} Soit $x\in C_{\mathrm{E}_8\oplus \mathrm{A}_1}(\Z/q\Z)$ une droite isotrope, et $(v,w)=((v_1,\dots ,v_8),w)$ un générateur de $x$. Alors le cardinal de l'orbite de $x$ par l'action de $W''$ est donné par :
\begin{center}
\scalebox{1}{$\begin{aligned}
	&\mathrm{card} \left( \mathcal{O}_{W''}(x)\right) = \\
	& \left\{ \begin{aligned}
		&\dfrac{2^{8-\vert \{ j\in \{1,\dots ,8\} \vert v_j = 0\} \vert } \cdot 8!}{\vert \{ i\in \Z/q\Z \vert \phi_8 (\overline{i\cdot v}) =\phi_8 (\overline{v}) \} \vert \cdot \displaystyle \prod_{i\in \Z/q\Z} \left(\vert \{j\in \{1,\dots 8\} \vert v_j = i \} \vert !\right)}\text{ si }w=0\\
		&\dfrac{2^{8-\vert \{ j\in \{1,\dots ,8\} \vert v_j = 0\} \vert } \cdot 8! \cdot 2}{\vert \{ i\in \Z/q\Z \vert  (\phi_8 (\overline{i\cdot v}),(iw)^2) =(\phi_8 (\overline{v}),w^2) \} \vert \cdot \displaystyle \prod_{i\in \Z/q\Z} \left(\vert \{j\in \{1,\dots 8\} \vert v_j = i \} \vert !\right)}\text{ si }w\neq 0\\
		\end{aligned}
		\right. \\
	\end{aligned}$}
\end{center}
\end{lem}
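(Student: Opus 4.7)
Mon plan est d'appliquer le théorème orbite-stabilisateur : $|\mathcal{O}_{W''}(x)| = |W''| / |\mathrm{Stab}_{W''}(x)|$ avec $|W''| = 2^9 \cdot 8!$ grâce à la décomposition $W'' = (\mathcal{S}_8 \ltimes \{\pm 1\}^8) \times \{\pm 1\}$. Tout se ramène au calcul de $|\mathrm{Stab}_{W''}(x)|$, que je partitionnerai selon le scalaire $\lambda \in (\Z/q\Z)^*$ tel que $\gamma \cdot (v, w) = \lambda (v, w)$ :
$$\mathrm{Stab}_{W''}(x) = \bigsqcup_{\lambda} F_\lambda, \qquad F_\lambda = \{(\gamma_1, \eta) \in W'' : \gamma_1(v) = \lambda v,\ \eta w = \lambda w\}.$$

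J'évaluerai $|F_\lambda|$ en traitant séparément les deux composantes. La condition $\eta w = \lambda w$ avec $\eta \in \{\pm 1\}$ admet $2$ solutions si $w = 0$ (toute valeur convient), et au plus $1$ sinon, son existence équivalant alors à $(\lambda w)^2 = w^2$. Pour la composante $\gamma_1$, le sous-ensemble $\{\gamma_1 \in \mathcal{S}_8 \ltimes \{\pm 1\}^8 : \gamma_1(v) = \lambda v\}$, lorsqu'il est non vide, est un translaté à gauche du stabilisateur $\mathrm{Stab}_{\mathcal{S}_8 \ltimes \{\pm 1\}^8}(v)$, dont je noterai la cardinalité $N$ ; la non-vacuité équivaut à $\phi_8(\lambda v) = \phi_8(v)$. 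En recollant, les $\lambda$ contribuant effectivement à la partition sont exactement les éléments de l'ensemble apparaissant au dénominateur de la formule : $\{i : \phi_8(\overline{iv}) = \phi_8(\overline{v})\}$ si $w = 0$, et $\{i : (\phi_8(\overline{iv}), (iw)^2) = (\phi_8(\overline{v}), w^2)\}$ si $w \neq 0$.

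Le calcul-clé sera celui de $N$, que j'effectuerai en exploitant la forme canonique du générateur $v$ donnée par le lemme précédent ($v_i \in \{0, \dots, (q-1)/2\}$). Si $(\sigma, \varepsilon) \in \mathcal{S}_8 \ltimes \{\pm 1\}^8$ fixe $v$, alors l'égalité $\varepsilon_i v_{\sigma^{-1}(i)} = v_i$ avec $v_i \neq 0$ force $\varepsilon_i = 1$ et $v_{\sigma^{-1}(i)} = v_i$, puisque les deux termes $v_{\sigma^{-1}(i)}$ et $v_i$ appartiennent à $\{1, \dots, (q-1)/2\}$. La permutation $\sigma$ doit donc préserver chaque ``bloc de valeur'' $\{j : v_j = c\}$ (contribuant $\prod_{c \in \Z/q\Z} |\{j : v_j = c\}|!$ choix), tandis que les signes sur les $k = |\{j : v_j = 0\}|$ positions nulles restent libres ($2^k$ choix). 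D'où $N = 2^k \cdot \prod_c |\{j : v_j = c\}|!$.

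En conclusion, $|\mathrm{Stab}_{W''}(x)| = 2 \cdot |S_0| \cdot N$ si $w = 0$, et $|\mathrm{Stab}_{W''}(x)| = |S_1| \cdot N$ si $w \neq 0$, où $S_0, S_1$ sont les deux ensembles du paragraphe précédent. La division par $|W''| = 2^9 \cdot 8!$ donne alors respectivement les deux formules annoncées, le facteur $2$ additionnel au numérateur du second cas reflétant l'absence du facteur $2$ pour $\eta$ au dénominateur. Le point délicat, quoique mineur, est de justifier le comptage ``par valeurs exactes'' plutôt que ``par valeurs absolues'' dans le calcul de $N$ : c'est la positivité imposée par la forme canonique qui empêche les permutations échangeant coordonnées opposées d'être dans le stabilisateur du vecteur.
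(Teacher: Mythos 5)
Votre preuve est correcte et suit exactement la voie que l'article sous-entend (celui-ci ne donne aucune démonstration : le lemme est déclaré se déduire du cas de $\mathrm{E}_8$, lui-même laissé au lecteur) : orbite--stabilisateur dans $W''=(\mathcal{S}_8\ltimes\{\pm1\}^8)\times\{\pm1\}$, le stabilisateur de la droite étant découpé en classes du stabilisateur du vecteur indexées par les scalaires $\lambda$ admissibles, et vous identifiez à juste titre le seul point délicat, à savoir que le comptage $N=2^{k}\prod_{c} m_c!$ n'est valable que pour le représentant sous forme canonique du lemme précédent (pour un générateur quelconque la formule serait fausse, comme on le voit sur $v=(1,-1,0,\dots,0)$). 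Une réserve, héritée de l'énoncé lui-même plutôt qu'introduite par vous : les équivalences utilisées ($a^2=b^2\Leftrightarrow a=\pm b$, aussi bien dans la caractérisation des orbites par $\phi_8$ que dans « $(\lambda w)^2=w^2\Leftrightarrow \lambda w=\pm w$ ») sont exactes lorsque $q$ est premier mais peuvent tomber en défaut dans $\Z/p^m\Z$ pour $m\geq 2$ quand l'élément concerné est un diviseur de zéro non nul ; la formule et votre démonstration doivent donc se lire pour $q$ premier (seul cas exploité par l'article pour $\mathrm{E}_8\oplus\mathrm{A}_1$), ou être complétées par cette vérification.
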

\indent Le lien entre les orbites de $C_{\mathrm{E}_8\oplus \mathrm{A}_1}(\Z/q\Z)$ pour l'action de $W''$ et pour $W'$ se comprend exactement comme dans le cas de $\mathrm{E}_8$. On trouve de la même manière que pour $\mathrm{E}_8$ les lemmes faciles suivants :
\begin{lem} Si $(v,w),(v',w')\in (\Z/q\Z)^8\times \Z/q\Z$ vérifient $\mathcal{O}_{W''}(v,w) = \mathcal{O}_{W''}(v',w')$, on a l'équivalence :
$$\mathcal{O}_{W'}((v,w)) = \mathcal{O}_{W'}((v',w')) \Leftrightarrow \mathrm{signe}_q(v) = \mathrm{signe}_q (v') .$$
\end{lem}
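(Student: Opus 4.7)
La stratégie reprend celle de la démonstration analogue pour $\mathrm{E}_8$ qui précède, et repose sur trois observations~: l'indice $[W'':W']$ vaut $2$, la fonction $\mathrm{signe}_q$ ne dépend que de la composante $v$ dans $\mathrm{E}_8$, et les facteurs $W(\mathrm{A}_1)$ de $W'$ et $W''$ coïncident (tous deux égaux à $\{\pm\mathrm{id}\}$). En particulier, la coordonnée $w$ ne joue aucun rôle non trivial et peut être transportée passivement tout au long du raisonnement.

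Je commencerais par fixer une transversale pour $W' \subset W''$. Comme seul le facteur $\mathrm{E}_8$ contribue à l'indice, je prendrais le représentant $d \in W''$ agissant sur $(v,w)$ en changeant le signe de la première coordonnée de $v$ et en fixant $w$, de sorte que $W'' = W' \sqcup (W' \circ d)$, d'où
$$\mathcal{O}_{W''}((v,w)) = \mathcal{O}_{W'}((v,w)) \cup \mathcal{O}_{W'}(d \cdot (v,w)).$$
La direction $\Rightarrow$ se réduit à vérifier que $\mathrm{signe}_q$ est $W'$-invariante. Les permutations dans $\mathcal{S}_8$ et l'action de $\{\pm\mathrm{id}\}$ sur $w$ préservent évidemment $\mathrm{signe}_q$. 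Pour les changements de signe dans $(\{\pm 1\}^8)^0$, on observe que $\mathrm{signe}_q(v) = 0$ dès qu'une coordonnée de $v$ s'annule~; sur le complémentaire, chaque changement de signe isolé multiplie $\mathrm{signe}_q$ par $-1$, et le nombre pair de changements imposé par $(\{\pm 1\}^8)^0$ préserve donc le produit.

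L'implication $\Leftarrow$ est le point principal, et se traite par une disjonction de cas analogue au cas $\mathrm{E}_8$. En supposant $\mathcal{O}_{W''}((v,w)) = \mathcal{O}_{W''}((v',w'))$ et $\mathrm{signe}_q(v) = \mathrm{signe}_q(v')$, la décomposition ci-dessus place $(v',w')$ soit dans $\mathcal{O}_{W'}((v,w))$ (cas favorable), soit dans $\mathcal{O}_{W'}(d\cdot(v,w))$. Dans ce second cas, je procéderais par disjonction selon qu'une coordonnée de $v$ s'annule ou non. Si toutes les $v_i$ sont non nulles, la première composante de $d\cdot(v,w)$ est $(-v_1,v_2,\ldots,v_8)$, et $\mathrm{signe}_q$ y vaut $-\mathrm{signe}_q(v)\neq \mathrm{signe}_q(v)$~; la $W'$-invariance appliquée à $(v',w')$ contredit alors l'hypothèse, et ce sous-cas est exclu. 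Si au contraire une coordonnée $v_i$ est nulle, je construirais un élément de $W'$ identifiant les deux orbites~: l'élément $\varepsilon \in (\{\pm 1\}^8)^0$ qui change les signes précisément aux positions $1$ et $i$ vérifie $\varepsilon \cdot v = d_1(v)$ (où $d_1$ désigne la composante $\mathrm{E}_8$ de $d$), le changement de signe à la position $i$ étant sans effet puisque $v_i=0$, et par conséquent $\mathcal{O}_{W'}(d\cdot(v,w)) = \mathcal{O}_{W'}((v,w))$, ce qui conclut.
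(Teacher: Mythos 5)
Votre démonstration est correcte et suit exactement la stratégie que le papier emploie pour le lemme analogue sur $\mathrm{E}_8$ (invariance de $\mathrm{signe}_q$ sous $W'$, décomposition $W''=W'\sqcup (W'\circ d)$, puis disjonction selon l'annulation d'une coordonnée de $v$), l'article se contentant d'ailleurs de renvoyer à ce cas sans rédiger de preuve séparée. La seule différence est cosmétique : au lieu de trier les coordonnées pour ramener le cas « une coordonnée nulle » à $v_1=0$, vous conjuguez $d$ par un changement de signes de $(\{\pm 1\}^8)^0$, ce qui revient au même.
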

\begin{lem} Soit $O$ une orbite de $C_{\mathrm{E}_8\oplus \mathrm{A}_1}(\Z/q\Z)$ pour l'action de $W''$. Alors $O$ correspond à une ou deux orbites pour l'action de $W'$.
\\ \indent S'il existe $v=(v_1,\dots ,v_8) \in (\Z/q\Z)^8$ et $w\in \Z/q\Z$ avec $x=\mathrm{vect}_{\Z/q\Z}(v,w)\in O$ tel que $v_1 =0$, alors $O$ ne correspond qu'à une seule orbite pour l'action de $W'$.
\\ \indent S'il n'existe pas de tel élément, alors $O$ sera la réunion de deux orbites $O^+$ et $O^-$ pour l'action de $W'$. On note $x=\mathrm{vect}_{\F_p}((v_1,\dots ,v_8),w)\in O$, alors $O^+$ et $O^-$ peuvent être définies par :
$$\left\{
	\begin{aligned}
		&\mathrm{vect}_{\Z/q\Z}(v_1,\dots,v_8)\in O^+ .\\
		&\mathrm{vect}_{\Z/q\Z}(-v_1,\dots,v_8)\in O^- .\\
		&\mathrm{card}(O^+)=\mathrm{card}(O^-) =\dfrac{\mathrm{card}(O)}{2} .\\	
	\end{aligned}
\right.$$
\end{lem}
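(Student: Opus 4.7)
The plan is to mirror exactly the proof of the analogous lemma for $\mathrm{E}_8$ given just above, exploiting the product decomposition $W(\mathrm{E}_8 \oplus \mathrm{A}_1) \simeq W(\mathrm{E}_8) \times W(\mathrm{A}_1)$ and the fact that $W'$ and $W''$ differ only through their $\mathrm{E}_8$-factor. More precisely, let $d \in W''$ be the element that negates the first coordinate of the $\mathrm{E}_8$ part and acts trivially on the $\mathrm{A}_1$ part; then $d \notin W'$ and $W'' = W' \sqcup (W' \circ d)$. Consequently, for any $x \in C_{\mathrm{E}_8 \oplus \mathrm{A}_1}(\Z/q\Z)$,
$$W'' \cdot x \;=\; W' \cdot x \;\cup\; W' \cdot (d \cdot x),$$
which already gives the first assertion: a $W''$-orbit decomposes into one or two $W'$-orbits.

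For the dichotomy, I would split according to the existence of a representative with $v_1 = 0$. In the \emph{first case}, suppose $O$ contains some $x = \mathrm{vect}_{\Z/q\Z}(v,w)$ with $v$ having a zero coordinate; using $\mathcal{S}_8 \subset W'$ we may assume this zero sits in position $1$. Then $d$ fixes $(v,w)$, so $d \cdot x \in W' \cdot x$ and $W'' \cdot x = W' \cdot x$, proving that $O$ is a single $W'$-orbit.

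In the \emph{second case}, no representative has a zero coordinate, so for any generator $(v,w)$ of an element of $O$ the value $\mathrm{signe}_q(v)$ lies in $\{\pm 1\}$. The function $(v,w) \mapsto \mathrm{signe}_q(v)$ is invariant under $W'$ (permutations preserve the product of signs, the even-sign-change group $(\{\pm 1\}^8)^0$ also preserves it, and the $\mathrm{A}_1$-factor $\{\pm \mathrm{id}\}$ does not touch $v$), whereas $\mathrm{signe}_q(d(v)) = -\mathrm{signe}_q(v)$ because all $v_i$ are nonzero. Hence $W' \cdot x$ and $W' \cdot (d \cdot x)$ carry opposite values of $\mathrm{signe}_q$ and are distinct; calling them $O^+$ and $O^-$ one checks, as in the proof of the previous lemma, that the bijection $x \mapsto d \cdot x$ exchanges them, so $|O^+| = |O^-| = |O|/2$, and the explicit descriptions with $(v_1, \ldots, v_8, w)$ and $(-v_1, v_2, \ldots, v_8, w)$ follow immediately from the definition of $d$.

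The only mildly delicate point is to verify that the dichotomy is well posed, i.e.\ that the property ``some representative has $v_1 = 0$'' depends only on the $W''$-orbit. This is clear once one notes that $W''$ acts on the multiset $\{v_1^2, \ldots, v_8^2\}$ by permutation only (via $\phi_8$), so vanishing of one of the $v_i$ is preserved by $W''$ and, after applying the normalization $v_1 \le \cdots \le v_8$ from the preceding lemma, amounts exactly to the condition $v_1 = 0$. With that observation in hand, the rest of the argument is a routine transcription of the $\mathrm{E}_8$ proof.
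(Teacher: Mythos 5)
Your strategy is the one the paper clearly intends (the text gives no proof here, asserting only that the lemma is obtained \og de la même manière que pour $\mathrm{E}_8$ \fg), and the first assertion together with the case $v_1=0$ are handled correctly: $W''=W'\sqcup (W'\circ d)$ bounds the number of $W'$-orbites by two, and when some generator has a vanishing $\mathrm{E}_8$-coordinate the element $d$ fixes that generator, hence the line, so $W''\cdot x=W'\cdot x$. The well-posedness remark is also fine.

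The second case, however, contains a genuine gap. You write that \og $W'\cdot x$ and $W'\cdot (d\cdot x)$ carry opposite values of $\mathrm{signe}_q$ and are distinct\fg, but $x$ is a line, not a vector, and $\mathrm{signe}_q$ is not a function on lines: it is invariant under $v\mapsto -v$ but not under $v\mapsto iv$ for a general unit $i$ (for $q=5$, a coordinate equal to $1$ keeps its sign under multiplication by $2$ while a coordinate equal to $2$ flips it). Your argument only shows that the \emph{vectors} $(v,w)$ and $(d(v),w)$ lie in distinct $W'$-orbits; to separate the \emph{lines} you must in addition exclude a relation $\gamma(v,w)=i\cdot(d(v),w)$ with $\gamma\in W'$ and $i$ a unit, $i\neq \pm 1$. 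This is not automatic. Equivalently one must show that no $\gamma''\in W''\setminus W'$ satisfies $\gamma''(v,w)=i(v,w)$; taking the product of the eight $\mathrm{E}_8$-coordinates gives $\prod_j\epsilon_j=i^8$, so the obstruction is precisely whether $-1$ is an eighth power in $(\Z/q\Z)^*$. For $q=17$ it is ($3^8\equiv -1$), and the isotropic vector $v=(3^0,3^1,\dots,3^7)=(1,3,9,10,13,5,15,11)\bmod 17$ with $w=0$ satisfies $3\cdot(v,0)=\gamma''(v,0)$ for a signed $8$-cycle $\gamma''\in W''\setminus W'$: its $W''$-orbit of lines is a single $W'$-orbit even though no generator has a zero coordinate, so the lemma as stated is false for that $q$. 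The statement (and your proof) can be salvaged only by using the restriction $q\leq 7$ in force for $\mathrm{E}_8\oplus\mathrm{A}_1$: when $w$ is a unit the $\mathrm{A}_1$-coordinate forces $i=\pm 1$ directly, and when $w=0$ one checks that $-1$ is not an eighth power modulo $3$, $5$ or $7$. This verification is missing from your argument and should be added for the proof to be complete.
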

\textbf{Deuxième étape :}
\\ \indent On possède ainsi pour chaque orbite de $C_{\mathrm{E}_8\oplus \mathrm{A}_1}(\Z/q\Z)$ sous l'action de $W'$ un représentant et le cardinal de l'orbite. On utilise la même méthode que pour $\mathrm{E}_8$ pour savoir si des orbites différentes pour l'action de $W'$ correspondent à la même orbite pour l'action de $W$. Concrètement, on utilise le lemme suivant qui se déduit immédiatement de la définition de $H_8$ :
\begin{lem} Soient $x,x'\in C_{\mathrm{E}_8\oplus \mathrm{A}_1}(\Z/q\Z)$, et $(v,w),(v',w')\in (\Z/q\Z)^8\times \Z/q\Z$ des générateurs de $x$ et $x'$. Alors $x$ et $x'$ sont sur la même orbite de $C_{\mathrm{E}_8}(\Z/q\Z)$ pour l'action de $W$ si, et seulement si :
$$(\exists i\in \Z/q\Z) (\exists h \in H_8)\  \left \{
	\begin{aligned}
		&\phi_8 (\overline{i\cdot v}) = \phi_8(\overline{h(v')})\\
		& (i\cdot w)^2 = w'^2 \\
		&\mathrm{signe}_q( \overline{i\cdot v}) = \mathrm{signe}_q (\overline{h(v')}) \\
	\end{aligned}
\right.
$$
où les notations sont les mêmes qu'aux lemmes précédents.
\end{lem}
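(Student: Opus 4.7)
Le plan est de réduire cet énoncé aux deux lemmes analogues déjà établis pour l'action de $W'$ (caractérisation via $\phi_8$, carré de la coordonnée $\mathrm{A}_1$, et $\mathrm{signe}_q$), en faisant agir ensuite séparément l'ensemble $H_8$ de représentants de $W/W'$ construit au paragraphe \ref{4.2}. Tout repose sur la décomposition $W(\mathrm{E}_8\oplus \mathrm{A}_1)\simeq W(\mathrm{E}_8)\times W(\mathrm{A}_1)$, conjuguée au fait que $W(\mathrm{A}_1)=\{\pm \mathrm{id}\}\subset W'$.

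Je commencerais par observer que, puisque $W'=\bigl(\mathcal{S}_8\ltimes (\{\pm 1\}^8)^0\bigr)\times \{\pm \mathrm{id}\}$ contient entièrement le facteur $W(\mathrm{A}_1)$, le quotient $W/W'$ s'identifie canoniquement à $W(\mathrm{E}_8)/W'(\mathrm{E}_8)$. En particulier, l'ensemble $H_8$ (vu comme $\{(h,\mathrm{id})\mid h\in H_8\}\subset W$) est un système de représentants de $W/W'$. On peut donc écrire : $W=\bigsqcup_{h\in H_8}W'\circ h$, et deux droites $x,x'\in C_{\mathrm{E}_8\oplus \mathrm{A}_1}(\Z/q\Z)$ sont dans la même $W$-orbite si, et seulement s'il existe $h\in H_8$ tel que $h(x')$ et $x$ soient dans la même $W'$-orbite. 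L'action de $h$ sur un générateur $(v',w')$ de $x'$ donne $(h(v'),w')$, car $h$ est trivial sur la composante $\mathrm{A}_1$.

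Il reste à traduire la condition ``$(v,w)$ et $(h(v'),w')$ engendrent des droites dans la même $W'$-orbite'' à l'aide des deux lemmes précédents. D'abord, le lemme caractérisant les $W''$-orbites par $\phi_8$ et $w^2$ fournit : $\mathcal{O}_{W''}(v,w)=\mathcal{O}_{W''}(h(v'),w')$ ssi il existe $i\in \Z/q\Z$ avec $\phi_8(\overline{i\cdot v})=\phi_8(\overline{h(v')})$ et $(i\cdot w)^2=w'^2$. Ensuite, le lemme de raffinement $W''\leadsto W'$ dit qu'à l'intérieur d'une $W''$-orbite donnée, les $W'$-orbites sont distinguées par la valeur de $\mathrm{signe}_q$ évaluée sur la composante $\mathrm{E}_8$, d'où la troisième condition $\mathrm{signe}_q(\overline{i\cdot v})=\mathrm{signe}_q(\overline{h(v')})$. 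La conjonction de ces trois conditions fournit l'équivalence voulue.

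Le point le plus délicat sera de vérifier la compatibilité des différents paramètres de liberté : le scalaire $i\in\Z/q\Z$ traduit le choix d'un autre générateur de la droite $x$ (et agit simultanément sur $v$ et sur $w$, d'où la présence de $(i\cdot w)^2$ et non de $w^2$), tandis que $h\in H_8$ n'agit que sur la composante $\mathrm{E}_8$ ; il faut s'assurer que ces deux actions commutent et que l'égalité $(i\cdot w)^2=w'^2$ absorbe bien l'action du facteur $\{\pm \mathrm{id}\}=W(\mathrm{A}_1)\subset W'$. Ce dernier point découle immédiatement du fait que $-\mathrm{id}$ envoie $w$ sur $-w$ et donc laisse $w^2$ invariant, ce qui explique pourquoi aucune condition supplémentaire de type ``signe sur la composante $\mathrm{A}_1$'' n'apparaît dans l'énoncé.
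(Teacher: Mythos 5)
Votre démonstration est correcte et suit exactement la démarche que l'article sous-entend : le texte ne donne d'ailleurs aucune preuve et déclare le lemme immédiat à partir de la définition de $H_8$ et des lemmes précédents sur les orbites de $W''$ et de $W'$. Votre réduction via $W=\bigsqcup_{h\in H_8}W'\circ h$ (licite puisque $W(\mathrm{A}_1)\subset W'$), puis l'application des critères $\phi_8$, $w^2$ et $\mathrm{signe}_q$, est précisément l'argument attendu, y compris la remarque finale expliquant l'absence de condition de signe sur la composante $\mathrm{A}_1$.
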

\textbf{Troisième étape :} On retrouve la même situation que pour $\mathrm{E}_7$ grâce au lemme suivant :
\begin{lem} Les orbites de $C_{\mathrm{E}_8\oplus \mathrm{A}_1}(\Z/q\Z)$ pour l'action de $W$ sont exactement celles pour l'action de $W^+$.
\end{lem}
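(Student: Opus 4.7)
Le plan est de reproduire l'argument utilisé pour le cas de $\mathrm{E}_7$ : il suffit d'exhiber un élément de $W \setminus W^+$ qui agit trivialement sur $C_{\mathrm{E}_8 \oplus \mathrm{A}_1}(\Z/q\Z)$. L'inclusion $W^+ \subset W$ entraîne que toute orbite sous $W^+$ est contenue dans une orbite sous $W$ ; réciproquement, l'existence d'un tel élément garantira que deux points d'une même orbite sous $W$ sont déjà reliés par un élément de $W^+$.

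Le candidat naturel est $-\mathrm{id}$. Je vérifierais d'abord, via l'isomorphisme $W(\mathrm{E}_8) \times W(\mathrm{A}_1) \simeq W(\mathrm{E}_8 \oplus \mathrm{A}_1)$ rappelé à la première étape, que $-\mathrm{id}$ appartient à $W$ : cela revient à constater d'une part que $-\mathrm{id}_{\mathrm{E}_8} \in W(\mathrm{E}_8)$ (propriété classique du groupe de Weyl de $\mathrm{E}_8$, voir \cite[Planche VII]{Bo}), et d'autre part que $-\mathrm{id}_{\mathrm{A}_1} \in W(\mathrm{A}_1) = \{\pm \mathrm{id}\}$. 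Ensuite, comme $\dim(\mathrm{E}_8 \oplus \mathrm{A}_1) = 9$, on a $\mathrm{det}(-\mathrm{id}) = (-1)^9 = -1$, donc $-\mathrm{id} \in W \setminus W^+$. Enfin, $-\mathrm{id}$ stabilise chaque $\Z/q\Z$-droite, donc agit trivialement sur $C_{\mathrm{E}_8 \oplus \mathrm{A}_1}(\Z/q\Z)$.

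La conclusion se fait alors en une ligne : si $w \in W \setminus W^+$ vérifie $w\cdot x = y$ pour $x,y \in C_{\mathrm{E}_8 \oplus \mathrm{A}_1}(\Z/q\Z)$, alors $(-\mathrm{id}) \circ w \in W^+$ satisfait aussi $((-\mathrm{id})\circ w)\cdot x = y$. Les orbites pour $W$ et pour $W^+$ coïncident donc.

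Aucun obstacle réel n'est à prévoir : la preuve est formellement identique à celle donnée pour $\mathrm{E}_7$. La seule observation à souligner, par contraste avec le cas de $\mathrm{E}_8$ traité à part (où $-\mathrm{id}_{\mathrm{E}_8}$ est de déterminant $+1$ et donc ne fournit pas l'élément cherché), est que l'ajout du facteur $\mathrm{A}_1$ fait passer la dimension ambiante de $8$ à $9$, ce qui change la parité du déterminant de $-\mathrm{id}$ et rend l'argument applicable.
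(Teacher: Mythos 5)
Votre preuve est correcte et suit exactement la même démarche que celle du papier, qui se contente de constater que $-\mathrm{id}\in W\setminus W^+$ et agit trivialement sur $C_{\mathrm{E}_8\oplus \mathrm{A}_1}(\Z/q\Z)$. Vos vérifications supplémentaires (appartenance de $-\mathrm{id}$ à $W$ via la décomposition $W(\mathrm{E}_8)\times W(\mathrm{A}_1)$, déterminant $(-1)^9=-1$) explicitent simplement ce que le papier laisse au lecteur.
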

\begin{proof}
il suffit de constater que : $-1\in W\setminus W^+$, et que $-1$ a une action triviale sur $C_{\mathrm{E}_8\oplus \mathrm{A}_1}(\Z/q\Z)$. 
\end{proof}
\subsection[La détermination d'une transformation de $\mathrm{SO}_n(\R)$ transformant $L$ \\ en un $q$-voisin donné.]{La détermination d'une transformation de $\mathrm{SO}_n(\R)$ transformant $L$ en un $q$-voisin donné.}\label{4.4}
\indent Au paragraphe précédent, on a montré comment trouver, pour chaque orbite de $C_{L}(\Z/q\Z)$ pour l'action de $W^+=\mathrm{SO}(L)$, un représentant ainsi que le cardinal de l'orbite. La formule trouvée au corollaire \ref{formuletrace} fait intervenir, pour chacun des représentants $x$ trouvé au paragraphe précédent, un élément $g_x \in \mathrm{SO}_n(\Q)$ tel que le $q$-voisin $L_x$ de $L$ associé à $x$ par la proposition-définition \ref{p-voisin} s'écrive : $L_x = g_x L$.
\\ \indent La méthode qui suit explique comment, à partir d'une droite isotrope $x$ engendrée par un vecteur $v$, construire le réseau $L_x$ associé (en exhibant une famille génératrice), puis comment trouver un élément $g_x\in \mathrm{SO}_n(\Q)$ tel que $L_x = g_x L$.
\subsubsection{La construction du $q$-voisin à partir d'une droite isotrope.}
\indent On reprend la construction de la proposition-définition \ref{p-voisin}. Soient $L\in \mathcal{L}_n$, $x$ la droite isotrope de $C_L(\Z/q\Z)$ engendrée par le vecteur $v\in L$, et $L_x$ le $q$-voisin de $L$ associé à $x$ d'après la proposition-définition \ref{p-voisin}. On souhaite déterminer une famille génératrice de $L_x$.
\\ \indent La première étape consiste à créer le réseau $M$ que l'on avait défini comme l'image inverse par l'homomorphisme $L \rightarrow L/qL$ de $x^\perp$, ce que l'on décrit dans le lemme suivant :
\begin{lem} Soient $(\alpha_1,\dots ,\alpha_n)$ une $\Z$-base de $L$, et $(\overline{\alpha_1},\dots ,\overline{\alpha_n})$ son image dans $L/qL$. Alors le $\Z/q\Z$-module $ x^\perp$ est libre de rang $n-1$. De plus, si l'on se donne $i_0 \in \{ 1,\dots ,n\}$ tel que $(\overline{\alpha_{i_0}} \cdot v) \in (\Z/q\Z)^*$, la famille :
$$ \left\{ (\alpha_{i_0} \cdot v)\,  \overline{\alpha_i} -(\alpha_i \cdot v)\, \overline{\alpha_{i_0}} \ \vert \ i\in \{1,\dots ,n\}\setminus {i_0} \right\}$$
est une base de $x^\perp$. En tant qu'image inverse de $x^\perp$ par la projection $L\rightarrow L/qL$, le réseau $M$ admet pour famille génératrice la famille suivante :
$$ \left\{ (\alpha_{i_0} \cdot v)\,  \alpha_i -(\alpha_i \cdot v)\, \alpha_{i_0} \ \vert \ i\in \{1,\dots ,n\}\setminus {i_0} \right\} \bigcup \left\{ q\, \alpha_i \ \vert \ i\in \{1,\dots ,n\} \right\}.$$
\end{lem}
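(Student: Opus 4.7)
Le plan est de traiter les trois assertions du lemme successivement, en travaillant d'abord dans le quotient $L/qL$ (pour les deux premières) puis en relevant la description à $L$ (pour la troisième). L'ingrédient clé est l'application $\Z/q\Z$-linéaire $\psi : L/qL \to \Z/q\Z$ définie par $y\mapsto y\cdot v$, dont le noyau est par construction $x^\perp$.

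Pour les deux premières assertions, l'hypothèse $\alpha_{i_0}\cdot v\in (\Z/q\Z)^*$ entraîne que $\psi(\overline{\alpha_{i_0}})$ est inversible, donc que $\psi$ est surjective. En comparant les cardinaux, on obtient $|x^\perp|=q^{n-1}$. On introduit ensuite les $n-1$ éléments $e_i := (\alpha_{i_0}\cdot v)\overline{\alpha_i}-(\alpha_i\cdot v)\overline{\alpha_{i_0}}$ pour $i\neq i_0$. Leur appartenance à $x^\perp$ se vérifie directement : $e_i\cdot v = (\alpha_{i_0}\cdot v)(\alpha_i\cdot v)-(\alpha_i\cdot v)(\alpha_{i_0}\cdot v) = 0$. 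Pour la liberté, une relation $\sum_{i\neq i_0}c_i\,e_i=0$ dans $L/qL$, développée dans la $\Z/q\Z$-base $(\overline{\alpha_j})_{j=1}^n$, fournit en lisant la coordonnée sur $\overline{\alpha_i}$ (pour $i\neq i_0$) la relation $c_i\cdot(\alpha_{i_0}\cdot v)=0$, d'où $c_i=0$ puisque $\alpha_{i_0}\cdot v$ est inversible. L'application $\Z/q\Z$-linéaire $(\Z/q\Z)^{n-1}\to x^\perp$, $(c_i)\mapsto \sum c_i e_i$, est donc injective ; comme source et but ont le même cardinal $q^{n-1}$, elle est bijective, ce qui établit simultanément que $x^\perp$ est libre de rang $n-1$ et que les $e_i$ en forment une base.

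Pour la troisième assertion, j'invoquerais le principe élémentaire suivant : si $\pi:L\to L/qL$ désigne la projection canonique, de noyau $qL=\sum_i \Z\cdot q\alpha_i$, et si $N\subset L/qL$ est un sous-module engendré par $(\overline{\beta_j})_j$ avec $\beta_j\in L$, alors $\pi^{-1}(N)=\sum_j \Z\cdot \beta_j + qL$. Appliqué à la base de $x^\perp$ constituée des $e_i$, et en relevant chacun d'eux en $(\alpha_{i_0}\cdot v)\alpha_i-(\alpha_i\cdot v)\alpha_{i_0}\in L$, on obtient exactement la famille génératrice de $M$ annoncée.

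La principale subtilité est plus conceptuelle que calculatoire : il s'agit d'éviter d'invoquer un argument naïf de dimension sur $\Z/q\Z$ quand $q=p^k$ avec $k\geq 2$, puisque l'anneau de base n'est alors pas un corps. L'astuce, utilisée ci-dessus, est de combiner un argument de cardinalité (via la surjection $\psi$) avec une injectivité prouvée coordonnée par coordonnée dans la base $(\overline{\alpha_j})$ de $L/qL$, ce qui court-circuite proprement toute question de projectivité ou de platitude sur $\Z/q\Z$.
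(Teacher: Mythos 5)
Votre démonstration est correcte et suit l'unique approche naturelle, celle que l'article expédie en déclarant le reste ``évident'' : vous en explicitez proprement les détails, et la combinaison d'un argument de cardinalité (via la surjection $\psi$) avec une injectivité lue coordonnée par coordonnée dans la base $(\overline{\alpha_j})$ contourne effectivement l'écueil des arguments de rang naïfs sur $\Z/q\Z$ lorsque $q=p^k$ avec $k\geq 2$. Signalez seulement que la première assertion du lemme est énoncée avant l'introduction de $i_0$, de sorte que votre preuve de la liberté de $x^\perp$ présuppose l'existence d'un tel indice, laquelle découle de la non-dégénérescence du produit scalaire modulo $q$ (c'est précisément le point que l'article invoque).
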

\begin{proof}
le rang de $x^\perp$ et l'existence de l'indice $i_0$ sont assurés par la non dégénérescence du produit scalaire sur $L$. Le reste est évident.  
\end{proof}
\indent Il est alors facile d'exhiber une famille génératrice de $L_x$ à partir de la famille génératrice de $M$ précédente. Concrètement, on a la proposition suivante :
\begin{prop} On reprend les mêmes notations que précédemment. Le réseau $L_x$ possède une famille $\Z$-génératrice à $2n$ éléments, à savoir :
$$
	\begin{array}{c}
	 \left\{ (\alpha_{i_0} \cdot v)\,  \alpha_i -(\alpha_i \cdot v)\, \alpha_{i_0} \vert i\in \{1,\dots ,n\}\setminus {i_0} \right\} \\
	 \bigcup \left\{ q\, \alpha_i \vert i\in \{1,\dots ,n\} \right\} \bigcup \left\{\dfrac{1}{q} \left( v - \dfrac{v\cdot v}{2}\, m\, \alpha_{i_0}\right) \right \}, \\
	\end{array}
$$
où $m$ est un entier tel que $m \, (v\cdot \alpha_{i_0}) \equiv 1 \mathrm{\ mod\ q}$ (qui existe bien comme $(\overline{\alpha_{i_0}}\cdot v)\in (\Z/q\Z)^*$).
\end{prop}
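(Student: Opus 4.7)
Le plan est de combiner le lemme précédent, qui fournit une famille génératrice de $M$, avec l'identité $L_x = M + \Z \cdot \frac{v}{q}$ tirée de la proposition-définition \ref{p-voisin}. Je commencerais par rappeler que le lemme précédent exhibe explicitement la famille génératrice à $2n-1$ éléments
$$\left\{(\alpha_{i_0}\cdot v)\, \alpha_i - (\alpha_i \cdot v)\, \alpha_{i_0} \ \vert \ i \in \{1, \dots, n\}\setminus \{i_0\}\right\} \cup \left\{q\, \alpha_i \ \vert \ i \in \{1, \dots, n\}\right\}$$
du sous-module $M$. Comme $L_x = M + \Z \cdot \frac{v}{q}$, il suffit de vérifier que le dernier élément $w := \frac{1}{q}\left(v - \frac{v\cdot v}{2}\, m\, \alpha_{i_0}\right)$ appartient à $L_x$ et engendre le quotient $L_x / M \simeq \Z/q\Z$ modulo $M$.

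L'argument-clé est un simple calcul de divisibilité. Par hypothèse rappelée à la proposition-définition \ref{p-voisin}, le représentant $v \in L$ satisfait $v\cdot v \equiv 0 \mod 2q^2$, donc $\frac{v\cdot v}{2q}$ est un entier qui est multiple de $q$. Par conséquent, $\frac{v\cdot v}{2q}\, m\, \alpha_{i_0}$ appartient à $qL$, et donc à $M$ (puisque $qL \subset M$, le sous-module $M$ étant l'image réciproque dans $L$ d'un sous-module de $L/qL$). En écrivant
$$w = \frac{v}{q} - \frac{v\cdot v}{2q}\, m\, \alpha_{i_0},$$
on voit immédiatement que $w \in L_x$ (comme différence d'un élément de $L_x$ et d'un élément de $M \subset L_x$), et réciproquement que $\frac{v}{q} = w + \frac{v\cdot v}{2q}\, m\, \alpha_{i_0}$ appartient au $\Z$-sous-module engendré par $w$ et par les générateurs de $M$ énumérés ci-dessus.

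Je conclurais alors en constatant que la famille proposée contient à la fois un système de générateurs de $M$ et l'élément $w$, donc engendre $M + \Z\cdot w = M + \Z \cdot \frac{v}{q} = L_x$, ce qui donne exactement la description voulue. Il n'y a à vrai dire aucun véritable obstacle dans cette preuve : la seule subtilité, qu'il faudra souligner, est de bien exploiter le renforcement $v\cdot v \equiv 0 \mod 2q^2$ (et non simplement $\mod 2q$) imposé dans la proposition-définition \ref{p-voisin} ; c'est précisément cette condition qui garantit l'intégralité du coefficient $\frac{v\cdot v}{2q^2}$ et donc l'appartenance du terme de correction à $qL$.
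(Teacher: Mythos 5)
Votre démonstration repose sur l'hypothèse que le vecteur $v$ satisfait déjà $v\cdot v\equiv 0\ \mathrm{mod}\ 2q^2$, et c'est là que se trouve la lacune. Dans le contexte de cette proposition, $v$ est seulement un vecteur de $L$ dont l'image engendre la droite isotrope $x\in C_L(\Z/q\Z)$ : l'isotropie ne donne que $v\cdot v\equiv 0\ \mathrm{mod}\ 2q$. La congruence modulo $2q^2$ est une condition imposée au \emph{relèvement} dans la proposition-définition \ref{p-voisin}, pas une propriété du $v$ dont on part ici --- et c'est précisément le rôle du terme correctif $-\frac{v\cdot v}{2}\,m\,\alpha_{i_0}$ que de produire un tel relèvement. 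Sous la seule hypothèse $v\cdot v\equiv 0\ \mathrm{mod}\ 2q$, votre point de départ $L_x=M+\Z\,\frac{v}{q}$ est faux en général (le membre de droite n'est même pas un réseau pair), l'affirmation que $\frac{v\cdot v}{2q}$ est un entier multiple de $q$ tombe, et avec elle l'appartenance du terme de correction à $qL$. Si $v$ vérifiait déjà la congruence forte, le terme correctif serait d'ailleurs superflu, ce qui devrait alerter sur le fait que ce n'est pas la bonne lecture de l'énoncé.

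L'argument attendu consiste à poser $v'=v-\frac{v\cdot v}{2}\,m\,\alpha_{i_0}$ et à vérifier deux choses. D'une part $v'\equiv v\ \mathrm{mod}\ qL$, car $\frac{v\cdot v}{2}\equiv 0\ \mathrm{mod}\ q$ par isotropie, de sorte que $v'$ engendre encore $x$. D'autre part $v'\cdot v'\equiv 0\ \mathrm{mod}\ 2q^2$ : en développant, $v'\cdot v'=(v\cdot v)\bigl(1-m\,(v\cdot\alpha_{i_0})\bigr)+\bigl(\frac{v\cdot v}{2}\bigr)^2 m^2\,(\alpha_{i_0}\cdot\alpha_{i_0})$, le premier terme est divisible par $2q\cdot q$ puisque $m\,(v\cdot\alpha_{i_0})\equiv 1\ \mathrm{mod}\ q$, et le second par $2q^2$ puisque $\frac{v\cdot v}{2}$ est multiple de $q$ et que $L$ est pair. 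La proposition-définition \ref{p-voisin} s'applique alors au relèvement $v'$ et donne $L_x=M+\Z\,\frac{v'}{q}$, d'où la famille génératrice annoncée en adjoignant $\frac{v'}{q}$ aux générateurs de $M$ fournis par le lemme précédent. Cette vérification de congruence est le contenu réel de la démonstration, et elle manque dans votre proposition.
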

\begin{proof}
On reprend la construction faite à la proposition-définition \ref{p-voisin}. Le réseau $L_x$ est donné par :
$$L_x = M + \Z\,\dfrac{v'}{q},$$
où $v' = v - \dfrac{v\cdot v}{2}\, m\, \alpha_{i_0}$, et $i_0$ et $m$ sont définis au-dessus. Comme $v$ est isotrope, alors $v$ et $v'$ ont bien même image dans $L/qL$, et $v'$ vérifie $(v'\cdot v')\equiv 0\mathrm{\ mod\ }2q^2$. 
\end{proof}
\subsubsection{Détermination d'une transformation entre $L$ et $L_x$.}
\indent On suppose que $n\leq 9$. En particulier, on a les deux propriétés suivantes :
\\ \indent - deux réseaux $L,L'\in \mathcal{L}_n$ sont nécessairement isomorphes ;
\\ \indent - soit $\{\alpha_1,\dots ,\alpha_n \}$ un système de racines simples de $L\in \mathcal{L}_n$ : c'est une $\Z$-base de $L$.
\\ \indent De ces deux constatations, on déduit le lemme facile suivant : 
\begin{lem} Soient $n\leq 9$, et $L,L'\in \mathcal{L}_n$ deux réseaux. On note $\{\alpha_1,\dots ,\alpha_n \}$ un système de racines simples de $L$, et $\{ \beta_1, \dots ,\beta_n \}$ un système de racines simples de $L'$. Ce sont respectivement des bases de $L$ et $L'$. On suppose que les $\alpha_i$ et les $\beta_j$ sont numérotés de manière à donner le même diagramme de Dynkin, c'est-à-dire que les matrices $(\alpha_i\cdot \alpha_j)_{i,j}$ et $(\beta_i\cdot \beta_j)_{i,j}$ sont égales (ce qui est bien possible comme les réseaux $L$ et $L'$ sont isomorphes).
\\ \indent Alors l'unique application linéaire $g$ donnée par :
$$\begin{array}{rcl}
	g: L & \rightarrow & L' \\
	\alpha_i & \mapsto & \beta_i \\
	\end{array}
$$
est un élément de $\mathrm{O}_n(\R)$ tel que $g(L)=L'$.
\\ \indent De plus, si $g\notin \mathrm{SO}_n(\R)$, alors $g' = g\circ s_{\alpha_1} \in \mathrm{SO}_n(\R)$ et vérifie $L'=g'(L)$.
\end{lem}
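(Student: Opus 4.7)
The plan is to unpack what the two preliminary observations give us and then verify the three assertions (orthogonality, sending $L$ onto $L'$, and the parity correction) in turn.

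First I would note that the two bulleted facts preceding the lemma make the map $g$ well-defined and easy to control. Since $\{\alpha_1,\dots,\alpha_n\}$ is a $\Z$-basis of $L$, the $\R$-linear extension of $\alpha_i\mapsto\beta_i$ (viewing both as lying in the ambient euclidean space, possibly after choosing any isometric embedding of the $\R$-span of $L'$ into $\R^n$) is uniquely determined on $L\otimes\R=\R^n$. This defines $g\in\mathrm{GL}_n(\R)$ unambiguously.

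Next I would check orthogonality. The hypothesis that the Gram matrices $(\alpha_i\cdot\alpha_j)_{i,j}$ and $(\beta_i\cdot\beta_j)_{i,j}$ coincide says exactly that $g(\alpha_i)\cdot g(\alpha_j)=\alpha_i\cdot\alpha_j$ for all $i,j$. Since these relations are bilinear and the $\alpha_i$ span $\R^n$, they force $g(x)\cdot g(y)=x\cdot y$ for all $x,y\in\R^n$, i.e.\ $g\in\mathrm{O}_n(\R)$. Then $g(L)=L'$ is immediate: $g$ carries the $\Z$-basis $(\alpha_i)$ of $L$ onto the $\Z$-basis $(\beta_i)$ of $L'$, so $g(L)=\sum_i\Z\beta_i=L'$.

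For the last sentence I would use the orthogonal reflection $s_{\alpha_1}$. Because $\alpha_1\in R(L)$ is a root of $L$, the reflection $s_{\alpha_1}$ belongs to the Weyl group $W(R(L))\subset\mathrm{O}(L)$ (Proposition~\ref{Weyl et SO}), so it stabilises $L$. Moreover $s_{\alpha_1}$ is a reflection, hence $\det s_{\alpha_1}=-1$. If $g\notin\mathrm{SO}_n(\R)$, i.e.\ $\det g=-1$, then $g':=g\circ s_{\alpha_1}$ has determinant $+1$, and $g'(L)=g(s_{\alpha_1}(L))=g(L)=L'$, establishing $g'\in\mathrm{SO}_n(\R)$ with $g'(L)=L'$.

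There is no real obstacle here: the lemma is a clean consequence of the facts that for $n\le 9$ every element of $\mathcal L_n$ is determined up to isometry by its Dynkin diagram and that a simple root system is automatically a $\Z$-basis. The only conceptual point worth underlining in the write-up is the use of Proposition~\ref{Weyl et SO} to get $s_{\alpha_1}\in\mathrm{O}(L)$, which is what allows the parity correction to stay within isometries of $L$.
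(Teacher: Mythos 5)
Your proof is correct and is precisely the argument the paper intends: the lemma is stated there without proof (introduced as an easy consequence of the two preceding observations), and your write-up simply fills in the standard details — equality of Gram matrices forces $g\in\mathrm{O}_n(\R)$, a $\Z$-basis is carried to a $\Z$-basis so $g(L)=L'$, and composing with the root reflection $s_{\alpha_1}$ (which stabilises $L$ and has determinant $-1$) fixes the sign of the determinant. The only superfluous remark is the aside about embedding the span of $L'$: both lattices are full-rank in $\R^n$ by definition of $\mathcal{L}_n$, so no such choice is needed.
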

\indent On souhaite construire un élément $g\in \mathrm{SO}_n(\R)$ tel que $L_x=g(L)$ (où $L_x$ est le $q$-voisin de $L$ associé à l'élément $x\in C_L(\Z/q\Z)$ d'après la proposition-définition \ref{p-voisin}). Grâce à l'étude faite au paragraphe précédent, on possède déjà une famille génératrice pour $L_x$. D'après le lemme précédent, il suffit de déterminer un système de racines simples pour $L_x$, puis de l'ordonner correctement, et enfin de créer la transformation $g$ associée. L'algorithme qui fait cela se fait selon les étapes suivantes :
\\ \indent \textbf{Première étape :} à l'aide d'une famille $\Z$-génératrice de $L_x$ (par exemple celle trouvée au paragraphe précédent), on utilise l'algorithme LLL pour avoir une $\Z$-base de $L_x$.
\\ \indent \textbf{Deuxième étape :} la forme bilinéaire sur $L_x$ est entièrement déterminée par sa valeur sur la base trouvée à la première étape. On en déduit l'ensemble des racines de $L_x$. La fonction $\mathrm{qfminim}$ de Pari-GP nous donne directement un ensemble $R^+$ de racines positives.
\\ \indent \textbf{Troisième étape :} en posant $\rho = (1/2)\cdot \sum_{\beta \in R^+} \beta$, on déduit le système de racines simples $\{ \beta_1,\dots ,\beta_n\}$ associé à la chambre de Weyl contenant $\rho$.
\\ \indent \textbf{Quatrième étape :} on réordonne ensuite les indices des $\beta_i$ pour qu'ils donnent le même diagramme de Dynkin que les $\alpha_i$ (où les $\alpha_i$ sont un système de racines simples de $L$, ordonnées comme dans \cite[Planches VI et VII]{Bo} par exemple).
\\ \indent \textbf{Cinquième étape :} on construit l'application $g$ définie au lemme précédent.
\\ \\ \indent Seules les quatrième et cinquième étapes diffèrent selon les réseaux considérés. On les détaille ci-dessous dans le cas où $L=\mathrm{E}_7$ :
\\ \\ \indent \textbf{L'exemple de $\mathrm{E}_7$ :}
\\ \indent On reprend les notations des paragraphes \ref{2.1} et \ref{4.2}. On pose $\alpha_i=v_i$ pour $i=1,\dots ,7$ (qui est bien un système de racines simples de $\mathrm{E}_7$). Le diagramme de Dynkin et la matrice de Gram associés sont donnés par :
$$\begin{tikzpicture}
\begin{scope}[start chain]
\dnode{1}
\foreach \dyni in {3,...,7} {
\dnode{\dyni}
}
\end{scope}
\begin{scope}[start chain=br going above]
\chainin (chain-3);
\dnodebr{2}
\end{scope}
\end{tikzpicture}\indent \left( \begin{matrix}
2&0&-1&0&0&0&0\\
0&2&0&-1&0&0&0\\
-1&0&2&-1&0&0&0\\
0&-1&-1&2&-1&0&0\\
0&0&0&-1&2&-1&0\\
0&0&0&0&-1&2&-1\\
0&0&0&0&0&-1&2\\
\end{matrix} \right)$$
\indent Donnons-nous un système de racines simples $\{ \beta_1,\dots ,\beta_7 \}$ de $L_x$ (celui qu'on a trouvé à la troisième étape grâce à Pari-GP). On pose $B=(b_{i,j})= (\beta_i \cdot \beta_j )$ la matrice de Gram associée. On explique ci-dessous comment trouver les entier $i_1,\dots,i_7$ tels que :
\\ \begin{tabular}{rp{14cm}}
$(i)$ & $\{i_1,\dots,i_7\} = \{1,\dots,7\}$.\\
$(ii)$ & les matrices $B' = (\beta_{i_l}\cdot \beta_{i_m})_{l,m}$ et $A = (\alpha_{l}\cdot \alpha_{m})_{l,m}$ sont égales.
\end{tabular}
\\ \indent Et une telle numérotation est unique.
\\ \\ \indent On procède comme suit :
\\ \indent - l'entier $i_4$ est l'indice de l'unique ligne de $B$ dont la somme des coefficients est $-1$.
\\ \indent - on définit les ensembles $I_1,I_2,I_3,I_4 \subset \{ 1,\dots ,7 \}$ par :
$$I_1=\{ j\in \{1,\dots ,7 \} \vert b_{i_4,j} =-1 \},$$
$$I_2=\{ j\in \{1,\dots ,7 \} \vert \sum_i b_{i,j} =1 \},$$
$$I_3=\{ j\in \{1,\dots ,7 \} \vert \sum_i b_{i,j} =0 \},$$
$$I_4=\{ j\in \{1,\dots ,7 \} \vert (\exists i\in I_1) b_{i,j} =-1 \}.$$
\indent - d'après la numérotation du diagramme de Dynkin de $\mathrm{E}_7$, on a les égalités :
$$\{ i_2,i_3,i_5 \} = I_1,$$
$$\{ i_1,i_2,i_7 \} = I_2,$$
$$\{ i_3,i_5,i_6 \} = I_3,$$
$$\{ i_1,i_4,i_6 \} = I_4.$$
\indent - dans un premier temps, on déduit : $i_1=I_2 \cap I_4$, $i_2=I_1 \cap I_2$ et $i_6 = I_3\cap I_4$.
\\ \indent - on déduit ensuite : $i_7=I_2 \setminus \{i_1,i_2 \}.$
\\ \indent - enfin on trouve $i_3=\{ j\in \{1,\dots ,7 \} \vert b_{i_1,j} =-1 \}$, et $i_5$ est le dernier indice restant.
\section{L'étude de $\mathrm{T}_A$ pour $A$ un $2$-groupe.}\label{5}
\indent Dans cette partie, on considère $A$ un $2$-groupe de la forme $A=(\Z/2\Z)^i$ (avec $i\leq n/2$) ou $A=\Z/4\Z$. Soient $n=7,8$ ou $9$, et $L\in \mathcal{L}_n$. On note $R=R(L)$ l'ensemble des racines de $L$, $W$ son groupe de Weyl, et $W^+=\mathrm{SO}(L)$ le sous-groupe de $W$ des éléments de déterminant $1$. On souhaite déterminer les orbites de $A$-voisins du réseau $L$ sous l'action du groupe $W^+$.
\\ \indent Une particularité du cas $p=2$ repose sur le lemme suivant :
\begin{lem} \label{bourbaki} Soit $L$ un réseau du type $\mathrm{E}_7$ ou $\mathrm{E}_8$. On pose $\tilde{q}$ la forme quadratique obtenue à partir de $q$ par passage au quotient dans $L/2L$. L'homomorphisme naturel suivant :
$$W\rightarrow \mathrm{O}(\tilde{q})=\mathrm{O}(L/2L)$$
est surjectif.
\end{lem}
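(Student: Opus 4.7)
The plan is to identify $\mathrm{O}(\tilde{q})$ as a group generated by orthogonal transvections, each of which is realized as the reduction of a Weyl reflection $s_\alpha$. For each root $\alpha \in R(L)$, the formula $s_\alpha(x) = x - (x \cdot \alpha)\alpha$ reduces modulo $2L$ to the map $\bar{x} \mapsto \bar{x} + \tilde{b}(\bar{x},\bar{\alpha})\bar{\alpha}$, where $\tilde{b}$ denotes the symmetric bilinear form on $L/2L$ obtained by reducing the scalar product modulo $2$. Since $\alpha \cdot \alpha = 2$, the class $\bar{\alpha}$ satisfies $\tilde{q}(\bar{\alpha}) = 1$, so $\bar{s}_\alpha$ is an orthogonal transvection $t_{\bar{\alpha}}$ of $\tilde{q}$.

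I would then invoke the classical generation theorem of Dieudonné (see \emph{La Géométrie des Groupes Classiques}, or Bourbaki, \emph{Algèbre}, chapitre~IX) asserting that, in characteristic $2$, the orthogonal group of a non-degenerate quadratic form is generated by its orthogonal transvections $t_v$ indexed by anisotropic vectors $v$ lying outside the radical of the associated bilinear form. Combined with the previous paragraph, it then suffices to establish that every such class $v \in L/2L$ is of the form $\bar{\alpha}$ for some $\alpha \in R(L)$.

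The map $\{\pm\alpha\} \mapsto \bar{\alpha}$ is injective: if $\alpha_1 \equiv \alpha_2 \pmod{2L}$, then $\alpha_1 - \alpha_2 \in 2L$ forces $\vert\alpha_1 - \alpha_2\vert^2 \in 8\Z$ (since $L$ is even), while $\vert\alpha_1 - \alpha_2\vert^2 = 4 - 2\,\alpha_1 \cdot \alpha_2 \in \{0,2,4,6,8\}$, leaving only $\alpha_1 \cdot \alpha_2 \in \{-2,2\}$, i.e. $\alpha_2 = \pm\alpha_1$. A counting argument then concludes. For $\mathrm{E}_8$, the reduction $\tilde{q}$ is a non-degenerate quadratic form of plus type on $\F_2^8$, having exactly $120$ anisotropic vectors, matching the $240/2 = 120$ pairs of opposite roots. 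For $\mathrm{E}_7$, the radical of $\tilde{b}$ is the $1$-dimensional line coming from $L^\sharp/L \simeq \Z/2$; a short computation with the discriminant form of $\mathrm{E}_7$ (which takes value $3/4 \bmod \Z$ on its non-trivial element) shows that no root has its reduction inside this radical, so the $63$ root pairs inject onto the $63$ anisotropic vectors outside the radical.

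The main difficulty is the $\mathrm{E}_7$ case, where $\tilde{b}$ is degenerate: one must verify both that Dieudonné's generation statement still applies in this mildly degenerate setting and that the above counting outside the radical is exact. A more elementary alternative avoiding Dieudonné altogether would be to identify the kernel of $W \to \mathrm{O}(\tilde q)$ explicitly (it contains $\{\pm 1\}$ in both cases, and an additional central involution for $\mathrm{E}_8$) and then conclude surjectivity by a pure cardinality argument using $\vert\mathrm{O}_8^+(\F_2)\vert = 2^{12}\cdot 3^5 \cdot 5^2 \cdot 7$ and $\vert\mathrm{O}_7(\F_2)\vert = \vert\mathrm{Sp}_6(\F_2)\vert = 2^9 \cdot 3^4 \cdot 5 \cdot 7$, which are respectively $\vert W(\mathrm{E}_8)\vert/4$ and $\vert W(\mathrm{E}_7)\vert/2$.
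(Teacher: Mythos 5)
The paper itself gives no argument here: it simply cites Bourbaki, ch.~VI, \S~4, exercices 1 et 3, which is exactly where the isomorphisms $W(\mathrm{E}_7)\simeq\{\pm1\}\times\mathrm{Sp}_6(\F_2)$ and $W(\mathrm{E}_8)\simeq 2.\mathrm{O}(\tilde q)$ are established. Your main route supplies an actual proof along the expected lines, and it is correct: the reduction of $s_\alpha$ modulo $2$ is the orthogonal transvection $t_{\bar\alpha}$ because $q(\alpha)=1$; Dieudonné's generation theorem applies (the unique exception $\mathrm{O}_4^+(\F_2)$ does not occur here, and in the defective odd-dimensional case $\mathrm{O}(\tilde q)\simeq\mathrm{Sp}_6(\F_2)$ is still generated by the transvections at anisotropic vectors outside the radical); and your injectivity and counting arguments are exact. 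For $\mathrm{E}_7$ the key point, which you correctly identify, is that the radical generator $r$ satisfies $\tilde q(r)=1$ (since the discriminant form takes the value $3/4$, a root cannot reduce into the radical, and $q(2\xi)=3$), so each coset of $\langle r\rangle$ contains exactly one anisotropic vector, giving $64-1=63$ of them outside $\{0,r\}$, matching the $63$ pairs $\{\pm\alpha\}$; for $\mathrm{E}_8$ the $120$ pairs match the $120$ anisotropic vectors of the plus-type form on $\F_2^8$.

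Your fallback cardinality argument, however, is wrong for $\mathrm{E}_8$ as stated, and you should not rely on it. The kernel of $W(\mathrm{E}_8)\to\mathrm{O}(L/2L)$ is exactly $\{\pm1\}$: there is no additional central involution ($Z(W(\mathrm{E}_8))=\{\pm1\}$, and a normal subgroup strictly larger than $\{\pm1\}$ would have index at most $2$, which is absurd since the image already contains $120$ distinct transvections). Correspondingly, $\mathrm{O}(\tilde q)$ is the \emph{full} orthogonal group $\mathrm{GO}_8^+(\F_2)$, of order $2^{13}\cdot3^5\cdot5^2\cdot7=\vert W(\mathrm{E}_8)\vert/2$; the order $2^{12}\cdot3^5\cdot5^2\cdot7$ you quote is that of the simple group $\mathrm{O}_8^+(2)$ in ATLAS notation, an index-$2$ subgroup. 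Taken at face value, your numbers would make the image a proper (index-$2$) subgroup and thus \emph{disprove} the lemma. If you want the counting route, use kernel $\{\pm1\}$ and $\vert\mathrm{GO}_8^+(\F_2)\vert=\vert W(\mathrm{E}_8)\vert/2$; your $\mathrm{E}_7$ count ($\vert\mathrm{Sp}_6(\F_2)\vert=\vert W(\mathrm{E}_7)\vert/2$, kernel $\{\pm1\}$) is correct as written. Note also that even the corrected counting argument presupposes that you can bound the kernel from above, which again comes down to knowing the normal subgroup structure of $W$; the transvection argument is cleaner and self-contained.
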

\begin{proof}
voir \cite[ch.VI, \S 4, exercices 1 et 3]{Bo}.  
\end{proof}
\subsection{Détermination des orbites de $A$-voisins pour les actions des groupes $W$ et $W^+$.}\label{5.1}
\subsubsection{Les orbites des $2$-voisins et des $4$-voisins pour l'action de $W^+$.}
\indent D'après la proposition \ref{paramètre A-vois} et la proposition-définition \ref{p-voisin}, les orbites des $2$-voisins ou des $4$-voisins pour l'action de $W^+$ se comprennent par l'étude des orbites des droites isotropes de $L/2L$ ou de $L/4L$ pour l'action de $W^+$.
\\ \indent Les orbites de droites isotropes de $L
/2L$ pour l'action de $W^+$ sont expliquées par les corollaires suivants du lemme \ref{bourbaki} :
\begin{cor} Soit $L$ le réseau $\mathrm{E}_7$ ou $\mathrm{E}_8$, et $W$ son groupe de Weyl. Soit $\tilde{q}$ la forme quadratique obtenue à partir de $q$ par passage au quotient dans $L/2L$. Alors $L\otimes \F_2 -\{ 0\}$ ne possède que deux orbites pour l'action de $W^+$, à savoir : $\tilde{q}^{-1}(0) -\{0 \}$ et $\tilde{q}^{-1}(1)$.
\end{cor}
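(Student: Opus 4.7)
Mon plan consiste à combiner le Lemme \ref{bourbaki} avec un argument de stabilisateur par réflexion. D'après ce lemme, le morphisme naturel $W \to \mathrm{O}(\tilde q)$ est surjectif, de sorte que les orbites de $W$ sur $L\otimes \F_2$ coïncident avec celles de $\mathrm{O}(\tilde q)$. Par un argument à la Witt appliqué à la forme $\tilde q$ sur le $\F_2$-espace vectoriel $L/2L$, je montrerais que $\mathrm{O}(\tilde q)$ agit transitivement sur chacun des deux sous-ensembles $\tilde q^{-1}(0)\setminus \{0\}$ et $\tilde q^{-1}(1)$, fournissant exactement deux orbites pour $W$ sur $L\otimes \F_2\setminus\{0\}$.

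Il reste ensuite à vérifier que ces deux orbites ne se scindent pas sous l'action du sous-groupe $W^+$ d'indice $2$. Pour cela, il suffit d'exhiber, dans le stabilisateur dans $W$ d'un représentant $v$ de chaque orbite, un élément de déterminant $-1$. Je chercherais naturellement un tel élément parmi les réflexions $s_\alpha$ pour $\alpha\in R$, qui vérifient toujours $\det s_\alpha=-1$. La condition pour que $s_\alpha$ fixe $v$ dans $L/2L$ est, en prenant un relèvement $\tilde v\in L$ de $v$, que $\alpha\cdot \tilde v\equiv 0\mathrm{\ mod\ }2$.

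Pour produire une telle racine quel que soit $v$ non nul, je fixerais trois racines $\alpha,\beta,\alpha+\beta$ formant un sous-système de type $\mathrm{A}_2$ dans $R$ (toujours présent dans $\mathrm{E}_7$ et $\mathrm{E}_8$). Leurs produits scalaires avec $\tilde v$ satisfont la relation $(\alpha+\beta)\cdot \tilde v= \alpha\cdot \tilde v+\beta\cdot \tilde v$ dans $\Z$, donc modulo $2$; or trois éléments de $\F_2$ de somme nulle ne peuvent valoir tous $1$, et au moins un des trois produits est pair, fournissant la racine cherchée. Ainsi $\mathcal{O}_{W^+}(v) = \mathcal{O}_W(v)$ pour tout $v\ne 0$, et le résultat s'ensuit.

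La principale difficulté anticipée réside dans la justification rigoureuse de la transitivité de $\mathrm{O}(\tilde q)$ sur chaque ensemble de niveau non trivial. Pour $L=\mathrm{E}_8$, la forme $\tilde q$ est non dégénérée sur $\F_2^8$ (de type plus, puisque $\mathrm{E}_8$ est unimodulaire) et le théorème de Witt standard s'applique directement. Pour $L=\mathrm{E}_7$ en revanche, la forme bilinéaire associée à $\tilde q$ admet un radical de dimension $1$ (conséquence de $L^\sharp/L\simeq \Z/2\Z$); il faudra traiter séparément l'action sur ce radical avant d'appliquer le théorème de Witt au quotient, où la forme induite est régulière, puis vérifier que la classification finale des orbites correspond bien aux deux ensembles $\tilde q^{-1}(0)\setminus\{0\}$ et $\tilde q^{-1}(1)$.
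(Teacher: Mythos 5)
Votre seconde étape diffère de celle du texte et la améliore : au lieu d'exhiber un représentant explicite de chaque orbite fixé par une transposition de coordonnées (ce qui oblige à vérifier dans quelle orbite tombe chaque représentant), vous montrez uniformément que le stabilisateur dans $W$ de \emph{tout} $v\neq 0$ de $L/2L$ contient une réflexion : en prenant un sous-système $\{\alpha,\beta,\alpha+\beta\}$ de type $\mathrm{A}_2$, la relation $(\alpha\cdot\tilde v)+(\beta\cdot\tilde v)+((\alpha+\beta)\cdot\tilde v)\equiv 0\ \mathrm{mod}\ 2$ interdit que les trois produits soient impairs, et la réflexion associée à une racine $\gamma$ avec $\gamma\cdot\tilde v$ pair fixe $v$ modulo $2L$ tout en étant de déterminant $-1$. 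Cet argument est correct et règle proprement le passage de $W$ à $W^+$.

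En revanche, la difficulté que vous repoussez à la fin pour $L=\mathrm{E}_7$ est réelle et la « vérification » que vous annoncez échoue : l'énoncé, tel quel, est faux pour $\mathrm{E}_7$. La forme polaire de $\tilde q$ est alternée sur un espace de dimension impaire, donc son radical est non nul (il est de dimension $1$, engendré par l'image de $r=2w$ pour $w\in \mathrm{E}_7^\sharp\setminus\mathrm{E}_7$ de norme $3/2$, par exemple $r=\frac{1}{2}(1,1,1,1,1,1,-3,-3)$). On a $q(r)=3$, donc $\tilde q(r)=1$ ; or tout élément de $\mathrm{O}(\tilde q)$ préserve ce radical, qui n'a qu'un seul élément non nul, de sorte que $\{r\}$ est une orbite à lui seul, contenue dans $\tilde q^{-1}(1)$ qui compte $64$ éléments. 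Il y a donc \emph{trois} orbites de $W$ (et de $W^+$) sur $\mathrm{E}_7/2\mathrm{E}_7\setminus\{0\}$, à savoir $\tilde q^{-1}(0)\setminus\{0\}$, $\tilde q^{-1}(1)\setminus\{r\}$ et $\{r\}$. La démonstration du texte souffre du même défaut, puisqu'elle invoque le théorème de Witt sans tenir compte du défaut de la forme en caractéristique $2$ ; l'énoncé est correct pour $\mathrm{E}_8$ (forme non dégénérée, où le théorème d'extension de Witt s'applique bien), et la seule conséquence exploitée ensuite — la transitivité sur $\tilde q^{-1}(0)\setminus\{0\}=C_L(\F_2)$ — subsiste pour $\mathrm{E}_7$ : cet ensemble évite le radical et s'envoie bijectivement et équivariamment sur les vecteurs non nuls du quotient symplectique de dimension $6$, sur lequel $\mathrm{Sp}_6(\F_2)$ agit transitivement.
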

\begin{proof}
découle du lemme \ref{bourbaki} et du théorème de Witt. En effet, si on se donne deux éléments non nuls $u,v\in L\otimes \F_2$ avec $\tilde{q}(u) = \tilde{q}(v)$, le théorème de Witt nous donne un élément $s\in \mathrm{O}(\tilde{q})$ tel que $s(u)=v$. Et un tel élément $s$ provient d'un élément $w\in W$ d'après le lemme \ref{bourbaki}. Ainsi, $L\otimes \F_2 -\{ 0\}$ ne possède que deux orbites pour l'action de $W$, à savoir : $\tilde{q}^{-1}(0) -\{0 \}$ et $\tilde{q}^{-1}(1)$.
\\ \indent Si $u\in \tilde{q}^{-1}(0)-\{0\}$ ou si $u\in \tilde{q}^{-1}(1)$, on peut toujours trouver un vecteur $u'\in L$ laissé stable par un élément $\sigma \in W\setminus W^+$, dont l'image par $L\rightarrow L/2L$ est $u$. Par exemple, si $\sigma$ désigne la permutation des deux dernières coordonnées dans $\R^8$, on peut voir que les vecteurs $u'=(1,-1,0,\dots ,0)$ et $u'(1,1,-1,-1,0,\dots ,0)$ conviennent. Ainsi, les orbites de $L\otimes \F_2 -\{ 0\}$ pour les actions de $W$ et de $W^+$ coïncident. D'où le résultat. 
\end{proof}
\indent Les corollaires suivants se déduisent du fait que les $2$-voisins du réseau $L$ sont en bijection avec les éléments de $C_L(\Z/2\Z)$, et que cette bijection commute aux actions de $W^+$ :
\begin{cor} Soient $L=\mathrm{E}_7$ ou $\mathrm{E}_8$, $W$ son groupe de Weyl, et $W^+=\mathrm{SO}(L)$. Le groupe $W^+$ agit transitivement sur $C_L(\F_2)$ ainsi que sur l'ensemble des $2$-voisins de $L$.
\end{cor}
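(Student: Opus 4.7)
Le corollaire précédent fait déjà l'essentiel du travail : il affirme que $W^+$ agit avec exactement deux orbites sur $L\otimes \F_2 \setminus \{0\}$, à savoir $\tilde q^{-1}(0) \setminus \{0\}$ et $\tilde q^{-1}(1)$. La stratégie que je suivrais consiste à en déduire les deux assertions voulues par de simples identifications canoniques.

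Je commencerais par observer qu'en caractéristique $2$, toute droite vectorielle est réduite à son unique vecteur non nul, puisque $\F_2^{*}=\{1\}$. L'ensemble $C_L(\F_2)$ des droites isotropes de $L/2L$ s'identifie donc canoniquement au sous-ensemble $\tilde q^{-1}(0) \setminus \{0\}$ de $L\otimes \F_2$, et cette identification commute évidemment à toute action orthogonale, en particulier à celle de $W^+$. La transitivité de $W^+$ sur $C_L(\F_2)$ en résulte immédiatement, puisque $\tilde q^{-1}(0) \setminus \{0\}$ est précisément l'une des deux orbites fournies par le corollaire précédent.

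Pour la seconde assertion, j'invoquerais la proposition-définition \ref{p-voisin}, qui fournit la bijection explicite $x \mapsto L'(x)$ entre $C_L(\Z/2\Z)$ et l'ensemble des $2$-voisins de $L$. La démonstration de cette proposition-définition établit précisément sa $\mathrm{SO}(L)$-équivariance, via la relation $\gamma(L'(x)) = L'(\gamma(x))$ pour $\gamma \in \mathrm{SO}(L)=W^+$. La transitivité de $W^+$ sur l'ensemble des $2$-voisins se déduit alors par simple transport de celle obtenue sur $C_L(\F_2)$.

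Aucune difficulté sérieuse n'apparaît : tout repose in fine sur le lemme \ref{bourbaki} et sur le théorème de Witt (déjà exploités dans le corollaire précédent) combinés à la rigidité particulière de $\F_2$ (où droites et vecteurs non nuls coïncident). Le seul point auquel je prêterais attention est de bien rappeler l'équivariance dans la proposition-définition \ref{p-voisin}, afin que le transport entre $C_L(\F_2)$ et $\mathrm{Vois}_2(L)$ soit justifié sans ambiguïté.
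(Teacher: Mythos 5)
Votre démonstration est correcte et suit exactement la voie que le texte sous-entend : le papier ne rédige d'ailleurs pas de preuve et se contente d'indiquer que ces corollaires « se déduisent du fait que les $2$-voisins de $L$ sont en bijection avec les éléments de $C_L(\Z/2\Z)$, et que cette bijection commute aux actions de $W^+$ ». Vous explicitez précisément les deux ingrédients attendus — l'identification de $C_L(\F_2)$ avec l'orbite $\tilde q^{-1}(0)\setminus\{0\}$ (puisque $\F_2^{*}=\{1\}$) et la $\mathrm{SO}(L)$-équivariance de la bijection de la proposition-définition \ref{p-voisin} — donc rien à redire.
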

\begin{cor} Soient $L=\mathrm{E}_8\oplus \mathrm{A}_1$, $W$ son groupe de Weyl, et $W^+=\mathrm{SO}(L)$. Notons $q_1$ et $q_2$ les formes quadratiques associées à la projection de $L$ sur $\mathrm{E}_8$ et $\mathrm{A}_1$ respectivement. Alors $C_L(\F_2)$ possède exactement deux orbites sous l'action de $W^+$, qui sont :
$$\begin{aligned}
	\left( q_1^{-1}(0) -\{0\} \right) & \times \{0 \}\\
	\left( q_1^{-1}(1) \right) &\times \left( q_2^{-1}(1)\right) \\
	\end{aligned}
$$
\indent Il y a donc deux orbites de $2$-voisins de $L$ sous l'action de $W^+$, qui sont associées aux deux orbites de $C_L(\F_2)$ par la bijection de la proposition-définition \ref{p-voisin}.
\end{cor}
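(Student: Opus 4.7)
Le plan est d'exploiter la décomposition $L = \mathrm{E}_8 \oplus \mathrm{A}_1$ pour se ramener intégralement au corollaire précédent sur $\mathrm{E}_8$. Je commencerais par décomposer $L/2L = (\mathrm{E}_8/2\mathrm{E}_8) \oplus (\mathrm{A}_1/2\mathrm{A}_1)$, la forme quadratique réduite étant $(v,w) \mapsto q_1(v) + q_2(w)$. Comme $\mathrm{A}_1/2\mathrm{A}_1 \simeq \F_2$ avec $q_2(0)=0$ et $q_2(1)=1$, un vecteur non nul $(v,w)$ de $L/2L$ est isotrope si et seulement si soit $w=0$ et $v \in q_1^{-1}(0) \setminus \{0\}$, soit $w=1$ et $v \in q_1^{-1}(1)$. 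Puisque sur $\F_2$ une droite est déterminée par son unique vecteur non nul, ceci fournit directement la partition annoncée de $C_L(\F_2)$ (après avoir observé que $q_2^{-1}(1) = \{1\}$, de sorte que $q_1^{-1}(1) \times q_2^{-1}(1)$ coïncide avec $q_1^{-1}(1) \times \{1\}$).

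Ensuite, j'utiliserais l'isomorphisme $W \simeq W(\mathrm{E}_8) \times W(\mathrm{A}_1)$ rappelé au paragraphe \ref{4.3}. Le facteur $W(\mathrm{A}_1) = \{\pm \mathrm{id}\}$ agit trivialement sur $\mathrm{A}_1/2\mathrm{A}_1$, donc l'action de $W$ préserve la coordonnée $w$. Les deux sous-ensembles décrits sont ainsi manifestement stables sous $W$ (et a fortiori sous $W^+$) et disjoints, puisque la valeur de $w$ les distingue.

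Il reste à établir la transitivité de $W^+$ sur chacun des deux ensembles. Le corollaire précédent garantit que $W(\mathrm{E}_8)^+$ agit transitivement sur $q_1^{-1}(0) \setminus \{0\}$ et sur $q_1^{-1}(1)$. Or tout élément de la forme $(\gamma_1, \mathrm{id})$ avec $\gamma_1 \in W(\mathrm{E}_8)^+$ est de déterminant $1$, donc appartient à $W^+$, et agit comme $\gamma_1$ sur la coordonnée $\mathrm{E}_8$ tout en fixant la coordonnée $\mathrm{A}_1$. La transitivité voulue en découle immédiatement. Je ne vois pas d'obstacle sérieux : le seul point à surveiller est l'identification correcte de la forme d'enlacement sur $\mathrm{A}_1/2\mathrm{A}_1$, mais celle-ci est immédiate à partir de $\mathrm{A}_1 = \Z \cdot (1,-1)$.

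La dernière assertion, concernant les deux orbites de $2$-voisins de $L$ sous l'action de $W^+$, est alors une conséquence formelle de la bijection $W^+$-équivariante entre $C_L(\F_2)$ et $\mathrm{Vois}_2(L)$ fournie par la proposition-définition \ref{p-voisin}.
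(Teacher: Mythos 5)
Votre démonstration est correcte et suit essentiellement la voie que le papier laisse implicite : décomposer $L/2L=(\mathrm{E}_8/2\mathrm{E}_8)\oplus(\mathrm{A}_1/2\mathrm{A}_1)$, observer que $W(\mathrm{A}_1)$ agit trivialement sur la seconde coordonnée, et ramener la transitivité sur chaque ensemble au corollaire précédent sur les deux orbites de $\mathrm{E}_8\otimes\F_2-\{0\}$ sous $\mathrm{SO}(\mathrm{E}_8)$. Le papier ne rédige pas ce détail et se contente d'invoquer la bijection $W^+$-équivariante de la proposition-définition sur les $2$-voisins, exactement comme vous le faites en conclusion.
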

\indent De même, nous allons voir que, pour $L=\mathrm{E}_7$ ou $L=\mathrm{E}_8$, le groupe $W^+=\mathrm{SO}(L)$ agit transitivement sur $C_L(\Z/4\Z)$, et donc sur l'ensemble des $4$-voisins de $L$.
\\ \indent Pour cela, posons $X=\{ x\in \mathrm{E}_7 \setminus 2\mathrm{E}_7 \, \vert \, (x\cdot x) =8 \}$. On a les lemmes suivants :
\begin{lem} On a les propriétés suivantes :
\\ \begin{tabular}{rp{10cm}}
$(i)$ & $\vert X \vert =4032 = 2\cdot 2^5\cdot 63$. \\
$(ii)$ & l'action de $W^+$ sur $X$ est transitive.
\end{tabular}
\end{lem}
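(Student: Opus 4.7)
Mon plan s'articule en deux temps.

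Pour la partie $(i)$, j'utilise la description $\mathrm{E}_7 = \{(x_i) \in \mathrm{E}_8 \mid \sum x_i = 0\}$ et je dénombre les vecteurs $x \in \mathrm{E}_7$ avec $x \cdot x = 8$ selon leur profil. À coordonnées entières, les profils $(\pm 2, \mp 2, 0^6)$, $(\pm 2, \pm 1, \mp 1^3, 0^3)$ et $(\pm 1^4, \mp 1^4)$ donnent respectivement $56$, $2240$ et $70$ vecteurs (en imposant $\sum x_i = 0$ et $\sum x_i^2 = 8$). À coordonnées demi-entières $x_i = k_i/2$ avec $k_i$ impair, les seules partitions de $\sum k_i^2 = 32$ compatibles avec $\sum k_i = 0$ sont $3 \cdot 9 + 5 \cdot 1$ (trois $\pm 3$ et cinq $\pm 1$, soit $1680$ vecteurs) et $25 + 7 \cdot 1$ (un $\pm 5$ et sept $\pm 1$, soit $112$ vecteurs). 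Le total fait $4158$ vecteurs ; en retranchant les $126$ vecteurs de la forme $2\alpha$ avec $\alpha \in R(\mathrm{E}_7)$, qui constituent exactement l'intersection avec $2\mathrm{E}_7$, on obtient $|X| = 4158 - 126 = 4032 = 2 \cdot 2^5 \cdot 63$.

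Pour la partie $(ii)$, j'exploite la réduction modulo $2$. L'application $\pi : X \to \mathrm{E}_7/2\mathrm{E}_7$, $x \mapsto \bar x$, est $W^+$-équivariante, et son image est contenue dans $\tilde q^{-1}(0) \setminus \{0\}$ (car $x \notin 2\mathrm{E}_7$ et $q(x) = 4 \equiv 0 \pmod 2$). Une analyse directe de la forme $\tilde q$ sur $\mathrm{E}_7/2\mathrm{E}_7 \simeq \F_2^7$ montre que le radical de la forme bilinéaire associée est de dimension $1$, engendré par l'image de $2v$ pour $v \in \mathrm{E}_7^\sharp$ de norme minimale $3/2$ ; comme $q(2v) = 3$ est impair, ce générateur est $\tilde q$-anisotrope, et il en découle $|\tilde q^{-1}(0) \setminus \{0\}| = 2^6 - 1 = 63$. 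Par le lemme \ref{bourbaki} combiné au théorème de Witt, $W$ agit transitivement sur $\tilde q^{-1}(0) \setminus \{0\}$, de sorte que chaque fibre de $\pi$ possède $4032/63 = 64$ éléments.

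L'étape essentielle, et la principale difficulté, est alors la transitivité du stabilisateur $\mathrm{Stab}_W(\bar y)$ sur la fibre $\pi^{-1}(\bar y)$ à $64$ éléments. En choisissant un relèvement explicite, par exemple $y_0 = (2, 1, -1, -1, -1, 0, 0, 0) \in X$, la fibre se met en bijection avec l'ensemble $\{w \in \mathrm{E}_7 + y_0/2 \mid w \cdot w = 2\}$ via $y_0 + 2v \leftrightarrow w = v + y_0/2$ : il s'agit des $64$ vecteurs de norme $2$ dans un coset particulier de $\mathrm{E}_7$ dans $\frac{1}{2}\mathrm{E}_7$. On peut vérifier la transitivité soit par inspection directe à l'aide des générateurs de $\mathrm{Stab}_W(\bar y_0)$ produits par les algorithmes du paragraphe \ref{4.2}, soit par un argument structurel via l'inclusion $\mathrm{E}_7 \hookrightarrow \mathrm{E}_8$ et la classification des plongements primitifs dans $\mathrm{E}_8$ d'un réseau de rang $2$ de matrice de Gram $\mathrm{diag}(2, 8)$. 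Une fois la transitivité de $W$ sur $X$ établie, celle de $W^+$ en découle : un calcul direct montre que $R(x^\perp) \cap R(\mathrm{E}_7)$ est de type $\mathrm{A}_5$ (on compte $12$ racines entières $e_i - e_j$ avec $y_{0,i} = y_{0,j}$ et $18$ racines demi-entières orthogonales, soit $30$ racines), si bien que $\mathrm{Stab}_W(x) = W(\mathrm{A}_5) = \mathcal{S}_6$ contient des réflexions (éléments de déterminant $-1$) permettant la compensation de $-\mathrm{id}$.
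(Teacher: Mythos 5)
Votre démonstration du point $(i)$ est complète et correcte : l'énumération par profils donne $56+2240+70=2366$ vecteurs à coordonnées entières et $1680+112=1792$ vecteurs à coordonnées demi-entières, soit $4158$ vecteurs de norme $8$ dans $\mathrm{E}_7$ (en accord avec le coefficient de $q^4$ de $\theta_{\mathrm{E}_7}$ que l'article invoque comme variante), et les $126$ vecteurs de norme $8$ appartenant à $2\mathrm{E}_7$ sont bien les $2\alpha$ pour $\alpha\in R(\mathrm{E}_7)$ ; c'est exactement l'énumération \emph{à la main} que l'article se contente de mentionner. Pour le point $(ii)$, l'article procède autrement : il vérifie directement, à l'aide de la description explicite de $W$ du paragraphe \ref{4.2}, qu'il n'y a qu'une seule orbite sous $W$, puis conclut comme vous en exhibant un élément de $W\setminus W^+$ fixant $(2,1,-1,-1,-1,0,0,0)$ (la transposition des deux dernières coordonnées, c'est-à-dire la réflexion en $e_7-e_8$). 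Votre fibration $W$-équivariante $X\to \tilde q^{-1}(0)\setminus\{0\}$ au-dessus des $63$ classes isotropes, à fibres de cardinal $64$, est un raffinement intéressant qui ramène la transitivité de $W$ sur $X$ à celle du stabilisateur d'une classe sur sa fibre, elle-même joliment identifiée aux $64$ vecteurs de norme $2$ du translaté $\mathrm{E}_7+y_0/2$. Mais c'est précisément cette étape --- que vous désignez vous-même comme la difficulté essentielle --- qui n'est pas démontrée : vous proposez deux pistes (inspection directe via les générateurs de $\mathrm{Stab}_W(\bar y_0)$, ou argument de plongements de réseaux dans $\mathrm{E}_8$) sans en mener aucune à terme. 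Votre preuve reste donc, comme celle de l'article, tributaire d'une vérification finie non rédigée ; elle a le mérite de réduire la taille de cette vérification, mais en l'état le point $(ii)$ n'est pas établi de façon autonome. Le passage final de la transitivité de $W$ à celle de $W^+$ est en revanche correct et identique à celui de l'article : vos comptes de $12+18=30$ racines orthogonales à $y_0$ sont exacts, et il suffit d'ailleurs qu'il existe une seule racine orthogonale à $y_0$ pour disposer d'une réflexion (de déterminant $-1$) dans le stabilisateur, sans avoir à identifier le type $\mathrm{A}_5$ du système $R(y_0^\perp)$.
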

\begin{proof}
Le point $(i)$ se vérifie facilement à la main, car il est élémentaire d'énumérer tous les éléments de $X$. On pourrait aussi utiliser un argument de série thêta, en utilisant le développement suivant :
$$\theta_{\mathrm{E}_7}(q) = 1 + 126\cdot q + 756\cdot q^2 + 2072\cdot q^3 + 4158\cdot q^4 + 7560\cdot q^5 + \dots .$$
\indent Pour le $(ii)$, on utilise la description faite de $W$ au paragraphe \ref{4.2}. Il est alors facile de vérifier qu'il n'y a qu'une seule orbite d'éléments de $X$ pour l'action de $W$. Comme le vecteur $(2,1,-1,-1,-1,0,0,0)\in X$ est invariant par la permutation $\sigma\in W\setminus W^+$ qui échange les deux dernières coordonnées, on déduit qu'il n'y a qu'une seule orbite d'éléments de $X$ pour l'action de $W^+$. 
\end{proof}
\indent On veut montrer que toute droite de la quadrique $C_{\mathrm{E}_7}(\Z/4\Z)$ est engendrée par un vecteur de $X$, ce qui découle du lemme suivant : 
\begin{lem} L'application de réduction modulo $4$ : $X\rightarrow \mathrm{E}_7/4\mathrm{E}_7$ est injective. De plus, elle induit une bijection :
$$
	\begin{array}{rcl}
		\left\{ \{x,-x\} \vert x\in X \right\} &\rightarrow & C_{\mathrm{E}_7}(\Z/4\Z) .\\
		\{x,-x\} & \mapsto & \mathrm{vect}_{\Z/4\Z} (x) .\\
	\end{array}
$$ 
\end{lem}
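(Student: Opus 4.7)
Je propose une preuve en deux étapes : l'injectivité de la réduction, puis la bijection induite.

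Pour l'injectivité de $X\to \mathrm{E}_7/4\mathrm{E}_7$, soient $x,y\in X$ de même image modulo $4\mathrm{E}_7$, et écrivons $y=x+4z$ avec $z\in\mathrm{E}_7$. La contrainte $y\cdot y=x\cdot x=8$ donne après développement la relation $x\cdot z=-2(z\cdot z)$. L'inégalité de Cauchy-Schwarz $(x\cdot z)^2\leq (x\cdot x)(z\cdot z)=8(z\cdot z)$ implique alors $z\cdot z\leq 2$. Comme $\mathrm{E}_7$ est pair de norme minimale $2$, ou bien $z=0$ (et $y=x$), ou bien $z\cdot z=2$ avec égalité dans Cauchy-Schwarz, auquel cas $z$ est colinéaire à $x$ et un calcul direct force $z=-x/2$, d'où $x=-2z\in 2\mathrm{E}_7$ contredisant $x\in X$.

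Pour la bijection, l'application $x\mapsto \mathrm{vect}_{\Z/4\Z}(x)$ est bien définie à valeurs dans $C_{\mathrm{E}_7}(\Z/4\Z)$, car $q(x)=4\equiv 0\pmod 4$ et $x\notin 2\mathrm{E}_7$ garantit que la droite engendrée est d'ordre exactement $4$. L'invariance sous $x\mapsto -x$ est évidente, et l'on obtient ainsi une application $\overline\Phi:\{\{x,-x\}\vert x\in X\}\to C_{\mathrm{E}_7}(\Z/4\Z)$. Son injectivité résulte directement de la première étape : si $\mathrm{vect}_{\Z/4\Z}(x)=\mathrm{vect}_{\Z/4\Z}(y)$, alors $y\equiv \lambda x\pmod{4\mathrm{E}_7}$ avec $\lambda\in (\Z/4\Z)^*=\{\pm 1\}$, et l'injectivité (appliquée à $y$ et $\pm x$, tous deux dans $X$) donne $y=\pm x$.

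La surjectivité de $\overline\Phi$ constitue le point le plus délicat. Ma stratégie est un argument de dénombrement : par injectivité, $|\overline\Phi(X)|=|X|/2=2016$, donc il suffit d'établir $|C_{\mathrm{E}_7}(\Z/4\Z)|=2016$. Pour cela, j'utiliserais la filtration $0\to 2\mathrm{E}_7/4\mathrm{E}_7\to \mathrm{E}_7/4\mathrm{E}_7\to \mathrm{E}_7/2\mathrm{E}_7\to 0$, en relevant chaque classe isotrope non nulle $\bar w\in \mathrm{E}_7/2\mathrm{E}_7$. Pour un relèvement fixé $v_0$ de $\bar w$, la parité de $\mathrm{E}_7$ entraîne la congruence $q(v_0+2u)\equiv q(v_0)+2(v_0\cdot u)\pmod 4$ (le terme $2u\cdot u$ étant nul modulo $4$), ce qui ramène le décompte des relèvements isotropes à une condition linéaire modulo $2$ sur $\bar u\in \mathrm{E}_7/2\mathrm{E}_7$. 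Un décompte cas par cas, selon la position de $\bar w$ par rapport au radical (de dimension $1$ car $\det(\mathrm{E}_7)=2$) de la forme bilinéaire réduite sur $\mathrm{E}_7/2\mathrm{E}_7$, et en notant que le générateur de ce radical n'est pas isotrope pour $\tilde q$ (ce qui se vérifie à partir de la forme d'enlacement de $\mathrm{E}_7^\sharp/\mathrm{E}_7$), aboutit à $|C_{\mathrm{E}_7}(\Z/4\Z)|=2016$. L'obstacle principal est donc l'exécution soigneuse de ce dénombrement.
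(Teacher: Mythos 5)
Votre démonstration est correcte et suit essentiellement la même démarche que celle de l'article : l'injectivité par Cauchy-Schwarz combinée à la minimalité de la norme $2$ dans $\mathrm{E}_7$, puis la surjectivité par comparaison des cardinaux $\vert X\vert /2=2016$ et $\vert C_{\mathrm{E}_7}(\Z/4\Z)\vert$. Votre seul apport est d'expliciter le dénombrement $\vert C_{\mathrm{E}_7}(\Z/4\Z)\vert = 2^5\cdot 63$ (via la filtration par $2\mathrm{E}_7/4\mathrm{E}_7$ et l'anisotropie du radical de la forme bilinéaire réduite), point que l'article laisse en exercice et pour lequel votre stratégie est correcte.
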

\begin{proof}
Montrons d'abord l'injectivité de l'application de réduction modulo $4$ : $X\rightarrow \mathrm{E}_7/4\mathrm{E}_7$. Soient $x\in X$, et $u$ non nul dans $\mathrm{E}_7$ tels que $(x-4\cdot u) \in X$. En particulier, on a les égalités : $(x-4u)\cdot (x-4u) =8$ et $x\cdot x=8$, donc $4 u\cdot u =x\cdot u$.
\\ \indent Le théorème de Cauchy-Schwarz nous dit que : $4 (u\cdot u)^2 \leq 8 (u\cdot u)$, puis $u\cdot u\leq 2$. Comme $u\in \mathrm{E}_7$ et que $u\neq 0$, on déduit que $u\cdot u=2$. Ainsi, toutes les inégalités précédentes sont des égalités, et les vecteurs $x$ et $u$ sont $\R$-proportionnels. En cherchant $x$ de la forme $\lambda \cdot u$ pour $\lambda \in \R$, on trouve facilement que $x=2 u$, ce qui est impossible par définition de $X$. D'où l'injectivité cherchée.
\\ \indent Il reste à déterminer le caractère bijectif de l'application $\{ x,-x \} \mapsto \mathrm{vect}_{\Z/4\Z}(x)$. Cette application est déjà injective d'après le premier point du lemme. Pour voir qu'elle est surjective, il suffit de voir que $\vert X \vert =2\cdot \vert C_{\mathrm{E}_7}(\Z/4\Z) \vert$. C'est un exercice de vérifier que $\vert C_{\mathrm{E}_7}(\Z/4\Z) \vert = 2^5 \cdot \vert C_{\mathrm{E}_7}(\Z/2\Z) \vert =2^5 \cdot (2^6-1) = 2^5 \cdot 63$. 
\end{proof}
\begin{cor} L'action naturelle du groupe $W^+=\mathrm{SO}(\mathrm{E}_7)$ sur l'ensemble des $4$-voisins de $\mathrm{E}_7$ est transitive.
\end{cor}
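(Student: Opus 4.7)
Le plan consiste simplement à enchaîner les trois résultats déjà établis. D'abord, d'après la proposition-définition \ref{p-voisin}, la bijection $x\mapsto L'(x)$ entre $C_{\mathrm{E}_7}(\Z/4\Z)$ et l'ensemble des $4$-voisins de $\mathrm{E}_7$ commute aux actions de $\mathrm{SO}(\mathrm{E}_7)=W^+$. Il suffit donc de montrer que $W^+$ agit transitivement sur $C_{\mathrm{E}_7}(\Z/4\Z)$.

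Ensuite, le lemme précédent fournit une bijection $\{x,-x\}\mapsto \mathrm{vect}_{\Z/4\Z}(x)$ entre l'ensemble des paires $\{x,-x\}$ (avec $x\in X$) et $C_{\mathrm{E}_7}(\Z/4\Z)$. Comme tout élément $\gamma\in W^+\subset \mathrm{O}(\mathrm{E}_7)$ est linéaire, il envoie la paire $\{x,-x\}$ sur la paire $\{\gamma(x),-\gamma(x)\}$, et il envoie la droite $\mathrm{vect}_{\Z/4\Z}(x)$ sur la droite $\mathrm{vect}_{\Z/4\Z}(\gamma(x))$. Cette bijection commute donc aux actions de $W^+$, et il suffit désormais de voir que $W^+$ agit transitivement sur les paires $\{x,-x\}$ avec $x\in X$.

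Enfin, d'après le premier des deux lemmes rappelés ci-dessus, $W^+$ agit déjà transitivement sur $X$ lui-même : pour $x,y\in X$, il existe $\gamma\in W^+$ tel que $\gamma(x)=y$, et alors $\gamma(\{x,-x\})=\{y,-y\}$. La transitivité sur les paires en découle \emph{a fortiori}, ce qui achève la démonstration.

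Il n'y a pas de véritable obstacle ici : toute la difficulté a été concentrée dans le lemme affirmant que $W^+$ agit transitivement sur $X$ (lequel repose sur la description explicite de $W(\mathrm{E}_7)$ donnée au paragraphe \ref{4.2}) et dans le lemme identifiant $C_{\mathrm{E}_7}(\Z/4\Z)$ aux paires $\{x,-x\}$ (dont la partie non triviale est l'injectivité de la réduction modulo $4$, établie via Cauchy--Schwarz). Le corollaire est alors une conséquence purement formelle.
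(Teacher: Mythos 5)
Votre démonstration est correcte et suit exactement le même chemin que celle du texte : transitivité de $W^+$ sur $X$, équivariance de la bijection $\{x,-x\}\mapsto \mathrm{vect}_{\Z/4\Z}(x)$, puis équivariance de la bijection de la proposition-définition \ref{p-voisin} entre $C_{\mathrm{E}_7}(\Z/4\Z)$ et les $4$-voisins. Rien à redire.
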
 
\begin{proof}
c'est une conséquence immédiate des lemmes précédents. On sait déjà que $W^+$ agit transitivement sur l'ensemble $X$, et on voit facilement que la bijection $\{x,-x\} \mapsto \mathrm{vect}_{Z/4\Z}(x)$ commute avec les actions de $W^+$ sur $X$ et sur $C_{\mathrm{E}_7}(\Z/4\Z)$. Ainsi, $W^+$ agit transitivement sur $C_{\mathrm{E}_7}(\Z/4\Z)$, donc sur l'ensemble des $4$-voisins de $\mathrm{E}_7$. 
\end{proof}
\indent La même méthode s'applique à l'étude des $4$-voisins de $\mathrm{E}_8$. On se contente d'exposer les résultats analogues au cas de $\mathrm{E}_7$, en laissant les démonstrations au lecteur.
\\ \indent Posons $Y=\{ y\in \mathrm{E}_8 \setminus 2\mathrm{E}_8 \, \vert \, (y\cdot y) =8 \}$. On a les lemmes suivants :
\begin{lem} On a les propriétés suivantes :
\\ \begin{tabular}{rp{10cm}}
$(i)$ & $\vert Y \vert =17280 = 2\cdot 2^6\cdot 135$. \\
$(ii)$ & l'action de $W^+$ sur $Y$ est transitive.
\end{tabular}
\end{lem}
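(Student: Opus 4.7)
La démonstration suit exactement le schéma du cas $\mathrm{E}_7$ traité juste au-dessus, en remplaçant l'enveloppe $W(\mathrm{D}_8)\subset W(\mathrm{E}_8)$ par $W(\mathrm{A}_7)\subset W(\mathrm{E}_7)$ et en utilisant les outils du paragraphe \ref{4.2}.

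Pour le point $(i)$, le plan est d'exploiter la série thêta de $\mathrm{E}_8$ plutôt que de faire une énumération fastidieuse à la main. On rappelle que $\theta_{\mathrm{E}_8}$ est la série d'Eisenstein normalisée $E_4=1+240\sum_{n\geq 1}\sigma_3(n)q^n$ ; le nombre de vecteurs de $\mathrm{E}_8$ de norme $8$ (c'est-à-dire $y\cdot y=8$, d'où le coefficient de $q^4$) vaut ainsi $240\cdot \sigma_3(4)=240\cdot 73=17520$. Les vecteurs $y\in 2\mathrm{E}_8$ vérifiant $y\cdot y=8$ sont exactement les $2z$ pour $z\in R(\mathrm{E}_8)$, au nombre de $240$. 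On en déduit $\vert Y\vert = 17520-240=17280=2\cdot 2^6\cdot 135$.

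Pour le point $(ii)$, je compte suivre mot pour mot l'argument du cas de $X\subset \mathrm{E}_7$. On pose $W'=W(\mathrm{D}_8)=\mathcal{S}_8\ltimes (\{\pm 1\}^8)^0$ et on commence par décrire les orbites de $Y$ sous l'action de $W'$ : celles-ci sont paramétrées par le multi-ensemble des valeurs absolues des coordonnées (dans le cas entier, parmi les types $(2,2,0,\dots,0)$, $(2,1,1,1,1,0,0,0)$, $(1,\dots,1)$, seuls les deux derniers peuvent fournir des éléments de $Y$, et le dernier seulement avec un nombre impair de signes négatifs ; dans le cas demi-entier, il y a les deux familles $(5/2,1/2,\dots,1/2)$ et $(3/2,3/2,3/2,1/2,\dots,1/2)$ avec les contraintes de signe imposées par l'appartenance à $\mathrm{E}_8$). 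Ceci donne une liste finie et courte de représentants, chacun étant un vecteur explicite de $Y$. On fait ensuite agir sur ces représentants l'ensemble $H_8$ des représentants du quotient $W/W'$ construit au paragraphe \ref{4.2} : un calcul direct (en particulier en appliquant la réflexion associée à la racine $e=\frac{1}{2}(1,\dots,1)$, qui envoie le type $(2,1,1,1,1,0,0,0)$ sur un type demi-entier) montre que toutes ces orbites fusionnent en une unique orbite sous $W$.

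Il reste à passer de $W$ à $W^+$. Pour cela on exhibe un vecteur $y_0\in Y$ stabilisé par un élément $\sigma\in W\setminus W^+$ : on prend par exemple $y_0=(2,1,1,1,1,0,0,0)$, clairement dans $\mathrm{E}_8$ avec $y_0\cdot y_0=8$, qui n'est pas dans $2\mathrm{E}_8$ puisque $y_0/2=(1,1/2,1/2,1/2,1/2,0,0,0)\notin \mathrm{E}_8$, et qui est fixé par la transposition $\sigma$ des deux dernières coordonnées (qui est une réflexion, donc bien de déterminant $-1$). On en déduit $W^+\cdot y_0=W\cdot y_0=Y$. La seule difficulté réelle de cette démonstration est la vérification combinatoire que les quelques orbites de $W'$ rencontrées ci-dessus fusionnent en une unique orbite sous $W$ ; c'est un contrôle fini, analogue exact de celui effectué pour $\mathrm{E}_7$, et rendu mécanique par la description explicite de $H_8$ donnée au paragraphe \ref{4.2}.
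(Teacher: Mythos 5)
Votre démonstration est correcte et suit exactement la méthode que le papier indique (celui-ci laisse explicitement la preuve au lecteur comme analogue du cas de $\mathrm{E}_7$) : série thêta pour le dénombrement ($240\cdot\sigma_3(4)-240=17280$), description des orbites sous $W(\mathrm{D}_8)$ puis fusion via $H_8$, et enfin le vecteur $(2,1,1,1,1,0,0,0)$ fixé par une transposition de $W\setminus W^+$ pour passer de $W$ à $W^+$. Rien à redire.
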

\begin{lem} L'application de réduction modulo $4$ : $Y\rightarrow \mathrm{E}_8/4\mathrm{E}_8$ est injective. De plus, elle induit une bijection :
$$
	\begin{array}{rcl}
		\left\{ \{y,-y\} \vert y\in Y \right\} &\rightarrow & C_{\mathrm{E}_8}(\Z/4\Z) .\\
		\{y,-y\} & \mapsto & \mathrm{vect}_{\Z/4\Z} (y) .\\
	\end{array}
$$ 
\end{lem}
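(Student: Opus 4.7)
The plan is to mimic verbatim the proof just given for $\mathrm{E}_7$, replacing $X$ by $Y$ throughout. The two claims---injectivity of the reduction and the bijection with $C_{\mathrm{E}_8}(\Z/4\Z)$---factor through exactly the same Cauchy-Schwarz/cardinality argument used in the preceding lemma.

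For injectivity of $Y \to \mathrm{E}_8/4\mathrm{E}_8$, I would take $y \in Y$ and $u \in \mathrm{E}_8 \setminus \{0\}$ with $y - 4u \in Y$. Expanding $(y-4u)\cdot(y-4u)=8$ and using $y\cdot y = 8$ yields a linear relation of the form $y \cdot u = 2\,(u \cdot u)$ (up to the same factor convention as in the $\mathrm{E}_7$ case). Cauchy-Schwarz gives $(y \cdot u)^2 \le (y\cdot y)(u\cdot u) = 8\,(u \cdot u)$, whence $(u \cdot u)^2 \le 2\,(u \cdot u)$, so $u \cdot u \le 2$. Because $\mathrm{E}_8$ is even and $u$ is nonzero, this forces $u \cdot u = 2$; equality in Cauchy-Schwarz then makes $y$ and $u$ collinear, and $y \cdot y = 8$ pins down $y = \pm 2 u \in 2\mathrm{E}_8$, contradicting $y \in Y$. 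This is the same line of reasoning as for $\mathrm{E}_7$, only with $\mathrm{E}_8$ substituted.

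For the bijection, I would first observe that the map $\{y,-y\} \mapsto \mathrm{vect}_{\Z/4\Z}(y)$ is well-defined (since $y$ and $-y = 3y$ are exactly the two generators of the same $\Z/4\Z$-line) and lands in $C_{\mathrm{E}_8}(\Z/4\Z)$ (since $y \cdot y = 8$ gives $q(y) \equiv 0 \bmod 4$, and $y \notin 2\mathrm{E}_8$ ensures $y \bmod 4$ generates a free rank-$1$ submodule). Injectivity of the induced map on pairs follows directly from the first part. Surjectivity is a cardinality check: using the preceding lemma's count $|Y| = 17280 = 2\cdot 2^6 \cdot 135$, it suffices to verify $|C_{\mathrm{E}_8}(\Z/4\Z)| = 2^6 \cdot 135 = 8640$. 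This can be done by the same fibration argument suggested in the $\mathrm{E}_7$ case: the reduction $C_{\mathrm{E}_8}(\Z/4\Z) \to C_{\mathrm{E}_8}(\Z/2\Z)$ has all fibres of size $2^6$, because for a fixed nonzero isotropic $\bar y \in \mathrm{E}_8/2\mathrm{E}_8$, lifting to an isotropic element of $\mathrm{E}_8/4\mathrm{E}_8$ amounts to solving a single nontrivial affine $\F_2$-equation in the $2^8$ candidate lifts, leaving $2^7$ valid ones, which pair up into $2^6$ lines.

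The argument is mostly bookkeeping; the only step requiring genuine care is the fibre count for $C_{\mathrm{E}_8}(\Z/4\Z) \to C_{\mathrm{E}_8}(\Z/2\Z)$, which relies on the non-degeneracy of the $\F_2$-quadratic form on $\mathrm{E}_8/2\mathrm{E}_8$. Once that is in hand, both $|Y| = 17280$ (which, incidentally, matches the theta-series coefficient $r_{\mathrm{E}_8}(8) = 17520$ minus the $240$ vectors of the form $2\alpha$ with $\alpha$ a root) and $|C_{\mathrm{E}_8}(\Z/4\Z)| = 8640$ agree, and the injection of the second part is forced to be a bijection. I expect no substantive obstacle beyond that cardinality verification.
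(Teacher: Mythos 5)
Votre démonstration est correcte et suit exactement la démarche du papier : pour $\mathrm{E}_8$ l'article renvoie explicitement à la preuve du cas $\mathrm{E}_7$ (argument de Cauchy--Schwarz pour l'injectivité, puis comptage $\vert Y\vert = 2\cdot\vert C_{\mathrm{E}_8}(\Z/4\Z)\vert$ avec $\vert C_{\mathrm{E}_8}(\Z/4\Z)\vert = 2^6\cdot\vert C_{\mathrm{E}_8}(\Z/2\Z)\vert = 2^6\cdot 135$), et c'est précisément ce que vous faites, en explicitant de surcroît le comptage des fibres de $C_{\mathrm{E}_8}(\Z/4\Z)\rightarrow C_{\mathrm{E}_8}(\Z/2\Z)$ que l'article laissait en exercice.
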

\begin{cor} L'action naturelle du groupe $W^+=\mathrm{SO}(\mathrm{E}_8)$ sur l'ensemble des $4$-voisins de $\mathrm{E}_8$ est transitive.
\end{cor}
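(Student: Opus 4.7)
Le plan est de suivre exactement la même démarche que celle utilisée pour le corollaire analogue sur $\mathrm{E}_7$, en enchaînant mécaniquement les deux lemmes précédents sur $Y$ avec la proposition-définition \ref{p-voisin}. Plus précisément, d'après le lemme précédent, l'application $\{y,-y\}\mapsto \mathrm{vect}_{\Z/4\Z}(y)$ est une bijection de $\{\{y,-y\}\,\vert\, y\in Y\}$ sur $C_{\mathrm{E}_8}(\Z/4\Z)$. Cette bijection commute aux actions de $W^+=\mathrm{SO}(\mathrm{E}_8)$, ce qui est clair car $W^+$ agit par isométries $\Z$-linéaires et respecte à la fois l'ensemble $Y$ (qui est défini par des conditions purement métriques) et la réduction modulo $4$.

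Il suffit ensuite d'invoquer le $(ii)$ du lemme précédent, qui donne la transitivité de l'action de $W^+$ sur $Y$. Comme $-\mathrm{id}\in W(\mathrm{E}_8)$ et appartient à $W^+$ (puisque $\det(-\mathrm{id})=(-1)^8=1$), l'orbite de $y$ sous $W^+$ coïncide modulo signe avec la classe $\{y,-y\}$, donc $W^+$ agit transitivement sur $\{\{y,-y\}\,\vert\, y\in Y\}$. Par la bijection ci-dessus, $W^+$ agit alors transitivement sur $C_{\mathrm{E}_8}(\Z/4\Z)$.

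Enfin, on conclut par la proposition-définition \ref{p-voisin}, qui donne une bijection $\mathrm{SO}(\mathrm{E}_8)$-équivariante entre $C_{\mathrm{E}_8}(\Z/4\Z)$ et l'ensemble $\mathrm{Vois}_4(\mathrm{E}_8)$ des $4$-voisins de $\mathrm{E}_8$ : la transitivité se transporte immédiatement. Il n'y a à proprement parler aucun obstacle dans cette démonstration, puisque tout le travail conceptuel a été fait dans les deux lemmes précédents (l'un reposant sur le lemme \ref{bourbaki} via la surjectivité $W\to \mathrm{O}(L/2L)$ et une énumération, l'autre sur un simple argument de Cauchy-Schwarz combiné à un calcul de cardinal de quadrique sur $\Z/4\Z$) ; la seule subtilité à noter est l'appartenance $-\mathrm{id}\in W^+$ qui permet de passer de la transitivité sur $Y$ à la transitivité sur les paires $\{y,-y\}$.
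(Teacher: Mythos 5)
Votre démonstration est correcte et suit exactement la démarche du texte, qui renvoie lui-même au cas de $\mathrm{E}_7$ : transitivité de $W^+$ sur $Y$, bijection $W^+$-équivariante $\{y,-y\}\mapsto \mathrm{vect}_{\Z/4\Z}(y)$ avec $C_{\mathrm{E}_8}(\Z/4\Z)$, puis transport par la bijection équivariante de la proposition-définition \ref{p-voisin}. Signalons seulement que la remarque sur $-\mathrm{id}\in W^+$ est superflue : la transitivité sur $Y$ entraîne automatiquement la transitivité sur tout quotient équivariant de $Y$, en particulier sur l'ensemble des paires $\{y,-y\}$.
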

\subsubsection{Les orbites des $2,\dots ,2$-voisins pour l'action de $W^+$.}
\indent Dans cette partie, on ne considère que les cas où $L=\mathrm{E}_7$ ou $L=\mathrm{E}_8$. On pose $A=(\Z/2\Z)^i$, et on s'intéresse aux $A$-voisins de $L$. Notre but est d'étudier les orbites de $A$-voisins de $L$ pour l'action de $W^+ = \mathrm{SO}(L)$, ce qui repose sur les lemmes suivants :
\begin{lem} \label{unique M} Soient $n=7$ ou $8$, et $1\leq i\leq n/2$. \`A isomorphisme près, il existe un unique réseau $M\subset \R^n$ pair et tel que :
$$\mathrm{r\acute{e}s}\, M \simeq \mathrm{H}\left( (\Z/2\Z)^i \right) \oplus \mathrm{r\acute{e}s}\, L_0,$$
où $L_0$ est un élément quelconque de $\mathcal{L}_n$.
\end{lem}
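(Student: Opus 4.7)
\noindent\emph{Plan de preuve.} Ma stratégie consiste à prouver l'existence par construction à partir d'un réseau $L_0 \in \mathcal{L}_n$ connu, puis à obtenir l'unicité en exploitant l'unicité de la classe dans $X_n$ combinée à la surjectivité du lemme \ref{bourbaki}.

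Pour l'existence, je pars de $L_0 \in \mathcal{L}_n$ (avec $n=7$ ou $8$), je choisis un sous-espace totalement isotrope $X \subset L_0/2L_0$ de dimension $i$, et je pose $M$ égal à l'image réciproque de $X^\perp$ par la surjection $L_0 \to L_0/2L_0$. La proposition-définition \ref{ppp-voisin} garantit que $M$ est bien l'intersection $L_0 \cap L_0'$ pour un certain $(\Z/2\Z)^i$-voisin $L_0'$ de $L_0$, puis le lemme \ref{voisins} donne directement $\mathrm{r\acute{e}s}\, M \simeq \mathrm{H}((\Z/2\Z)^i) \oplus \mathrm{r\acute{e}s}\, L_0$ (comme $L_0 \simeq L_0'$ puisque $|X_n|=1$).

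Pour l'unicité, je donne-moi un réseau pair $M \subset \R^n$ vérifiant l'hypothèse et je choisis un lagrangien $I$ du facteur hyperbolique $\mathrm{H}((\Z/2\Z)^i) \subset \mathrm{r\acute{e}s}\, M$. Son image inverse par $M^\sharp \to \mathrm{r\acute{e}s}\, M$ est un réseau $L \supset M$ contenu dans $M^\sharp$ qui, grâce au caractère lagrangien de $I$, est entier et pair, et dont le résidu est le complément orthogonal de $I$ dans $\mathrm{r\acute{e}s}\, M$, à savoir $\mathrm{r\acute{e}s}\, L_0$. On a donc $L \in \mathcal{L}_n$, et puisque $|X_n|=1$, il existe une isométrie $L \tilde\rightarrow L_0$ qui identifie $M$ à un sous-réseau de $L_0$ d'indice $2^i$ avec $L_0/M \simeq (\Z/2\Z)^i$. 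Un tel sous-réseau est exactement l'image réciproque d'un sous-espace totalement isotrope $X \subset L_0/2L_0$ de dimension $i$.

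Reste à montrer que deux tels sous-réseaux de $L_0$ sont conjugués sous l'action de $\mathrm{O}(L_0)$, ce qui entraînera qu'ils sont isomorphes comme réseaux abstraits. Cela résultera du théorème de Witt, appliqué à la forme quadratique $\tilde q$ sur $L_0/2L_0$, qui affirme que $\mathrm{O}(\tilde q)$ agit transitivement sur les sous-espaces totalement isotropes de dimension $i$ donnée ; combinée au lemme \ref{bourbaki} (surjectivité de $W(L_0) \to \mathrm{O}(\tilde q)$), on obtient la transitivité voulue. Le principal obstacle est le cas $L_0 = \mathrm{E}_7$ : la forme $\tilde q$ sur $L_0/2L_0$ possède alors un radical non trivial (de dimension $1$, correspondant à $\mathrm{r\acute{e}s}\, L_0 \simeq \Z/2\Z$), et il faudra invoquer la version du théorème de Witt pour les formes quadratiques dégénérées — ou décomposer $L_0/2L_0$ en la somme orthogonale de son radical et d'un supplémentaire non dégénéré, et observer que tout sous-espace isotrope $X$ provenant d'un $M$ du type considéré évite le radical (ou le contient selon la décomposition voulue), ce qui ramène à la transitivité classique de Witt sur le quotient non dégénéré.
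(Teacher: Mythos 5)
Votre démonstration suit pour l'essentiel la même route que celle du texte : existence via les $(\Z/2\Z)^i$-voisins et le lemme \ref{voisins} ; unicité en relevant un lagrangien du facteur hyperbolique pour se ramener à un élément de $\mathcal{L}_n$, puis transitivité de $\mathrm{O}(L_0)$ sur les sous-espaces totalement isotropes de $L_0/2L_0$ par le théorème de Witt combiné au lemme \ref{bourbaki}. Votre remarque sur le cas $\mathrm{E}_7$ est pertinente et se résout comme vous l'indiquez : le radical de la forme bilinéaire sur $\mathrm{E}_7/2\mathrm{E}_7$ est engendré par un vecteur anisotrope pour $\tilde q$ (il provient de $2w$ où $w$ engendre $\mathrm{E}_7^\sharp/\mathrm{E}_7$ avec $q(w)\equiv 3/4$), donc tout sous-espace totalement isotrope le rencontre trivialement et la version non défective du théorème de Witt en caractéristique $2$ s'applique ; le texte se contente d'invoquer ``le théorème de Witt'' sans ce commentaire.

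Un point de votre rédaction demande toutefois une réparation. L'affirmation ``un tel sous-réseau [d'indice $2^i$ avec $L_0/M\simeq(\Z/2\Z)^i$] est exactement l'image réciproque d'un sous-espace totalement isotrope'' est fausse lue littéralement : pour $i=1$ et $v\in L_0/2L_0$ anisotrope, de relèvement $\tilde v$ avec $\tilde v\cdot\tilde v\equiv 2\ \mathrm{mod}\ 4$, l'image réciproque $M$ de $v^\perp$ est d'indice $2$ à quotient $\Z/2\Z$, mais $\tilde v /2$ est un élément d'ordre $2$ de $\mathrm{r\acute{e}s}\, M$ de forme d'enlacement $\pm 1/4$, de sorte que $\mathrm{r\acute{e}s}\, M\not\simeq \mathrm{H}(\Z/2\Z)\oplus\mathrm{r\acute{e}s}\, L_0$. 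C'est donc l'hypothèse sur $\mathrm{r\acute{e}s}\, M$ qu'il faut réutiliser à cette étape : le facteur $\mathrm{H}((\Z/2\Z)^i)$ fournit un second lagrangien $I'$ transverse à $I$, dont l'image réciproque $L'$ dans $M^\sharp$ est un $(\Z/2\Z)^i$-voisin de $L$ vérifiant $L\cap L'=M$, et la proposition \ref{paramètre A-vois} identifie alors $M$ à l'image réciproque de $X^\perp$ pour $X$ totalement isotrope de dimension $i$. C'est exactement ainsi que procède le texte, qui choisit d'emblée deux espaces isotropes $I$ et $I'$ en somme directe dans $\mathrm{r\acute{e}s}\, M$. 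Avec cette correction, votre preuve est complète et coïncide avec celle du texte.
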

\begin{proof}
Soient $I,I'$ deux espaces isotropes isomorphes à $(\Z/2\Z)^i$ et en somme directe dans $\mathrm{r\acute{e}s}\, M$ (qui existent bien par hypothèse). Considérons $L$ et $L'$ les images réciproques respectives de $I$ et $I'$ par l'application naturelle $M^\sharp \rightarrow \mathrm{r\acute{e}s}\, M$ : ce sont deux éléments de $\mathcal{L}_n$, qui sont des $(\Z/2\Z)^i$-voisins.
\\ \indent \`A isométrie près, $\mathcal{L}_n$ ne possède qu'un seul élément, et on peut donc supposer que $L=\mathrm{E}_7$ ou $L=\mathrm{E}_8$ (selon le choix de $n$). D'après la proposition \ref{paramètre A-vois}, $M$ est l'image réciproque via l'application naturelle $L\rightarrow L/2L$ de l'orthogonal d'un sous-espace isotrope de dimension $i$. Comme $\mathrm{O}(L)$ permute transitivement ces sous-espaces (par le théorème de Witt et le lemme \ref{bourbaki}), $M$ est bien unique à isomorphisme près.
\\ \indent Il reste à montrer l'existence de tels réseaux $M$ : celle-ci est claire grâce à l'existence de $(\Z/2\Z)^i$-voisins. On donnera dans la suite des exemples de $M$ satisfaisant les propriétés de l'énoncé du lemme. 
\end{proof}
\begin{lem}\label{transitivité I} Soient $n$, $i$ et $M$ comme dans le lemme \ref{unique M}. Alors le groupe $\mathrm{O}(M)$ agit transitivement sur l'ensemble des couples $(I,I')$, où $I$ et $I'$ sont des sous-espaces isotropes de dimension $i$ en somme directe dans $\mathrm{r\acute{e}s}\, M$.
\end{lem}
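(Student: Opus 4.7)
Je propose une stratégie en plusieurs étapes, fondée sur la traduction du problème en un énoncé sur les $A$-voisins puis sur un argument de double transitivité. Étant donnés deux couples $(I_1, I_1')$ et $(I_2, I_2')$ comme dans l'énoncé, je commencerais par définir pour chaque $j\in\{1,2\}$ les réseaux $L_j$ et $L_j'$ comme les images réciproques respectives de $I_j$ et $I_j'$ par la projection canonique $M^\sharp \twoheadrightarrow \mathrm{r\acute{e}s}\, M$. Un calcul analogue à ceux du lemme \ref{voisins} et de la proposition \ref{paramètre A-vois}, utilisant l'isotropie des $I_j, I_j'$ et l'hypothèse de somme directe, montre que $L_j, L_j' \in \mathcal{L}_n$ sont des $A$-voisins vérifiant $L_j \cap L_j' = M$. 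Il suffit alors d'exhiber un élément $g\in\mathrm{O}_n(\R)$ envoyant la paire $(L_1, L_1')$ sur $(L_2, L_2')$ : il préservera automatiquement $M=L_j\cap L_j'$, appartiendra donc à $\mathrm{O}(M)$, et induira sur $\mathrm{r\acute{e}s}\, M$ l'isométrie souhaitée envoyant $(I_1,I_1')$ sur $(I_2,I_2')$.

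L'hypothèse $|X_n|=1$ (vraie pour $n\in\{7,8\}$) fournit une isométrie $\sigma\in\mathrm{O}_n(\R)$ telle que $\sigma(L_1)=L_2$. Le sous-réseau $\sigma(M) = L_2 \cap \sigma(L_1')$ est alors l'image réciproque dans $L_2$ de l'orthogonal d'un $\F_2$-sous-espace totalement isotrope de dimension $i$ de $L_2/2L_2$. Par le lemme \ref{bourbaki} combiné au théorème de Witt appliqués à $L_2$, le groupe $\mathrm{O}(L_2)$ agit transitivement sur les $\F_2$-sous-espaces totalement isotropes de $L_2/2L_2$ d'une dimension fixée, donc sur les sous-réseaux de $L_2$ de la forme précédente. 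On obtient ainsi $\tau_1\in\mathrm{O}(L_2)$ tel que $g_1:=\tau_1\sigma$ vérifie $g_1(L_1)=L_2$ et $g_1(M)=M$ ; le réseau $L''_1:=g_1(L_1')$ est alors un $A$-voisin de $L_2$ d'intersection $M$ avec $L_2$.

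Il ne reste qu'à exhiber $\tau_2 \in \mathrm{O}(L_2)\cap\mathrm{O}(M)$ vérifiant $\tau_2(L''_1)=L_2'$ ; l'élément cherché sera $g:=\tau_2\tau_1\sigma$. D'après la proposition-définition \ref{ppp-voisin}, les $A$-voisins de $L_2$ d'intersection $M$ avec $L_2$ sont paramétrés, une fois un relèvement de $L_2/M$ dans $L_2$ fixé, par les matrices antisymétriques à diagonale nulle de taille $i\times i$ à coefficients dans $\F_2$. L'obstacle principal de la preuve consiste à démontrer que le stabilisateur $\mathrm{O}(L_2)\cap\mathrm{O}(M)$ agit transitivement sur cet ensemble de matrices. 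J'envisagerais de procéder par construction explicite d'éléments du stabilisateur : certains produits de réflexions associées à des racines de $L_2=\mathrm{E}_7$ ou $\mathrm{E}_8$ contenues dans $M$ et orthogonales à un relèvement de $L_2/M$ devraient réaliser toutes les \og transvections \fg{} requises dans $\mathrm{H}(L_2/M)$ entre lagrangiens transverses à $L_2/M$. La richesse du groupe de Weyl de $L_0$ et la description explicite de $M$ (unique à isomorphisme près d'après le lemme \ref{unique M}) rendent cet argument techniquement accessible, bien qu'il constitue le cœur combinatoire de la preuve.
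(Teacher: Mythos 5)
Votre réduction est correcte dans ses deux premières étapes : le passage des couples $(I,I')$ aux couples de réseaux $(L,L')$ avec $L\cap L'=M$ est licite (les images réciproques sont bien dans $\mathcal{L}_n$ et sont des $(\Z/2\Z)^i$-voisins d'intersection $M$), et la combinaison de $\vert X_n\vert =1$, du théorème de Witt et du lemme \ref{bourbaki} permet effectivement de se ramener au cas où $L_1=L_2$ et où $M$ est fixé. En revanche, la troisième étape, que vous identifiez vous-même comme le c{\oe}ur combinatoire de la preuve, n'est pas démontrée : vous affirmez seulement que des produits de réflexions bien choisis \emph{devraient} réaliser les transvections nécessaires. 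Or cette étape --- la transitivité du stabilisateur de $M$ dans $\mathrm{O}(L)$ sur les $2^{i(i-1)/2}$ lagrangiens de $\mathrm{H}(L/M)$ transverses à $L/M$ --- est précisément toute la difficulté du lemme : elle est vide pour $i=1$ (un seul voisin), mais pour $i=2,3,4$ il y a respectivement $2$, $8$ et $64$ voisins à permuter. Rien dans votre argument ne garantit que l'image du stabilisateur dans le groupe des formes alternées sur $L/M$ (qui agit simplement transitivement sur ces lagrangiens) soit tout le groupe. Notez en particulier que la surjectivité $W\rightarrow \mathrm{O}(L/2L)$ ne peut pas suffire à elle seule : deux voisins distincts $L'$ et $L''$ vérifiant $L\cap L'=L\cap L''=M$ déterminent le même sous-espace isotrope $X\subset L/2L$, de sorte que leur différence n'est pas visible modulo $2$ ; il faut contrôler l'action sur $\mathrm{r\acute{e}s}\, M$ du noyau de $\mathrm{O}(L)_M\rightarrow \mathrm{O}(L/2L)_X$, ce qui est un problème de nature \emph{modulo $4$} et demande une construction explicite.

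La démonstration du texte contourne ce point en exploitant l'unicité de $M$ à isomorphisme près : elle choisit pour chaque $(n,i)$ un modèle concret de $M$ ($\mathrm{A}_1\oplus\mathrm{D}_6$, $(\mathrm{A}_1)^3\oplus\mathrm{D}_4$ et $(\mathrm{A}_1)^7$ pour $n=7$ ; $\mathrm{D}_8$, $\mathrm{D}_4\oplus\mathrm{D}_4$, $\sqrt{2}\mathrm{D}_8^\sharp$ et $\sqrt{2}\mathrm{E}_8^\sharp$ pour $n=8$), pour lequel $\mathrm{O}(M)$ et les sous-espaces isotropes de $\mathrm{r\acute{e}s}\, M$ admettent une description combinatoire explicite (permutations des facteurs, configurations de parties de $\{1,\dots,7\}$, etc.), et vérifie la double transitivité à la main. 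Votre reformulation en termes de voisins est propre et pourrait servir de cadre, mais en l'état elle remplace le lemme par un énoncé essentiellement équivalent et non démontré : il faudrait, pour la compléter, exhiber effectivement les éléments du stabilisateur de $M$ réalisant chaque forme alternée, ce qui ramène de toute façon à un examen cas par cas comparable à celui du texte.
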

\begin{proof}
On procède cas par cas. Comme le réseau $M$ est unique à isomorphisme près, on comprend qu'il suffit de montrer le lemme pour les différentes valeurs possibles pour $i$ et $n$, mais à chaque fois pour un seul réseau $M$ tel que $\mathrm{r\acute{e}s}\, M \simeq \mathrm{H}\left( (\Z/2\Z)^i \right) + \mathrm{r\acute{e}s}\, L_0$.
\begin{proof}[Démonstration du cas $n=7$ et $i=1$ ] posons $M=\mathrm{A}_1 \oplus \mathrm{D}_6$ (d'après les notations du paragraphe \ref{2.1}). Notons $\varepsilon =(1/2,-1/2) \in \mathrm{A}_1^\sharp$, qui est un générateur de $\mathrm{A}_1^\sharp /\mathrm{A}_1$, et $e^\pm = (\pm 1/2,1/2,\dots ,1/2) \in \mathrm{D}_6^\sharp$ qui engendrent $\mathrm{D}_6^\sharp /\mathrm{D}_6$. On a les égalités :
$$\begin{aligned}
\mathrm{r\acute{e}s}\, M =& \Z/2\Z \varepsilon \oplus \Z/2\Z e^+ \oplus \Z/2\Z e^- \\
=& \left( \Z/2\Z (\varepsilon+e^+) \oplus \Z/2\Z (\varepsilon + e^-) \right) \oplus  \Z/2\Z (\varepsilon +e^+ + e^-) \\
\simeq & \mathrm{H}(\Z/2\Z) \oplus \mathrm{r\acute{e}s}\, \mathrm{E}_7 .\\
\end{aligned}$$
\indent Il existe seulement deux droites isotropes de dimension $1$ et elles sont en somme directe dans $\mathrm{r\acute{e}s}\, M$. Ce sont les droites engendrées par $\varepsilon +e^+$ et par $\varepsilon +e^-$.
\\ \indent Soit $\sigma \in \mathrm{O}(\mathrm{D}_6)$ la symétrie orthogonale par rapport au vecteur $(1,0,\dots ,0)$ : elle échange $e^+$ et $e^-$. L'élément $\mathrm{id}\oplus \sigma$ est donc un élément de $\mathrm{O}(M)$ qui échange les droites engendrées par $\varepsilon +e^+$ et $\varepsilon + e^-$ : le groupe $\mathrm{O}(M)$ agit transitivement sur l'ensemble des couples de droites isotropes en somme directe dans $\mathrm{r\acute{e}s}\, M$.  
\end{proof}
\begin{proof}[Démonstration du cas $n=7$ et $i=2$] posons $M=(\mathrm{A}_1)^3\oplus \mathrm{D}_4$ (d'après les notations du paragraphe \ref{2.1}). Notons $\varepsilon_k = (1/2,-1/2)\in \mathrm{A}_1^\sharp$, qui est un générateur du $k$-ème terme $\mathrm{A}_1^\sharp /\mathrm{A}_1$ apparaissant dans l'écriture de $\mathrm{r\acute{e}s}\, M$, et $e^\pm = (\pm 1/2,1/2,1/2,1/2) \in \mathrm{D}_4^\sharp$ qui engendrent $\mathrm{D}_4^\sharp /\mathrm{D}_4$. Enfin, on note $P$ le plan $(\F_2)^2$ muni de la forme quadratique $(x,y) \mapsto (1/2)(x^2+xy+y^2)$ à valeurs dans $\Q/\Z$. On a les égalités :
$$\begin{aligned}
\mathrm{r\acute{e}s}\, M =& \Z/2\Z \varepsilon_1 \oplus \Z/2\Z \varepsilon_2 \oplus \Z/2\Z \varepsilon_3 \oplus \Z/2\Z e^+ \oplus \Z/2\Z e^- \\
=& \left( \Z/2\Z e^+ \oplus \Z/2\Z e^- \right) \oplus \left( \Z/2\Z (\varepsilon_1 +\varepsilon_2) \oplus \Z/2\Z (\varepsilon_1 +\varepsilon_3) \right) \oplus  \Z/2\Z (\varepsilon_1 +\varepsilon_2 + \varepsilon_3) \\
\simeq & P\oplus P \oplus \mathrm{r\acute{e}s}\, \mathrm{E}_7 \simeq \mathrm{H}\left( (\Z/2\Z)^2\right) \oplus \mathrm{r\acute{e}s}\, \mathrm{E}_7 .\\
\end{aligned}$$
\indent Seule la dernière égalité n'est pas évidente : elle provient de \cite[ch. II, proposition 2.1]{CL}. Il suffit de constater que le sous-ensemble $\{ x\oplus x \vert x\in P \} \subset P\oplus P$ est un lagrangien de $P\oplus P$, et on a donc un isomorphisme $P\oplus P \simeq H\left( (\Z/2\Z)^2 \right)$.
\\ \indent Reste donc à comprendre l'action de $\mathrm{O}(M)$ sur les couples de plans isotropes de $\mathrm{r\acute{e}s}\, M$. Pour cela, il suffit d'utiliser l'inclusion : $\mathcal{S}_3\times \mathcal{S}_3 \subset \mathrm{O}(M)$, où le premier $\mathcal{S}_3$ agit par permutation des $\varepsilon_i$, et où le second $\mathcal{S}_3$ agit par permutation des vecteurs non-nuls de $\mathrm{r\acute{e}s}\, \mathrm{D}_4 \simeq P$. On remarque que le groupe $\mathrm{O}(P)$ agit par permutation des vecteurs non-nuls de $P$ : fixons pour la suite un isomorphisme $\phi : \mathcal{S}_3 \simeq \mathrm{O}(P)$ (ce qui revient à fixer une numérotation des vecteurs non-nuls de $P$). L'inclusion $\mathcal{S}_3\times \mathcal{S}_3 \subset \mathrm{O}(M)$ induit l'inclusion $\mathrm{O}(P) \times \mathrm{O}(P) \subset \mathrm{O}(M)$.
\\ \indent Les plans isotropes de $\mathrm{r\acute{e}s}\, M$ sont obtenus à partir des plans isotropes de $P\oplus P$. Si l'on se donne $I$ un tel plan, comme $P$ ne contient pas d'élément isotrope non nul, on a les égalités : $I\cap (P \oplus \{0 \}) = I\cap (\{ 0\} \oplus P ) = \{ 0\}$. La dimension de $I$ impose qu'il existe un unique élément $\sigma$ de $\mathcal{S}_3$ tel que : $I=\{ u + \phi(\sigma) (u) \in P\oplus P \vert u\in P \}$. Ceci montre déjà que $\mathrm{O}(M)$ agit transitivement sur les plans isotropes de $\mathrm{r\acute{e}s}\, M$.
\\ \indent Soient $I$ et $I'$ deux plans isotropes en somme directe dans $\mathrm{r\acute{e}s}\, M$. Quitte à faire agir un élément de $\mathrm{O}(M)$ bien choisi, on peut supposer que $I=\{ u+u \in P \oplus P \vert u\in P  \}$. Les espaces $I$ et $I'$ sont en somme directe si, et seulement si, l'élément $\sigma \in \mathcal{S}_3$ associé à $I'$ ne fixe aucun point (c'est-à-dire que c'est un $3$-cycle).
\\ \indent Notons $\sigma_1=(123)$ et $\sigma_2=(132)$ les deux $3$-cycles de $\mathcal{S}_3$, et notons $I_1,I_2$ les plans isotropes associés. Soient $\tau\in \mathcal{S}_3$ la transposition $(23)$, et $\overline{\tau}=\tau \times \tau \in \mathcal{S}_3 \times \mathcal{S}_3 \subset\mathrm{O}(M)$. L'action de $\overline{\tau}$ sur $P\oplus P \subset \mathrm{r\acute{e}s}\, M$ a bien un sens, en considérant l'élément $\phi(\tau) \times \phi (\tau) \in \mathrm{O}(P) \times \mathrm{O}(P)$. De plus, cette action échange $I_1$ et $I_2$, et laisse stable $I$. Ainsi, le groupe $\mathrm{O}(M)$ agit bien transitivement sur les couples de plans isotropes de $\mathrm{r\acute{e}s}\, M$.
\end{proof}
\begin{proof}[Démonstration du cas $n=7$ et $i=3$] posons $M=(\mathrm{A}_1)^7$ (d'après les notations du paragraphe \ref{2.1}). Notons $\varepsilon_k = (1/2,-1/2)\in \mathrm{A}_1^\sharp$, qui est un générateur du $k$-ème terme $\mathrm{A}_1^\sharp /\mathrm{A}_1$ apparaissant dans l'écriture de $\mathrm{r\acute{e}s}\, M$. On a les égalités :
$$\begin{aligned}
\mathrm{r\acute{e}s}\, M & = \bigoplus_{k=1}^7 \Z/2\Z \varepsilon_k = \left( \bigoplus_{k=2}^7 \Z/2\Z (\varepsilon_1 + \varepsilon_k) \right) \oplus \Z/2\Z \left( \sum_{k=1}^7 \varepsilon_k \right)\\
&\simeq \mathrm{H}\left( (\Z/2\Z)^3\right) \oplus \mathrm{r\acute{e}s}\, \mathrm{E}_7\\
\end{aligned}
$$
\indent Comme dans le cas précédent, la dernière égalité provient de \cite[ch. II, proposition 2.1]{CL} : il suffit de constater que l'espace $I_0$ engendré par les vecteurs $\varepsilon_1+\varepsilon_2+\varepsilon_3+\varepsilon_4,\varepsilon_1+\varepsilon_2+\varepsilon_5+\varepsilon_6$ et $\varepsilon_1+\varepsilon_3+\varepsilon_5+\varepsilon_7$ est un lagrangien de $\bigoplus_{k=2}^7 \Z/2\Z (\varepsilon_1 + \varepsilon_k)$.
\\ \indent Par définition de $M$, on a : $\mathrm{O}(M) = \mathcal{S}_7 \ltimes \{ \pm 1\}^7$, où $\{ \pm 1\}^7$ agit sur chaque $\mathrm{A}_1$ par $\varepsilon_k \mapsto \pm \varepsilon_k$ (et a donc une action triviale sur $\mathrm{r\acute{e}s}\, M$), tandis que $\mathcal{S}_7$ agit par permutation des $\varepsilon_k$. On déduit déjà que l'action de $\mathrm{O}(M)$ sur $\mathrm{r\acute{e}s}\, \mathrm{E}_7 \subset \mathrm{r\acute{e}s}\, M$ est triviale. La compréhension des espaces isotropes de dimension $3$ de $\mathrm{r\acute{e}s}\, M$ se déduit de la compréhension des lagrangiens de $\mathrm{H}\left( (\Z/2\Z)^3 \right)$ donnée par le lemme suivant :
\begin{lem} Soit $\phi$ l'application définie sur $\mathcal{S}_7$ par :
$$\begin{array}{rrcl}
\phi :& \mathcal{S}_7 & \rightarrow & B_I \\ 
& \sigma & \mapsto & \left( \mathrm{vect}_{\Z/2\Z}(v_1,v_2,v_3), (v_1,v_2,v_3) \right) \\
\end{array}
$$
où $B_I$ est l'ensemble des couples $(I,b)$, où $I$ est un lagrangiens de $H\left( (\Z/2\Z)^3 \right)$, $b$ est une base de $I$, et où les vecteurs $v_1,v_2,v_3$ sont donnés par :
$$
	\left\{ \begin{aligned}
		v_1&= \varepsilon_{\sigma (1) }+\varepsilon_{\sigma (2) }+\varepsilon_{\sigma (3) }+\varepsilon_{\sigma (4) } \\
		v_2&= \varepsilon_{\sigma (1) }+\varepsilon_{\sigma (2) }+\varepsilon_{\sigma (5) }+\varepsilon_{\sigma (6) } \\
		v_3&= \varepsilon_{\sigma (1) }+\varepsilon_{\sigma (3) }+\varepsilon_{\sigma (5) }+\varepsilon_{\sigma (7) } \\
		\end{aligned}
	\right. .
$$
\indent Alors $\phi$ est une bijection.
\end{lem}
\begin{proof}[Démonstration du lemme] Il est immédiat que $\phi$ est bien à valeurs dans $B_I$. Il suffit en effet de voir que les vecteurs $v_1,v_2,v_3$ sont isotropes, orthogonaux, et forment une famille libre.
\\ \indent Réciproquement, donnons-nous $(v_1,v_2,v_3)$ une base d'un lagrangien $I$. Pour simplifier, on note $I_i$ les sous-ensembles de $\{1,\dots ,7\}$ tels que : $v_i = \sum_{j\in I_i} \varepsilon_j$. Les $I_i$ sont deux-à-deux distincts. Comme $I$ est un lagrangien, on a en particulier que :
$$(\forall \ i\in \{1,2,3\} ) \ \vert I_i \vert \equiv 0\ \mathrm{mod}\ 4,$$
$$(\forall \ i,j\in \{1,2,3\} ) \ \vert I_i\cap I_j \vert \equiv 0\ \mathrm{mod}\ 2.$$
\indent Comme les $I_i$ sont des sous-ensembles de $\{1,\dots ,7\}$, les congruences ci-dessus donnent les égalités :
$$(\forall \ i\in \{1,2,3\} ) \ \vert I_i \vert =4,$$
$$(\forall \ i \neq j\in \{1,2,3\} ) \ \vert I_i\cap I_j \vert =2.$$
\indent Il est alors facile de voir que l'on a l'égalité :
$$\vert I_1 \cap I_2 \cap I_3 \vert =1 .$$
\indent En effet, les seules autres valeurs possibles sont $0$ ou $2$. Mais ces deux cas sont impossibles :
\\ \indent - si $\vert I_1 \cap I_2 \cap I_3 \vert=0$ : alors on voit facilement que $I_3=I_1\cup I_2 \setminus I_1 \cap I_2$, ce qui veut dire que : $v_3=v_1+v_2$, ce qui contredit que $(v_1,v_2,v_3)$ forme une base de $I$.
\\ \indent - si $\vert I_1 \cap I_2 \cap I_3 \vert=2$, alors $I_1\cap I_2 =I_2\cap I_3 = I_3\cap I_1 =J$, et les ensembles $I_i \setminus J$ formeraient trois ensembles disjoints à deux éléments de $\{1,\dots ,7 \} \setminus J$, qui ne possède que $5$ éléments, ce qui est impossible.
\\ \indent On peut ainsi définir une permutation $\sigma \in \mathcal{S}_7$ avec :
$$\{ \sigma (1)\}= I_1 \cap I_2 \cap I_3,$$
$$\{ \sigma(2) \}=  I_1\cap I_2 \setminus \{ \sigma (1)\},$$
$$\{ \sigma (3)\}= I_1\cap I_3 \setminus \{ \sigma (1)\},$$
$$\{ \sigma (4)\}=I_1 \setminus \{ \sigma (1),\sigma(2),\sigma(3)\},$$
$$\{ \sigma (5)\} = I_2\cap I_3 \setminus \{ \sigma (1)\},$$
$$\{ \sigma (6)\} = I_2 \setminus \{ \sigma (1), \sigma(2), \sigma(5)\},$$
$$\{ \sigma (7)\} = I_3 \setminus \{ \sigma (1),\sigma(3),\sigma(5)\}.$$
\indent On vérifie facilement que pour notre élément $\sigma$ défini ci-dessus on a : $\phi(\sigma) = (I,(v_1,v_2,v_3))$. L'application $\phi$ est une bijection, et sa réciproque est donnée par le processus précédent qui nous a permis de trouver $\sigma$.  
\end{proof}
\indent Revenons à la démonstration du lemme \ref{transitivité I} dans le cas $n=7$ et $i=3$. Soit $I$ un espace isotrope de dimension $3$ de $\mathrm{r\acute{e}s}\, M$ : on peut le voir comme un lagrangien de $\bigoplus_{k=2}^7 \Z/2\Z (\varepsilon_1+\varepsilon_k) \simeq \mathrm{H}\left( (\Z/2\Z)^3 \right)$, et d'après le lemme précédent il existe un élément $\sigma \in \mathcal{S}_7$ associé à une de ses bases. Il est facile de voir que $I=\sigma (I_0)$ (avec l'inclusion $\mathcal{S}_7 \subset \mathrm{O}(M)$), donc $\mathrm{O}(M)$ agit transitivement sur l'ensemble des espaces isotropes de dimension $3$ de $\mathrm{r\acute{e}s}\, M$.
\\ \indent On souhaite vérifier que $\mathrm{O}(M)$ agit bien transitivement sur les couples d'espaces isotropes de dimension $3$ en somme directe dans $\mathrm{r\acute{e}s}\, M$. Soit $(I,J)$ un tel couple. On peut supposer que $I=I_0$ (comme $\mathrm{O}(M)$ agit transitivement sur les espaces isotropes de dimension $3$ de $\mathrm{r\acute{e}s}\, M$). L'espace isotrope $J_0$ associé par $\phi$ au $4$-cycle $\sigma=(4576)$ est bien en somme directe avec $I_0$. Soit $S$ le sous-groupe des éléments de $\mathcal{S}_7$ qui préserve $I$ : pour montrer notre résultat, il suffit de vérifier que $J$ est de la forme $s(J_0)$ pour $s\in S$. On vérifie facilement à la main qu'il existe $8$ éléments de la forme $s(J_0)$, c'est-à-dire autant que d'espaces isotropes de dimension $3$ en somme directe avec $I_0$ dans $\mathrm{r\acute{e}s}\, M$, d'où le résultat. 
\end{proof}
\begin{proof}[Démonstration du cas $n=8$] On se contente de donner les réseaux $M$ qui conviennent selon les valeurs de $I$. On laisse au lecteur la vérification du lemme \ref{transitivité I} dans ces cas (les méthodes nécessaires ayant déjà été utilisées et détaillées dans le cas $n=7$). Pour $i=1,2,3,4$, des réseaux $M$ qui vérifient les conditions du lemme \ref{unique M} sont respectivement : $\mathrm{D}_8$, $\mathrm{D}_4\oplus \mathrm{D}_4$, $\sqrt{2}\mathrm{D}_8^\sharp$ et $\sqrt{2}\mathrm{E}_8^\sharp$. 
\end{proof}
\end{proof}
\indent On déduit des lemmes \ref{unique M} et \ref{transitivité I} la proposition suivante :
\begin{prop} Soient $1\leq i\leq 3$ et $A=(\Z/2\Z)^i$. Pour $L=\mathrm{E}_7$ ou $\mathrm{E}_8$, le groupe $\mathrm{SO}(L)$ agit transitivement sur l'ensemble des $A$-voisins de $L$.
\\ \indent Pour $L=\mathrm{E}_8$, il y a deux orbites de $2,2,2,2$-voisins de $L$ pour l'action de $\mathrm{SO}(L)$, et une seule pour l'action de $\mathrm{O}(L)$.
\end{prop}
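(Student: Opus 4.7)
L'idée est de combiner la paramétrisation des $A$-voisins donnée par la proposition \ref{paramètre A-vois} avec les deux lemmes techniques \ref{unique M} et \ref{transitivité I}, puis de contrôler le passage du déterminant.

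Plus précisément, le plan est le suivant. D'abord, je rappellerais qu'un $A$-voisin $L'$ de $L$ est déterminé par le couple $(X, I')$, où $X \subset L/2L$ est un sous-espace totalement isotrope de dimension $i$ et $I' \subset \mathrm{r\acute{e}s}\, M$ est un lagrangien transverse à $I = L/M$, $M$ désignant l'image réciproque de $X^\perp$ par $L \to L/2L$. Ensuite, pour deux $A$-voisins $L_1', L_2'$ de $L$, je voudrais montrer qu'il existe $g \in \mathrm{SO}(L)$ envoyant $L_1'$ sur $L_2'$.

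L'étape \textbf{(a)} consiste à se ramener au cas où $M_1 = M_2 = M$ : le théorème de Witt combiné au lemme \ref{bourbaki} entraîne que $\mathrm{O}(L)$ (et en fait déjà $W = \mathrm{O}(L)$) agit transitivement sur les sous-espaces totalement isotropes de dimension $i$ fixée de $L/2L$, ce qui permet de choisir un $w \in \mathrm{O}(L)$ avec $w(M_1) = M_2$ et de remplacer $L_1'$ par $w(L_1')$. L'étape \textbf{(b)} applique le lemme \ref{transitivité I} : comme $L$ et $L_i'$ sont les images réciproques de $I$ et $I_i'$ par $M^\sharp \to \mathrm{r\acute{e}s}\, M$, l'existence d'un $\gamma \in \mathrm{O}(M)$ envoyant le couple $(I, I_1')$ sur $(I, I_2')$ fournit un élément de $\mathrm{O}(L)$ envoyant $L_1'$ sur $L_2'$. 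Ceci prouve déjà la transitivité sous $\mathrm{O}(L)$.

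L'étape \textbf{(c)}, qui est le point délicat, est de passer de $\mathrm{O}(L)$ à $\mathrm{SO}(L)$ pour $i \leq 3$, et d'expliquer pourquoi cela échoue lorsque $i = 4$ et $L = \mathrm{E}_8$. Il s'agit de montrer que le stabilisateur d'un $A$-voisin $L'$ dans $\mathrm{O}(L)$ contient un élément de déterminant $-1$ : une fois ceci établi, si $g \in \mathrm{O}(L) \setminus \mathrm{SO}(L)$ envoie $L_1'$ sur $L_2'$, on le compose avec un tel élément pour ajuster le déterminant. Pour $L = \mathrm{E}_7$, il suffit d'utiliser $-\mathrm{id} \in \mathrm{O}(\mathrm{E}_7) \setminus \mathrm{SO}(\mathrm{E}_7)$ qui fixe chaque réseau. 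Pour $L = \mathrm{E}_8$ et $i \in \{1, 2, 3\}$, je chercherais concrètement dans chacun des réseaux modèles $M$ donnés à la preuve du lemme \ref{transitivité I} un élément du stabilisateur de $(I, I')$ dans $\mathrm{O}(M)$ ayant déterminant $-1$ comme isométrie de $M \otimes \R$, par exemple une réflexion par rapport à une racine bien choisie de $L$ qui laisse stables $I$ et $I'$.

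Pour la seconde affirmation, je ferais apparaître que dans le cas $i = 4$ et $L = \mathrm{E}_8$, le réseau $M = \sqrt{2}\, \mathrm{E}_8^\sharp$ correspondant au lemme \ref{unique M} admet un stabilisateur de $(I, I')$ dans $\mathrm{O}(M)$ entièrement contenu dans $\mathrm{SO}(L)$ : c'est là le principal obstacle, et je m'attendrais à le résoudre en décrivant explicitement $\mathrm{O}(M) \cap \mathrm{O}(L)$ et en utilisant la structure de lagrangien maximal de $I$ dans $\mathrm{r\acute{e}s}\, M \simeq \mathrm{H}((\Z/2\Z)^4)$. L'orbite unique sous $\mathrm{O}(L)$ se scinde alors en deux orbites de cardinaux égaux sous $\mathrm{SO}(L)$, l'involution d'échange étant fournie par n'importe quel élément de $\mathrm{O}(L) \setminus \mathrm{SO}(L)$ (par exemple la transposition $\tau_{1,2}$ utilisée au paragraphe \ref{4.3}).
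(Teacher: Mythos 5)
Votre proposition est correcte et suit essentiellement la démonstration du texte : même paramétrisation des $A$-voisins par les couples $(X,I)$ (proposition \ref{paramètre A-vois}), même réduction via le théorème de Witt, le lemme \ref{bourbaki} et les lemmes \ref{unique M} et \ref{transitivité I} pour la transitivité sous $\mathrm{O}(L)$, puis même passage à $\mathrm{SO}(L)$ pour $i\leq 3$ en exhibant un élément de déterminant $-1$ dans le stabilisateur d'un couple $(X,I)$. Pour $i=4$, vous raisonnez sur le fait que le stabilisateur du voisin dans $\mathrm{O}(\mathrm{E}_8)$ est contenu dans $\mathrm{SO}(\mathrm{E}_8)$, là où le texte compte directement les deux orbites d'espaces isotropes de dimension $4$ de $\mathrm{E}_8/2\mathrm{E}_8$ sous $\mathrm{SO}(\mathrm{E}_8)$ ; les deux formulations sont équivalentes par l'argument orbite-stabilisateur et reposent sur le même fait (les deux familles de lagrangiens, échangées par les éléments de déterminant $-1$).
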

\begin{proof}
soient $L$ et $A$ comme dans l'énoncé de la proposition. D'après la proposition \ref{paramètre A-vois}, les orbites de $A$-voisins de $L$ pour l'action de $\mathrm{SO}(L)$ sont en bijection avec les orbites de couples de la forme $(X,I)$, où $X$ est un espace totalement isotrope de $L/pL$ de dimension $i$, et $I$ est un lagrangien de $\mathrm{H}(L/M)$ (avec $M$ l'image réciproque de $X^\perp$ par $L\rightarrow L/pL$) transverse à $L/M$.
\\ \indent Le théorème de Witt nous dit déjà que $\mathrm{O}(L)$ agit transitivement sur les espaces isotropes $X$ de dimension $i$ de $L/pL$. Les lemmes \ref{unique M} et \ref{transitivité I} nous disent que, une fois $X$ fixé (et donc une fois le réseau $M$ fixé), le groupe $\mathrm{O}(L)$ agit transitivement sur l'ensemble des espaces isotropes de dimension $i$ en somme directe avec $L/M$ dans $\mathrm{r\acute{e}s}\, M$. Ainsi, le groupe $\mathrm{O}(L)$ agit transitivement sur l'ensemble des $A$-voisins de $L$.
\\ \indent Pour passer aux orbites pour l'action de $\mathrm{SO}(L)$, on a deux cas à traiter :
\\ \indent - si $i\leq 3$ : alors il est facile de trouver pour un couple $(X,I)$ de la forme précédente un élément $\sigma \in \mathrm{O}(L) \setminus \mathrm{SO}(L)$ tel que $\sigma (X,I) = (X,I)$. Par exemple, pour $L=\mathrm{E}_7$, il suffit de prendre $\sigma = -\mathrm{id}$. On déduit dans ce cas que le groupe $\mathrm{SO}(L)$ agit transitivement sur l'ensemble des couples $(X,I)$ de la forme précédente, et donc sur l'ensemble des $A$-voisins.
\\ \indent - si $i=4$ (et donc $L=\mathrm{E}_8$) : il existe exactement deux orbites d'espaces isotropes de dimension $4$ dans $\mathrm{E}_8/2\mathrm{E}_8$. Ainsi, il y a exactement deux orbites de couples $(X,I)$ de la forme précédente pour l'action de $\mathrm{SO}(\mathrm{E}_8)$. Il y a donc exactement deux orbites de $2,2,2,2$-voisins de $\mathrm{E}_8$ pour l'action de $\mathrm{SO}(\mathrm{E}_8)$. En particulier, si on se donne $\sigma \in \mathrm{O}(\mathrm{E}_8) \setminus \mathrm{SO}(\mathrm{E}_8)$ et $L$ un $2,2,2,2$-voisin de $\mathrm{E}_8$, alors les réseaux $L$ et $\sigma(L)$ sont des $2,2,2,2$-voisins de $\mathrm{E}_8$ dans des orbites différentes pour l'action de $\mathrm{SO}(\mathrm{E}_8)$. 
\end{proof}
\subsection{La création de $A$-voisins dans des cas particuliers.}\label{5.2}
\indent Les calculs de la trace de $\mathrm{T}_A$ (pour $A$ de la forme $(\Z/2\Z)^i$ ou $\Z/4\Z$) sont ainsi faciles à réaliser dans les cas précédents : comme on connaît bien les orbites, il suffit de chercher un voisin correspondant à chacune des orbites (c'est-à-dire au plus deux voisins), et de prendre en considération le cardinal de chaque orbite ensuite.
\\ \indent Une autre subtilité du cas où $L=\mathrm{E}_7$, $\mathrm{E}_8$ ou $\mathrm{E}_8\oplus \mathrm{A}_1$ est que tout élément isotrope $x\in L/2L$ possède un relèvement $v\in L$ tel que $v\cdot v =4$.
\\ \indent On détaille dans la suite des cas particuliers où il est facile de construire une transformations $\sigma
\in \mathrm{O}_n(\Q)$ qui transforme $L$ en un de ses $A$-voisin. Dans les cas étudiés, chaque orbite de $A$-voisin de $L$ pour l'action de $\mathrm{SO}(L)$ possède un élément de la forme $\sigma (L)$ pour de tels éléments $\sigma$.
\begin{lem}[Création d'un $2$-voisin] Soit $x\in L$ tel que $x\cdot x =4$. On définit la symétrie $\sigma\in \mathrm{O}_n(\Q)$ par :
$$\sigma (y) = y- \dfrac{(x\cdot y)}{2}\, x.$$
\indent Alors $\sigma (L)$ est un $2$-voisin de $L$. C'est même le $2$-voisin associé à la droite isotrope $\mathrm{vect}(x)\subset L/2L$.
\end{lem}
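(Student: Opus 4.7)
Le point de départ est de reconnaître que $\sigma$ n'est autre que la réflexion orthogonale $s_x$ d'axe $x^\perp$ : comme $x\cdot x = 4$, on a $\sigma(y) = y - \frac{2(x\cdot y)}{x\cdot x}\,x$. On en déduit immédiatement $\sigma\in\mathrm{O}_n(\Q)$, $\sigma^2 = \mathrm{id}$, et $\sigma(L)\in\mathcal{L}_n$ (c'est un réseau pair de même déterminant que $L$).

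Je commencerais par identifier $L\cap\sigma(L)$ au réseau $M$ intervenant dans la proposition-définition \ref{p-voisin}. Puisque $\sigma$ est involutive, pour $y\in L$ on a $y\in\sigma(L)$ si, et seulement si, $\sigma(y)\in L$, c'est-à-dire $\frac{(x\cdot y)}{2}\,x\in L$. La parité de $L$ interdit $x\in 2L$ (sinon $x = 2w$ avec $q(w)=1$, impossible puisque $L$ est pair), d'où $x/2\notin L$, et la condition équivaut à $x\cdot y\equiv 0 \mathrm{\ mod\ }2$. Ainsi $L\cap\sigma(L)=M$, où $M$ est exactement l'image réciproque de $\overline{x}^\perp$ par $L\to L/2L$.

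Il reste à vérifier $|L/M|=2$ pour conclure que $\sigma(L)$ est un $2$-voisin : cela équivaut à $x\notin 2L^\sharp$. Si tel était le cas, $x/2\in L^\sharp$ aurait pour valeur de forme d'enlacement $q(x/2) = 1/2\mathrm{\ mod\ }\Z$ ; or, pour les trois réseaux $\mathrm{E}_7$, $\mathrm{E}_8$ et $\mathrm{E}_8\oplus\mathrm{A}_1$, les valeurs possibles de cette forme sur $\mathrm{r\acute{e}s}\, L$ sont respectivement $\{0,3/4\}$, $\{0\}$ et $\{0,1/4\}$ modulo $\Z$, d'où contradiction. Cette étape, la plus délicate du raisonnement, se règle au cas par cas grâce à la description explicite des résidus.

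Pour identifier enfin $\sigma(L)$ au $2$-voisin canonique $L'(\mathrm{vect}(\overline{x}))$, je choisirais un relèvement explicite : prendre $y_0\in L\setminus M$ (possible d'après l'étape précédente) et poser $v = x+2y_0$ donne $v\cdot v = 4 + 4(x\cdot y_0 + y_0\cdot y_0) \equiv 0\mathrm{\ mod\ }8$ par imparité de $x\cdot y_0$ et parité de $L$, de sorte que $L'(\mathrm{vect}(\overline{x})) = M+\Z\,\tfrac{v}{2} = M+\Z(\tfrac{x}{2}+y_0)$. D'autre part, un calcul direct donne $\sigma(y_0) = y_0 - kx - \tfrac{x}{2}$ où $x\cdot y_0 = 2k+1$, puis $\sigma(y_0) \equiv y_0 + \tfrac{x}{2} \mathrm{\ mod\ }M$ (en utilisant $kx, x\in M$ puisque $x\cdot x = 4$). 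Par conséquent $\sigma(L) = M + \Z\,\sigma(y_0) = M + \Z(\tfrac{x}{2}+y_0) = L'(\mathrm{vect}(\overline{x}))$, ce qui conclut.
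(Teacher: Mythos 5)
Votre preuve est correcte et suit essentiellement la même démarche que celle du texte : identification de $L\cap\sigma(L)$ avec $M$ via l'involutivité de $\sigma$ et le fait que $x\notin 2L$, puis exhibition du générateur $\tfrac{x+2y_0}{2}$ vérifiant les congruences de la proposition-définition \ref{p-voisin}. Votre vérification explicite de $|L/M|=2$ (c'est-à-dire $x\notin 2L^\sharp$, réglée au cas par cas via les formes d'enlacement de $\mathrm{E}_7$, $\mathrm{E}_8$ et $\mathrm{E}_8\oplus\mathrm{A}_1$) est un point que le texte passe sous silence ; elle est la bienvenue.
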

\begin{proof}
Posons $L' =\sigma (L)$. On a alors :
$$\begin{aligned}
L\cap L' &= \left\{ y\in L \, \vert \, \sigma^{-1} (y)\in L \right\} = \left\{ y\in L \, \vert \, \sigma (y)\in L \right\} \\
&= \left\{ y\in L \, \vert \,  (x\cdot y)\in 2\Z \right\} \text{ comme $x \notin 2L$ }\\
\end{aligned}
$$
\indent On déduit ainsi facilement que $L/(L\cap L') \simeq \Z/2\Z$ et que $M=L\cap L'$ est bien l'image réciproque de $x^\perp$ par $L\rightarrow L/2L$. Si l'on se donne $y\in L$ tel que $(x\cdot y) \equiv 1 \mathrm{\ mod\ }2$, alors :
$$
	\begin{aligned}
		L' &= M+\Z \, \sigma(y) =M+\Z \, \left( y-\frac{x}{2} \right) =M+\Z \, \frac{x+2y}{2}\\
	\end{aligned}
$$
et on a bien la forme voulue, comme $(x+2y)$ a même image que $x$ dans $L/2L$ et vérifie $(x+2y)\cdot (x+2y) \equiv 0\mathrm{\ mod\ }8$.  
\end{proof}
\indent On peut étendre cette construction aux $2,\dots ,2$-voisins grâce au lemme suivant :
\begin{lem}[Création de $2,\dots ,2$ voisins] Soit $I=\{1,\dots,m\}$, et $(x_i)_{i\in I}$ une famille d'éléments de $L$ vérifiant les conditions suivantes :
$$
	\begin{array}{rl}
		(i) & (\forall i\in I ) \ x_i \cdot x_i =4 ,\\
		(ii) & (\forall i,j \in I) \ i\neq j \Rightarrow x_i\cdot x_j  =0 ,\\
		(iii) & \text{la famille $(x_i)_{i\in I}$ est libre dans $L/2L$} .\\
	\end{array}
$$
\indent Alors, en définissant les $\sigma_i$ par :
$\sigma_i (y)=y-\dfrac{(x_i\cdot y)}{2}\, x_i,$
on obtient que :
$$
\left\{
	\begin{aligned}
		& L\cap \sigma _m \circ \dots \circ \sigma_1 (L) = L\cap \sigma_m(L) \cap \dots \cap \sigma_1 (L) ,\\
		& \sigma _m \circ \dots \circ \sigma_1 (L) \text{ est un $2,\dots ,2$-voisin de }L.\\
	\end{aligned}
\right.
$$
\end{lem}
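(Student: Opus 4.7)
\textit{Plan de démonstration.} La clef est la constatation que les $\sigma_i$ commutent deux-à-deux : comme $\sigma_j(x_i) = x_i - \frac{(x_j \cdot x_i)}{2}\, x_j = x_i$ pour $i \neq j$ grâce à $(ii)$, un calcul immédiat donne $\sigma_j \circ \sigma_i(y) = y - \frac{x_i\cdot y}{2}\, x_i - \frac{x_j \cdot y}{2}\, x_j$, qui est symétrique en $(i,j)$. L'élément $\sigma := \sigma_m \circ \cdots \circ \sigma_1$ est donc indépendant de l'ordre, vérifie $\sigma^{-1} = \sigma$, et est donné explicitement par $\sigma(y) = y - \sum_{i=1}^m \frac{x_i \cdot y}{2}\, x_i$.

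Pour la première identité, j'écris $L \cap \sigma(L) = \{y \in L \,\vert\, \sigma(y) \in L\}$ (grâce à $\sigma = \sigma^{-1}$). La condition $\sigma(y) \in L$ équivaut à $\sum_i (x_i \cdot y)\, x_i \in 2L$, ce qui, par l'indépendance $\F_2$-linéaire des $\overline{x_i}$ dans $L/2L$ (hypothèse $(iii)$), équivaut à $(x_i \cdot y) \equiv 0 \bmod 2$ pour tout $i$. La démonstration du lemme précédent identifie chaque $L \cap \sigma_i(L)$ à $\{y \in L \,\vert\, (x_i \cdot y) \equiv 0 \bmod 2\}$, d'où l'égalité annoncée.

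Pour la seconde assertion, je pose $L' = \sigma(L)$ (qui est dans $\mathcal{L}_n$ car $\sigma$ est une isométrie) et $M = L \cap L'$. L'étape précédente identifie $M$ au noyau de l'homomorphisme $\phi : L \to (\Z/2\Z)^m$, $y \mapsto ((x_i \cdot y) \bmod 2)_i$, plongeant $L/M$ dans $(\Z/2\Z)^m$. Il reste à montrer la surjectivité de $\phi$, équivalente à ce qu'aucune combinaison non triviale $\sum a_i \overline{x_i}$ ne soit dans le radical de la forme bilinéaire induite sur $L/2L$. C'est le c\oe ur de la démonstration et la principale difficulté, car elle repose sur l'arithmétique spécifique des trois réseaux considérés : pour $L = \mathrm{E}_8$, le radical est nul, ce qui conclut immédiatement ; pour $L = \mathrm{E}_7$ (resp. $L = \mathrm{E}_8 \oplus \mathrm{A}_1$), un générateur du radical se relève en un $r \in L \setminus 2L$ avec $r \cdot r = 6$ (resp. $2$), soit $r \cdot r \equiv 2 \bmod 4$ dans les deux cas. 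Or tout $y = \sum a_i x_i$ (avec $a_i \in \{0,1\}$, non tous nuls) satisfait, d'après $(i)$ et $(ii)$, $y \cdot y = 4\sum a_i^2 \equiv 0 \bmod 4$ ; si l'on avait $y \equiv r \bmod 2L$, une écriture $y = r + 2z$ avec $z \in L$ donnerait $y \cdot y \equiv r \cdot r \bmod 4$, contradiction. La surjectivité de $\phi$ en résulte, et $L' = \sigma_m \circ \cdots \circ \sigma_1(L)$ est bien un $(\Z/2\Z)^m$-voisin de $L$.
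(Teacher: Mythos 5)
Votre démonstration est correcte. La première moitié (commutation des $\sigma_i$ grâce à la condition $(ii)$, formule explicite $\sigma(y)=y-\sum_i \frac{(x_i\cdot y)}{2}x_i$, puis l'équivalence $\sigma(y)\in L \Leftrightarrow (\forall i)\ (x_i\cdot y)\equiv 0 \bmod 2 \Leftrightarrow (\forall i)\ \sigma_i(y)\in L$ via la liberté des $\overline{x_i}$) est exactement celle du texte. Pour la seconde assertion, vous prenez une route différente : le texte construit une famille duale $(u_i)$ vérifiant $u_i\cdot x_j\equiv \delta_{ij}\bmod 2$ et écrit $L'=M+\sum_i \Z\,\frac{x_i+2u_i}{2}$, ce qui identifie $L'$ au $2,\dots,2$-voisin associé à la famille $(x_i+2u_i)$ au sens de la proposition-définition \ref{ppp-voisin} ; vous établissez plutôt directement que $L/M\simeq(\Z/2\Z)^m$ en prouvant la surjectivité de $\phi$. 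Les deux arguments reposent sur le même fait sous-jacent, à savoir qu'aucune combinaison non triviale des $\overline{x_i}$ n'appartient au radical de la forme bilinéaire sur $L/2L$ ; le texte le déduit de la ``non-dégénérescence'' du produit scalaire, ce qui n'est littéralement vrai que pour $\mathrm{E}_8$, puisque pour $\mathrm{E}_7$ et $\mathrm{E}_8\oplus\mathrm{A}_1$ ce radical est de dimension $1$ (il s'identifie à $2L^\sharp/2L$). Votre argument par la classe de $y\cdot y$ modulo $4$ (qui vaut $0$ pour toute somme partielle des $x_i$, contre $2$ pour le générateur du radical) traite précisément ces deux cas dégénérés : c'est un complément utile, rendu explicite chez vous et laissé implicite dans le texte. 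La contrepartie est que la construction du texte fournit en prime la famille de relèvements $(v_i)=(x_i+2u_i)$ au sens de la proposition-définition \ref{ppp-voisin}, utile pour les calculs effectifs, ce que votre preuve ne donne pas.
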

\begin{proof}
 On pose pour simplifier : $\sigma =\sigma _m \circ \dots \circ \sigma_1$, $L'=\sigma (L)$ et $M=L\cap L'$. La condition $(ii)$ permet d'écrire :
$$\sigma(y) = y-\displaystyle{\sum _{i\in I}} \dfrac{(x_i \cdot y)\, x_i}{2} .$$
\indent Ensuite, la condition $(iii)$ nous donne le premier résultat cherché. En effet, on aura :
$$\begin{aligned}
	\sigma (y) \in L &\Leftrightarrow \displaystyle{\sum _{i\in I}} \dfrac{(x_i \cdot y)\, x_i}{2} \in L \Leftrightarrow \displaystyle{\sum_{i\in I}} (x_i \cdot y)\, x_i \in 2L\\
	&\Leftrightarrow (\forall i\in I) (x_i \cdot y) \equiv 0 \mathrm{\ mod\ }2 \Leftrightarrow (\forall i\in I) \sigma_i (y) \in L .\\
\end{aligned}
$$
\indent On déduit ainsi que $L\cap L'=L\cap \sigma (L)= L\cap \sigma_m (L) \cap \dots \cap \sigma_1 (L)$, et que $L'$ est bien un $2,\dots ,2$-voisin de $L$.
\\ \indent On peut trouver la famille $(v_i)$ de la proposition \ref{ppp-voisin} qui est associée à $L'$. Pour cela, notons $\overline{x_i}$ l'image de la famille $x_i$ dans $L/2L$. On considère $u_i$ une famille d'éléments de $L$ telle que :
$$
	\begin{array}{rl}
	(i) & (\forall i)\ u_i\in \bigcap_{j\neq i} \overline{x_j}^\perp \\
	(ii) & (\forall i)\ u_i\cdot \overline{x_i} \equiv 1\mathrm{\ mod\ }2\\
	\end{array}
$$
qui existe bien comme le produit scalaire $(\ \cdot \ )$ est non dégénéré et comme la famille des $\overline{x_i}$ est libre. On peut ainsi écrire :
$$
	\begin{aligned}
		L' & = M + \sum_i \Z \, \sigma(u_i) = M+\sum_i \Z \, \sigma_i(u_i)= M+ \sum_i \Z \, \dfrac{x_i+2u_i}{2},\\
	\end{aligned}
$$
donc $L'$ est le $2,\dots ,2$-voisin associé à la famille $(x_i +2u_i)$, qui est bien un relèvement de $(\overline{x_i})$ vérifiant : $(x_i +2u_i)\cdot (x_i+2u_i) \equiv 0\mathrm{\ mod\ }8$ et $(x_i +2u_i)\cdot (x_j+2u_j) \equiv 0\mathrm{\ mod\ }4$. 
\end{proof}
\indent Lorsque $m=2$, on a le résultat plus général suivant dont la démonstration est évidente :
\begin{lem} Soient $x_1$ et $x_2$ deux éléments de $L$ tels que $(x_1\cdot x_1) = (x_2\cdot x_2) =4$. On définit comme précédemment les symétries :
$$\sigma_i (y) = y-\dfrac{(x_i \cdot y)}{2}\, x_i,$$
qui transforment chacune $L$ en un $2$-voisin.
\\ \indent Alors selon la valeur de $(x_1\cdot x_2) \mathrm{\ mod\ }2$, on déduit la nature de $L'=\sigma_1 \circ \sigma_2 (L)$ :
\\ \indent - si $(x_1\cdot x_2)\equiv 0\mathrm{\ mod\ }2$ avec $x_1 =\pm x_2$ : alors $L'=L$.
\\ \indent - si $(x_1\cdot x_2)\equiv 0\mathrm{\ mod\ }2$ avec $x_1 \neq \pm x_2$ : alors $L'$ est un $2,2$-voisin de $L$.
\\ \indent - si $(x_1\cdot x_2)\equiv 1\mathrm{\ mod\ }2$ : alors $L'$ est un $4$-voisin de $L$.
\end{lem}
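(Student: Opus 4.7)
L'idée directrice est de se ramener à une étude dans le plan $V = \Q\cdot x_1 + \Q\cdot x_2$, puisque $\sigma_1 \circ \sigma_2$ agit trivialement sur l'orthogonal de $V$. Un calcul direct donne
$$\sigma_2 \circ \sigma_1(y) = y - \frac{(x_1 \cdot y)}{2}\, x_1 + \left(\frac{c\,(x_1 \cdot y)}{4} - \frac{(x_2 \cdot y)}{2}\right) x_2,$$
où $c := x_1\cdot x_2$. En posant $L_0 = L \cap V$, l'équivalence $y \in L\cap L' \Leftrightarrow \sigma_2\circ \sigma_1(y)\in L$ revient à demander que le déplacement ci-dessus appartienne à $L_0$. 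Cela identifie $L/(L\cap L')$ à l'image de l'application $\Z$-linéaire $\psi : L \to V/L_0$ définie par cette même formule, et il suffira donc d'en déterminer la structure dans chaque cas.

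Le premier cas ($x_1 = \pm x_2$) est immédiat : les réflexions $\sigma_i$ ne dépendent que de la droite $\Q\cdot x_i$, donc $\sigma_1 = \sigma_2$, d'où $\sigma_1 \circ \sigma_2 = \mathrm{id}$ et $L' = L$.

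Pour les deux autres cas, j'analyserais l'image de $\psi$ en examinant les résidus de $a := (x_1 \cdot y)$ et $b := (x_2 \cdot y)$ modulo $2$ et $4$ lorsque $y$ parcourt $L$. Comme $x_i \cdot x_i = 4$ et que $L$ est pair, on a nécessairement $x_i \notin 2L$ (sinon $x_i/2$ serait de norme $1$, ce qui est impossible dans un réseau pair), si bien que $\overline{x_1}$ et $\overline{x_2}$ sont deux vecteurs non nuls de $L/2L$ ; sous l'hypothèse qu'ils sont $\F_2$-libres (la situation générique sous-entendue par $x_1 \neq \pm x_2$), le couple $(a, b) \mathrm{\ mod\ } 2$ prend toutes les valeurs de $\F_2^2$. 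Dans le cas $c$ impair, le coefficient $c/4$ force l'apparition de dénominateurs $4$ dans $\psi(y)$ : un $y$ vérifiant $(a, b) \equiv (1,0) \mathrm{\ mod\ } 2$ produit une classe de $V/L_0$ d'ordre exactement $4$, et l'on conclut que $\mathrm{Im}(\psi)$ est cyclique d'ordre $4$, donc $L'$ est un $4$-voisin de $L$. Dans le cas $c$ pair, les dénominateurs se simplifient en $2$, l'image de $\psi$ est alors engendrée par $\tfrac{1}{2}\, x_1$ et $\tfrac{1}{2}\, x_2$ modulo $L_0$, soit un quotient isomorphe à $(\Z/2\Z)^2$, et $L'$ est un $2,2$-voisin de $L$.

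Enfin, je vérifierais que $L' \in \mathcal{L}_n$, ce qui résulte du fait que $\sigma_1 \circ \sigma_2 \in \mathrm{O}_n(\Q)$ préserve la forme quadratique : le réseau $L'$ reste pair et de même déterminant que $L$. L'obstacle principal sera de gérer proprement la dégénérescence possible $\overline{x_1} = \overline{x_2}$ dans $L/2L$, qui peut survenir même pour $x_1 \neq \pm x_2$ dans $L$ (par exemple $x_1 = 2e_1$ et $x_2 = 2e_2$ dans $\mathrm{E}_8$) : dans un tel cas $\sigma_1 \circ \sigma_2$ tombe dans $\mathrm{O}(L)$ et $L' = L$, de sorte qu'il faudra interpréter la dichotomie ``$x_1 = \pm x_2$ ou non'' de l'énoncé en termes de classes modulo $2L$ plutôt que strictement modulo le signe dans $L$.
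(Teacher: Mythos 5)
Le texte déclare cette démonstration « évidente » et n'en donne aucune ; il n'y a donc pas d'argument de l'auteur auquel confronter le vôtre. Votre stratégie est correcte : le calcul de $\sigma_2\circ\sigma_1$ est exact, et l'identification de $L/(L\cap L')$ à l'image de $\psi : L\rightarrow V/L_0$ est légitime, le noyau de $\psi$ étant bien $L\cap L'$ puisque $\sigma_2\circ\sigma_1=(\sigma_1\circ\sigma_2)^{-1}$ (un mot là-dessus serait bienvenu, vu que le lemme définit $L'$ par $\sigma_1\circ\sigma_2$). Deux points restent à sécuriser. Dans le cas $c$ impair, exhiber un élément d'ordre $4$ ne suffit pas : il faut montrer que toute l'image est contenue dans le groupe cyclique qu'il engendre. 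C'est facile : on peut prendre $y_0=x_2$ (de sorte que $x_1\cdot y_0=c$ est impair, ce qui règle au passage l'existence d'un tel $y_0$) ; pour tout $z$ avec $x_1\cdot z$ pair on vérifie que $\psi(z)\in\{0,\ \tfrac{1}{2}x_2 \bmod L_0\}=\langle 2\psi(y_0)\rangle$, et tout $y$ avec $x_1\cdot y$ impair s'écrit $y_0+z$ avec $x_1 \cdot z$ pair, d'où $\mathrm{Im}\,\psi=\langle\psi(y_0)\rangle\simeq\Z/4\Z$. Dans le cas $c$ pair, la surjectivité de $y\mapsto(x_1\cdot y,\,x_2\cdot y)\bmod 2$ demande un peu plus que la liberté de $\overline{x_1},\overline{x_2}$ dans $L/2L$ : la forme bilinéaire réduite modulo $2$ a pour radical $(2L^\sharp\cap L)/2L$, qui est non trivial lorsque $\mathrm{det}(L)=2$. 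Il faut donc exclure $x_1-x_2\in 2L^\sharp\setminus 2L$, ce qui se fait pour $\mathrm{E}_7$ et $\mathrm{E}_8\oplus\mathrm{A}_1$ en remarquant que $\frac{1}{2}(x_1-x_2)$ serait un vecteur de $L^\sharp\setminus L$ de norme entière $2-c/2$, alors que les normes y sont non entières.

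Surtout, l'« obstacle » que vous signalez en conclusion est bien réel, et c'est l'observation la plus précieuse de votre texte : l'énoncé est faux tel quel. Pour $L=\mathrm{E}_8$, $x_1=2e_1$ et $x_2=2e_2$, on a $x_1\cdot x_1=x_2\cdot x_2=4$, $x_1\cdot x_2=0$ et $x_1\neq\pm x_2$, mais $x_1-x_2=2(e_1-e_2)\in 2\mathrm{E}_8$ : les deux symétries définissent alors le même $2$-voisin (qui ne dépend que de la droite isotrope de $L/2L$), donc $\sigma_1\circ\sigma_2(L)=\sigma_1(\sigma_1(L))=L$, qui n'est pas un $2,2$-voisin ; concrètement $\sigma_1\circ\sigma_2$ est ici le changement de signe des deux premières coordonnées, qui préserve $\mathrm{E}_8$. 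La dichotomie correcte entre les deux premiers cas porte donc sur l'égalité ou non de $\overline{x_1}$ et $\overline{x_2}$ dans $L/2L$, et non sur $x_1=\pm x_2$ dans $L$ ; le troisième cas n'est pas affecté, puisque $c$ impair entraîne $\overline{x_1}\neq\overline{x_2}$.
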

\section{Résultats obtenus pour les calculs de traces d'opérateurs de Hecke.}\label{6}
\subsection{Tables des résultats obtenus.}\label{6.1}
\indent Si l'on se donne $A$ et $n$ un groupe abélien et un entier bien choisis, pour $\lambda$ un poids dominant arbitraire de $\mathrm{SO}_n(\R)$, les résultats précédents nous permettent de calculer : $\mathrm{tr}\left( \mathrm{T}_A \vert \mathcal{M}_{V_\lambda}(\mathrm{SO}_n) \right)$. On a regroupé certains de ces résultats sous forme de tables disponibles à \cite{Meg}. Les théorèmes suivants détaillent les résultats en question :
\begin{theo} \label{résultat SO7} Soient $A$ le groupe $(\Z/2\Z)^i$ ($i\leq 3$), $(\Z/p\Z)$ ($p\leq 53$ premier) ou $\Z/q\Z$ ($q\in \{4,9,25,27\}$), et $\lambda =(a,b,c)$ ($13\geq a\geq b\geq c\geq 0$) un poids dominant de $\mathrm{SO}_7(\R)$. Alors les quantités :
$$\big( 2a+5,2b+3,2c+1,\vert A \vert ^a \mathrm{tr}\left( \mathrm{T}_A \vert \mathcal{M}_{V_\lambda} (\mathrm{SO}_7) \right) \big) $$
sont données par les tables de \cite{Meg}.
\end{theo}
\begin{theo} \label{résultat SO8} Soient $A$ le groupe $(\Z/2\Z)^i$ ($i\leq 4$), $(\Z/p\Z)$ ($p\leq 13$ premier) ou $\Z/4\Z$, et $\lambda =(a,b,c,d)$ ($12\geq a\geq b\geq c\geq d\geq 0$) un poids dominant de $\mathrm{SO}_8(\R)$. Alors les quantités :
$$\big( 2a+6,2b+4,2c+2,2d,\vert A \vert ^a \mathrm{tr}\left( \mathrm{T}_A \vert \mathcal{M}_{V_\lambda} (\mathrm{SO}_8) \right) \big) $$
sont données par les tables de \cite{Meg}.
\end{theo}
\begin{theo} \label{résultat SO9} Soient $A$ le groupe $(\Z/p\Z)$ ($p\leq 7$ premier), et $\lambda =(a,b,c,d)$ ($12\geq a\geq b\geq c\geq d\geq 0$) un poids dominant de $\mathrm{SO}_9(\R)$. Alors les quantités :
$$\big( 2a+7,2b+5,2c+3,2d+1,\vert A \vert ^a \mathrm{tr}\left( \mathrm{T}_A \vert \mathcal{M}_{V_\lambda} (\mathrm{SO}_9) \right) \big) $$
sont données par les tables de \cite{Meg}.
\end{theo}
\indent Notons au passage que dans ces tables, de nombreux poids $\lambda$ ne sont pas représentés : ce sont ceux pour lesquels la dimension de l'espace $\mathcal{M}_{V_\lambda}(\mathrm{SO}_n)$ est nulle.
\subsection{Premières constatations autour de quelques exemples.}\label{6.2}
\indent Partons de l'exemple de $\mathrm{SO}_7$. D'après \cite[Table 12]{CR}, si l'on exclut la représentation triviale $W=\C$ (c'est-à-dire $W=V_\lambda$ pour $\lambda=(0,0,0)$), le ``premier'' $W=V_\lambda$ tel que $\mathcal{M}_W(\mathrm{SO}_7) \neq 0$ est obtenu pour $\lambda=(4,4,4)$, auquel cas on a d'après \cite{CR} : $\mathrm{dim}\mathcal{M}_W(\mathrm{SO}_7) =1$. Sur cet espace, la valeur propre de $p^4 \mathrm{T}_p$ est :
$$(1+p+p^2)\tau (p),$$
où $\tau$ désigne la fonction de Ramanujan. On constate que c'est bien le résultat que l'on a trouvé.
\\ \indent Toujours d'après \cite[Table 12]{CR}, le ``deuxième" $W=V_\lambda$ tel que $\mathcal{M}_W(\mathrm{SO}_7) \neq 0$ est obtenu pour $\lambda=(6,0,0)$, et on a là encore d'après \cite{CR} $\mathrm{dim}\mathcal{M}_W(\mathrm{SO}_7) =1$ dans ce cas. Sur cet espace, la valeur propre de $p^6 \mathrm{T}_p$ est :
$$ \tau_{18}(p)+(1+p+p^2+p^3)p^7 ,$$
où les coefficients $\tau_k (p)$ sont les coefficients en $q^p$ des formes modulaires normalisées pour $\mathrm{SL}_2(\Z)$ de poids $k\leq 22$ (et en particulier $\tau=\tau_{12}$). Là encore, on constate que c'est bien le résultat que l'on a trouvé.
\\ \indent De manière plus générale, donnons-nous $W$ une représentation irréductible de $\mathrm{SO}_7$, et notons $\mathcal{M}_W(\mathrm{SO}_7) = \bigoplus_{i=1}^{r_W} \C F_i$, où les $F_i$ sont des éléments de $\mathcal{M}_W(\mathrm{SO}_7)$ propres pour tous les opérateurs de Hecke $\mathrm{T}_A$ (ce qui a bien un sens car les opérateurs de Hecke sont tous co-diagonalisables d'après \cite[ch. IV, \S 4]{CL}).
\\ \indent Certaines de ces formes $F_i$ sont en fait associées à des formes automorphes pour des $\Z$-groupes ``plus petits'' que $\mathrm{SO}_7$, au sens de la théorie de l'endoscopie (nous reviendrons plus en détail sur ce point dans le chapitre suivant). C'est par exemple le cas des deux exemples précédents, où les valeurs propres cherchées se déduisent des valeurs propres de formes modulaires pour $\mathrm{SL}_2(\Z)$.
\\ \indent Dans d'autres cas, ce sont des valeurs propres de formes modulaires de Siegel pour $\mathrm{Sp}_4(\Z)$ qui interviennent. Contrairement aux cas des formes modulaires pour $\mathrm{SL}_2(\Z)$, leurs valeurs propres sont déjà difficiles à calculer, et ont fait l'objet de nombreux travaux récents (Skoruppa, Faber-van der Geer, Chenevier-Lannes).
\\ \indent Par exemple, d'après un résultat de Tsushima, si $(j,k)\in \{(6,8),(8,8),(4,10),(12,6)\}$, alors la dimension de l'espace des formes modulaires de Siegel de poids $\mathrm{Sym}^j \C^2\otimes \mathrm{det}^k$ est $1$. Suivant \cite[ch. IX]{CL}, notons $\tau_{j,k}(p)$ la valeur propre de l'opérateur $p^{\frac{j+2k-6}{2}} \mathrm{K}_p$ sur cet espace. Ces valeurs propres ont élé calculées par Faber - van der Geer pour $p\leq 37$, et par une méthode différente dans \cite{CL} pour $p\leq 113$.
\\ \indent D'autre part, d'après \cite[Table 12]{CR}, pour $\lambda = (j+2k-3,15,j+1)=(a,b,c)$ et $W=V_\lambda$, on a $\mathrm{dim}\mathcal{M}_W(\mathrm{SO}_7) =1$, et la valeur propre de $p^{a}\mathrm{T}_p$ sur cet espace est :
$$\tau_{j,k}(p) + p^{\frac{j+2k-18}{2}} \tau_{16}(p),$$
où $\tau_{16}$ a été introduit précédemment.
\\ \indent En particulier, nos calculs permettent de retrouver les valeurs de $\tau_{j,k}(p)$ pour $p\leq 53$ pour une méthode différente.
\\ \indent Enfin, l'intérêt principal de nos calcul est qu'ils permettent de calculer la trace de $\mathrm{T}_p$ sur l'espace $\mathcal{M}_W^{\mathrm{ne}}(\mathrm{SO}_7) \subset \mathcal{M}_W(\mathrm{SO}_7)$ engendré par les formes non-endoscopiques. Le ``premier" $\lambda$ pour lequel $\mathcal{M}_{V_\lambda}^{\mathrm{ne}}(\mathrm{SO}_7) \neq 0$ est $\lambda=(9,5,2)$. On a alors $\mathrm{dim}(\mathcal{M}_{V_\lambda}^{\mathrm{ne}}(\mathrm{SO}_7))=1$. Si on note $\lambda_p$ la valeur propre de $p^9\mathrm{T}_p$ sur $\mathcal{M}_{V_\lambda}^{\mathrm{ne}}(\mathrm{SO}_7)$, on a les résultats suivants :
\begin{center}\begin{tabular}{c||c|c|c|c|c|c}
$p$ & $2$ & $3$ & $5$ & $7$ & $11$ & $13$ \\
\hline
$\lambda_p$ & $0$ & $-304668$ & $874314$ & $452588136$ & $-1090903017204$ & $1624277793138$ \\ 
\end{tabular}
\end{center}

\indent Les autres valeurs propres sont données dans les tables \ref{tableau1SO7} à \ref{tableau5SO7}.
\\ \indent Ces calculs suggèrent que la représentation de Galois de dimension $6$ mise en évidence par Bergström, Faber et van der Geer est associée à cette forme automorphe pour $\mathrm{SO}_7$, ce qui répond à une question de ces auteurs (ce qui était une des motivations principales de ce travail).
\\ \indent Dans le chapitre suivant, nous rappelons plus en détail suivant \cite{CL} et \cite{CR} la contribution des formes endoscopiques dans $\mathcal{M}_W(\mathrm{SO}_n)$.
\\ \indent Au final, nous ne donnerons des tables (équivalentes aux tables des théorèmes \ref{résultat SO7}, \ref{résultat SO8} et \ref{résultat SO9}) que pour les contributions non-endoscopiques.
\\ \indent Suivant Arthur, ces contributions s'expriment mieux en terme de certaines représentations automorphes pour les groupes linéaires $\mathrm{GL}_m$. Nos calculs permettent de donner des informations sur les ``paramètres de Satake" de ces représentations automorphes.
\section{La paramétrisation de Langlands-Satake}\label{7}
\subsection{Les formules de Gross et la paramétrisation de Satake}\label{7.1}
\subsubsection{La formule de Gross dans le cas général}
\indent Nous renvoyons à Borel \cite{Bor77} et à Springer \cite{Spr79} pour les notions de groupe dual et de données radicielles des groupes réductifs. Nous renvoyons aussi aux études de l'isomorphisme de Satake faites par Gross \cite{Gr} et par Satake \cite{Sat63}. Enfin, on suivra les notations utilisées dans \cite[ch. 3]{CR} et \cite[ch. VI]{CL}.
\\ \indent On considère $G$ un $\Z_p$-groupe semi-simple, et $\widehat{G}$ son dual de Langlands (qui est un groupe semi-simple sur $\C$). On note $\Psi (G) = (X,\Phi,\Delta,X^\vee,\Phi^\vee,\Delta ^\vee)$ sa donnée radicielle basée, $\Psi (\widehat{G}) = (X^\vee,\Phi^\vee,\Delta^\vee,X,\Phi,\Delta)$ la donnée duale, et on note $X_+\subset X^\vee$ l'ensemble des poids dominants de $G$. On rappelle aussi que, si $T$ désigne un tore maximal de $G$, et $B$ un sous-groupe de Borel contenant $T$, on leur associe la donnée radicielle basée : $\Psi (G,T,B)=(X^*(T),\Phi(G,T),\Delta(G,T,B),X_*(T),\Phi^\vee (G,T),\Delta^\vee (G,T,B))$, où les groupes $X^*(T)=\mathrm{Hom}(T,\mathbb{G}_m)$ et  $X_*(T)=\mathrm{Hom}(\mathbb{G}_m,T)$ sont repectivement les groupes des caractères et des co-caractères de $T$, $\Phi(G,T)$ (respectivement $\Phi^\vee (G,T)$) désigne l'ensemble des racines (respectivement des coracines) de $G$ relativement à $T$, $\Delta (G,T,B)$ est la base de $\Phi(G,T)$ associée au système positif de $\Phi(G,T)$ intervenant dans $\mathrm{Lie}(B)$, et $\Delta^\vee (G,T,B)$ est la base duale associée.
\\ \indent On rappelle que l'isomorphisme de Satake, introduit dans \cite{Sat63} et revisité par Langlands dans \cite[\S 2]{Lan}, est un isomorphisme d'anneaux canonique :
$$ \mathrm{Sat}:\mathrm{H}_p (G)\otimes \Z [p^{-\frac{1}{2}}] \overset{\sim}{\rightarrow} \mathrm{Rep}(\widehat{G}) \otimes \Z [p^{-\frac{1}{2}}],$$
où $\mathrm{Rep}(\widehat{G})$ désigne l'anneau de Grothendieck des représentations polynomiales de dimension finie de $\widehat{G}$.
\\ \indent Si l'on désigne par $\widehat{G}(\C)_{\mathrm{ss}}$ l'ensemble des classes de conjugaison d'éléments semi-simples de $\widehat{G}(C)$, alors pour $c\in \widehat{G}(\C)_{\mathrm{ss}}$ on possède une application $V\mapsto \mathrm{trace}(c\vert V)$ qui associe à une $\C$-représentation de dimension finie $V$ de $\widehat{G}$ la trace de $c$ dans $V$. Cette application s'étend en un homomorphisme d'anneaux $\mathrm{tr}(c) : \mathrm{Rep}(\widehat{G}) \rightarrow \C$. D'après un résultat de Chevalley, l'application $\mathrm{tr}:\widehat{G}(\C)_{\mathrm{ss}} \rightarrow \mathrm{Hom}_{\mathrm{anneaux}}(\mathrm{Rep}(\widehat{G}),\C)$ est une bijection. On en déduit la proposition suivante :
\begin{prop} \label{tr o Sat} L'application $c\mapsto \mathrm{tr}(c) \circ \mathrm{Sat}$ définit une bijection :
$$\widehat{G}(\C)_{\mathrm{ss}} \overset{\sim}{\rightarrow} \mathrm{Hom}_{\mathrm{anneaux}} (\mathrm{H}_p (G),\C).$$
\end{prop}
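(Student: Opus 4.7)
Le plan consiste simplement à combiner les deux ingrédients rappelés juste avant l'énoncé : l'isomorphisme de Satake $\mathrm{Sat} : \mathrm{H}_p(G) \otimes \Z[p^{-1/2}] \overset{\sim}{\rightarrow} \mathrm{Rep}(\widehat{G}) \otimes \Z[p^{-1/2}]$ et la bijection de Chevalley $\mathrm{tr} : \widehat{G}(\C)_{\mathrm{ss}} \overset{\sim}{\rightarrow} \mathrm{Hom}_{\mathrm{anneaux}}(\mathrm{Rep}(\widehat{G}), \C)$.

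D'abord, je fixerais un plongement $\iota : \Z[p^{-1/2}] \hookrightarrow \C$ (c'est-à-dire une racine carrée de $p$ dans $\C$). Pour tout anneau $R$, ce plongement induit par extension des scalaires une bijection naturelle
\[
\mathrm{Hom}_{\mathrm{anneaux}}(R, \C) \overset{\sim}{\rightarrow} \mathrm{Hom}_{\mathrm{anneaux}}\bigl(R \otimes_\Z \Z[p^{-1/2}], \C\bigr),
\]
où l'on associe à $\phi : R \to \C$ l'unique morphisme prolongeant $\phi$ et coïncidant avec $\iota$ sur $\Z[p^{-1/2}]$. Appliquée à $R = \mathrm{H}_p(G)$ et à $R = \mathrm{Rep}(\widehat{G})$, combinée à la fonctorialité de $\mathrm{Hom}_{\mathrm{anneaux}}(-,\C)$ et à l'isomorphisme de Satake, elle fournit une bijection
\[
\mathrm{Hom}_{\mathrm{anneaux}}(\mathrm{Rep}(\widehat{G}), \C) \overset{\sim}{\rightarrow} \mathrm{Hom}_{\mathrm{anneaux}}(\mathrm{H}_p(G), \C), \quad \phi \mapsto \phi \circ \mathrm{Sat}.
\]

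Ensuite, je composerais cette bijection avec la bijection de Chevalley $c \mapsto \mathrm{tr}(c)$. Comme pour $c \in \widehat{G}(\C)_{\mathrm{ss}}$ l'homomorphisme $\mathrm{tr}(c) : \mathrm{Rep}(\widehat{G}) \to \C$ s'étend canoniquement à $\mathrm{Rep}(\widehat{G}) \otimes \Z[p^{-1/2}]$ via $\iota$, la composée finale est bien l'application $c \mapsto \mathrm{tr}(c) \circ \mathrm{Sat}$ annoncée.

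Il n'y a pas de véritable obstacle : la démonstration se ramène à une vérification purement formelle de la compatibilité entre extension des scalaires et fonctorialité de $\mathrm{Hom}_{\mathrm{anneaux}}(-,\C)$, les résultats non triviaux (Satake et Chevalley) étant simplement invoqués. L'unique point de vigilance est le choix du plongement $\iota$ : la bijection obtenue en dépend a priori, mais ce choix est implicite dès que l'on interprète $\mathrm{tr}(c) \circ \mathrm{Sat}$ comme un élément de $\mathrm{Hom}_{\mathrm{anneaux}}(\mathrm{H}_p(G), \C)$.
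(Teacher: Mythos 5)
Votre démonstration est correcte et suit exactement la voie que l'article laisse implicite : la proposition y est énoncée comme conséquence immédiate de l'isomorphisme de Satake et de la bijection de Chevalley rappelés juste avant, sans preuve détaillée. Votre vérification formelle (extension des scalaires via un plongement $\iota$ de $\Z[p^{-1/2}]$ dans $\C$, puis composition des deux bijections) est précisément ce que l'auteur sous-entend par \og on en déduit \fg, et votre remarque sur la dépendance au choix de $\iota$ est pertinente.
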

\indent La théorie de Cartan-Weyl pour les représentations de plus haut poids nous donne une première $\Z$-base naturelle de $\mathrm{Rep}(\widehat{G})$ indexée par $X_+$, à savoir les représentations irréductibles de la forme $V_\lambda$.
\\ \indent Pour $\lambda\in X^+$, on note $\left[ V_\lambda \right]$ la classe de $V_\lambda$ dans $\mathrm{Rep}(\widehat{G})$. De plus, si l'on se donne $\widehat{T}$ un tore maximal de $\widehat{G}$, et $\widehat{B}$ un sous-groupe de Borel de $\widehat{G}$ contenant $\widehat{T}$ (de sorte que $\Psi(\widehat{G})$ s'identifie à $\Psi(\widehat{G},\widehat{T},\widehat{B})$), et si $\mu\in X$, on désigne par $V_\lambda
(\mu)\subset V_\lambda$ le sous-espace propre de $V_\lambda$ pour $\mu$ sous l'action de $\widehat{T}$.
\\ \indent Comme $G$ est réductif sur $\Z_p$, il existe une décomposition de Cartan de la forme :
$$G(\Q_p) = \coprod_{\lambda \in X_+} G(\Z_p) \lambda (p) G(\Z_p).$$
\indent Pour $\lambda \in X$, on note $c_\lambda \in \mathrm{H}_p(G)$ la fonction caractéristique de la double classe $G(\Z_p) \lambda (p) G(\Z_p)$. Les $c_\lambda$ ainsi définis forment une $\Z_p$-base de $\mathrm{H}_p(G)$ pour $\lambda$ décrivant $X^+$, avec des relations de la forme :
$$ (\forall \lambda,\mu \in X_+) (\exists (n_{\lambda,\mu,\nu}) \in \Z) \ c_\lambda \cdot c_\mu = c_{\lambda+\mu} + \sum_{\nu< \lambda+\mu} n_{\lambda,\mu,\nu} c_\nu .$$
\indent Le lien entre ces deux $\Z_p$-bases de $\mathrm{H}_p(G)$ et $\mathrm{Rep}(\widehat{G})$ est donné grâce à l'isomorphisme de Satake comme suit :
\begin{prop}[La formule de Gross] Soit $G$ un $\Z_p$-groupe semi simple déployé et $X_+$ l'ensemble ordonné des poids dominants de $\widehat{G}$. Soit $\lambda \in X_+$. On note $\rho$ la demi somme des racines positives de $G$. Alors on dispose d'une identité de la forme :
$$p^{\left< \rho ,\lambda \right> } \left[ V_\lambda \right] = \mathrm{Sat}(c_\lambda) + \sum_{\{\mu \in X_+, \mu<\lambda \} } d_\lambda(\mu) \mathrm{Sat}(c_\mu)$$
pour certains entiers $d_\lambda (\mu)$ dépendant de $p$, qu'on détaille plus loin.
\\ \indent En particulier, on a les résultats suivants :
\\ \begin{tabular}{rp{14cm}}
$(i)$ &  Si $\lambda$ est un élément minimal, alors : $p^{\left< \rho,\lambda \right> } \left[ V_\lambda \right] = \mathrm{Sat}(c_\lambda)$.\\
$(ii)$ & Si $\mu \in X_+$ tel que $\mathrm{dim}\left( V_\lambda (\mu)\right) =1$, alors $d_\lambda(\mu) =1$\\
$(iii)$ & Si $V_\lambda=\mathrm{Lie}(G)$ est la représentation adjointe, alors : $d_\lambda (0) = \sum_i p^{m_i-1}$, où les $m_i$ sont les exposants de $G$.\\
\end{tabular}
\end{prop}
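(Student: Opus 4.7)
Le plan consiste à suivre directement la démarche de Gross \cite{Gr}, s'appuyant sur la description explicite de l'isomorphisme de Satake et la comparaison entre deux $\Z[p^{-1/2}]$-bases naturelles.

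Premièrement, je fixerais un tore maximal déployé $T\subset G$ et un sous-groupe de Borel $B\supset T$. L'isomorphisme de Satake se décrit concrètement en passant par le tore $T$ : la formule classique de Satake-Langlands (\cite{Sat63}, \cite[\S 2]{Lan}) donne, dans l'anneau $\mathrm{Rep}(\widehat{T}) \otimes \Z[p^{-1/2}]$,
$$ p^{-\langle \rho,\lambda\rangle}\mathrm{Sat}(c_\lambda) \;=\; [\lambda] + \sum_{\mu<\lambda}\, a_\lambda(\mu)\, [\mu],$$
la somme étant triangulaire pour l'ordre de dominance et indexée par les caractères $\mu\in X$ (pas nécessairement dominants). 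D'autre part, la formule des caractères de Weyl donne
$$ [V_\lambda] \;=\; [\lambda] + \sum_{\mu<\lambda} \dim V_\lambda(\mu)\, [\mu].$$

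Deuxièmement, je comparerais ces deux écritures dans $\mathrm{Rep}(\widehat{T}) \otimes \Z[p^{-1/2}]$. La différence $p^{\langle \rho,\lambda\rangle}[V_\lambda]-\mathrm{Sat}(c_\lambda)$ s'exprime comme combinaison linéaire des $[\mu]$, $\mu<\lambda$, à coefficients dans $\Z[p^{-1/2}]$. En projetant sur l'orbite de Weyl (pour ne retenir que les $\mu\in X_+$) et en raisonnant par récurrence descendante sur l'ordre de dominance, on peut inverser la matrice triangulaire unipotente de passage entre les familles $\{p^{\langle \rho,\mu\rangle}[V_\mu]\}_{\mu\in X_+}$ et $\{\mathrm{Sat}(c_\mu)\}_{\mu\in X_+}$. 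Ceci établit l'existence de l'identité annoncée avec certains $d_\lambda(\mu)\in \Z[p^{-1/2}]$ ; l'intégralité dans $\Z$ découle alors du fait que $\mathrm{Rep}(\widehat{G})$ et $\mathrm{H}_p(G)$ admettent des structures entières compatibles par l'isomorphisme de Satake, pour lesquelles les $[V_\mu]$ et $c_\mu$ forment respectivement des bases sur $\Z$.

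Pour les cas particuliers : $(i)$ résulte immédiatement de ce que, si $\lambda$ est minimal dans $X_+$, il n'existe aucun $\mu\in X_+$ avec $\mu<\lambda$, donc la somme est vide. $(ii)$ se déduit de la récurrence descendante : si $\dim V_\lambda(\mu)=1$, alors la contribution du caractère $[\mu]$ à $[V_\lambda]$ est exactement celle fournie par $\mathrm{Sat}(c_\mu)$ après normalisation, ce qui force $d_\lambda(\mu)=1$. Pour $(iii)$, on utilise que $\mathrm{Lie}(G)(0)=\mathrm{Lie}(T)$ a pour dimension le rang $r$ de $G$ (égal au nombre d'exposants), puis on invoque l'identité classique de Macdonald-Kostant exprimant la fonction génératrice attachée à la représentation adjointe en termes des exposants $m_i$ ; un calcul direct du coefficient de $[0]$ fournit alors $d_\lambda(0)=\sum_i p^{m_i-1}$. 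L'obstacle principal est double : d'une part, prouver rigoureusement l'intégralité des $d_\lambda(\mu)$ sur $\Z$ plutôt que sur $\Z[p^{-1/2}]$ demande un argument structurel précis sur les bases entières reliées par l'isomorphisme de Satake ; d'autre part, la vérification explicite de $(iii)$ nécessite l'identité combinatoire sur les exposants du groupe de Weyl, qui, bien que classique, requiert l'invocation directe des calculs de \cite{Gr}.
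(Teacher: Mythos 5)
Votre démonstration est correcte dans ses grandes lignes, mais elle prend une route différente de celle du texte : l'article ne démontre pas cette proposition, il renvoie simplement à Gross \cite{Gr} pour le cas adjoint et à \cite[ch. VI, lemme 2.7]{CL} pour le cas semi-simple général. Votre reconstruction est en substance l'argument de Gross : triangularité unipotente, pour l'ordre de dominance, de $p^{-\left< \rho,\lambda\right>}\mathrm{Sat}(c_\lambda)$ et de $[V_\lambda]$ par rapport aux sommes d'orbites de Weyl $m_\mu$ ($\mu\in X_+$, $\mu\leq\lambda$), puis inversion de la matrice de passage. Deux réserves cependant. D'abord, cette comparaison ne donne a priori que $d_\lambda(\mu)\in\Z[p^{\pm 1/2}]$ ; l'intégralité (et en fait la positivité) des $d_\lambda(\mu)$ ne résulte pas d'un simple argument de ``structures entières compatibles'' --- l'isomorphisme de Satake n'est défini que sur $\Z[p^{-1/2}]$ --- mais de la formule de Lusztig--Kato, qui identifie $d_\lambda(\mu)$ au $q$-analogue de Lusztig de la multiplicité du poids $\mu$ dans $V_\lambda$ évalué en $q=p$ : c'est exactement le contenu de la proposition \ref{Gross} qui suit dans l'article. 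Ensuite, le point $(ii)$ ne ``se déduit pas de la récurrence descendante'' aussi directement que vous l'écrivez : il utilise que ce $q$-analogue est un polynôme à coefficients entiers positifs dont la somme des coefficients vaut $\dim V_\lambda(\mu)$, joint à la normalisation précise du degré ; de même $(iii)$ repose sur l'identité de Kostant--Macdonald pour la représentation adjointe. Vous identifiez honnêtement ces deux obstacles, et ce sont précisément les étapes que l'article délègue à \cite{Gr}, \cite{Kat} et \cite{CL} ; votre plan constitue donc un résumé fidèle de la preuve, là où le texte se contente d'une citation.
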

\begin{proof}
le cas où $G$ est adjoint est traité par Gross dans \cite{Gr}. Le cas où $G$ est semi-simple est traité dans \cite[ch. VI, lemme 2.7]{CL}. 
\end{proof}
\indent Les coefficients $d_\lambda (\mu)$ qui interviennent dans la proposition précédente ont été calculés par Lusztig et Kato. On a le résultat suivant :
\begin{prop} \label{Gross} On reprend les mêmes notations qu'à la proposition précédente. On note $\rho$ la demi somme des racines positives de $G$, et $\rho^\vee$ celles des coracines positives de $G$. Pour $\mu\in X_+$, on définit le polynôme $\widehat{P}$ comme :
$$\widehat{P}(\mu) =\sum _{\substack{ \mu=\sum n(\alpha^\vee) \alpha^\vee \\ \alpha \in \Phi^+}} p^{-\sum n(\alpha ^\vee)},$$
qui est un polynôme en $p^{-1}$ qui considère le nombre d'expressions de $\mu$ comme somme à coefficients positifs de coracines de $G$. Si $\mu$ ne peut pas s'écrire sous cette forme (ce qui est le cas si $\mu \in -\Phi^{\vee +}\setminus \{0\}$ par exemple), alors $\widehat{P}(\mu)=0$. Comme on considère la somme vide dans les possibilités, alors $\widehat{P}(0) =1$.
\\ \indent Les coefficients $d_\lambda (\mu)$ sont alors donnés par la formule suiante :
$$d_\lambda (\mu) = p^{\left< \rho ,\lambda -\mu  \right>} \sum_{\sigma \in W} \varepsilon (\sigma) \cdot \widehat{P}\left( \sigma (\lambda +\rho^\vee ) - (\mu + \rho^\vee) \right) ,$$
où $\varepsilon (\sigma)$ est la signature sur le groupe de Weyl $W$ de $\widehat{G}$.
\end{prop}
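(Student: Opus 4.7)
La formule étant essentiellement celle de Kato (\emph{Inventiones} 1982) combinée avec la formule de Gross, le plan naturel est d'inverser la relation triangulaire de la proposition précédente puis d'identifier les coefficients à l'aide de la formule explicite de Macdonald pour les fonctions sphériques. Plus précisément, la relation de Gross fournit une matrice de passage unitriangulaire (pour l'ordre dominant sur $X_+$) entre les deux $\Z[p^{-1/2}]$-bases $(\mathrm{Sat}(c_\mu))_{\mu\in X_+}$ et $(p^{\langle \rho,\mu\rangle}[V_\mu])_{\mu \in X_+}$ de $\mathrm{Rep}(\widehat{G})\otimes \Z[p^{-1/2}]$. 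Les $d_\lambda(\mu)$ étant les coefficients de cette matrice (à transposition/inversion près), il suffit de calculer explicitement $\mathrm{Sat}(c_\lambda)$ dans la base des caractères $[V_\mu]$, puis d'inverser.

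La première étape, purement locale, consiste à invoquer la formule de Macdonald pour la fonction sphérique attachée à $\lambda$, qui donne, évaluée sur un élément $t\in \widehat{T}(\C)$,
\[
\mathrm{Sat}(c_\lambda)(t) \;=\; \frac{p^{-\langle\rho,\lambda\rangle}}{W_\lambda(p^{-1})}\sum_{w\in W}w\!\left(\frac{t^\lambda \prod_{\alpha\in \Phi^+}\bigl(1 - p^{-1} t^{-\alpha^\vee}\bigr)}{\prod_{\alpha\in \Phi^+}\bigl(1-t^{-\alpha^\vee}\bigr)}\right).
\]
Ensuite, j'appliquerais la formule du dénominateur de Weyl au facteur $\prod(1-t^{-\alpha^\vee})$ au dénominateur pour la réabsorber dans la somme alternée sur $W$, et la définition même de $\widehat{P}$ fournit le développement
\[
\prod_{\alpha\in \Phi^+}\bigl(1 - p^{-1}t^{-\alpha^\vee}\bigr)\cdot\prod_{\alpha\in \Phi^+}\bigl(1-t^{-\alpha^\vee}\bigr)^{-1} \;=\; \sum_{\nu} \widehat{P}(\nu)\, t^{-\nu}.
\]
La troisième étape est la reconnaissance combinatoire : en recombinant les deux sommes alternées et en utilisant l'antisymétrie des $\sigma \in W$ pour regrouper les poids $w(\lambda+\rho^\vee)-\rho^\vee$, on obtient la $q$-multiplicité de Kostant $\sum_{\sigma\in W}\varepsilon(\sigma)\widehat{P}(\sigma(\lambda+\rho^\vee)-(\mu+\rho^\vee))$ comme coefficient de $[V_\mu]$ dans $\mathrm{Sat}(c_\lambda)$. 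En combinant avec le facteur $p^{-\langle \rho,\lambda\rangle}$ et en comparant à la relation triangulaire de la proposition précédente, on lit l'expression annoncée pour $d_\lambda(\mu)$.

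L'obstacle principal est l'étape d'identification combinatoire : il faut montrer rigoureusement que la somme sur $W$ issue de la formule de Macdonald, après simplification par le dénominateur de Weyl, se réorganise exactement en la somme alternée $\sum_\sigma \varepsilon(\sigma)\widehat{P}(\sigma(\lambda+\rho^\vee)-(\mu+\rho^\vee))$. C'est le contenu de la formule de $q$-multiplicité de Kato-Lusztig, dont la preuve repose sur une analyse fine du système de racines ; on pourra donc se contenter de renvoyer à \cite{Gr} pour le cas adjoint et à \cite[ch. VI]{CL} pour l'extension semi-simple, la comparaison finale avec la relation triangulaire de Gross étant alors purement formelle.
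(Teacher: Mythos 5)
La proposition est mot pour mot le théorème de Kato--Lusztig, et la démonstration du texte se réduit au renvoi à Kato \cite{Kat} ; comme vous renvoyez vous aussi in fine à la littérature pour l'étape combinatoire décisive, votre approche est en substance la même que celle du papier. En revanche, l'esquisse que vous proposez en amont contient deux erreurs concrètes qui la rendraient inutilisable si l'on voulait la développer. D'abord, l'identité de séries génératrices que vous attribuez à ``la définition même de $\widehat{P}$'' est fausse : par définition on a
$$\sum_{\nu}\widehat{P}(\nu)\,t^{-\nu}\;=\;\prod_{\alpha\in\Phi^{+}}\bigl(1-p^{-1}\,t^{-\alpha^{\vee}}\bigr)^{-1},$$
et non le quotient $\prod_{\alpha}\bigl(1-p^{-1}t^{-\alpha^{\vee}}\bigr)\cdot\prod_{\alpha}\bigl(1-t^{-\alpha^{\vee}}\bigr)^{-1}$ qui figure dans la formule de Macdonald ; relier ce quotient à $\widehat{P}$ exige de faire intervenir en plus la formule des caractères de Weyl, et c'est précisément là que se concentre toute la difficulté (par ailleurs votre formule de Macdonald omet le facteur de volume $\mathrm{vol}(K\lambda(p)K)$ reliant $\mathrm{Sat}(c_\lambda)$ à la fonction sphérique).

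Ensuite, et c'est plus grave, vous identifiez la somme alternée $\sum_{\sigma}\varepsilon(\sigma)\widehat{P}\bigl(\sigma(\lambda+\rho^{\vee})-(\mu+\rho^{\vee})\bigr)$ au coefficient de $[V_\mu]$ dans $\mathrm{Sat}(c_\lambda)$, puis prétendez ``lire'' $d_\lambda(\mu)$ par comparaison avec la relation triangulaire. Or $d_\lambda(\mu)$ est le coefficient de $\mathrm{Sat}(c_\mu)$ dans $p^{\langle\rho,\lambda\rangle}[V_\lambda]$, c'est-à-dire une entrée de la matrice inverse (à normalisation diagonale près) de celle que vous dites calculer : les entrées de ces deux matrices unitriangulaires ne coïncident pas, et passer de l'une à l'autre exigerait une inversion explicite que votre argument escamote. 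Le cas $G=\mathrm{PGL}_2$, $\lambda=\alpha^{\vee}$, $\mu=0$ le montre déjà : on a $\mathrm{Sat}(c_{\alpha^{\vee}})=p\,[V_{\alpha^{\vee}}]-1$, donc le coefficient de $[V_0]$ dans $\mathrm{Sat}(c_{\alpha^{\vee}})$ vaut $-1$, tandis que la formule de l'énoncé donne (le terme en $\sigma\neq 1$ étant nul) $d_{\alpha^{\vee}}(0)=p^{\langle\rho,\alpha^{\vee}\rangle}\,\widehat{P}(\alpha^{\vee})=p\cdot p^{-1}=+1$, en accord avec le point $(iii)$ de la proposition précédente. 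Le théorème de Kato porte bien sur la direction ``$[V_\lambda]$ en fonction des $\mathrm{Sat}(c_\mu)$'' --- c'est l'identité $s_\lambda=\sum_\mu K_{\lambda\mu}(q)\,P_\mu$ entre caractères et polynômes de Hall--Littlewood --- et la référence à citer pour l'identification de ces polynômes de Kostka--Foulkes aux $q$-analogues de Lusztig est \cite{Kat}, plutôt que \cite{Gr} ou \cite{CL} qui renvoient eux-mêmes à Kato pour ce point.
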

\begin{proof}
voir Kato dans \cite{Kat}, qui reprend des résultats de Lusztig \cite{Lus}, Kostant \cite{FH} et Brylinski \cite{Bry}.
\end{proof}
\subsubsection{Le groupe spécial orthogonal de dimension paire}
\indent Soit $r\geq 2$ un entier, $U=\left( \Z_p \right)^r$, et $V=\mathrm{H}(U)$ le module hyperbolique sur $U$. On pose $G=\mathrm{SO}_V$ le sous-groupes des automorphismes de $V$ préservant la forme quadratique définie sur $V$ et de déterminant $1$. On désigne par $(e_i)_{1\leq i\leq r}$ une $\Z_p$-base de $U$, et $(e_i^*)$ sa base duale. Alors on peut définir une donnée radicielle basée de $G$ en se donnant :
\\ \indent - pour le tore $T$ : le sous-groupe de $G$ des éléments préservant les droites engendrées par les $e_i$ et celles engendrées par les $e_j^*$.
\\ \indent - pour le groupe de Borel $B$ : le sous-$\Z_p$-groupe de $G$ des éléments préservant le drapeau complet de $U$ associé à $\{e_1\},\{e_1,e_2\},\dots,\{e_1,\dots,e_r\}$.
\\ \indent On pose $\varepsilon_i\in X^*(T)$ le caractère de $T$ agissant sur la droite engendrée par $e_i$ (ce qui veut dire que $T$ agit sur la droite engendrée par $e_j^*$ par le caractère $-\varepsilon_j$). On pose enfin $(\varepsilon^*_j) \in X_* (T)$ la famille duale de $(\varepsilon_i) \in X^* (T)$. Alors :
\\ \indent - les ensembles $X^*(T)$ et $X_*(T)$ s'identifient respectivement à $\bigoplus_{1\leq i\leq r} \varepsilon_i \Z$ et à $\bigoplus_{1\leq i\leq r} \varepsilon_i^* \Z$ (et en particulier sont isomorphes).
\\ \indent - l'ensemble $\Phi(G,T)$ est constitué des $\pm \varepsilon_i \pm \varepsilon_j$, avec la dualité : $(\pm \varepsilon_i \pm \varepsilon_j)^\vee = \pm \varepsilon_i^* \pm \varepsilon_j^*$ (pour $1\leq i < j \leq r$).
\\ \indent - l'ensemble $\Delta (G,T,B)$ est constitué des $\varepsilon_i - \varepsilon_{i+1}$ (pour $1\leq i\leq r-1$) et de $\varepsilon_{r-1} + \varepsilon_r$.
\\ \indent - l'ensemble $\Phi^+$ est constitué des $\varepsilon_i \pm \varepsilon_j$ (pour $1\leq i < j \leq r$).
\\ \indent - les éléments $\rho$ et $\rho^\vee$ sont donnés par : $\rho=(r-1) \varepsilon_1 +(r-2)\varepsilon_2+\dots +\varepsilon_{r-1}$ et $\rho^\vee = (r-1) \varepsilon_1^* +(r-2)\varepsilon_2^*+\dots +\varepsilon_{r-1}^*$.
\\ \indent - le dual de Langlands de $G$ est donné par : $\widehat{G} = \mathrm{SO}_{2r}(\C)$.
\\ \indent - le groupe de Weyl $W$ de $\widehat{G}$ s'identifie à : $\mathcal{S}_r \ltimes \left( \{ \pm 1 \}^r\right) ^0$.
\\ \indent En particulier, l'ensemble des copoids dominants de $G$ est donné par :
$$X_+ = \left\{ \sum_i m_i  \varepsilon_i^* \bigg\vert m_1\geq m_2\geq \dots \geq m_{r-1} \geq \vert m_r \vert \right\} .$$
\indent Pour $1\leq i\leq r$, on définit les éléments $\lambda_i \in X_+$ par : $ \lambda_i = \varepsilon_1^*+\dots +\varepsilon_i^*$.
\\ \indent Enfin, on pose $\tau$ l'automorphisme de $\Psi (G)$ qui fixe les $\varepsilon_i$ pour $1\leq i\leq r-1$, et qui envoie $\varepsilon_r$ sur $-\varepsilon_r$ (qui est une involution sur $X_+$).
\\ \indent Les opérateurs de Hecke de $\mathrm{H}(G)$ sont donnés par la proposition suivante :
\begin{prop} Soient $\lambda =\sum_i m_i \varepsilon_i^* \in X_+$ et $A_\lambda = \prod_{i=1}^r \left( \Z/p^{\vert m_i \vert } \Z \right)$.
\\ \indent Alors l'opérateur $\mathrm{T}_{A_\lambda}\in \mathrm{H}(G)$ est donné par :
$$\mathrm{T}_{A_\lambda} = \sum_{\mu\in \{\lambda,\tau(\lambda) \} } c_\mu .$$
\indent En particulier, on a les égalités :
\begin{center}
\begin{tabular}{rl}
$(i)$ & $\left( \forall\ 1\leq i\leq r-1 \right) c_{\lambda_i}=\mathrm{T}_{\left( \Z/p\Z\right)^i} $ \\
$(ii)$ & $c_{\lambda_r} + c_{\tau (\lambda_r)} = \mathrm{T}_{\left( \Z/p\Z\right)^r}$ \\
$(iii)$ & $\left( \forall\ m\geq 0 \right) c_{m\lambda_1}=\mathrm{T}_{p^m} $ \\
\end{tabular}
\end{center}
\end{prop}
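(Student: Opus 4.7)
L'idée directrice est d'interpréter $\mathrm{T}_{A_\lambda}$ via la décomposition de Cartan de $G(\Q_p)$ et de déterminer explicitement les double-classes qui contribuent. Par la bijection canonique entre $\mathcal{R}_p(G) = G(\Q_p)/G(\Z_p)$ et l'ensemble des $\Z_p$-réseaux de $V\otimes\Q_p$ dans le même genre que $L_{0,p}$, l'opérateur $\mathrm{T}_{A_\lambda}$ s'écrit comme la somme des $c_\mu$ pour les $\mu\in X_+$ tels que $\mu(p)\cdot L_{0,p}$ soit un $A_\lambda$-voisin de $L_{0,p}$. Il suffit donc de déterminer, pour chaque $\mu\in X_+$, le type de voisin obtenu.

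\textbf{Première étape : calcul explicite des voisins.} Soit $\mu=\sum_i n_i\varepsilon_i^*\in X_+$. Par définition des caractères $\varepsilon_i$, l'élément $\mu(p)$ agit sur les vecteurs de la base canonique par $\mu(p)\cdot e_i = p^{n_i}e_i$ et $\mu(p)\cdot e_i^* = p^{-n_i}e_i^*$. Le réseau $L_\mu:=\mu(p)\cdot L_{0,p}$ a donc pour $\Z_p$-base les $p^{n_i}e_i$ et les $p^{-n_i}e_i^*$, et l'intersection $M_\mu:=L_{0,p}\cap L_\mu$, calculée coordonnée par coordonnée (comme somme directe orthogonale), vaut
\[M_\mu = \bigoplus_i p^{\max(0,n_i)}\Z_p e_i \oplus \bigoplus_i p^{\max(0,-n_i)}\Z_p e_i^*.\]
On en déduit immédiatement que $L_{0,p}/M_\mu \simeq \prod_i \Z/p^{|n_i|}\Z = A_\mu$, et donc que $L_\mu$ est un $A_\mu$-voisin de $L_{0,p}$.

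\textbf{Deuxième étape : caractérisation de $\{\lambda,\tau(\lambda)\}$.} On a $A_\mu\simeq A_\lambda$ si et seulement si les multi-ensembles $\{|n_i|\}$ et $\{|m_i|\}$ coïncident. Or $\lambda$ et $\mu$ vérifient tous deux la condition de dominance $n_1\geq\dots\geq n_{r-1}\geq|n_r|$, ce qui impose $n_i=m_i$ pour $1\leq i\leq r-1$ et $|n_r|=|m_r|$. Ainsi $\mu\in\{\lambda,\tau(\lambda)\}$, cet ensemble étant de cardinal $1$ si $m_r=0$ et $2$ sinon. On obtient immédiatement l'égalité $\mathrm{T}_{A_\lambda}=\sum_{\mu\in\{\lambda,\tau(\lambda)\}}c_\mu$ annoncée.

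\textbf{Les trois conséquences} se déduisent alors par simple inspection. Pour $1\leq i\leq r-1$, le copoids $\lambda_i=\varepsilon_1^*+\dots+\varepsilon_i^*$ a sa dernière coordonnée nulle donc $\tau(\lambda_i)=\lambda_i$, et $A_{\lambda_i}=(\Z/p\Z)^i$ : on obtient (i). Pour $\lambda_r=\sum_{j=1}^r\varepsilon_j^*$, toutes les coordonnées valent $1$ (en particulier $m_r=1\neq 0$), donc $\tau(\lambda_r)\neq\lambda_r$ et $A_{\lambda_r}=(\Z/p\Z)^r$ : on obtient (ii). Enfin, $m\lambda_1=m\varepsilon_1^*$ a sa dernière coordonnée nulle (puisque $r\geq 2$), donc $\tau(m\lambda_1)=m\lambda_1$ et $A_{m\lambda_1}=\Z/p^m\Z$ : on obtient (iii). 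L'obstacle principal, plus technique que conceptuel, réside dans le calcul des intersections de la première étape où il faut distinguer les signes des $n_i$ ; il s'agit essentiellement d'une application directe du théorème des diviseurs élémentaires à une base hyperbolique.
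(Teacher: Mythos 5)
Votre démonstration est correcte et complète : le calcul coordonnée par coordonnée de $M_\mu = L_{0,p}\cap \mu(p)L_{0,p}$ dans la base hyperbolique donne bien $L_{0,p}/M_\mu \simeq \prod_i \Z/p^{|n_i|}\Z$, et la condition de dominance identifie correctement $\{\lambda,\tau(\lambda)\}$ comme l'ensemble des copoids dominants produisant un $A_\lambda$-voisin. Le texte ne donne pas d'argument et renvoie simplement à \cite[ch.~VI, scholie~2.9]{CL} ; votre rédaction fournit la vérification directe standard (décomposition de Cartan puis diviseurs élémentaires), qui est essentiellement celle de la référence citée.
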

\begin{proof}
voir \cite[ch. VI, scholie 2.9]{CL}. 
\end{proof}
\begin{prop}[Le poids de la représentation standard]\label{standard1} On considère $V_{\mathrm{St}}$ la représentation standard de $\widehat{G}=\mathrm{SO}_{2r}(\C)$. C'est la représentation de plus haut poids $\lambda_1$.
\\ \indent De plus, pour $2\leq i \leq r-1$, la représentation $\Lambda^i V_{\mathrm{St}}$ est irréductible et correspond à la représentation de plus haut poids $\lambda_i$.
\\ \indent Enfin, la représentation $\Lambda^r V_{\mathrm{St}}$ correspond à la somme des représentations $V_{\lambda_r}$ et $V_{\tau (\lambda_r)}$.
\end{prop}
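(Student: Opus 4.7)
Le plan consiste à expliciter les poids de $V_{\mathrm{St}}$ et de ses puissances extérieures par rapport au tore $\widehat{T}\subset \widehat{G}=\mathrm{SO}_{2r}(\C)$ utilisé dans l'article, à exhiber des vecteurs de plus haut poids dans chaque cas, puis à conclure par un comptage de dimensions via la formule de Weyl. On fixe une base hyperbolique $(v_1^+,\dots,v_r^+,v_1^-,\dots,v_r^-)$ de $V_{\mathrm{St}}$ avec $(v_i^+,v_j^-)=\delta_{ij}$ et autres produits nuls, et on prend $\widehat{T}$ diagonal dans cette base, de sorte que $v_j^\pm$ soit vecteur propre de poids $\pm \varepsilon_j^\ast$. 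Une vérification matricielle immédiate montre que le système de racines et les racines simples décrits au paragraphe précédent correspondent alors à ce choix, les vecteurs de racines positives étant les $E_{ab}^-$ (avec $v_b^+\mapsto v_a^+,\ v_a^-\mapsto -v_b^-$) et $E_{ab}^+$ (avec $v_b^-\mapsto v_a^+,\ v_a^-\mapsto -v_b^+$) pour $a<b$.

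Pour $(i)$, les poids de $V_{\mathrm{St}}$ sont les $\pm\varepsilon_j^\ast$, chacun simple, et le plus grand est $\lambda_1=\varepsilon_1^\ast$. L'irréductibilité de $V_{\mathrm{St}}$ étant classique (se ramenant pour $r=2$ à l'irréductibilité de $\C^2\otimes\C^2$ sous $\mathrm{SL}_2\times\mathrm{SL}_2$), on conclut $V_{\mathrm{St}}\simeq V_{\lambda_1}$. Pour l'analyse de $\Lambda^i V_{\mathrm{St}}$ avec $2\le i\le r$, on observe que le vecteur $v_1^+\wedge\cdots\wedge v_i^+$ a le poids $\lambda_i=\varepsilon_1^\ast+\cdots+\varepsilon_i^\ast$, et que ce poids est de multiplicité $1$ (car la somme $\lambda_i$ ne peut s'écrire que d'une seule manière comme somme de $i$ poids distincts de $V_{\mathrm{St}}$). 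Appliquer $E_{ab}^-$ ou $E_{ab}^+$ à ce vecteur revient à remplacer un $v_k^+$ par un $v_a^+$ déjà présent dans le produit, ou à agir trivialement : dans tous les cas le résultat est nul. On obtient ainsi un vecteur de plus haut poids, d'où un plongement $V_{\lambda_i}\hookrightarrow \Lambda^i V_{\mathrm{St}}$.

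Pour $(ii)$, $2\le i\le r-1$, il reste à justifier l'égalité $\Lambda^i V_{\mathrm{St}}=V_{\lambda_i}$. Le poids $\lambda_i$ étant l'un des copoids fondamentaux de $\widehat{G}$ de type $D_r$, la formule de dimension de Weyl donne directement $\dim V_{\lambda_i}=\binom{2r}{i}$, égal à $\dim \Lambda^i V_{\mathrm{St}}$, ce qui force l'égalité et donc l'irréductibilité.

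Pour $(iii)$, on exhibe un second vecteur de plus haut poids $v_1^+\wedge\cdots\wedge v_{r-1}^+\wedge v_r^-$ de poids $\tau(\lambda_r)$ ; les mêmes vérifications sur les $E_{ab}^\pm$ (en remarquant qu'aucune variable $v_k^+$ n'est disponible dans le produit pour recevoir l'image de $v_b^-=v_r^-$ sans répétition) montrent qu'il est annulé par toutes les racines positives, d'où un plongement $V_{\lambda_r}\oplus V_{\tau(\lambda_r)}\hookrightarrow \Lambda^r V_{\mathrm{St}}$. Pour l'égalité, on utilise l'opérateur de Hodge $\ast:\Lambda^r V_{\mathrm{St}}\to \Lambda^r V_{\mathrm{St}}$ associé à la forme volume $\omega=v_1^+\wedge v_1^-\wedge\cdots\wedge v_r^+\wedge v_r^-$ : il est $\widehat{G}$-équivariant, préserve les poids, et vérifie $\ast^2\in \C^\times$, ses deux espaces propres fournissant une décomposition $\widehat{G}$-stable de $\Lambda^r V_{\mathrm{St}}$. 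Un calcul explicite montre que $v_1^+\wedge\cdots\wedge v_r^+$ et $v_1^+\wedge\cdots\wedge v_{r-1}^+\wedge v_r^-$ relèvent d'espaces propres distincts, et la formule de dimension de Weyl pour $\lambda_r$ et $\tau(\lambda_r)$ donne $\dim V_{\lambda_r}=\dim V_{\tau(\lambda_r)}=\tfrac{1}{2}\binom{2r}{r}$, concluant le théorème.

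La principale difficulté technique est la vérification des dimensions de $V_{\lambda_i}$, $V_{\lambda_r}$ et $V_{\tau(\lambda_r)}$ par la formule de Weyl ; tout le reste se réduit à des vérifications directes sur les poids et au comportement explicite de $\ast$ sur les vecteurs distingués.
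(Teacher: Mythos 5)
Votre démonstration est correcte, mais elle ne suit pas la même voie que l'article : celui-ci se contente de renvoyer à \cite[théorème 19.2]{FH}, alors que vous redonnez un argument complet — qui est d'ailleurs, pour l'essentiel, celui de cette référence : exhiber les vecteurs de plus haut poids $v_1^+\wedge\cdots\wedge v_i^+$ (et, pour $i=r$, le second vecteur $v_1^+\wedge\cdots\wedge v_{r-1}^+\wedge v_r^-$ de poids $\tau(\lambda_r)$), puis conclure par un comptage de dimensions via la formule de Weyl. Deux remarques mineures. D'abord, pour $i=r-1$ et $i=r$, les poids $\lambda_{r-1}=\omega_{r-1}+\omega_r$ et $\lambda_r=2\omega_r$ ne sont pas des poids fondamentaux de type $D_r$ (les poids fondamentaux extrêmes sont ceux des demi-spineurs) ; cela ne change rien au calcul de $\dim V_{\lambda_i}$ par la formule de Weyl, mais la phrase affirmant que $\lambda_i$ est un copoids fondamental est inexacte dans ces cas. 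Ensuite, l'opérateur de Hodge est superflu dans votre point $(iii)$ : les deux vecteurs de plus haut poids exhibés ont des poids distincts $\lambda_r\neq\tau(\lambda_r)$, donc engendrent des sous-représentations irréductibles non isomorphes dont la somme est automatiquement directe, et l'égalité $\dim V_{\lambda_r}+\dim V_{\tau(\lambda_r)}=\binom{2r}{r}$ suffit à conclure. Ce que votre approche apporte par rapport à la simple citation est un énoncé auto-contenu et explicite (les vecteurs de plus haut poids servent d'ailleurs à comprendre concrètement les multiplicités de poids utilisées dans les formules de Gross) ; ce qu'elle coûte est la vérification, laissée implicite, des dimensions $\binom{2r}{i}$ et $\tfrac{1}{2}\binom{2r}{r}$ par la formule de Weyl.
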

\begin{proof}
découle de \cite[théorème 19.2]{FH}. 
\end{proof}
\indent On souhaite exprimer, grâce aux formules de Gross, les représentation $V_{\mathrm{St}}$ et $\Lambda^i V_{\mathrm{St}}$ en fonction des opérateurs de Hecke $\mathrm{T}_A$. Pour $r=4$ (c'est-à-dire pour $\widehat{G}=\mathrm{SO}_8 (\C)$), on a la proposition suivante :
\begin{prop} \label{GrossSO8} On reprend les mêmes notations, avec $r=4$. On considère les éléments $\mathrm{T}_A\in \mathrm{H}(G)$. On a alors les égalités suivantes :
\begin{center}
\begin{tabular}{rl}
$(i)$ & $p^3 \left[ V_{\mathrm{St}}\right] = \mathrm{Sat}(\mathrm{T}_p)$\\
$(ii)$ & $p^5 \left[ \Lambda^2 V_{\mathrm{St}}\right] = \mathrm{Sat}(\mathrm{T}_{p,p})+\left( p^4+2\, p^2+1\right)$\\
$(iii)$ & $p^6 \left[ \Lambda^3 V_{\mathrm{St}}\right] = \mathrm{Sat}(\mathrm{T}_{p,p,p})+\left( p^2+p+1\right)\, \mathrm{Sat}(\mathrm{T}_p)$\\
$(iv)$ & $p^6 \left[ \Lambda^4 V_{\mathrm{St}}\right] = \mathrm{Sat}(\mathrm{T}_{p,p,p,p})+2\, \mathrm{Sat}(\mathrm{T}_{p,p}) + 2\, \left(p^4+p^2+1 \right) $\\
\end{tabular}
\end{center}
\end{prop}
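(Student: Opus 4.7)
La stratégie consiste à appliquer la formule de Gross (proposition \ref{Gross}) à chaque représentation irréductible $V_\lambda$ mise en jeu, en identifiant les opérateurs de Hecke $\mathrm{T}_A$ concernés via l'isomorphisme de la proposition précédente : $c_{\lambda_i}=\mathrm{T}_{(\Z/p\Z)^i}$ pour $i<r$, et $c_{\lambda_r}+c_{\tau(\lambda_r)}=\mathrm{T}_{(\Z/p\Z)^r}$. La proposition \ref{standard1} nous dit que $V_{\lambda_i}=\Lambda^i V_{\mathrm{St}}$ pour $i=1,2,3$, tandis que $\Lambda^4 V_{\mathrm{St}}=V_{\lambda_4}\oplus V_{\tau(\lambda_4)}$. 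On calcule $\langle\rho,\lambda_1\rangle=3$, $\langle\rho,\lambda_2\rangle=5$, et $\langle\rho,\lambda_3\rangle=\langle\rho,\lambda_4\rangle=\langle\rho,\tau(\lambda_4)\rangle=6$, ce qui donne les puissances de $p$ apparaissant dans chaque formule.

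Le point (i) est immédiat puisque $\lambda_1$ est minimal dans $X_+$ : la formule de Gross ne contient alors pas de terme correctif. Pour (ii), on remarque que $V_{\lambda_2}$ s'identifie à la représentation adjointe de $\widehat{G}=\mathrm{SO}_8(\C)$ (de type $D_4$) ; le seul poids dominant strictement inférieur à $\lambda_2$ apparaissant dans $V_{\lambda_2}$ est $\mu=0$. Le point (iii) de la proposition \ref{Gross} donne alors directement $d_{\lambda_2}(0)=\sum_i p^{m_i-1}=1+2p^2+p^4$ (les exposants de $D_4$ étant $1,3,3,5$).

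Les points (iii) et (iv) nécessitent l'usage explicite de la formule de Lusztig-Kato. Pour (iii), l'unique poids dominant strictement inférieur à $\lambda_3$ apparaissant dans $V_{\lambda_3}=\Lambda^3 V_{\mathrm{St}}$ est $\lambda_1$ (de multiplicité $3$, obtenue en prenant une paire $\varepsilon_j\wedge(-\varepsilon_j)$ pour $j\in\{2,3,4\}$ et le vecteur $\varepsilon_1$) ; il reste alors à évaluer $d_{\lambda_3}(\lambda_1)$ via la proposition \ref{Gross}, et l'on trouve $p^2+p+1$. Pour (iv), on applique Gross aux deux représentations $V_{\lambda_4}$ et $V_{\tau(\lambda_4)}$ puis l'on somme, en utilisant $c_{\lambda_4}+c_{\tau(\lambda_4)}=\mathrm{T}_{p,p,p,p}$. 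Les poids dominants strictement inférieurs à $\lambda_4$ (resp. à $\tau(\lambda_4)$) apparaissant dans $V_{\lambda_4}$ (resp. $V_{\tau(\lambda_4)}$) sont $\lambda_2$ (de multiplicité $1$) et $0$ (de multiplicité $3$) -- on note en particulier que $\tau(\lambda_4)$ n'est pas un poids de $V_{\lambda_4}$, $\lambda_4-\tau(\lambda_4)=2\varepsilon_4^*$ n'étant pas une combinaison à coefficients positifs de coracines simples. Le point (ii) de la proposition \ref{Gross} donne $d_{\lambda_4}(\lambda_2)=d_{\tau(\lambda_4)}(\lambda_2)=1$, d'où le coefficient $2$ ; les termes constants $d_{\lambda_4}(0)+d_{\tau(\lambda_4)}(0)$ s'obtiennent à nouveau par Lusztig-Kato et valent $2(p^4+p^2+1)$.

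L'obstacle principal réside dans les calculs combinatoires explicites des coefficients $d_\lambda(\mu)$ pour $\mu\in\{\lambda_1,0\}$ via la somme alternée sur le groupe de Weyl $W=\mathcal{S}_4\ltimes(\{\pm 1\}^4)^0$ d'ordre $192$. On exploite d'une part la symétrie sous l'involution $\tau$ (qui échange $\lambda_4$ et $\tau(\lambda_4)$) pour déduire automatiquement $d_{\tau(\lambda_4)}$ à partir de $d_{\lambda_4}$, et d'autre part les annulations issues de la signature dans la formule de Lusztig-Kato pour réduire chaque somme à un nombre restreint de termes non nuls.
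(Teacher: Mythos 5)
Votre démonstration est correcte et suit essentiellement la même démarche que celle du texte : décomposition des $\Lambda^i V_{\mathrm{St}}$ via la proposition \ref{standard1}, calcul des $\left< \rho,\lambda_i\right>$, identification des poids dominants $\mu<\lambda$ (y compris la vérification que $\tau(\lambda_4)\not<\lambda_4$), puis évaluation des $d_\lambda(\mu)$ par la formule de Lusztig--Kato de la proposition \ref{Gross}, là où le texte renvoie à un calcul par ordinateur. Votre seul apport distinct, et il est bienvenu, est le point $(ii)$ : en identifiant $V_{\lambda_2}=\Lambda^2 V_{\mathrm{St}}$ à la représentation adjointe de $\mathrm{SO}_8(\C)$ et en invoquant le point $(iii)$ de la formule de Gross avec les exposants $1,3,3,5$ de $\mathrm{D}_4$, vous obtenez $d_{\lambda_2}(0)=1+2p^2+p^4$ en forme close, sans recours à la somme alternée sur le groupe de Weyl.
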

\begin{proof}
On utilise la proposition \ref{standard1} et les formules de Gross. On a :
\\ \indent Pour le $(i)$ : il n'y a pas de poids $\mu <\lambda_1$, et ainsi on a les égalités :
$$
\begin{aligned}
	p^3\left[ V_{\mathrm{St}}\right] &= p^{\left< \rho, \lambda_1 \right>} \left[ V_{\lambda_1} \right] = \mathrm{Sat}(c_{\lambda_1})+\sum_{ \{\mu\in X_+,\mu <\lambda_1\} } d_{\lambda_1}(\mu) \mathrm{Sat}(c_\mu) \\
	& =\mathrm{Sat}(c_{\lambda_1}) =\mathrm{Sat}(\mathrm{T}_p) \\
\end{aligned}
$$
\indent Pour le $(ii)$ : il y a un seul poids $\mu <\lambda_2$, à savoir $\mu =0$. On déduit ainsi :
$$
\begin{aligned}
	p^5\left[ \Lambda^2 V_{\mathrm{St}}\right] &= p^{\left< \rho, \lambda_2 \right>} \left[ V_{\lambda_2} \right] = \mathrm{Sat}(c_{\lambda_2})+\sum_{ \{\mu\in X_+,\mu <\lambda_2\} } d_{\lambda_2}(\mu) \mathrm{Sat}(c_\mu) \\
	& =\mathrm{Sat}(c_{\lambda_2}) +d_{\lambda_2}(0)=\mathrm{Sat}(\mathrm{T}_{p,p})+\left( p^4+2\, p^2+1\right) \\
\end{aligned}
$$
\indent Pour le $(iii)$ : il y a un seul poids $\mu <\lambda_3$, à savoir $\mu =\lambda_1$. On déduit ainsi :
$$
\begin{aligned}
	p^6\left[ \Lambda^3 V_{\mathrm{St}}\right] &= p^{\left< \rho, \lambda_3 \right>} \left[ V_{\lambda_3} \right] = \mathrm{Sat}(c_{\lambda_3})+\sum_{ \{\mu\in X_+,\mu <\lambda_3\} } d_{\lambda_3}(\mu) \mathrm{Sat}(c_\mu) \\
	& =\mathrm{Sat}(c_{\lambda_3}) +d_{\lambda_3}(\lambda_1)\, \mathrm{Sat}(c_{\lambda_1})=\mathrm{Sat}(\mathrm{T}_{p,p,p})+\left( p^2+p+1\right)\, \mathrm{Sat}(\mathrm{T}_p) \\
\end{aligned}
$$
\indent Pour le $(iv)$ : il y a deux poids $\mu$ inférieurs à $\lambda_4$ ou à $\tau (\lambda_4)$, à savoir $\mu =\lambda_2$ ou $\mu =0$ dans les deux cas. On déduit ainsi :
$$
\begin{aligned}
	p^6 \left[ \Lambda^4 V_{\mathrm{St}}\right] &= p^6 \, \left(\left[V_{\lambda_4}\right] +\left[V_{\tau (\lambda_4)}\right] \right) = p^{\left< \rho, \lambda_4 \right>} \left[ V_{\lambda_4} \right]+ p^{\left< \rho, \tau(\lambda_4) \right>} \left[ V_{\tau (\lambda_4)} \right] \\
	&= \mathrm{Sat}(c_{\lambda_4})+\sum_{ \{\mu\in X_+,\mu <\lambda_4\} } d_{\lambda_4}(\mu) \mathrm{Sat}(c_\mu) \\
	& \indent + \mathrm{Sat}(c_{\tau(\lambda_4)})+\sum_{ \{\mu\in X_+,\mu <\tau(\lambda_4)\} } d_{\tau(\lambda_4)}(\mu) \mathrm{Sat}(c_\mu) \\
	& =\mathrm{Sat}(c_{\lambda_4})+\mathrm{Sat}(c_{\tau (\lambda_4)}) +\left( d_{\lambda_4}(\lambda_2) + d_{\tau (\lambda_4)}(\lambda_2) \right) \, \mathrm{Sat}(c_{\lambda_2}) \\
	&\indent  +\left( d_{\lambda_4}(0) + d_{\tau (\lambda_4)}(0) \right)\\
	&=\mathrm{Sat}(\mathrm{T}_{p,p,p,p})+2\, \mathrm{Sat}(\mathrm{T}_{p,p}) + 2\, \left(p^4+p^2+1 \right) \\
\end{aligned}
$$
\indent Les coefficients $d_\lambda (\mu)$ ont été calculés à l'ordinateur grâce à la proposition \ref{Gross}.  
\end{proof}
\subsubsection{Le groupe spécial orthogonal en dimension impaire}
\indent Soit $r\geq 1$ un entier, $U=\left( \Z_p \right)^r$, $\mathrm{H}(U)$ le module hyperbolique sur $U$ et $V=\mathrm{H}(U)\oplus \Z_p$ (où on a muni $\Z_p$ de la forme quadratique $x\mapsto x^2$). On pose $G=\mathrm{SO}_{V}$ le sous-groupes des automorphismes de $V$ préservant la forme quadratique définie sur $V$ et de déterminant $1$. On définit comme dans le cas du groupe spécial orthogonal de dimension paire les éléments $T$, $B$, les $\varepsilon_i$ et les $\varepsilon_i^*$. Les changements sur la donnée radicielle basée se font alors comme suit :
\\ \indent - l'ensemble $\Phi(G,T)$ est constitué des $\pm \varepsilon_i \pm \varepsilon_j$ ($1\leq i < j \leq r$) et des $\pm  \varepsilon_i$ ($1\leq i \leq r$), avec la dualité : $(\pm \varepsilon_i \pm \varepsilon_j)^\vee = \pm \varepsilon_i^* \pm \varepsilon_j^*$ et $(\pm \varepsilon_i)^\vee = \pm 2\cdot  \varepsilon_i^*$.
\\ \indent - l'ensemble $\Delta (G,T,B)$ est constitué des $\varepsilon_i - \varepsilon_{i+1}$ (pour $1\leq i\leq r-1$) et de $ \varepsilon_r$.
\\ \indent - l'ensemble $\Phi^+$ est constitué des $\varepsilon_i \pm \varepsilon_j$ (pour $1\leq i < j \leq r$) et des $ \varepsilon_i$ (pour $1\leq i\leq r$).
\\ \indent - les éléments $\rho$ et $\rho^\vee$ sont donnés par : $\rho=\frac{2r-1}{2} \varepsilon_1 +\frac{2r-3}{2}\varepsilon_2+\dots +\frac{1}{2}\varepsilon_{r}$ et $\rho^\vee = r \varepsilon_1^* +(r-1)\varepsilon_2^*+\dots +\varepsilon_{r}^*$.
\\ \indent - le dual de Langlands de $G$ est donné par : $\widehat{G} = \mathrm{Sp}_{2r}(\C)$.
\\ \indent - le groupe de Weyl $W$ de $\widehat{G}$ s'identifie à : $\mathcal{S}_r \ltimes \{ \pm 1 \}^r$.
\\ \indent En particulier, l'ensemble des copoids dominants de $G$ est donné par :
$$X_+ = \left\{ \sum_i m_i  \varepsilon_i^* \bigg\vert m_1\geq m_2\geq \dots \geq m_r \geq 0\right\} .$$
\indent Pour $1\leq i\leq r$, on définit les éléments $\lambda_i \in X_+$ par : $ \lambda_i = \varepsilon_1^*+\dots +\varepsilon_i^*$.
\\ \indent Les opérateurs de Hecke de $\mathrm{H}(G)$ sont donnés par la proposition suivante :
\begin{prop} Soient $\lambda =\sum_i m_i \varepsilon_i^* \in X_+$ et $A_\lambda = \prod_{i=1}^r \left( \Z/p^{ m_i } \Z \right)$.
\\ \indent Alors l'opérateur $\mathrm{T}_{A_\lambda} \in \mathrm{H}(G)$ est donné par :
$$\mathrm{T}_{A_\lambda} =c_\lambda .$$
\indent En particulier, on a les égalités :
\begin{center}
\begin{tabular}{rl}
$(i)$ & $\left( \forall\ 1\leq i\leq r \right) c_{\lambda_i}=\mathrm{T}_{\left( \Z/p\Z\right)^i} $ \\
$(ii)$ & $\left( \forall\ m\geq 0 \right) c_{m\lambda_1}=\mathrm{T}_{p^m} $ \\
\end{tabular}
\end{center}
\end{prop}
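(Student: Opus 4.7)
Mon plan est le suivant. On part de l'identification canonique de $\mathcal{R}_p(G)=G(\Q_p)/G(\Z_p)$ avec la $G(\Q_p)$-orbite de $V$ dans l'ensemble des $\Z_p$-réseaux de $V\otimes \Q_p$, via $g\,G(\Z_p)\mapsto g\cdot V$. Sous cette identification, l'opérateur $c_\lambda$ (fonction caractéristique de la double classe $G(\Z_p)\,\lambda(p)\,G(\Z_p)$) envoie $V$ sur la somme formelle des réseaux constituant la $G(\Z_p)$-orbite de $L_\lambda := \lambda(p)\cdot V$. Tout revient donc à identifier cette $G(\Z_p)$-orbite avec $\mathrm{vois}_{A_\lambda}(V)$ tout entier.

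Je commencerai par un calcul explicite. Comme $\lambda=\sum_i m_i\varepsilon_i^*$, l'élément $\lambda(p)\in T(\Q_p)$ agit par $e_i\mapsto p^{m_i}e_i$, $e_i^*\mapsto p^{-m_i}e_i^*$, et fixe le vecteur anisotrope $f$. J'obtiendrai alors :
$$L_\lambda = \bigoplus_i \left( p^{m_i}\Z_p e_i\oplus p^{-m_i}\Z_p e_i^*\right)\oplus \Z_p f,$$
d'où $V\cap L_\lambda=\bigoplus_i\left(p^{m_i}\Z_p e_i\oplus \Z_p e_i^*\right)\oplus \Z_p f$, et enfin $V/(V\cap L_\lambda)\simeq L_\lambda/(V\cap L_\lambda)\simeq A_\lambda$. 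Ceci établira que $L_\lambda$ est un $A_\lambda$-voisin de $V$.

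L'étape-clé, et la plus délicate, consistera à démontrer que $G(\Z_p)$ agit transitivement sur $\mathrm{vois}_A(V)$ pour tout $A$ de la forme $A_\lambda$. Étant donnés deux $A$-voisins $L,L'$ de $V$, la théorie des diviseurs élémentaires adaptée à la forme quadratique (ou un analogue $p$-adique de la proposition-définition \ref{ppp-voisin} pour les quotients de la forme $\prod \Z/p^{m_i}\Z$) fournira, pour chacun, une décomposition canonique de $V\cap L$ et $V\cap L'$ exhibant les mêmes invariants $(p^{m_i})$. Comme $p$ est impair, la forme quadratique reste non-dégénérée sur tous les quotients $V/p^kV$, et le théorème de Witt fournira un élément de $\mathrm{SO}(V/p^kV)$ transportant $L\bmod p^kV$ sur $L'\bmod p^kV$. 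Un argument de relèvement (cf.\ l'analogue $p$-adique du lemme \ref{bourbaki}) produira alors un élément $g\in G(\Z_p)=\mathrm{SO}(V)$ tel que $g\cdot L=L'$.

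Combinant ces étapes, on aura $\mathrm{vois}_{A_\lambda}(V)=G(\Z_p)\cdot L_\lambda$, donc $c_\lambda\cdot[V]=\mathrm{T}_{A_\lambda}\cdot[V]$, et la $G(\Q_p)$-équivariance des deux opérateurs donnera l'égalité $c_\lambda=\mathrm{T}_{A_\lambda}$ dans $\mathrm{H}_p(G)$. Les cas particuliers $(i)$ et $(ii)$ en résulteront immédiatement : pour $\lambda=\lambda_i$ on a $A_\lambda=(\Z/p\Z)^i$, et pour $\lambda=m\lambda_1$ on a $A_\lambda=\Z/p^m\Z$. La difficulté principale sera la troisième étape (transitivité) : dans le cas pair traité dans \cite[ch.~VI, scholie 2.9]{CL}, l'involution non-triviale $\tau$ sur $X_+$ donne naissance à la somme $c_\lambda+c_{\tau(\lambda)}$ dans la formule analogue, mais dans notre cas impair, $\lambda\mapsto A_\lambda$ étant injective sur $X_+$, aucune telle complication n'interviendra.
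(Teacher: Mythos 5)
Your overall strategy is the standard one and is sound in outline; note that the paper itself gives no argument here, delegating everything to \cite[ch. VI, scholie 2.9]{CL}, so you are supplying a proof where the text supplies only a citation. Your reduction of the statement to the set-theoretic identity $\mathrm{vois}_{A_\lambda}(V)=\left\{ h\cdot V \mid h\in G(\Z_p)\lambda(p)G(\Z_p)\right\}$ is correct, and your explicit computation of $V\cap\lambda(p)V$ and of the two quotients is exactly right; together with the $G(\Z_p)$-equivariance of the neighbour relation it already gives one inclusion (every lattice of the double coset is un $A_\lambda$-voisin).

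The converse inclusion --- your third step --- is indeed the crux, and two things should be said about the way you propose to carry it out. First, you do not need Witt's theorem modulo $p^k$ nor a lifting argument: the Cartan decomposition displayed just above in the paper does the work. An $A_\lambda$-neighbour $L$ of $V$ lies in $\mathcal{R}_p(G)$, hence $L=\gamma V$ with $\gamma$ in the double coset of a unique dominant $\mu$; the isomorphism class of the finite group $V/(V\cap L)$ is constant on each double coset and equals $A_\mu$ by your computation; and since $\mu\mapsto A_\mu$ is injective on $X_+$ in the odd case (your closing observation, which is precisely the right one, and explains the extra term $c_{\tau(\lambda)}$ in the even case), one gets $\mu=\lambda$. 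Second, your chosen route is not only heavier but genuinely restricted: you invoke ``$p$ impair'' to obtain non-degeneracy on $V/p^kV$, whereas the proposition carries no such hypothesis and is used in the paper at $p=2$ (for instance to compute the $\mathrm{c}_2(\pi)$ of the tables); at $p=2$ the Gram matrix of $V=\mathrm{H}(U)\oplus\Z_2$ has a diagonal entry $2$ and the claimed non-degeneracy fails, and moreover $L\not\subset V$, so that ``$L\bmod p^kV$'' should anyway be replaced by $V\cap L$. The Cartan-decomposition route has none of these problems. In short: correct skeleton, correct key observation, but the transitivity step as written would have to be repaired (or, better, replaced by the elementary-divisor argument) to cover all $p$.
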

\begin{proof}
voir \cite[ch. VI, scholie 2.9]{CL}.  
\end{proof}
\begin{prop}[Le poids de la représentation standard]\label{standard3} On considère $V_{\mathrm{St}}$ la représentation standard de $\widehat{G}=\mathrm{Sp}_{2r}(\C)$. C'est la représentation de plus haut poids $\lambda_1$.
\\ \indent De plus, pour $2\leq i \leq r$, la représentation $\Lambda^i V_{\mathrm{St}}$ n'est pas irréductible, son plus haut poids est $\lambda_i$, et s'écrit sous la forme : $\Lambda^i V_\mathrm{St} = V_{\lambda_i} \oplus V_{\lambda_{i-2}} \oplus \dots \oplus V_{\lambda_{i-2\cdot [i/2]}}$.
\end{prop}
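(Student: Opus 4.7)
Le plan est de suivre la démarche classique pour les représentations fondamentales du groupe symplectique, telle qu'exposée dans \cite[théorème 17.5]{FH}. D'abord, pour la première assertion, il suffit de constater que le vecteur $e_1$ (premier vecteur de la $\Z_p$-base fixée de $U$, vu dans $V$) engendre une droite de poids $\varepsilon_1 = \lambda_1$ pour l'action du tore maximal $\widehat{T}$, puis d'utiliser le fait que $V_{\mathrm{St}}$ est irréductible de dimension $2r$ pour conclure que c'est bien la représentation de plus haut poids $\lambda_1$.

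Pour l'étude de $\Lambda^i V_{\mathrm{St}}$ avec $2\leq i\leq r$, j'exploiterais l'existence de la forme symplectique $\omega$ sur $V_{\mathrm{St}}$, qui est invariante par $\widehat{G}=\mathrm{Sp}_{2r}(\C)$ (au signe près). Cette forme fournit deux applications $\widehat{G}$-équivariantes : la contraction $\iota_\omega : \Lambda^i V_{\mathrm{St}} \to \Lambda^{i-2} V_{\mathrm{St}}$, et la multiplication extérieure $\omega \wedge \cdot : \Lambda^{i-2}V_{\mathrm{St}} \to \Lambda^i V_{\mathrm{St}}$. On poserait alors $V_{\langle i \rangle} = \ker \iota_\omega \subset \Lambda^i V_{\mathrm{St}}$. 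Le vecteur $e_1 \wedge \dots \wedge e_i$ est manifestement dans $V_{\langle i \rangle}$ (aucune paire hyperbolique $e_j, e_j^*$ n'y apparaît), et c'est un vecteur de poids $\lambda_i$ annulé par les opérateurs associés aux racines positives.

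Les étapes suivantes consistent à établir : (a) que $V_{\langle i \rangle}$ est irréductible, et donc isomorphe à $V_{\lambda_i}$, et (b) que la décomposition souhaitée découle par récurrence sur $i$ d'une scission de la suite exacte
\[ 0 \to V_{\langle i \rangle} \to \Lambda^i V_{\mathrm{St}} \xrightarrow{\iota_\omega} \Lambda^{i-2} V_{\mathrm{St}} \to 0. \]
La scission s'obtient en vérifiant que la composée $\iota_\omega \circ (\omega \wedge \cdot )$ sur $\Lambda^{i-2} V_{\mathrm{St}}$ est (à un scalaire non-nul près dépendant de $i$ et $r$) l'identité plus un opérateur du même type, puis en raisonnant par récurrence descendante sur $i$ pour inverser cet opérateur. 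La récurrence appliquée à $\Lambda^{i-2} V_{\mathrm{St}} = V_{\lambda_{i-2}} \oplus V_{\lambda_{i-4}} \oplus \dots$ donne alors la décomposition annoncée.

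Le point principal sera l'irréductibilité de $V_{\langle i \rangle}$ (étape (a)), pour laquelle l'argument le plus propre est dimensionnel : on calcule $\dim V_{\langle i \rangle} = \binom{2r}{i} - \binom{2r}{i-2}$, puis on compare à la dimension de $V_{\lambda_i}$ donnée par la formule de Weyl. Alternativement, on peut vérifier directement que l'unique (à scalaire près) vecteur de plus haut poids dans $\Lambda^i V_{\mathrm{St}}$ est $e_1 \wedge \dots \wedge e_i$ et qu'il engendre bien $V_{\langle i \rangle}$. C'est exactement le contenu de \cite[théorème 17.5]{FH}, que nous pouvons invoquer directement.
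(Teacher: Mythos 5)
Votre démonstration est correcte et suit essentiellement la même voie que celle de l'article, qui se contente d'invoquer \cite[théorème 17.5]{FH} et une récurrence sur $i$ : vous explicitez le contenu de ce théorème (noyau de la contraction par la forme symplectique, irréductibilité de $V_{\langle i\rangle}\simeq V_{\lambda_i}$, scission de la suite exacte) avant de constater vous-même qu'il suffit de le citer directement. Notez seulement que la scission est automatique par semi-simplicité de $\mathrm{Sp}_{2r}(\C)$, sans qu'il soit nécessaire d'inverser explicitement $\iota_\omega\circ(\omega\wedge\cdot)$.
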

\begin{proof}
découle de \cite[théorème 17.5]{FH} (en faisant une récurrence sur $i$).  
\end{proof}
\indent On souhaite exprimer, grâce aux formules de Gross, les représentation $V_{\mathrm{St}}$ et $\Lambda^i V_{\mathrm{St}}$ en fonction des opérateurs de Hecke $\mathrm{T}_A$. Pour $r=3$ (c'est-à-dire pour $\widehat{G}=\mathrm{Sp}_6 (\C)$), on a la proposition suivante :
\begin{prop} \label{GrossSO7} On reprend les mêmes notations, avec $r=3$. On considère les éléments $\mathrm{T}_A\in \mathrm{H}(G)$. On a alors les égalités suivantes :
\begin{center}
\begin{tabular}{rl}
$(i)$ & $p^{5/2} \left[ V_{\mathrm{St}}\right] = \mathrm{Sat}(\mathrm{T}_p) $\\
$(ii)$ & $p^4 \left[ \Lambda^2 V_{\mathrm{St}}\right] = \mathrm{Sat}(\mathrm{T}_{p,p})+  \left( p^4+p^2+1\right)$\\
$(iii)$ & $p^{9/2} \left[ \Lambda^3 V_{\mathrm{St}}\right] = \mathrm{Sat}(\mathrm{T}_{p,p,p})+ \left( p^2+1\right) \cdot \mathrm{Sat}(\mathrm{T}_p)$\\
\end{tabular}
\end{center}
\end{prop}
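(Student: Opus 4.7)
La démonstration suivra exactement le schéma de la proposition \ref{GrossSO8} précédente : on combinera la proposition \ref{standard3} (décomposition de $\Lambda^i V_{\mathrm{St}}$ en irréductibles $V_{\lambda_j}$) avec la formule de Gross, complétée par la proposition \ref{Gross} pour le calcul explicite des coefficients $d_\lambda(\mu)$, pour chacun des $\lambda_i$ avec $i=1,2,3$.

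On commencera par calculer les accouplements, avec $\rho=\frac{5}{2}\varepsilon_1+\frac{3}{2}\varepsilon_2+\frac{1}{2}\varepsilon_3$ (cas $r=3$) : on trouve $\langle\rho,\lambda_1\rangle=5/2$, $\langle\rho,\lambda_2\rangle=4$ et $\langle\rho,\lambda_3\rangle=9/2$, ce qui justifie les puissances de $p$ apparaissant dans les membres de gauche. Ensuite, on identifiera pour chaque $\lambda_i$ l'ensemble des $\mu\in X_+$ tels que $\mu<\lambda_i$. L'observation clef est que toute coracine positive $\varepsilon_j^*\pm\varepsilon_k^*$ ou $2\varepsilon_j^*$ a une somme de coefficients paire (égale à $0$ ou $2$), donc pour que $\lambda-\mu$ s'écrive comme somme à coefficients positifs de coracines positives, il faut que la somme des coordonnées de $\lambda-\mu$ soit paire. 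Cette parité, combinée à la contrainte $m_1\le 1$ sur $\mu=\sum m_j\varepsilon_j^*$ (provenant du fait que le coefficient de $\varepsilon_1^*$ dans $\lambda_i-\mu$ doit être positif), donne rapidement : aucun $\mu$ strictement inférieur pour $\lambda_1$, uniquement $\mu=0$ pour $\lambda_2$, et uniquement $\mu=\lambda_1$ pour $\lambda_3$.

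On appliquera alors la formule de Gross. Pour $(i)$, il n'y a aucun terme correctif et on obtient directement $p^{5/2}[V_{\mathrm{St}}]=\mathrm{Sat}(\mathrm{T}_p)$. Pour $(ii)$, on écrit $[\Lambda^2 V_{\mathrm{St}}]=[V_{\lambda_2}]+1$ et la formule donne $p^4[\Lambda^2 V_{\mathrm{St}}]=\mathrm{Sat}(\mathrm{T}_{p,p})+d_{\lambda_2}(0)+p^4$. Pour $(iii)$, on a $[\Lambda^3 V_{\mathrm{St}}]=[V_{\lambda_3}]+[V_{\lambda_1}]$ ; la multiplicité du poids $\lambda_1$ dans $V_{\lambda_3}$ vaut $1$ (la multiplicité de $\varepsilon_1$ dans $\Lambda^3 V_{\mathrm{St}}$ vaut $2$, provenant des triplets $\{\varepsilon_1,\varepsilon_j,-\varepsilon_j\}$ pour $j=2,3$, et celle dans $V_{\lambda_1}$ vaut $1$), donc $d_{\lambda_3}(\lambda_1)=1$ par le point $(ii)$ de la formule de Gross ; en utilisant $(i)$ pour réécrire $p^{9/2}[V_{\lambda_1}]=p^2\cdot\mathrm{Sat}(\mathrm{T}_p)$, on obtient le coefficient total $p^2+1$ devant $\mathrm{Sat}(\mathrm{T}_p)$.

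Le point le plus délicat est le calcul de la seule constante non immédiate, à savoir $d_{\lambda_2}(0)$, qui requiert l'évaluation du polynôme $\widehat{P}$ de Kostant-Lusztig donné par la proposition \ref{Gross} via une somme alternée sur le groupe de Weyl $\mathcal{S}_3\ltimes\{\pm 1\}^3$ (d'ordre $48$) des valeurs de $\widehat{P}$ aux éléments $\sigma(\lambda_2+\rho^\vee)-\rho^\vee$. Comme dans le cas $\mathrm{SO}_8$ traité juste avant, ce calcul fini mais fastidieux se mène à l'ordinateur, et le résultat attendu $d_{\lambda_2}(0)=p^2+1$ conclura la démonstration en donnant bien $p^4+d_{\lambda_2}(0)=p^4+p^2+1$ pour $(ii)$.
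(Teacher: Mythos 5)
Votre démonstration est correcte et suit essentiellement la même démarche que celle du texte : décomposition de $\Lambda^i V_{\mathrm{St}}$ via la proposition \ref{standard3}, calcul des accouplements $\langle\rho,\lambda_i\rangle$, identification des poids $\mu<\lambda_i$ (aucun, $0$, $\lambda_1$ respectivement) et application de la formule de Gross. Votre justification de $d_{\lambda_3}(\lambda_1)=1$ par la multiplicité du poids $\lambda_1$ dans $V_{\lambda_3}$ (point $(ii)$ de la formule de Gross) est d'ailleurs un peu plus explicite que le renvoi du texte au calcul machine, et la valeur $d_{\lambda_2}(0)=p^2+1$ est bien celle attendue.
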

\begin{proof}
L'expression des $\left[ V_{\lambda_i} \right]$ en fonction des $\mathrm{Sat}(\mathrm{T}_A)$ se fait comme précédemment. Reste donc à exprimer les $\left[ \Lambda^i V_\mathrm{St} \right]$ en fonction des $\left[ V_{\lambda_i} \right]$.
\\ \indent - $(i)$ : on a $\left[ V_{\mathrm{St}} \right] = \left[ V_{\lambda_1} \right]$ et il n'y a pas de poids $\mu <\lambda_1$.
\\ \indent - $(ii)$ : on a $\left[ \Lambda^2 V_{\mathrm{St}} \right] = \left[ V_{\lambda_2} \right]+1$ et il y a un seul poids $\mu <\lambda_2$, à savoir $0$.
\\ \indent - $(iii)$ : on a $\left[ \Lambda^3 V_{\mathrm{St}} \right] = \left[ V_{\lambda_3} \right]+\left[ V_{\lambda_1} \right]$ et il y a un seul poids $\mu <\lambda_3$, à savoir $\lambda_1$.  
\end{proof}
\indent Pour $r=4$ (c'est-à-dire pour $\widehat{G}=\mathrm{Sp}_8 (\C)$), on a la proposition suivante :
\begin{prop} \label{GrossSO9} On reprend les mêmes notations, avec $r=4$. On considère les éléments $\mathrm{T}_A\in \mathrm{H}(G)$. On a alors les égalités suivantes :
\begin{center}
\begin{tabular}{rl}
$(i)$ & $p^{7/2} \left[ V_{\mathrm{St}}\right] = \mathrm{Sat}(\mathrm{T}_p) $\\
$(ii)$ & $p^6 \left[ \Lambda^2 V_{\mathrm{St}}\right] = \mathrm{Sat}(\mathrm{T}_{p,p})+ \left( p^6+p^4+p^2+1\right)$\\
$(iii)$ & $p^{15/2} \left[ \Lambda^3 V_{\mathrm{St}}\right] = \mathrm{Sat}(\mathrm{T}_{p,p,p})+\left( p^4+ p^2+1 \right)\cdot \mathrm{Sat}(\mathrm{T}_p)$\\
$(iv)$ & $p^{8} \left[ \Lambda^4 V_{\mathrm{St}}\right] = \mathrm{Sat}(\mathrm{T}_{p,p,p,p})+ \left( p^2+1 \right) \cdot \mathrm{Sat}(\mathrm{T}_{p,p})+\left( p^8+p^6+2\cdot  p^4+p^2+1 \right)$\\
\end{tabular}
\end{center}
\end{prop}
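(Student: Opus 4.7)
The plan is to mirror exactly the strategy carried out in the proof of Proposition \ref{GrossSO7} for the $r=3$ case, replacing $\mathrm{Sp}_6(\C)$ by $\mathrm{Sp}_8(\C)$ throughout. First, I would apply Proposition \ref{standard3} with $r=4$ to write the exterior powers of the standard representation as direct sums of irreducibles: $\Lambda^1 V_{\mathrm{St}} = V_{\lambda_1}$, $\Lambda^2 V_{\mathrm{St}} = V_{\lambda_2} \oplus V_0$, $\Lambda^3 V_{\mathrm{St}} = V_{\lambda_3} \oplus V_{\lambda_1}$, and $\Lambda^4 V_{\mathrm{St}} = V_{\lambda_4} \oplus V_{\lambda_2} \oplus V_0$. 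For each term, I compute $\langle \rho, \lambda_i\rangle$ with $\rho = \tfrac{7}{2}\varepsilon_1 + \tfrac{5}{2}\varepsilon_2 + \tfrac{3}{2}\varepsilon_3 + \tfrac{1}{2}\varepsilon_4$, obtaining the exponents $7/2$, $6$, $15/2$ and $8$ that appear on the left-hand sides.

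Second, I would apply the Gross formula from Proposition \ref{Gross} to each $V_{\lambda_i}$. This requires identifying, for each $i$, the set of dominant coweights $\mu \in X_+$ strictly less than $\lambda_i$ for the standard partial order coming from the simple coroots $\varepsilon_j^* - \varepsilon_{j+1}^*$ and $2\varepsilon_r^*$ of $\mathrm{Sp}_8$. An inspection of the coroot lattice gives: nothing below $\lambda_1$; only $\mu = 0$ below $\lambda_2$; only $\mu = \lambda_1$ below $\lambda_3$; and $\{0, \lambda_2\}$ below $\lambda_4$. These are precisely the correction terms that appear in the four identities to prove.

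Third, I would invoke the proposition identifying $c_{\lambda_i}$ with the Hecke operator $\mathrm{T}_{(\Z/p\Z)^i}$ (which is $\mathrm{T}_p$ for $i=1$, $\mathrm{T}_{p,p}$ for $i=2$, and so on), which allows me to replace each $\mathrm{Sat}(c_{\lambda_i})$ by the $\mathrm{Sat}(\mathrm{T}_A)$'s appearing in the statement. Assembling the pieces, $(i)$--$(iv)$ reduce to verifying the four equalities
\begin{equation*}
d_{\lambda_2}(0) = p^6+p^4+p^2+1,\quad d_{\lambda_3}(\lambda_1) = p^4+p^2+1,
\end{equation*}
\begin{equation*}
d_{\lambda_4}(\lambda_2) = p^2+1,\quad d_{\lambda_4}(0) = p^8+p^6+2p^4+p^2+1.
\end{equation*}

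The main obstacle is therefore the last step: the evaluation of the Lusztig--Kato coefficients $d_{\lambda_i}(\mu)$ via the explicit formula of Proposition \ref{Gross}, which involves summing over the Weyl group $\mathcal{W} = \mathcal{S}_4 \ltimes \{\pm 1\}^4$ (of order $384$) the signed values of Kostant's partition function $\widehat P$ evaluated at vectors $\sigma(\lambda_i + \rho^\vee) - (\mu + \rho^\vee)$. As in the $r=3$ case treated in Proposition \ref{GrossSO7}, rather than attempt these by hand I would perform the computation by computer, exactly as the author does there, and check that the four polynomial identities above hold. The shape of the correction terms (each $\mu$ satisfies $\dim V_{\lambda_i}(\mu) = 1$ for the first three, falling under part $(ii)$ of the Gross formula) gives a useful sanity check that several of the leading coefficients must equal~$1$.
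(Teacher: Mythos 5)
Your overall strategy is the same as the paper's: decompose $\Lambda^i V_{\mathrm{St}}$ via Proposition \ref{standard3}, identify the dominant coweights below each $\lambda_i$ (your list --- nothing below $\lambda_1$, only $0$ below $\lambda_2$, only $\lambda_1$ below $\lambda_3$, and $\{0,\lambda_2\}$ below $\lambda_4$ --- agrees with the paper), and feed everything into the Gross formula with the Lusztig--Kato coefficients evaluated by computer. The gap is in your final reduction: the four identities you propose to verify are false. You have equated the full correction terms of the proposition with the single coefficients $d_{\lambda_i}(\mu)$ attached to $V_{\lambda_i}$, forgetting that the lower summands of $\Lambda^i V_{\mathrm{St}}$ also contribute to the right-hand side once multiplied by $p^{\langle \rho,\lambda_i\rangle}$. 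For instance in $(ii)$ one has $p^6\left[\Lambda^2 V_{\mathrm{St}}\right]=p^6\left[V_{\lambda_2}\right]+p^6$ and $p^6\left[V_{\lambda_2}\right]=\mathrm{Sat}(\mathrm{T}_{p,p})+d_{\lambda_2}(0)$, so the identity to check is $d_{\lambda_2}(0)=p^4+p^2+1$, not $p^6+p^4+p^2+1$: the missing $p^6$ is the contribution of the trivial summand. Likewise in $(iii)$ the summand $V_{\lambda_1}$ contributes $p^4\,\mathrm{Sat}(\mathrm{T}_p)$, so the correct target is $d_{\lambda_3}(\lambda_1)=p^2+1$; and in $(iv)$ the summand $V_{\lambda_2}$ contributes $p^2\,\mathrm{Sat}(\mathrm{T}_{p,p})+p^6+p^4+p^2$ while the trivial summand contributes $p^8$, so the correct targets are $d_{\lambda_4}(\lambda_2)=1$ and $d_{\lambda_4}(0)=p^4+1$. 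As written, your computer evaluation of the Lusztig--Kato formula would return these smaller values and your verification would fail.

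A symptom of the error is already visible in your own sanity check: you assert that $\dim V_{\lambda_i}(\mu)=1$ for the first three pairs, which by part $(ii)$ of the Gross formula would force $d_{\lambda_i}(\mu)=1$, contradicting the values $p^6+p^4+p^2+1$ and $p^4+p^2+1$ you propose to verify. In fact $\dim V_{\lambda_2}(0)=3$, $\dim V_{\lambda_3}(\lambda_1)=2$, $\dim V_{\lambda_4}(\lambda_2)=1$ and $\dim V_{\lambda_4}(0)=2$, which is consistent with the corrected values above (each $d_{\lambda}(\mu)$ being a sum of $\dim V_{\lambda}(\mu)$ powers of $p$). Once the bookkeeping of the lower summands is repaired, the argument goes through exactly as in the paper.
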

\begin{proof}
La démonstration se fait comme précédemment. Il suffit de faire les constatations suivantes :
\\ \indent - $(i)$ : on a $\left[ V_{\mathrm{St}} \right] = \left[ V_{\lambda_1} \right]$ et il n'y a pas de poids $\mu <\lambda_1$.
\\ \indent - $(ii)$ : on a $\left[ \Lambda^2 V_{\mathrm{St}} \right] = \left[ V_{\lambda_2} \right]+1$ et il y a un seul poids $\mu <\lambda_2$, à savoir $0$.
\\ \indent - $(iii)$ : on a $\left[ \Lambda^3 V_{\mathrm{St}} \right] = \left[ V_{\lambda_3} \right]+\left[ V_{\lambda_1} \right]$ et il y a un seul poids $\mu <\lambda_3$, à savoir $\lambda_1$.
\\ \indent - $(iv)$ : on a $\left[ \Lambda^4 V_{\mathrm{St}} \right] = \left[ V_{\lambda_4} \right]+\left[ V_{\lambda_2} \right] +1 $ et il y a deux poids $\mu <\lambda_4$, à savoir $\lambda_2$ et $0$. 
\end{proof}
\subsection{Les formes automorphes et la paramétrisation de Langlands}\label{7.2}
\indent Soient $G$ un $\C$-groupe semi-simple et $\mathfrak{g}$ sa $\C$-algèbre de Lie. On note $\mathrm{U}(\mathfrak{g})$ son algèbre enveloppante, $\mathrm{Z}(\mathrm{U}(\mathfrak{g}))$ le centre de $\mathrm{U}(\mathfrak{g})$. Soient enfin $\widehat{G}$ le dual de Langlands de $G$, et $\widehat{\mathfrak{g}}$ sa $\C$-algèbre de Lie.
\\ \indent Suivant Harish-Chandra et Langlands, on rappelle dans la proposition suivante comment voir les caractères centraux de $\mathrm{U}(\mathfrak{g})$-modules comme des classes de conjugaison semi-simples dans $\widehat{\mathfrak{g}}$, grâce à l'isomorphisme de Harish-Chandra :
\begin{prop}[L'isomorphisme de Harish-Chandra] Avec les mêmes notations, on pose $\mathrm{Pol} (\widehat{\mathfrak{g}})$ la $\C$-algèbre des fonctions polynomiales sur $\widehat{\mathfrak{g}}$. Elle est munie d'une action naturelle de $\widehat{G}(\C)$, et on note $\mathrm{Pol}(\widehat{\mathfrak{g}})^{\widehat{G}}$ l'algèbre des invariants. L'isomorphisme de Harish-Chandra est un isomorphisme canonique :
$$\mathrm{HC}:\mathrm{Z}(\mathrm{U}(\mathfrak{g})) \overset{\sim}{\longrightarrow} \mathrm{Pol}(\widehat{\mathfrak{g}})^{\widehat{G}}.$$
\indent L'isomorphisme de Harish-Chandra induit une bijection canonique :
$$\mathrm{Hom}_{\C -\mathrm{cusp}} (\mathrm{Z}(\mathrm{U}(\mathfrak{g})), \C ) \overset{\sim}{\longrightarrow} \widehat{\mathfrak{g}}_{\mathrm{ss}},$$
où $\widehat{\mathfrak{g}}_{\mathrm{ss}}$ désigne l'ensemble des classes de conjugaisons d'éléments semi-simples de $\widehat{\mathfrak{g}}$.
\end{prop}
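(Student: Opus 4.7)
Le plan que je suivrais repose sur deux ingrédients classiques : la forme ``à la Cartan'' de l'isomorphisme de Harish-Chandra côté $\mathfrak{g}$, et le théorème de restriction de Chevalley côté $\widehat{\mathfrak{g}}$, recollés par la dualité de Langlands. Concrètement, je fixerais une sous-algèbre de Cartan $\mathfrak{h}\subset\mathfrak{g}$, un choix de racines positives donnant $\rho\in \mathfrak{h}^*$, et le groupe de Weyl $W$ correspondant. La version classique de l'isomorphisme (Dixmier, Humphreys) fournit, via la projection sur $S(\mathfrak{h})$ issue de la décomposition $\mathrm{U}(\mathfrak{g}) = \mathrm{U}(\mathfrak{h}) \oplus (\mathfrak{n}^- \mathrm{U}(\mathfrak{g}) + \mathrm{U}(\mathfrak{g}) \mathfrak{n}^+)$ suivie du décalage $t \mapsto t-\rho(t)$, un isomorphisme canonique
$$ \gamma : \mathrm{Z}(\mathrm{U}(\mathfrak{g})) \overset{\sim}{\longrightarrow} S(\mathfrak{h})^W = \mathrm{Pol}(\mathfrak{h}^*)^W. $$
Parallèlement, si $\widehat{\mathfrak{h}}\subset\widehat{\mathfrak{g}}$ désigne une sous-algèbre de Cartan et $\widehat{W}$ son groupe de Weyl, le théorème de restriction de Chevalley donne l'isomorphisme canonique $\mathrm{Pol}(\widehat{\mathfrak{g}})^{\widehat{G}} \overset{\sim}{\rightarrow} \mathrm{Pol}(\widehat{\mathfrak{h}})^{\widehat{W}}$.

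Le point charnière est alors d'identifier canoniquement $\mathrm{Pol}(\mathfrak{h}^*)^W$ à $\mathrm{Pol}(\widehat{\mathfrak{h}})^{\widehat{W}}$. Par définition même du groupe dual, la donnée radicielle basée $\Psi(\widehat{G},\widehat{T},\widehat{B})$ est obtenue de $\Psi(G,T,B)$ par échange des caractères et des cocaractères, et des racines et des coracines. On obtient ainsi une identification canonique $\mathfrak{h}^* \simeq X^*(T)\otimes\C \simeq X_*(\widehat{T})\otimes \C \simeq \widehat{\mathfrak{h}}$, qui échange l'action de $W$ et celle de $\widehat{W}$. En composant les trois isomorphismes ci-dessus, je déduirais l'existence de l'isomorphisme $\mathrm{HC}$ annoncé.

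Pour la seconde partie, j'utiliserais le résultat standard (Chevalley, Kostant) selon lequel le morphisme de quotient $\widehat{\mathfrak{g}} \to \widehat{\mathfrak{g}}/\!/\widehat{G} = \mathrm{Spec}\,\mathrm{Pol}(\widehat{\mathfrak{g}})^{\widehat{G}}$ sépare les orbites semi-simples, ce qui fournit une bijection entre $\widehat{\mathfrak{g}}_{\mathrm{ss}}$ et $\mathrm{Hom}_{\C\text{-alg}}(\mathrm{Pol}(\widehat{\mathfrak{g}})^{\widehat{G}},\C)$. Combinée avec $\mathrm{HC}$, cela donne la bijection cherchée. Alternativement, passant par $\widehat{\mathfrak{h}}$, on retrouve la description bien connue $\widehat{\mathfrak{g}}_{\mathrm{ss}} \simeq \widehat{\mathfrak{h}}/\widehat{W}$.

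La principale subtilité ne réside pas dans la compilation de ces ingrédients (qui sont tous standards), mais dans le contrôle soigneux de la canonicité. Il faudra notamment vérifier que le décalage par $\rho$ — qui dépend a priori du choix de $\mathfrak{h}$ et d'un système de racines positives — produit bien un isomorphisme $\mathrm{HC}$ indépendant de ces choix ; et surtout, que la dualité $\mathfrak{h}^*\simeq\widehat{\mathfrak{h}}$ utilisée est bien celle provenant de la donnée radicielle (et non une identification ad hoc via une forme bilinéaire invariante, qui ne serait pas canonique sur un groupe réductif général). C'est précisément cette canonicité qui garantit que les caractères infinitésimaux des $\mathfrak{g}$-modules pourront être vus sans ambiguïté comme des classes de conjugaison dans $\widehat{\mathfrak{g}}$, propriété essentielle pour les applications ultérieures à la paramétrisation de Langlands.
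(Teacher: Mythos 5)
Votre plan est correct et complet dans ses grandes lignes ; notez toutefois que le texte ne démontre pas cette proposition : il renvoie simplement à \cite{Lan}, \S 2. Votre esquisse reconstitue précisément l'argument standard qui sous-tend cette référence : l'isomorphisme de Harish--Chandra classique $\mathrm{Z}(\mathrm{U}(\mathfrak{g})) \simeq S(\mathfrak{h})^W$ (via la décomposition triangulaire et le décalage par $\rho$), le théorème de restriction de Chevalley $\mathrm{Pol}(\widehat{\mathfrak{g}})^{\widehat{G}} \simeq \mathrm{Pol}(\widehat{\mathfrak{h}})^{\widehat{W}}$, et l'identification $\mathfrak{h}^* \simeq \widehat{\mathfrak{h}}$ issue de la donnée radicielle, puis, pour la seconde assertion, la séparation des orbites semi-simples par les polynômes invariants. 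Les deux points de canonicité que vous signalez sont exactement les bons : l'indépendance du décalage par $\rho$ vis-à-vis du choix du système positif résulte de la $W$-équivariance de l'application tordue (Dixmier, 7.4), et c'est bien l'identification via $X^*(T)\otimes\C \simeq X_*(\widehat{T})\otimes\C$, et non une forme bilinéaire invariante, qui rend la construction canonique et permet d'interpréter les caractères infinitésimaux comme des classes de conjugaison dans $\widehat{\mathfrak{g}}_{\mathrm{ss}}$. Seule remarque mineure : la notation $\mathrm{Hom}_{\C-\mathrm{cusp}}$ de l'énoncé désigne bien les homomorphismes de $\C$-algèbres, comme vous l'avez implicitement supposé.
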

\begin{proof}
voir \cite[\S 2]{Lan}. 
\end{proof}
\noindent{Exemple :} Soient $\lambda \in X_+$ un poids dominant de $G$, et $V_\lambda$ la $\C$-représentation irréductible de $G$ de plus haut poids $\lambda$. Cette représentation munit $V_\lambda$ d'une structure de $\mathrm{U}(\mathfrak{g})$-module. Ce module est simple et admet un caractère central. La classe de conjugaison dans $\widehat{\mathfrak{g}}_{\mathrm{ss}}$ qui correspond à ce caractère est la classe de conjugaison de $\lambda + \rho$ (où $\rho$ est la demi-somme des racines positives de $G$).
\\ \\ \indent On définit comme suit les différents ensembles de représentations automorphes :
\begin{defi} Soit $G$ un $\Z$-groupe semi-simple. On définit une représentation du couple $(G(\R), \mathrm{H}(G))$ comme la donnée d'un espace de Hilbert muni d'une représentation unitaire de $G(\R)$, et d'une structure de module à droite sur $\mathrm{H}(G)$ commutant à l'action de $G(\R)$.
\\ \indent On note alors $\Pi (G)$ l'ensemble des classes d'isomorphisme de représentations du couple $(G(\R), \mathrm{H}(G))$ de la forme $\pi = \pi_\infty \otimes \pi_V$, où $\pi_\infty$ est une représentation unitaire irréductible de $G(\R)$, et $\pi_V$ est une représentation irréductible complexe de dimension finie de $\mathrm{H}(G)^\mathrm{opp}$ (donc de dimension $1$).
\\ \indent On note aussi $\Pi_{\mathrm{disc}}(G)$ l'ensemble des représentations automorphes $\pi$ discrètes de $G$ telles que $\pi_p^{G(\Z_p)} \neq 0$ (le sous-espace des invariants de $\pi_p$ sous l'action de $G(\Z_p)$) pour tout $p$ premier. L'application $\pi\in\Pi_{\mathrm{disc}}(G) \mapsto \pi^{G(\widehat{\Z})}$ (le sous-espace des invariants de $\pi$ sous l'action de $G(\widehat{\Z})$) réalise $\Pi_{\mathrm{disc}}(G)$ comme un sous-ensemble de $\Pi (G)$. On note enfin $\Pi_{\mathrm{cusp}}(G)$ le sous-ensemble de $\Pi_{\mathrm{disc}}(G)$ des représentations automorphes cuspidales.
\end{defi}
\indent Si l'on se donne $H$ un $\C$-groupe, on peut lui associer un ensemble $\mathcal{X}(H)$ de collections de classes de conjugaison semi-simples comme suit :
\begin{defi} Soit $H$ un $\C$-groupe semi-simple, et $\mathfrak{h}$ son algèbre de Lie complexe. On note $H(\C)_\mathrm{ss}$ et $\mathfrak{h}_\mathrm{ss}$ les classes de $H(\C)$-conjugaison d'éléments semi-simples respectivement de $H (\C)$ et de $\mathfrak{h}$.
\\ \indent On note alors $\mathcal{X}(H)$ l'ensemble des familles $(\mathrm{c}_v)_{v\in P\cup \{\infty \}}$, où $\mathrm{c}_\infty \in \mathfrak{h}_\mathrm{ss}$ et $\mathrm{c}_p\in H(\C)_\mathrm{ss}$ pour tout $p\in P$.
\\ \indent Si on possède un morphisme de $\C$-groupe $r:H\rightarrow H'$, on note encore par $r$ l'application définie de $\mathcal{X}(H)$ dans $\mathcal{X}(H')$ envoyant $(\mathrm{c}_v)$ sur $(r(\mathrm{c}_v))$.
\end{defi}
\indent Suivant Langlands dans \cite{Lan}, on possède une application $\Pi (G)\rightarrow \mathcal{X}(\widehat{G})$ : 
\begin{propdef}[Paramétrisation de Langlands] On dispose d'une application canonique :
$$\begin{array}{rrcl}
	c:&\Pi (G) & \rightarrow & \mathcal{X}(\widehat{G}) \\
	& \pi & \mapsto & \left( \mathrm{c}_v (\pi) \right) \\
\end{array}$$
définie comme suit. Si $\pi=\pi_\infty \otimes \pi_V \in \Pi (G)$, on pose $\mathrm{c}_\infty (\pi)$ le caractère infinitésimal de $\pi_\infty$ (d'après Harish-Chandra). De plus, l'isomorphisme de Satake entraîne que $\mathrm{H}(G)$ est commutative, donc $\pi_V$ est de dimension $1$ et peut être vue comme un homomorphisme d'anneaux de $\mathrm{H}(G)$ dans $\C$. Sa restriction à $\mathrm{H}_p(G)$ est associée par la proposition \ref{tr o Sat} à un unique élément $\mathrm{c}_p (\pi) \in \widehat{G}(\C)_\mathrm{ss}$.
\\ \indent L'application $c$ ainsi définie est à fibres finies.
\end{propdef}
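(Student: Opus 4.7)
Le plan consiste à vérifier séparément la bonne définition de $c$ puis le caractère fini de ses fibres. La bonne définition se déduit directement des constructions rappelées juste avant l'énoncé : l'isomorphisme de Harish-Chandra associe canoniquement à $\pi_\infty$ un élément $\mathrm{c}_\infty (\pi) \in \widehat{\mathfrak{g}}_{\mathrm{ss}}$, tandis que la commutativité de $\mathrm{H}(G)$ (conséquence de l'isomorphisme de Satake) force $\pi_V$ à être de dimension $1$ et donc à s'identifier à un homomorphisme d'anneaux $\mathrm{H}(G)\rightarrow \C$ ; la proposition \ref{tr o Sat} appliquée à la restriction à chaque $\mathrm{H}_p(G)$ de cet homomorphisme fournit alors l'unique élément $\mathrm{c}_p(\pi)\in \widehat{G}(\C)_{\mathrm{ss}}$ attendu.

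Pour le caractère fini des fibres, on fixera $(\mathrm{c}_v)\in \mathcal{X}(\widehat{G})$ et on analysera séparément les composantes $\pi_\infty$ et $\pi_V$ d'un élément $\pi = \pi_\infty\otimes \pi_V$ de la fibre. Du côté des places finies, la proposition-définition \ref{Hp(G)} donne l'isomorphisme $\bigotimes_{p\in P}\mathrm{H}_p(G)\overset{\sim}{\rightarrow} \mathrm{H}(G)$, de sorte qu'un homomorphisme d'anneaux $\mathrm{H}(G)\rightarrow \C$ est entièrement déterminé par la famille de ses restrictions aux $\mathrm{H}_p(G)$ ; la proposition \ref{tr o Sat} identifie chacune de ces restrictions à l'élément $\mathrm{c}_p(\pi)$, si bien que $\pi_V$ est uniquement déterminé à isomorphisme près par la donnée de $(\mathrm{c}_p)_{p\in P}$.

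Du côté archimédien, il restera à voir que le nombre de représentations unitaires irréductibles de $G(\R)$ de caractère infinitésimal fixé est fini. On invoquera ici le théorème classique de finitude d'Harish-Chandra selon lequel l'ensemble des classes d'équivalence de $(\mathfrak{g},K)$-modules admissibles irréductibles ayant un caractère infinitésimal donné est fini ; comme les $\pi_\infty$ considérées sont toutes admissibles, leur nombre est a fortiori fini. La combinaison de ces deux analyses conclut à la finitude de la fibre de $c$ au-dessus de $(\mathrm{c}_v)$. L'étape principale à détailler sera le recours à ce théorème de finitude d'Harish-Chandra, les autres se réduisant à enchaîner les bijections déjà établies aux propositions \ref{tr o Sat} et \ref{Hp(G)}.
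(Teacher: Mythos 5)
Votre argument est correct : le texte ne donne ici aucune démonstration et renvoie simplement à \cite[ch. VI, \S 4.2]{CL}, dont votre rédaction reproduit l'argument standard (unicité de $\pi_V$ via la décomposition $\bigotimes_p \mathrm{H}_p(G)\simeq \mathrm{H}(G)$ et la bijection de la proposition \ref{tr o Sat}, puis finitude du nombre de $\pi_\infty$ de caractère infinitésimal donné par le théorème de finitude de Harish-Chandra, les représentations unitaires irréductibles étant admissibles). Vous identifiez correctement le seul point non formel, à savoir ce théorème de finitude ; rien à redire.
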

\begin{proof}
voir \cite[ch. VI, \S 4.2]{CL}. 
\end{proof}
\indent En particulier, le caractère $\mathrm{c}_\infty (\pi)$ nous permet de définir la notion de représentation algébrique ou autoduale dans le cas où $G$ est le groupe $\mathrm{PGL}_n$ :
\begin{defi}[Représentations algébriques] Soit $\pi\in \Pi_{\mathrm{cusp}}(\mathrm{PGL}_n)$. Les valeurs propres du caractère $\mathrm{c}_\infty (\pi)$ sont appelés les poids de $\pi$.
\\ \indent Une représentation $\pi\in \Pi_{\mathrm{cusp}}(\mathrm{PGL}_n)$ est dite algébrique si ses poids sont des demi-entiers dont les différences deux à deux sont des entiers. On note $\Pi_{\mathrm{alg}}(\mathrm{PGL}_n)$ le sous-ensemble de $\Pi_{\mathrm{cusp}}(\mathrm{PGL}_n)$ des représentations algébriques.
\end{defi}
\begin{defi}[Représentations autoduales] Soit $\pi\in \Pi(\mathrm{PGL}_n)$. On dit que la représentation $\pi$ est autoduale si elle est isomorphe à sa contragrédiente.
\\ \indent On note respectivement $\Pi^\bot(\mathrm{PGL}_n)$, $\Pi^\bot_{\mathrm{disc}}(\mathrm{PGL}_n)$, $\Pi^\bot_{\mathrm{cusp}}(\mathrm{PGL}_n)$ et $\Pi^\bot_{\mathrm{alg}}(\mathrm{PGL}_n)$ les sous-ensembles de $\Pi(\mathrm{PGL}_n)$, $\Pi_{\mathrm{disc}}(\mathrm{PGL}_n)$, $\Pi_{\mathrm{cusp}}(\mathrm{PGL}_n)$ et $\Pi_{\mathrm{alg}}(\mathrm{PGL}_n)$ constitués des représentations autoduales.
\end{defi}
\indent Suivant \cite[ch. IV, \S 3.2]{CL} par exemple, si l'on se donne une forme automorphe propre pour tous les opérateurs de Hecke, on peut lui associer une représentation automorphe discrète : il s'agit de la représentation automorphe engendrée par la forme automorphe propre considérée. Les formules de Gross et l'isomorphisme d'Harish-Chandra nous permettent de relier les valeurs propres associés aux opérateurs de Hecke d'une forme automorphe propre avec les paramètres de Satake de la représentation automorphe qu'elle engendre. On détaille ci-dessous ce lien dans le cas des représentations automorphes pour $\mathrm{SO}_8$.
\\ \\ \noindent{Exemple :} Soient $G=\mathrm{SO}_8$, $\lambda =\sum_{i=1}^4 m_i \varepsilon_i$ (suivant les notations adoptées précédemment dans le cas du groupe spécial orthogonal en dimension paire, avec $r=4$), et $W=V_\lambda$ la représentation irréductible de $\mathrm{SO}_8(\C)$ de plus haut poids $\lambda$. Soient $f\in \mathcal{M}_W(\mathrm{SO}_8)$ une forme propre, et $\pi \in \Pi_{\mathrm{disc}} (\mathrm{SO}_8)$ la représentation engendrée.
\\ \indent Le caractère infinitésimal $\mathrm{c}_\infty(\pi)$ correspond à la classe de conjugaison de $\lambda+\rho$, et ses valeurs propres dans la représentation standard sont les 
$$\pm (m_i +4-i),\ i=1,\dots ,4.$$
\indent Posons de plus : $\mathrm{T}_p (f) = \lambda_p f,\dots,  \mathrm{T}_{p,\dots ,p} (f) =\lambda_{p,\dots ,p} f$. Alors on a les relations :
\begin{center}
\begin{tabular}{rl}
$(i)$ & $p^3 \mathrm{Trace}\left( \mathrm{c}_p(\pi) \vert V_{\mathrm{St}}\right) = \lambda_p$\\
$(ii)$ & $p^5 \mathrm{Trace}\left( \mathrm{c}_p(\pi) \vert  \Lambda^2 V_{\mathrm{St}}\right) = \lambda_{p,p}+\left( p^4+2\cdot p^2+1\right)$\\
$(iii)$ & $p^6 \mathrm{Trace}\left( \mathrm{c}_p(\pi) \vert  \Lambda^3 V_{\mathrm{St}}\right) = \lambda_{p,p,p}+\left( p^2+p+1\right)\cdot \lambda_p$\\
$(iv)$ & $p^6 \mathrm{Trace}\left( \mathrm{c}_p(\pi) \vert  \Lambda^4 V_{\mathrm{St}}\right) = \lambda_{p,p,p,p}+2\cdot \lambda_{p,p} + 2\cdot \left(p^4+p^2+1 \right) $\\
\end{tabular}
\end{center}
qui découlent directement de la proposition \ref{GrossSO8}.
\\ \indent 
\subsection{La conjecture d'Arthur-Langlands}\label{7.3}
\indent Afin de formuler facilement la conjecture d'Arthur-Langlands, commençons par définir la notion de paramètre de Langlands :
\begin{defi}Soit $G$ un $\Z$-groupe semi-simple, et $r:\widehat{G} \rightarrow \mathrm{SL}_n$ une $\C$ représentation. Cette représentation induit une application $\mathcal{X}(\widehat{G}) \rightarrow \mathcal{X}(\mathrm{SL}_n)$, $(\mathrm{c}_v)\mapsto \left( r(\mathrm{c}_v) \right)$. Si $\pi \in \Pi (G)$, on lui associe l'élément :
$$\psi (\pi ,r) = r\left( c\left( \pi \right) \right) \in \mathcal{X}(\mathrm{SL}_n)$$
appelé paramètre de Langlands-Satake du couple $(\pi ,r)$.
\end{defi}
\indent Dans la suite, on reprend les notations de \cite[\S IV.4]{CL}, qui sont les suivantes :
\\ \indent - On note $\mathrm{St}_m$ la $\C$-représentation tautologique de $\mathrm{SL}_m$ sur $\C^m$. Pour $a$ et $b$ deux entiers, la somme directe et le produit tensoriel des représentations $\mathrm{St}_a$ et $\mathrm{St}_b$ nous donnent les applications naturelles :
$$\mathcal{X}\left( \mathrm{SL}_a \right) \times \mathcal{X}\left( \mathrm{SL}_b \right) \rightarrow \mathcal{X}\left( \mathrm{SL}_{a+b} \right) \indent \text{et} \indent \mathcal{X}\left( \mathrm{SL}_a \right) \times \mathcal{X}\left( \mathrm{SL}_b \right) \rightarrow \mathcal{X}\left( \mathrm{SL}_{ab} \right)$$
que l'on note respectivement $(c,c')\mapsto c\oplus c'$ et $(c,c') \mapsto c\otimes c'$.
\\ \indent - On note $e\in \mathcal{X}\left( \mathrm{SL}_2 \right)$ l'élément défini par :
$$e_p = \left[ \begin{matrix}
p^{-1/2} & 0 \\
0 & p^{1/2} \\
\end{matrix} \right]
\forall p\in P,\indent \text{et} \indent e_\infty = \left[ \begin{matrix}
-\frac{1}{2} & 0 \\
0 & \frac{1}{2}\\
\end{matrix}
\right].$$
Pour tout entier $d\geq 1$, on note $\left[ d\right]$ l'élément $\mathrm{Sym}^{d-1} (e) \in \mathcal{X}\left( \mathrm{SL}_d \right)$, où $\mathrm{Sym}^{d-1}$ désigne la représentation $\mathrm{Sym}^{d-1} \mathrm{St}_2$. Pour $m,d \geq 1$ entiers, et $c\in \mathcal{X}\left( \mathrm{SL}_m \right)$, on pose pour simplifier :
$$c\left[ d\right] = c\otimes \left[ d\right].$$
\indent - Pour $\pi\in \Pi_{\mathrm{cusp}} \left( \mathrm{PGL}_m \right)$, on note simplement $\pi$ pour désigner l'élément $c(\pi)\in \mathcal{X}\left( \mathrm{SL}_m \right)$.
\\ \\ \indent Avec les notations précédentes, si l'on se donne $n_1,\dots,n_k, d_1,\dots ,d_k$ des entiers naturels non nuls, et $\pi_i \in \Pi_{\mathrm{cusp}} \left( \mathrm{PGL} _{n_i} \right)$ pour tout $i\in \{1,\dots ,k\}$, en posant $n=\sum_{i=1}^k n_i d_i$, on dispose d'un élément bien défini :
$$\pi_1 \left[ d_1 \right] \oplus \pi_2 \left[ d_2 \right] \oplus \dots \oplus \pi_k \left[ d_k \right] \in \mathcal{X}\left( \mathrm{SL}_n \right).$$
\indent On définit alors $\mathcal{X}_\mathrm{AL}\left( \mathrm{SL}_n \right)$ comme suit :
\begin{propdef} On pose $\mathcal{X}_\mathrm{AL}\left( \mathrm{SL}_n \right)$ le sous-ensemble de $\mathcal{X}\left( \mathrm{SL}_n \right)$ des éléments la forme $\pi_1 \left[ d_1 \right] \oplus \pi_2 \left[ d_2 \right] \oplus \dots \oplus \pi_k \left[ d_k \right]$, pour un quadruplet $(k,(n_i),(d_i),(\pi_i))$ tel que $n=\sum_{i=1}^k n_i d_i$.
\\ \indent Si on se donne deux écritures $\oplus _{i=1}^k \pi_i [d_i] = \oplus_{j=1}^l \pi'_j [d'_j]$ dans $\mathcal{X}\left( \mathrm{SL}_n \right)$, alors $k=l$ et il existe une permutation $\sigma \in \mathcal{S}_k$ telle que $(\pi'_i,d'_i) = (\pi_{\sigma (i)},d_{\sigma (i)})$.
\end{propdef}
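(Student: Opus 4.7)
Le plan est de suivre l'argument classique, également présenté en détail dans \cite[ch. VI]{CL} : l'unicité résulte de la théorie des fonctions $L$ de Rankin-Selberg de Jacquet-Piatetski-Shapiro-Shalika, et essentiellement du théorème de multiplicité un fort pour les représentations automorphes cuspidales des groupes linéaires.

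Première étape : je rappellerais que pour $\pi \in \Pi_{\mathrm{cusp}}(\mathrm{PGL}_a)$ et $\pi' \in \Pi_{\mathrm{cusp}}(\mathrm{PGL}_b)$, la fonction $L(s,\pi\times {\pi'}^\vee)$, définie à partir des paramètres de Satake locaux (aux places non ramifiées, les facteurs ramifiés et archimédien étant traités à part), admet un prolongement méromorphe au plan complexe, et possède un pôle (simple) en $s=1$ si et seulement si $a=b$ et $\pi\simeq \pi'$. C'est l'énoncé crucial dont découle toute l'unicité.

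Deuxième étape : étant donnés deux paramètres $c=\oplus_{i=1}^k \pi_i[d_i]$ et $c'=\oplus_{j=1}^l \pi'_j[d'_j]$ dans $\mathcal{X}_{\mathrm{AL}}(\mathrm{SL}_n)$, je définirais formellement, à partir des collections de classes de conjugaison semi-simples aux places finies, la fonction $L(s, c\times {c'}^\vee)$. Celle-ci se factorise en un produit des fonctions $L(s, \pi_i[d_i] \times \pi'_j[d'_j]^\vee)$, lesquelles se décomposent à leur tour, grâce à l'identité $[d_i]\otimes [d'_j]^\vee = \bigoplus_{r} [\,d_i+d'_j-1-2r\,]$ (identité classique pour $\mathrm{Sym}^{d-1}\mathrm{St}_2$), en produits de fonctions $L$ de Rankin-Selberg cuspidales décalées $L(s+u, \pi_i \times {\pi'_j}^\vee)$, $u$ parcourant certains demi-entiers explicites.

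Troisième étape : l'idée centrale est alors, en fixant une paire $(\pi, d)$ apparaissant (disons) dans la première écriture, d'étudier les pôles en $s=1$ de la fonction $L(s, c\times \pi[d]^\vee)$ (qui est égale à $L(s, c'\times \pi[d]^\vee)$ puisque $c=c'$). La première étape garantit que chaque facteur cuspidal décalé contribue au plus un pôle simple, et l'ordre total du pôle en $s=1$ compte précisément la multiplicité de la paire $(\pi, d)$ dans chacune des deux décompositions. Comme les deux expressions coïncident, ces multiplicités sont égales, d'où l'existence d'une permutation $\sigma$ réalisant l'identification.

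Le principal obstacle technique est l'analyse fine des pôles dans la troisième étape : il faut garantir que les pôles en $s=1$ de $L(s, \pi_i[d_i]\times \pi'_j[d'_j]^\vee)$ ne proviennent que des facteurs où $\pi_i\simeq \pi'_j$ et où le décalage est exactement celui attendu, ce qui requiert que $L(s, \pi\times {\pi'}^\vee)$ n'ait pas de pôle sur la droite $\mathrm{Re}(s)=1$ en dehors de $s=1$ lui-même (propriété connue depuis Jacquet-Shalika). Une fois cette analyse établie, le comptage des pôles est purement combinatoire et fournit l'unicité du multiensemble $\{(\pi_i, d_i)\}$.
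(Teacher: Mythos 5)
Le texte ne donne pas de démonstration : il renvoie à \cite{JS81b} et \cite{Lan79}, dont votre proposition est précisément une reconstruction fidèle (c'est aussi l'argument présenté dans \cite[ch. VI]{CL}) ; l'approche est donc la même et le schéma est correct. Une seule précision s'impose dans votre troisième étape : l'ordre du pôle en $s=1$ de $L(s, c\times \pi[d]^\vee)$ n'est pas littéralement la multiplicité de la paire $(\pi,d)$, mais la somme $\sum_i \min (d_i,d)$ étendue aux indices $i$ tels que $\pi_i\simeq \pi$ et $d_i\equiv d \bmod 2$ (seuls les facteurs $[e]$ avec $e$ impair dans $[d_i]\otimes[d]$ fournissent le décalage nul) ; cette fonction de $d$ détermine néanmoins le multiensemble des $d_i$ attachés à chaque $\pi$, de sorte que votre conclusion combinatoire reste valable.
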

\begin{proof}
voir \cite{JS81b} et \cite{Lan79}. 
\end{proof}
\indent Suivant ces notations, on dira qu'un élément $\pi$ de $\mathcal{X}_\mathrm{AL}\left( \mathrm{SL}_n \right)$ est non endoscopique si son quadruplet $(k,(n_i),(d_i),(\pi_i))$ vérifie $k=1$ et $d_1=1$.
\\ \\ \indent Avec ces notations, on a la conjecture suivante :
\begin{conj}[Conjecture d'Arthur-Langlands] Soient $G$ un $\Z$-groupe semi-simple et $r: \widehat{G}\rightarrow \mathrm{SL}_n$ une $\C$-représentation. Si $\pi \in \Pi_{\mathrm{disc}} (G)$, alors $\psi (\pi,r) \in \mathcal{X}_{\mathrm{AL}}\left( \mathrm{SL}_n \right)$.
\end{conj}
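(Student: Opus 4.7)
The plan is to establish this conjecture case by case according to the nature of $G$, following the general strategy initiated by Arthur in \cite{Art13} for quasi-split classical groups. A first step is to reduce to the situation where $G$ is quasi-split and $r$ is one of the standard representations, so that $(G,r)$ is one of $(\mathrm{SO}_{2n+1},\mathrm{St})$, $(\mathrm{Sp}_{2n},\mathrm{St})$ or $(\mathrm{SO}_{2n},\mathrm{St})$. For a more general pair $(G,r)$, one hopes to appeal to Langlands functoriality to transfer $\pi$ along $r$ to a self-dual automorphic representation of $\mathrm{GL}_n$, and then deduce the Arthur-Langlands decomposition from Jacquet-Shalika rigidity \cite{JS81b} combined with the classification of the discrete spectrum of $\mathrm{GL}_n$ \cite{Lan79}, which already gives every element of $\Pi_{\mathrm{disc}}(\mathrm{PGL}_n)$ in the form $\pi_1[d_1]\oplus \cdots \oplus \pi_k[d_k]$.

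The core technical tool in the classical case is the stable trace formula for $G$, compared via endoscopic transfer with the twisted trace formula for $\mathrm{GL}_n$ relative to the outer involution $g \mapsto {}^t g^{-1}$. Concretely, one expands the discrete part of the spectral side of the stable trace formula for $G$ as a sum of endoscopic contributions indexed by pairs $(H,\sigma)$, where $H$ is an elliptic endoscopic group and $\sigma$ is a discrete automorphic representation of $H$ that produces $\pi$ after endoscopic transfer. The twisted trace formula then identifies the self-dual cuspidal representations of $\mathrm{GL}_n$ and groups them into Arthur packets indexed precisely by the formal sums appearing in the definition of $\mathcal{X}_{\mathrm{AL}}(\mathrm{SL}_n)$. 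Assembling these two sides and matching Satake parameters place by place yields the required decomposition for $\psi(\pi,r)$. The recent works of Waldspurger \cite{Wal14}, Kaletha \cite{Kal}, Taïbi \cite{Tai16} and Arancibia-Moeglin-Renard \cite{AMR} complete this program unconditionally in the split classical setting, which covers the cases $G = \mathrm{SO}_n$, $n \in \{7,8,9\}$, used in this article.

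The hardest part of the argument lies outside the classical case: for a general semi-simple $G$, and in particular for exceptional groups, the required functorial transfer from $G$ to a classical group or to $\mathrm{GL}_n$ is still conjectural and is known only in scattered situations. A second serious difficulty, already present for classical $G$, is that the factor $[d] = \mathrm{Sym}^{d-1}(e)$ built into the definition of $\mathcal{X}_{\mathrm{AL}}(\mathrm{SL}_n)$ implicitly encodes the generalized Ramanujan conjecture for its cuspidal constituents $\pi_i$, since the components $e_p$ have eigenvalues of absolute value $1$; one must therefore either restrict to the algebraic regular range, where purity of the associated Galois representations supplies Ramanujan, or weaken the statement by allowing $\pi_i \in \Pi_{\mathrm{disc}}(\mathrm{PGL}_{n_i})$ rather than $\Pi_{\mathrm{cusp}}(\mathrm{PGL}_{n_i})$. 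A final obstacle is local-global compatibility at the archimedean place: one must show that the infinitesimal character $\mathrm{c}_\infty(\pi)$ matches $r$ applied to the sum of the archimedean parameters of the $\pi_i[d_i]$, which in turn requires the archimedean local Langlands correspondence in a form compatible with the global endoscopic construction.
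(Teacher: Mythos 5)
The statement you were asked to prove is a conjecture: the paper offers no proof of it in general, and only records, as a separate theorem whose ``proof'' is a list of citations (Arthur, Shelstad, Waldspurger, Ngô, Chaudouard--Laumon, Mezo, Moeglin, then Taïbi, Kaletha and Arancibia--Moeglin--Renard), the special case $G=\mathrm{SO}_n$, $r=V_{\mathrm{St}}$ that it actually uses — which is precisely the case your sketch settles, by the same endoscopic comparison with the twisted trace formula for $\mathrm{GL}_n$ and the same references, while correctly flagging that the general case (in particular the functorial transfer for exceptional $G$ or non-standard $r$) remains open. Your remarks on the remaining difficulties are apt, except that the Ramanujan issue is not an obstruction to the statement as formulated: the definition of $\mathcal{X}_{\mathrm{AL}}(\mathrm{SL}_n)$ only requires the constituents $\pi_i$ to be cuspidal and asserts nothing about their temperedness, so no purity input is needed to make sense of, or to prove, the decomposition $\psi(\pi,r)=\bigoplus_i \pi_i[d_i]$.
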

\begin{theo} La conjecture d'Arthur-Langlands est vraie pour $G=\mathrm{SO}_n$ et $r=V_{\mathrm{St}}$.
\end{theo}
\begin{proof}
ce théorème est le produit de travaux de nombreux auteurs (Arthur \cite{Art13}, Langlands, Kottwitz, Shelstad \cite{She12a} \cite{She12b} \cite{She14}, Waldspurger \cite{Wal06} \cite{Wal14} \cite{MW14} \cite{LW13}, Ngô \cite{Ngo10}, Laumon, Chaudouard \cite{ChaL10} \cite{ChaL12}, Moeglin, Mezo \cite{Mez11} \cite{Mez13}), culminant par les travaux récents d'Arthur et Waldspurger.
\\ \indent L'énoncé ci-dessus, concernant $\mathrm{SO}_n$, est dû à Taïbi \cite{Tai16} et repose sur \cite{Art89} et les travaux de Kaletha \cite{Kal} et Arancibia-Moeglin-Renard \cite{AMR}. 
\end{proof}
\indent Lorsque $G$ est classique et que $r$ est la représentation standard $V_{\mathrm{St}}$ de $\widehat{G}$, pour tout élément $\pi \in \Pi_{\mathrm{disc}}(G)$ on a une égalité de la forme :
$$\psi (\pi,\mathrm{St}) = \bigoplus \pi_i [d_i]$$
où les $\pi$ sont autoduales. Si $G=\mathrm{SO}_n$, on constate sur les caractère infinitésimaux qu'elles sont algébriques, de sortes que les $\pi_i$ sont des éléments de $\Pi_{\mathrm{alg}}^\bot(\mathrm{PGL}_{n_i})$. De plus, on appellera ``poids de $\pi$" les valeurs propres du caractère infinitésimal de $\psi(\pi,\mathrm{St})$.
\\ \indent L'étude faite dans \cite{CR} a pour but de déterminer pour $n=7,8$ ou $9$ comment s'exprime tout élément de  $\Pi_{\mathrm{disc}}(\mathrm{SO}_n)$ grâce aux éléments de $\Pi_{\mathrm{alg}}^\bot(\mathrm{PGL}_m)$. Plus précisément, donnons-nous un tel $n$ et posons $m=[n/2]$, et posons $\overline{w}=(w_1,\dots,w_m)$ avec $w_1>\dots >w_m\geq 0$ des entiers positifs ou nuls de même parité que $n$. On désigne par $\Pi_{\overline{w}}(\mathrm{SO}_n)$ l'ensemble des éléments de $\Pi_{\mathrm{disc}}(\mathrm{SO}_n)$ dont les poids sont les $\pm \frac{w_i}{2}$. Alors les résultats de \cite{CR} permettent de calculer le cardinal de $\Pi_{\overline{w}}(\mathrm{SO}_n)$, et d'exprimer pour chacun de ses éléments le quadruplet $(k,(n_i),(d_i),(\pi_i))$ associé par la conjecture d'Arthur-Langlands. Soulignons au passage que les résultats conditionnels de \cite{CR} (les énoncés Theorem$^*$ et Theorem$^{**}$) sont maintenant inconditionnels grâce aux résultats récents des auteurs cités ci-dessus (notamment \cite{Tai16}).
\\ \indent On renvoie à \cite[Tables 12 à 14]{CR} pour une liste des représentations ainsi décrites dans les cas où les $w_i$ sont impairs, avec $n=7$ ou $9$, pour certaines valeurs de $\overline{w}$. La méthode énoncée dans \cite[chapitres 5, 6 et 7]{CR} nous permet de trouver l'ensemble des représentations de poids $\pm \frac{w_i}{2}$ dans les autres cas. On donne dans la table \ref{SO9} ci-dessous la décomposition de tous les éléments de $\Pi_{(w_1,w_2,w_3,w_4)}(\mathrm{SO}_9)$, où les $w_i$ sont des entiers impairs tels que $25=w_1 >w_2>w_3>w_4>0$, choisis de telle sorte qu'il existe un élément de $\Pi_{\mathrm{alg}}^\bot(\mathrm{PGL}_8)$ dont les poids sont les $\pm w_i/2$.
\\ \indent Les tables de \cite{CR} susmentionnées, ainsi que les tables \ref{SO9} à \ref{tableau4SO8} du présent article, font intervenir les notations suivantes. Donnons-nous $\overline{w}=(w_1,\dots,w_m)$ où $w_1>\dots>w_m\geq 0$ sont des entiers positifs de même parité. On pose $\Pi$ l'ensemble des $\pi\in\Pi_{\mathrm{alg}}^\bot(\mathrm{PGL}_{2m})$ dont les poids sont les $\pm w_i/2$ (avec $0$ de multiplicité double lorsque $w_m=0$). Si $\vert \Pi \vert =1$, on note $\Delta_{w_1,\dots,w_m}$ son unique élément. Si $\vert \Pi \vert =k$, on note $\Delta^k_{w_1,\dots,w_m}$ n'importe lequel de ses élément. De la même manière, donnons-nous $\overline{w}=(w_1,\dots,w_m)$ où $w_1>\dots>w_m>0$ sont des entiers pairs strictement positifs, et posons $\Pi^*$ l'ensemble des $\pi\in\Pi_{\mathrm{alg}}^\bot(\mathrm{PGL}_{2m+1})$ dont les poids sont les $\pm w_i/2$ et $0$. Si $\vert \Pi^* \vert =1$, on note $\Delta^*_{w_1,\dots,w_m}$ son unique élément. Si $\vert \Pi^* \vert =k$, on note $\Delta^{*k}_{w_1,\dots,w_m}$ n'importe lequel de ses élément.
\subsection{Résultats obtenus}\label{7.4}
\indent Notre but est de déterminer un maximum de paramètres de Langlands-Satake pour des représentations automorphes cuspidales pour les groupes $\mathrm{GL}_n$. Grâce à la conjecture d'Arthur-Langlands, ces paramètres apparaissent comme les éléments fondamentaux pour comprendre les formes automorphes discrètes de groupes plus généraux.
\\ \indent Donnons-nous $\pi\in \Pi_{\mathrm{cusp}}(\mathrm{PGL}_m)$ dont on souhaite déterminer les $\mathrm{c}_p(\pi)$. On procède comme suit :
\\ \indent - on cherche $\pi'\in \Pi_{\mathrm{disc}}(\mathrm{SO}_n)$ pour $n\in\{7,8,9\}$ tel que $\pi$ apparaisse dans l'écriture de $\psi(\pi',\mathrm{St})$ donnée par la conjecture d'Arthur-Langlands. Idéalement, on espère obtenir une égalité de la forme $\psi(\pi',\mathrm{St})=\pi$ ou $\psi(\pi',\mathrm{St})=\pi\oplus [d]$.
\\ \indent - on détermine grâce à l'étude de \cite{CR} l'ensemble des $\pi''\in \Pi_{\mathrm{disc}}(\mathrm{SO}_n)$ ayant les mêmes poids que $\pi'$, et on détermine les paramètres $\psi(\pi'',\mathrm{St})$ associés.
\\ \indent - sous réserve que les paramètres $\psi(\pi'',\mathrm{St})$ font intervenir uniquement des éléments dont les paramètres de Langlands-Satake sont bien connus, on en déduit les $\mathrm{c}_p(\pi)$ (ou du moins les $\mathrm{Trace}(\mathrm{c}_p(\pi) \vert \Lambda^i \mathrm{St})$ pour certains $i$).
\\ \\ \indent En guise d'exemple, détaillons comment on a étudié l'élément $\Delta_{23,15,7} \in \Pi_{\mathrm{alg}}^\bot(\mathrm{PGL}_6)$, qui est un cas assez représentatif. La notation $\Delta_{w_1,\dots,w_m}$ a été présentée au paragraphe \ref{7.3} (et est notamment utilisée dans \cite{CR} ou \cite{CL}).
\\ \indent D'après \cite[Table 12]{CR}, on a l'égalité : $\Pi_{(23,15,7)}(\mathrm{SO}_7) = \{\Delta_{23,7}\oplus \Delta_{15} ,\ \Delta_{23,15,7} \}$. On pose pour simplifier $\pi_1 = \Delta_{23,7}\oplus \Delta_{15}$ et $\pi_2 = \Delta_{23,15,7}$. Soit $W$ la représentation de plus haut poids $(9,6,3)$ de $\mathrm{SO}_7$. On pose $f_1$ et $f_2$ des formes propres de $\mathcal{M}_W (\mathrm{SO}_7)$ qui engendrent respectivement $\pi_1$ et $\pi_2$. On pose de plus, pour $p$ un nombre premier quelconque, les valeurs propres de $f_1$ et $f_2$ pour les opérateurs de Hecke $\mathrm{T}_p,\dots ,\mathrm{T}_{p,\dots ,p}$ comme étant respectivement les $\mu_p,\dots ,\mu_{p,\dots,p}$ et les $\lambda_p,\dots ,\lambda_{p\dots ,p}$. Les formules de Gross nous donnent alors les égalités :
$$\begin{aligned}
\mathrm{Tr}(\mathrm{T}_p \vert \mathcal{M}_W (\mathrm{SO}_7)) & =  \mu_p + \lambda_p = p^{5/2}\cdot \mathrm{Trace}(\mathrm{c}_p (\pi_1) \vert V_{\mathrm{St}})+p^{5/2}\cdot \mathrm{Trace}(\mathrm{c}_p (\pi_2) \vert V_{\mathrm{St}})\\
\mathrm{Tr}(\mathrm{T}_{p,p} \vert \mathcal{M}_W (\mathrm{SO}_7)) & =  \mu_{p,p} + \lambda_{p,p} = p^4\cdot \mathrm{Trace}(\mathrm{c}_p (\pi_1) \vert \Lambda^2 V_{\mathrm{St}})\\
 & \indent \indent \indent \indent +p^4\cdot \mathrm{Trace}(\mathrm{c}_p (\pi_2) \vert \Lambda^2 V_{\mathrm{St}})-2\cdot (p^4+p^2+1)\\
\mathrm{Tr}(\mathrm{T}_{p,p,p} \vert \mathcal{M}_W (\mathrm{SO}_7))& =  \mu_{p,p,p} + \lambda_{p,p,p} = p^{9/2}\cdot \mathrm{Trace}(\mathrm{c}_p (\pi_1) \vert \Lambda^3 V_{\mathrm{St}})\\
& \indent \indent \indent +p^{9/2}\cdot \mathrm{Trace}(\mathrm{c}_p (\pi_2) \vert \Lambda^3 V_{\mathrm{St}}) -(p^2+1)\cdot \mathrm{Tr}(\mathrm{T}_p \vert \mathcal{M}_W (\mathrm{SO}_7)) \\
\end{aligned}$$
\indent Les quantités qui nous intéressent ici sont les $\mathrm{Trace}(\mathrm{c}_p (\pi_2) \vert \Lambda^i V_{\mathrm{St}})$ (pour $i=1,2,3$). Pour les déterminer, on a besoin des traces des opérateurs de Hecke $T_p,T_{p,p},T_{p,p,p}$ (qu'on a calculées au paragraphe \ref{6.1}), et des quantités $\mathrm{Trace}(\mathrm{c}_p (\pi_1) \vert \Lambda^i V_{\mathrm{St}})$ (qui sont calculable à l'aide des quantités $\mathrm{Trace}(\mathrm{c}_p (\Delta_{23,7}) \vert \Lambda^i V_{\mathrm{St}})$ et $\mathrm{Trace}(\mathrm{c}_p (\Delta_{15}) \vert \Lambda^i V_{\mathrm{St}})$, et des lemmes techniques \ref{formuleplus}, \ref{formulefois} et \ref{formuled} présentés ci-dessous).
\\ \indent Reste donc à calculer les quantités $\mathrm{Trace}(\mathrm{c}_p (\Delta_{23,7}) \vert \Lambda^i V_{\mathrm{St}})$ et $\mathrm{Trace}(\mathrm{c}_p (\Delta_{15}) \vert \Lambda^i V_{\mathrm{St}})$. Si on note $V_{(m_1,m_2,m_3)}$ la représentation de $\mathrm{SO}_7$ de plus haut poids $(m_1,m_2,m_3)$, alors les formules de Gross nous donnent les égalités suivantes :
 $$\begin{aligned}
\mathrm{Tr}(\mathrm{T}_p \vert \mathcal{M}_{V_{(4,4,4)}} (\mathrm{SO}_7)) & = p^{5/2}\cdot \mathrm{Trace}(\mathrm{c}_p (\Delta_{11} [3]) \vert V_{\mathrm{St}}) \\
&= p^{5/2} \mathrm{Trace}(\mathrm{c}_p (\Delta_{11}) \vert V_{\mathrm{St}}) \cdot \mathrm{Trace}(\mathrm{c}_p ([3]) \vert V_{\mathrm{St}})\\
\mathrm{Tr}(\mathrm{T}_p \vert \mathcal{M}_{V_{(6,6,6)}} (\mathrm{SO}_7)) &= p^{5/2}\cdot \mathrm{Trace}(\mathrm{c}_p (\Delta_{15} [3]) \vert V_{\mathrm{St}}) \\
&= p^{5/2} \mathrm{Trace}(\mathrm{c}_p (\Delta_{15}) \vert V_{\mathrm{St}}) \cdot \mathrm{Trace}(\mathrm{c}_p ([3]) \vert V_{\mathrm{St}})\\
\mathrm{Tr}(\mathrm{T}_p \vert \mathcal{M}_{V_{(9,4,3)}} (\mathrm{SO}_7)) &= p^{5/2}\cdot \mathrm{Trace}(\mathrm{c}_p (\Delta_{23,7} \oplus \Delta_{11}) \vert V_{\mathrm{St}}) \\
& = p^{5/2} \left( \mathrm{Trace}(\mathrm{c}_p (\Delta_{23,7}) \vert V_{\mathrm{St}}) + \mathrm{Trace}(\mathrm{c}_p (\Delta_{11}) \vert V_{\mathrm{St}}) \right)\\
\mathrm{Tr}(\mathrm{T}_{p,p} \vert \mathcal{M}_{V_{(9,4,3)}} (\mathrm{SO}_7)) &= p^{4}\cdot \mathrm{Trace}(\mathrm{c}_p (\Delta_{23,7} \oplus \Delta_{11}) \vert \Lambda^2 V_{\mathrm{St}}) +(p^4+p^2+1)\\
& = p^{4} ( \mathrm{Trace}(\mathrm{c}_p (\Delta_{23,7}) \vert \Lambda^2 V_{\mathrm{St}})+\mathrm{Trace}(\mathrm{c}_p (\Delta_{11}) \vert \Lambda^2 V_{\mathrm{St}}) \\
& \indent \indent + \mathrm{Trace}(\mathrm{c}_p (\Delta_{23,7}) \vert V_{\mathrm{St}}) \cdot \mathrm{Trace}(\mathrm{c}_p (\Delta_{11}) \vert V_{\mathrm{St}}) )\\
\end{aligned}$$
\indent On constate aussi que :
$$\begin{aligned}
\mathrm{Trace}(\mathrm{c}_p (\Delta_{11}) \vert \Lambda^2 V_{\mathrm{St}})&=\mathrm{Trace}(\mathrm{c}_p (\Delta_{15}) \vert \Lambda^2 V_{\mathrm{St}})=1 \\
\mathrm{Trace}(\mathrm{c}_p (\Delta_{11}) \vert \Lambda^i V_{\mathrm{St}})&=\mathrm{Trace}(\mathrm{c}_p (\Delta_{15}) \vert \Lambda^i V_{\mathrm{St}})=0 \text{ pour }i\geq 3\\
\mathrm{Trace}(\mathrm{c}_p (\Delta_{23,7}) \vert \Lambda^3 V_{\mathrm{St}})&=\mathrm{Trace}(\mathrm{c}_p (\Delta_{23,7}) \vert \Lambda^2 V_{\mathrm{St}}) \\
\end{aligned}
$$
\indent Il suffit enfin de réinjecter ces valeurs dans les égalités précédentes pour calculer les quantités $\mathrm{Trace}(\mathrm{c}_p (\Delta_{23,15,7}) \vert \Lambda^i V_{\mathrm{St}})$, données par les tables \ref{tableau1SO7}, \ref{tableau3SO7}, \ref{tableau4SO7} et \ref{tableau5SO7}.
\\ \indent Les résultats obtenus sont décrits par les théorèmes \ref{theo1}, \ref{theo2}, \ref{theo3} et \ref{theo4} présentés en introduction.
\\ \indent Enfin, les quelques lemmes techniques suivant sont particulièrement utiles dans nos calculs, et leur démonstration est immédiate :
\begin{lem} Soit $\pi \in \mathcal{X}(\mathrm{SL}_n)$. Alors on a les relations :
$$
\begin{aligned}
\mathrm{Trace}&\left( \mathrm{c}_p(\pi)^2 \vert V_{\mathrm{St}}\right) = \mathrm{Trace}\left( \mathrm{c}_p(\pi) \vert V_{\mathrm{St}}\right)^2 - 2\cdot \mathrm{Trace}\left( \mathrm{c}_p(\pi) \vert \Lambda^2 V_{\mathrm{St}}\right)\\
\\
\mathrm{Trace}&\left( \mathrm{c}_p(\pi)^3 \vert V_{\mathrm{St}}\right) = \mathrm{Trace}\left( \mathrm{c}_p(\pi) \vert V_{\mathrm{St}}\right)^3 + 3\cdot \mathrm{Trace}\left( \mathrm{c}_p(\pi) \vert \Lambda^3 V_{\mathrm{St}}\right)\\
&-3\cdot \mathrm{Trace}\left( \mathrm{c}_p(\pi) \vert \Lambda^2 V_{\mathrm{St}}\right)\cdot \mathrm{Trace}\left( \mathrm{c}_p(\pi) \vert V_{\mathrm{St}}\right)\\
\\
\mathrm{Trace}&\left( \mathrm{c}_p(\pi)^4 \vert V_{\mathrm{St}}\right) = \mathrm{Trace}\left( \mathrm{c}_p(\pi) \vert V_{\mathrm{St}}\right)^4 - 4\cdot \mathrm{Trace}\left( \mathrm{c}_p(\pi) \vert \Lambda^4 V_{\mathrm{St}}\right)\\
&-4\cdot \mathrm{Trace}\left( \mathrm{c}_p(\pi) \vert \Lambda^2 V_{\mathrm{St}}\right)\cdot \mathrm{Trace}\left( \mathrm{c}_p(\pi) \vert V_{\mathrm{St}}\right)^2\\
&+4\cdot \mathrm{Trace}\left( \mathrm{c}_p(\pi) \vert \Lambda^3 V_{\mathrm{St}}\right)\cdot \mathrm{Trace}\left( \mathrm{c}_p(\pi) \vert V_{\mathrm{St}}\right)+2 \cdot \mathrm{Trace}\left( \mathrm{c}_p(\pi) \vert \Lambda^2 V_{\mathrm{St}}\right)^2\\
\end{aligned}
$$
\end{lem}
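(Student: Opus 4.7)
Ce lemme est une application directe des identités de Newton reliant sommes de puissances et fonctions symétriques élémentaires des valeurs propres d'un élément semi-simple. Puisque $\mathrm{c}_p(\pi)$ est une classe de conjugaison semi-simple dans $\mathrm{SL}_n(\C)$, elle admet dans $V_{\mathrm{St}}=\C^n$ une base de vecteurs propres, avec valeurs propres notées $\lambda_1,\dots,\lambda_n$. On pose alors, pour $k\geq 1$, $p_k = \sum_{i=1}^n \lambda_i^k$, et pour $j\geq 0$, $e_j = e_j(\lambda_1,\dots,\lambda_n)$ la fonction symétrique élémentaire. La première observation clef est que
\[
\mathrm{Trace}\left( \mathrm{c}_p(\pi)^k \,\vert\, V_{\mathrm{St}}\right) = p_k \quad \text{et}\quad \mathrm{Trace}\left( \mathrm{c}_p(\pi) \,\vert\, \Lambda^j V_{\mathrm{St}}\right) = e_j,
\]
la seconde identité étant la définition même de la trace dans une puissance extérieure.

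La démonstration consiste alors à appliquer les identités de Newton classiques :
\[
p_k = e_1 \, p_{k-1} - e_2 \, p_{k-2} + \cdots + (-1)^{k-1} k \, e_k.
\]
Pour $k=2,3,4$, cela donne successivement $p_2 = e_1^2 - 2 e_2$, puis $p_3 = e_1^3 - 3 e_1 e_2 + 3 e_3$, puis $p_4 = e_1^4 - 4 e_1^2 e_2 + 4 e_1 e_3 + 2 e_2^2 - 4 e_4$, ce qui est exactement le contenu des trois identités du lemme, une fois substitué $p_k$ et $e_j$ par leurs expressions comme traces.

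Les identités de Newton pour $k\leq n$ se démontrent en développant l'identité de séries formelles
\[
\sum_{k\geq 0} e_k \, t^k = \prod_{i=1}^n (1 + \lambda_i t),
\]
puis en prenant sa dérivée logarithmique pour obtenir une récurrence linéaire entre les $p_k$ et les $e_j$. Comme le résultat ne dépend que du polynôme caractéristique de $\mathrm{c}_p(\pi)$, la formule reste valable même si certaines $\lambda_i$ coïncident. Il n'y a donc aucune difficulté réelle : le seul point à vérifier est la cohérence des signes et coefficients, ce qui se fait par un calcul algébrique élémentaire.
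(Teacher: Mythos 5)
Votre démonstration est correcte : l'identification $\mathrm{Trace}(\mathrm{c}_p(\pi)^k\,\vert\,V_{\mathrm{St}})=p_k$ et $\mathrm{Trace}(\mathrm{c}_p(\pi)\,\vert\,\Lambda^j V_{\mathrm{St}})=e_j$ suivie des identités de Newton donne exactement $p_2=e_1^2-2e_2$, $p_3=e_1^3-3e_1e_2+3e_3$ et $p_4=e_1^4-4e_1^2e_2+4e_1e_3+2e_2^2-4e_4$, soit les trois relations du lemme. C'est précisément l'argument standard que l'article sous-entend en qualifiant la démonstration d'immédiate, et votre vérification des signes et coefficients est exacte.
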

\begin{lem} \label{formuleplus} Soient $\pi_i\in \mathcal{X}(\mathrm{SL}_{n_i})$ et $n=\sum_i n_i$. L'élément $\bigoplus _i \pi_i \in \mathcal{X}(\mathrm{SL}_n)$ est alors bien défini, et on a :
$$\mathrm{Trace}\left( \mathrm{c}_p \left(\bigoplus_i \pi_i \right) \vert \Lambda^k V_\mathrm{St} \right) = \sum_{\substack{0<j\leq k \\ n_1+\dots +n_j =k \\ i_1<\dots < i_j}} \prod_{l=1}^j \mathrm{Trace}\left( \mathrm{c}_p(\pi_{i_l}) \vert \Lambda^{n_l} V_{\mathrm{St}}\right) $$
\indent En particulier, les cas $k=1$ ou $k=2$ donnent :
$$
\begin{aligned}
&\mathrm{Trace}\left( \mathrm{c}_p\left(\bigoplus _i \pi_i\right) \vert V_{\mathrm{St}}\right) = \sum_i \mathrm{Trace}\left( \mathrm{c}_p(\pi_i) \vert V_{\mathrm{St}}\right)\\
\\
&\mathrm{Trace}\left( \mathrm{c}_p\left(\bigoplus _i \pi_i\right) \vert \Lambda^2 V_{\mathrm{St}}\right) = \sum_i \mathrm{Trace}\left( \mathrm{c}_p(\pi_i) \vert \Lambda^2 V_{\mathrm{St}}\right)+\sum_{i<j} \mathrm{Trace}\left( \mathrm{c}_p(\pi_i) \vert V_{\mathrm{St}}\right)\cdot \mathrm{Trace}\left( \mathrm{c}_p(\pi_j) \vert V_{\mathrm{St}}\right)\\
\end{aligned}
$$
\end{lem}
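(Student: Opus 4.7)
La démonstration repose essentiellement sur la décomposition classique de la puissance extérieure d'une somme directe d'espaces vectoriels. Pour toute collection $V_1,\dots,V_r$ de $\C$-espaces vectoriels de dimensions finies, on dispose de l'isomorphisme canonique
$$\Lambda^k\left( \bigoplus_{i=1}^r V_i \right) \simeq \bigoplus_{\substack{(k_1,\dots,k_r)\in \N^r \\ k_1+\dots+k_r=k}} \Lambda^{k_1} V_1 \otimes \dots \otimes \Lambda^{k_r} V_r ,$$
et cet isomorphisme est $\prod_i \mathrm{GL}(V_i)$-équivariant, où le groupe produit agit diagonalement sur le membre de gauche.

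Je vais appliquer cette décomposition à $V_\mathrm{St}^{(1)} \oplus \dots \oplus V_\mathrm{St}^{(r)}$, avec $V_\mathrm{St}^{(i)}$ la représentation standard de $\mathrm{SL}_{n_i}$. Par définition même de la somme $\oplus$ dans $\mathcal{X}(\mathrm{SL}_n)$ (donnée au paragraphe \ref{7.3}), l'élément $\mathrm{c}_p\bigl(\bigoplus_i \pi_i\bigr)$ est la classe de conjugaison dans $\mathrm{SL}_n(\C)$ de la matrice diagonale par blocs $\mathrm{diag}(\mathrm{c}_p(\pi_1),\dots,\mathrm{c}_p(\pi_r))$. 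Sa trace sur $\Lambda^k V_\mathrm{St}$ se calcule donc terme à terme grâce à la décomposition ci-dessus, chaque facteur $\Lambda^{k_1} V_\mathrm{St}^{(1)} \otimes \dots \otimes \Lambda^{k_r} V_\mathrm{St}^{(r)}$ contribuant par $\prod_{i=1}^r \mathrm{Trace}\bigl(\mathrm{c}_p(\pi_i) \vert \Lambda^{k_i} V_{\mathrm{St}}\bigr)$, puisque la trace d'un produit tensoriel est le produit des traces.

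Enfin, comme $\Lambda^0 V_\mathrm{St} = \C$ est la représentation triviale, les facteurs avec $k_i=0$ contribuent $1$ au produit ; on peut donc n'indexer la somme que par le choix d'un sous-ensemble $\{i_1 < \dots < i_j\} \subset \{1,\dots,r\}$ des indices où $k_i > 0$, assorti d'une composition $k = n_1 + \dots + n_j$ avec $n_l = k_{i_l} > 0$, ce qui donne exactement la formule du lemme. Il n'y a pas de véritable obstacle : l'unique point un peu délicat est la compatibilité de la classe de conjugaison $\mathrm{c}_p(\bigoplus_i \pi_i)$ avec l'action diagonale, qui résulte aussitôt de la définition. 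Les cas particuliers $k=1$ (où nécessairement $j=1$, $n_1=1$) et $k=2$ (où $j=1,\,n_1=2$, ou bien $j=2,\,n_1=n_2=1$) s'en déduisent immédiatement.
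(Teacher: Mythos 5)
Votre démonstration est correcte et correspond exactement à l'argument standard que le texte sous-entend en qualifiant la preuve d'immédiate : décomposition $\Lambda^k(\bigoplus_i V_i) \simeq \bigoplus_{k_1+\dots+k_r=k} \Lambda^{k_1}V_1\otimes\dots\otimes\Lambda^{k_r}V_r$, multiplicativité de la trace sur les produits tensoriels pour la matrice diagonale par blocs représentant $\mathrm{c}_p(\bigoplus_i \pi_i)$, et élimination des facteurs $\Lambda^0$. Rien à redire, y compris sur votre lecture (correcte) de l'indexation un peu abusive de la somme dans l'énoncé.
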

\begin{lem} \label{formulefois} Soit $\pi_1 \in \mathcal{X}(\mathrm{SL}_{n_1})$ et $\pi_2 \in \mathcal{X}(\mathrm{SL}_{n_2})$. L'élément $\pi_1 \otimes \pi_2 \in \mathcal{X}(\mathrm{SL}_{n_1\cdot n_2})$ est alors bien défini, et on a :
$$\begin{aligned}
\mathrm{Trace}&\left( \mathrm{c}_p (\pi_1\otimes \pi_2) \vert V_\mathrm{St} \right) = \mathrm{Trace}\left( \mathrm{c}_p (\pi_1 ) \vert V_\mathrm{St} \right) \cdot \mathrm{Trace}\left( \mathrm{c}_p (\pi_2) \vert V_\mathrm{St} \right)\\
\\
\mathrm{Trace}&\left( \mathrm{c}_p (\pi_1\otimes \pi_2) \vert \Lambda^2 V_\mathrm{St} \right) = \mathrm{Trace}\left( \mathrm{c}_p (\pi_1 ) \vert V_\mathrm{St} \right)^2 \cdot \mathrm{Trace}\left( \mathrm{c}_p (\pi_2) \vert \Lambda^2 V_\mathrm{St} \right) \\
& +\mathrm{Trace}\left( \mathrm{c}_p (\pi_1 ) \vert \Lambda^2 V_\mathrm{St} \right) \cdot \mathrm{Trace}\left( \mathrm{c}_p (\pi_2) \vert  V_\mathrm{St} \right) ^2 -2\cdot \mathrm{Trace}\left( \mathrm{c}_p (\pi_1 ) \vert \Lambda^2 V_\mathrm{St} \right) \cdot \mathrm{Trace}\left( \mathrm{c}_p (\pi_2) \vert \Lambda^2 V_\mathrm{St} \right) \\
\\
\mathrm{Trace}&\left( \mathrm{c}_p (\pi_1\otimes \pi_2) \vert \Lambda^3 V_\mathrm{St} \right) = \mathrm{Trace}\left( \mathrm{c}_p (\pi_1 ) \vert V_\mathrm{St} \right)^3 \cdot \mathrm{Trace}\left( \mathrm{c}_p (\pi_2) \vert \Lambda^3 V_\mathrm{St} \right) \\
& +\mathrm{Trace}\left( \mathrm{c}_p (\pi_1 ) \vert \Lambda^3 V_\mathrm{St} \right) \cdot \mathrm{Trace}\left( \mathrm{c}_p (\pi_2) \vert  V_\mathrm{St} \right) ^3 +3\cdot \mathrm{Trace}\left( \mathrm{c}_p (\pi_1 ) \vert \Lambda^3 V_\mathrm{St} \right) \cdot \mathrm{Trace}\left( \mathrm{c}_p (\pi_2) \vert \Lambda^3 V_\mathrm{St} \right) \\
& -3\cdot \mathrm{Trace}\left( \mathrm{c}_p (\pi_1 ) \vert \Lambda^2 V_\mathrm{St} \right) \cdot \mathrm{Trace}\left( \mathrm{c}_p (\pi_1 ) \vert V_\mathrm{St} \right)\cdot \mathrm{Trace}\left( \mathrm{c}_p (\pi_2) \vert  \Lambda^3 V_\mathrm{St} \right) \\
&-3\cdot \mathrm{Trace}\left( \mathrm{c}_p (\pi_1 ) \vert \Lambda^3 V_\mathrm{St} \right) \cdot \mathrm{Trace}\left( \mathrm{c}_p (\pi_2 ) \vert \Lambda^2 V_\mathrm{St} \right) \cdot \mathrm{Trace}\left( \mathrm{c}_p (\pi_2) \vert V_\mathrm{St} \right) \\
&-3\cdot  \mathrm{Trace}\left( \mathrm{c}_p (\pi_1 ) \vert \Lambda^2 V_\mathrm{St} \right) \cdot \mathrm{Trace}\left( \mathrm{c}_p (\pi_1 ) \vert V_\mathrm{St} \right) \cdot \mathrm{Trace}\left( \mathrm{c}_p (\pi_2 ) \vert \Lambda^2 V_\mathrm{St} \right) \cdot \mathrm{Trace}\left( \mathrm{c}_p (\pi_2) \vert V_\mathrm{St} \right) \\
\end{aligned}
$$
\end{lem}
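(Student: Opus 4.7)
Ma démonstration reposerait sur le passage aux valeurs propres. Si $\mathrm{c}_p(\pi_1)$ a pour valeurs propres $(\alpha_i)_{1\leq i\leq n_1}$ et $\mathrm{c}_p(\pi_2)$ a pour valeurs propres $(\beta_j)_{1\leq j\leq n_2}$, alors par définition du produit tensoriel, $\mathrm{c}_p(\pi_1\otimes\pi_2)$ a pour valeurs propres la famille $(\alpha_i\beta_j)_{1\leq i\leq n_1,\, 1\leq j\leq n_2}$. L'identité cherchée est donc simplement l'expression de la $k$-ième fonction symétrique élémentaire $e_k$ des $\alpha_i\beta_j$ en fonction des $e_l(\alpha_i)$ et des $e_m(\beta_j)$, pour $k\in\{1,2,3\}$.

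L'outil principal est la relation évidente sur les sommes de Newton : en notant $p_k(A) = \mathrm{Trace}(A^k \vert V_{\mathrm{St}})$, on a
$$p_k(\mathrm{c}_p(\pi_1\otimes\pi_2)) \;=\; \sum_{i,j}(\alpha_i\beta_j)^k \;=\; p_k(\mathrm{c}_p(\pi_1))\cdot p_k(\mathrm{c}_p(\pi_2)).$$
Le lemme précédent de l'article (reliant $\mathrm{Trace}(A^k \vert V_{\mathrm{St}})$ aux $\mathrm{Trace}(A \vert \Lambda^l V_{\mathrm{St}})$ pour $l\leq k$, via les identités de Newton) fournit précisément le dictionnaire d'aller-retour entre sommes de Newton et fonctions symétriques élémentaires, et ce dans les deux sens.

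La stratégie sera donc la suivante : exprimer d'abord $\mathrm{Trace}(\mathrm{c}_p(\pi_1\otimes\pi_2)\vert \Lambda^k V_{\mathrm{St}})$ comme polynôme en $p_1,\dots ,p_k$ appliqué à $\mathrm{c}_p(\pi_1\otimes\pi_2)$ grâce aux identités de Newton ; remplacer ensuite chaque $p_j(\mathrm{c}_p(\pi_1\otimes\pi_2))$ par le produit $p_j(\mathrm{c}_p(\pi_1))\cdot p_j(\mathrm{c}_p(\pi_2))$ ; enfin réexprimer chaque $p_j(\mathrm{c}_p(\pi_i))$ à l'aide des quantités $\mathrm{Trace}(\mathrm{c}_p(\pi_i)\vert V_{\mathrm{St}})$, $\mathrm{Trace}(\mathrm{c}_p(\pi_i)\vert \Lambda^2 V_{\mathrm{St}})$ et $\mathrm{Trace}(\mathrm{c}_p(\pi_i)\vert \Lambda^3 V_{\mathrm{St}})$. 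Pour $k=1$ c'est trivial ; pour $k=2$ il s'agit d'une simple mise en facteur du développement $p_1^2 p_1'^2 - (p_1^2-2e_2)(p_1'^2-2e_2')$ ; pour $k=3$, le calcul devient long mais reste mécanique.

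Le seul point véritablement délicat est de nature comptable : dans le cas $k=3$, la formule annoncée comporte sept termes et il faut veiller à ne pas omettre de coefficient lors du développement, en particulier dans les termes croisés faisant intervenir simultanément $\mathrm{Trace}(\cdot\vert \Lambda^2 V_{\mathrm{St}})$ et $\mathrm{Trace}(\cdot\vert V_{\mathrm{St}})$ pour l'un et l'autre des $\pi_i$. Aucun obstacle conceptuel ne se présente, ce qui justifie l'affirmation des auteurs selon laquelle la démonstration est immédiate.
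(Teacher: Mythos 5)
Votre stratégie est correcte et c'est manifestement celle que l'auteur a en tête : le papier ne donne aucune démonstration de ce lemme (il est déclaré immédiat), et le passage aux valeurs propres, combiné à la multiplicativité des sommes de Newton $P_k(\mathrm{c}_p(\pi_1\otimes\pi_2))=p_k(\mathrm{c}_p(\pi_1))\cdot p_k(\mathrm{c}_p(\pi_2))$ et au dictionnaire de Newton fourni par le lemme précédent, est exactement le bon cadre. Les cas $k=1$ et $k=2$ sortent comme vous le décrivez.

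Le point faible de votre proposition est précisément celui que vous identifiez comme « comptable » sans l'exécuter : menez le calcul de $k=3$ jusqu'au bout, car il ne redonne pas l'énoncé imprimé. En notant $a_k=\mathrm{Trace}(\mathrm{c}_p(\pi_1)\vert\Lambda^k V_{\mathrm{St}})$ et $b_k=\mathrm{Trace}(\mathrm{c}_p(\pi_2)\vert\Lambda^k V_{\mathrm{St}})$, la formule $e_3=\tfrac{1}{6}\left(P_1^3-3P_1P_2+2P_3\right)$ appliquée à $P_j=p_jq_j$ avec $p_1=a_1$, $p_2=a_1^2-2a_2$, $p_3=a_1^3-3a_1a_2+3a_3$ (et de même pour les $b_k$) donne
$$\mathrm{Trace}\left( \mathrm{c}_p (\pi_1\otimes \pi_2) \vert \Lambda^3 V_\mathrm{St} \right)=a_1^3b_3+a_3b_1^3+3a_3b_3-3a_1a_2b_3-3a_3b_1b_2+a_1a_2b_1b_2,$$
c'est-à-dire un coefficient $+1$ (et non $-3$) devant le dernier terme $a_1a_2b_1b_2$. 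La version imprimée est d'ailleurs réfutée par un exemple élémentaire : si $\mathrm{c}_p(\pi_1)$ et $\mathrm{c}_p(\pi_2)$ ont toutes deux pour valeurs propres $(1,1)$, alors $a_3=b_3=0$, $a_1=b_1=2$, $a_2=b_2=1$, le membre de gauche vaut $\dim\Lambda^3\C^4=4$, tandis que le membre de droite imprimé vaut $-3\cdot1\cdot2\cdot1\cdot2=-12$ ; la formule corrigée donne bien $+1\cdot1\cdot2\cdot1\cdot2=4$. Votre démonstration, correctement achevée, établit donc la version rectifiée de l'identité (celle qui est effectivement nécessaire aux calculs numériques du paragraphe 7.4), et il faut le dire explicitement plutôt que d'affirmer que le développement mécanique reproduit l'énoncé tel quel. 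Petit détail au passage : la formule pour $\Lambda^3$ comporte six termes, non sept.
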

\begin{lem} \label{formuled} Soit $d\geq 1$, et $[d]\in \mathcal{X}(\mathrm{
SL}_d)$. On a alors :
$$\begin{aligned}
& \mathrm{Trace}\left( \mathrm{c}_p ([d]) \vert V_\mathrm{St} \right) = p^{\frac{1-d}{2}} \cdot \frac{p^d-1}{p-1}\\
& \mathrm{Trace}\left( \mathrm{c}_p ([d]) \vert \Lambda^2 V_\mathrm{St} \right) = p^{2-d} \cdot \frac{(p^d-1)\cdot (p^{d-1}-1)}{(p-1)^2 \cdot (p+1)}\\
& \mathrm{Trace}\left( \mathrm{c}_p ([d]) \vert \Lambda^3 V_\mathrm{St} \right) = p^\frac{3\cdot (3-d)}{2} \cdot \frac{(p^d-1)\cdot (p^{d-1}-1) \cdot (p^{d-2}-1)}{(p-1)^3 \cdot (p+1) \cdot (p^2+p+1)}\\
& \mathrm{Trace}\left( \mathrm{c}_p ([d]) \vert \Lambda^4 V_\mathrm{St} \right) = p^{2\cdot (4-d)} \cdot \frac{(p^d-1)\cdot (p^{d-1}-1) \cdot (p^{d-2}-1)\cdot (p^{d-3}-1)}{(p-1)^4 \cdot (p+1)^2 \cdot (p^2+p+1)\cdot (p^2+1)}\\
\end{aligned}
$$
\end{lem}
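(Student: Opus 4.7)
Le point de départ sera la définition explicite de $[d]=\mathrm{Sym}^{d-1}(e)\in \mathcal{X}(\mathrm{SL}_d)$ donnée au paragraphe \ref{7.3}. Puisque $e_p$ est la matrice diagonale de valeurs propres $p^{-1/2}$ et $p^{1/2}$, l'élément $\mathrm{c}_p([d])$ est la classe de conjugaison de la matrice diagonale de $\mathrm{SL}_d(\C)$ dont les valeurs propres sont les $p^{(1-d)/2+k}$ pour $k=0,1,\dots,d-1$ (c'est-à-dire $p^{-(d-1)/2},p^{-(d-3)/2},\dots,p^{(d-1)/2}$). La trace de $\mathrm{c}_p([d])$ agissant sur $\Lambda^k V_\mathrm{St}$ est alors la $k$-ème fonction symétrique élémentaire évaluée en ces valeurs propres, à savoir
$$
\mathrm{Trace}\left( \mathrm{c}_p ([d]) \mid \Lambda^k V_\mathrm{St} \right) = e_k\bigl(p^{-(d-1)/2},p^{-(d-3)/2},\dots,p^{(d-1)/2}\bigr).
$$

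En mettant en facteur $p^{k(1-d)/2}$ (chaque produit $\alpha_{i_1}\cdots\alpha_{i_k}$ comporte $k$ facteurs), on se ramène à évaluer $e_k$ sur la progression géométrique $(1,p,p^2,\dots,p^{d-1})$. L'étape clef est alors l'identité classique
$$
e_k(1,p,p^2,\dots,p^{d-1}) = p^{k(k-1)/2} \cdot \binom{d}{k}_{\!p},
$$
où $\binom{d}{k}_p = \dfrac{(p^d-1)(p^{d-1}-1)\cdots (p^{d-k+1}-1)}{(p^k-1)(p^{k-1}-1)\cdots (p-1)}$ désigne le coefficient $q$-binomial de Gauss. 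Cette formule se démontre par exemple par récurrence sur $d$ en utilisant la relation $e_k(x_1,\dots,x_d)=e_k(x_1,\dots,x_{d-1})+x_d\, e_{k-1}(x_1,\dots,x_{d-1})$, ou encore en développant le produit $\prod_{i=0}^{d-1}(1+Tp^i)$ et en invoquant le théorème $q$-binomial.

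En combinant ces deux étapes, on obtient la formule générale
$$
\mathrm{Trace}\left( \mathrm{c}_p ([d]) \mid \Lambda^k V_\mathrm{St} \right) = p^{k(k-d)/2} \cdot \binom{d}{k}_{\!p},
$$
valable pour tout $k\geq 1$. Il suffit enfin, pour $k=1,2,3,4$, de spécialiser cette formule en factorisant explicitement les dénominateurs $p^j-1$ pour $j\leq 4$ sous la forme
$$
p-1,\quad (p-1)(p+1),\quad (p-1)(p^2+p+1),\quad (p-1)(p+1)(p^2+1),
$$
ce qui fournit immédiatement les quatre égalités de l'énoncé. Il n'y a pas d'obstacle véritable dans cette démonstration : la seule subtilité est l'identification correcte des exposants via la définition de $[d]$ comme puissance symétrique de $e$, le reste étant une manipulation algébrique directe reposant sur une identité $q$-binomiale classique.
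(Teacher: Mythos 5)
Votre démonstration est correcte : le papier se contente de déclarer la preuve de ce lemme « immédiate » sans la rédiger, et votre argument (identification des valeurs propres $p^{(1-d)/2},p^{(3-d)/2},\dots,p^{(d-1)/2}$ de $\mathrm{Sym}^{d-1}(e_p)$, puis évaluation des fonctions symétriques élémentaires d'une progression géométrique via l'identité $q$-binomiale $e_k(1,p,\dots,p^{d-1})=p^{k(k-1)/2}\binom{d}{k}_p$) est exactement le calcul sous-entendu. Les quatre spécialisations pour $k=1,2,3,4$ et les factorisations des $p^j-1$ sont exactes.
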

\clearpage
\begin{table} \begin{center} \renewcommand\arraystretch{1.2} \begin{tabular}{|c|m{5cm}||c|m{5cm}|}
\hline
\rule[-0.2cm]{0mm}{0.8cm} \small $(w_1,w_2,w_3,w_4)$ & \centering $\Pi_{w_1,w_2,w_3,w_4}(\mathrm{SO}_9)$ & \small $(w_1,w_2,w_3,w_4)$ & \centering $\Pi_{w_1,w_2,w_3,w_4}(\mathrm{SO}_9)$ \tabularnewline
\hline
\centering $(25,17,9,5)$ & \centering $ \Delta_{25,17,9,5}$ & \centering $(25,21,17,7)$ & \centering $ \Delta_{25,21,7}^2\oplus \Delta_{17},\  \Delta_{25,21,17,7}$ \tabularnewline\hline
\centering $(25,17,13,5)$ & \centering $ \Delta_{25,17,13,5}$ & \centering $(25,21,17,9)$ & \centering $\Delta_{25}\oplus \Delta_{21,9} \oplus \Delta_{17},$\newline$\Delta_{25,17}\oplus \Delta_{21,9},\  \Delta_{25,21,17,9}$ \tabularnewline\hline
\centering $(25,19,9,3)$ & \centering $ \Delta_{25,19,9,3}$ & \centering $(25,23,9,3)$ & \centering $ \Delta_{25,23,9,3}$ \tabularnewline\hline
\centering $(25,19,11,5)$ & \centering $ \Delta_{25,19,5}^2\oplus \Delta_{11},\  \Delta_{25,19,11,5}$ & \centering $(25,23,11,1)$ & \centering $ \Delta_{25,23,11,1}$ \tabularnewline\hline
\centering $(25,19,13,3)$ & \centering $ \Delta_{25,19,13,3}$ & \centering $(25,23,11,5)$ & \centering $ \Delta_{25,23,11,5}^2$ \tabularnewline\hline
\centering $(25,19,13,5)$ & \centering $ \Delta_{25,19,13,5}$ & \centering $(25,23,13,3)$ & \centering $ \Delta_{25,23,13,3}$ \tabularnewline\hline
\centering $(25,19,13,7)$ & \centering $ \Delta_{25,13}^2\oplus \Delta_{19,7},\  \Delta_{25,19,13,7}$ & \centering $(25,23,13,7)$ & \centering $ \Delta_{25,13}^2\oplus \Delta_{23,7},\  \Delta_{25,23,13,7}$ \tabularnewline\hline
\centering $(25,19,13,9)$ & \centering $ \Delta_{25,19,13,9}$ & \centering $(25,23,15,1)$ & \centering $ \Delta_{25,23,15,1}$ \tabularnewline\hline
\centering $(25,19,15,5)$ & \centering $ \Delta_{25,19,5}^2\oplus \Delta_{15},\  \Delta_{25,19,15,5}$ & \centering $(25,23,15,5)$ & \centering $ \Delta_{25,23,15,5}^3$ \tabularnewline\hline
\centering $(25,21,11,7)$ & \centering $ \Delta_{25,21,7}^2\oplus \Delta_{11},\  \Delta_{25,21,11,7}$ & \centering $(25,23,15,9)$ & \centering $ \Delta_{25}\oplus \Delta_{15} \oplus \Delta_{23,9},$ \newline $\Delta_{25,15}\oplus \Delta_{23,9},\  \Delta_{25,23,15,9}$ \tabularnewline\hline
\centering $(25,21,13,5)$ & \centering $ \Delta_{25,13}^2\oplus \Delta_{21,5},\  \Delta_{25,21,13,5}$ & \centering $(25,23,15,11)$ & \centering $ \Delta_{25,23,15,11}$ \tabularnewline\hline
\centering $(25,21,13,7)$ & \centering $ \Delta_{25,21,13,7}$ & \centering $(25,23,17,3)$ & \centering $ \Delta_{25,23,17,3}$ \tabularnewline\hline
\centering $(25,21,15,3)$ & \centering $ \Delta_{25,21,3}^2\oplus \Delta_{15},\  \Delta_{25,21,15,3}$ & \centering $(25,23,17,5)$ & \centering $\Delta_{23,17,5}\oplus \Delta_{25},\  \Delta_{25,23,17,5}$ \tabularnewline\hline
\centering $(25,21,15,5)$ & \centering $ \Delta_{25}\oplus \Delta_{21}\oplus \Delta_{21,5},$ \newline $\Delta_{25,15}\oplus \Delta_{21,5},\  \Delta_{25,21,15,5}$ & \centering $(25,23,17,7)$ & \centering $\Delta_{25}\oplus \Delta_{17} \oplus \Delta_{23,7},$ \newline $\Delta_{25,17} \oplus \Delta_{23,7},\  \Delta_{25,23,17,7}$ \tabularnewline\hline
\centering $(25,21,15,7)$ & \centering $ \Delta_{25,21,7}^2\oplus \Delta_{15},\  \Delta_{25,21,15,7}^2$ & \centering $(25,23,17,11)$ & \centering $ \Delta_{25,23,17,11}$ \tabularnewline\hline
\centering $(25,21,15,9)$ & \centering $ \Delta_{25}\oplus \Delta_{15}\oplus \Delta_{21,9},$\newline$\Delta_{25,15}\oplus \Delta_{21,9},\  \Delta_{25,21,15,9}$ & \centering $(25,23,19,5)$ & \centering $ \Delta_{25,23,19,5}$ \tabularnewline\hline
\centering $(25,21,17,5)$ & \centering $ \Delta_{25}\oplus \Delta_{17}\oplus \Delta_{21,5},$ \newline $\Delta_{25,17}\oplus \Delta_{21,5},\  \Delta_{25,21,17,5}$ &  &  \tabularnewline\hline
\end{tabular} \caption{Décomposition des éléments de $\Pi_{(w_1,w_2,w_3,w_4)}(\mathrm{SO}_9)$ pour $w_1=25$ lorsqu'il existe un élément de $\Pi_{\mathrm{cusp}} (\mathrm{PGL}_8)$ dont les poids sont les $\pm w_i/2$.}\label{SO9} \end{center} \end{table}
\begin{table} \begin{center} \renewcommand\arraystretch{1.2} \begin{tabular}{|c|m{12cm}|}
\hline
\rule[-0.4cm]{0mm}{1cm} $(w_1,w_2,w_3)$ & \centering $\mathrm{det}\left( 2^{w_1/2} X\cdot \mathrm{Id} - \mathrm{c}_2\left( \Delta_{w_1,w_2 ,w_3} \right) \right)$ \tabularnewline
\hline
\small \centering $(23,13,5)$ & \small \centering $ 2^{ 69 }\cdot X^6 + \dots -14948499456\cdot X^3 - 4472832\cdot X^2 + 1$ \tabularnewline\hline
\small \centering $(23,15,3)$ & \small \centering $ 2^{ 69 }\cdot X^6 + \dots  + 17641242624\cdot X^3 + 7139328\cdot X^2 + 3360\cdot X + 1$ \tabularnewline\hline
\small \centering $(23,15,7)$ & \small \centering $ 2^{ 69 }\cdot X^6 + \dots  + 528482304\cdot X^3 - 4288512\cdot X^2 + 720\cdot X + 1$ \tabularnewline\hline
\small \centering $(23,17,5)$ & \small \centering $ 2^{ 69 }\cdot X^6 + \dots -22246588416\cdot X^3 + 7323648\cdot X^2 - 1920\cdot X + 1$ \tabularnewline\hline
\small \centering $(23,17,9)$ & \small \centering $ 2^{ 69 }\cdot X^6 + \dots  + 5190451200\cdot X^3 - 417792\cdot X^2 - 1584\cdot X + 1$ \tabularnewline\hline
\small \centering $(23,19,3)$ & \small \centering $ 2^{ 69 }\cdot X^6 + \dots -8241807360\cdot X^3 + 872448\cdot X^2 + 96\cdot X + 1$ \tabularnewline\hline
\small \centering $(23,19,11)$ & \small \centering $ 2^{ 69 }\cdot X^6 + \dots -6259998720\cdot X^3 - 4288512\cdot X^2 + 96\cdot X + 1$ \tabularnewline\hline
\small \centering $(25,13,3)$ & \small \centering $ 2^{ 75 }\cdot X^6 + \dots -56170119168\cdot X^3 + 16023552\cdot X^2 + 8640\cdot X + 1$ \tabularnewline\hline
\small \centering $(25,13,7)$ & \small \centering $ 2^{ 75 }\cdot X^6 + \dots  + 1962934272\cdot X^3 + 5332992\cdot X^2 - 5040\cdot X + 1$ \tabularnewline\hline
\small \centering $(25,15,5)$ & \small \centering $ 2^{ 75 }\cdot X^6 + \dots -119587995648\cdot X^3 + 21331968\cdot X^2 + 1$ \tabularnewline\hline
\small \centering $(25,15,9)$ & \small \centering $ 2^{ 75 }\cdot X^6 + \dots -335208775680\cdot X^3 - 23052288\cdot X^2 + 6048\cdot X + 1$ \tabularnewline\hline
\small \centering $(25,17,11)$ & \small \centering $ 2^{ 75 }\cdot X^6 + \dots  + 185377751040\cdot X^3 - 11071488\cdot X^2 - 6432\cdot X + 1$ \tabularnewline\hline
\small \centering $(25,19,1)$ & \small \centering $ 2^{ 75 }\cdot X^6 + \dots  + 443421818880\cdot X^3 + 72425472\cdot X^2 + 10752\cdot X + 1$ \tabularnewline\hline
\small \centering $(25,19,13)$ & \small \centering $ 2^{ 75 }\cdot X^6 + \dots -173801472000\cdot X^3 - 3053568\cdot X^2 - 672\cdot X + 1$ \tabularnewline\hline
\small \centering $(25,21,15)$ & \small \centering $ 2^{ 75 }\cdot X^6 + \dots -106419978240\cdot X^3 - 14020608\cdot X^2 - 672\cdot X + 1$ \tabularnewline\hline
\end{tabular} \caption{Polynômes caractéristiques des $\mathrm{c}_2 (\pi)$ pour $\pi \in \Pi^\bot_{\mathrm{alg}}\left( \mathrm{PGL}_6\right)$}  \label{tableau1SO7} \end{center} \end{table}
\begin{table} \begin{center} \renewcommand\arraystretch{1.2} \begin{tabular}{|c|m{8cm}|}
\hline
\rule[-0.2cm]{0mm}{0.8cm} $(w_1,w_2,w_3)$ & \centering $P_{w_1,w_2,w_3}(X)$ \tabularnewline
\hline
\small \centering $(25,17,3)$ & \small \centering $X^2 + 768\cdot X - 2764800$ \tabularnewline
\hline
\small \centering $(25,17,7)$ & \small \centering $X^2 - 5232\cdot X - 23063040$ \tabularnewline
\hline
\small \centering $(25,19,5)$ & \small \centering $X^2 - 6624\cdot X - 38854656$ \tabularnewline
\hline
\small \centering $(25,19,9)$ & \small \centering $X^2 + 1104\cdot X - 35306496$ \tabularnewline
\hline
\small \centering $(25,21,3)$ & \small \centering $X^2 - 2880\cdot X - 8193024$ \tabularnewline
\hline
\small \centering $(25,21,7)$ & \small \centering $X^2 + 240\cdot X - 28491264$ \tabularnewline
\hline
\small \centering $(25,21,11)$ & \small \centering $X^2 + 1824\cdot X - 42771456$ \tabularnewline
\hline
\end{tabular} \caption{Polynômes annulateurs des $2^{w_1/2}\cdot \mathrm{Trace}(\mathrm{c}_2 (\pi))$ pour $\pi \in \Pi^\bot_{\mathrm{alg}}\left( \mathrm{PGL}_6\right)$} \label{tableau2SO7} \end{center} \end{table}
\clearpage
\begin{sidewaystable} \begin{center}\renewcommand\arraystretch{1.2} \begin{tabular}{|c|c|c|c|c|c|c|c|}
\hline
\backslashbox{$\overline{w}$}{$p$} & \centering $3$ & \centering $5$ & \centering $7$  & \centering $11$ & \centering $13$  & \centering $17$ & \centering $19$  \tabularnewline
\hline
\small \centering $(23,13,5)$ & \footnotesize \centering $-304668$ & \footnotesize \centering $874314$ & \footnotesize \centering $452588136$ & \footnotesize \centering $-1090903017204$ & \footnotesize \centering $1624277793138$ & \footnotesize \centering $126454166788950$ & \footnotesize \centering $-119149415901516$ \tabularnewline
\hline
\small \centering $(23,15,3)$ & \footnotesize \centering $-47628$ & \footnotesize \centering $83069994$ & \footnotesize \centering $-4690439544$ & \footnotesize \centering $-412279403844$ & \footnotesize \centering $8898668260818$ & \footnotesize \centering $-106699425426090$ & \footnotesize \centering $-312437470082556$ \tabularnewline
\hline
\small \centering $(23,15,7)$ & \footnotesize \centering $425412$ & \footnotesize \centering $-124558326$ & \footnotesize \centering $-3040958424$ & \footnotesize \centering $352045171116$ & \footnotesize \centering $-4816260369102$ & \footnotesize \centering $99848197859670$ & \footnotesize \centering $129801738947604$ \tabularnewline
\hline
\small \centering $(23,17,5)$ & \footnotesize \centering $-37548$ & \footnotesize \centering $9957354$ & \footnotesize \centering $-3491256504$ & \footnotesize \centering $1417257011676$ & \footnotesize \centering $-5403644192622$ & \footnotesize \centering $-1644876121770$ & \footnotesize \centering $-824110968459036$ \tabularnewline
\hline
\small \centering $(23,17,9)$ & \footnotesize \centering $161028$ & \footnotesize \centering $118413450$ & \footnotesize \centering $-3221005656$ & \footnotesize \centering $-1654692256404$ & \footnotesize \centering $-5869020263502$ & \footnotesize \centering $-8093664534186$ & \footnotesize \centering $676095496191060$ \tabularnewline
\hline
\small \centering $(23,19,3)$ & \footnotesize \centering $-201852$ & \footnotesize \centering $-26872950$ & \footnotesize \centering $4686149544$ & \footnotesize \centering $465927593196$ & \footnotesize \centering $-7534226506062$ & \footnotesize \centering $-90400042234026$ & \footnotesize \centering $392917842132180$ \tabularnewline
\hline
\small \centering $(23,19,11)$ & \footnotesize \centering $-252252$ & \footnotesize \centering $26651850$ & \footnotesize \centering $6781882344$ & \footnotesize \centering $25215729996$ & \footnotesize \centering $2875236177138$ & \footnotesize \centering $-128845421894826$ & \footnotesize \centering $-41596411782540$ \tabularnewline
\hline
\small \centering $(25,13,3)$ & \footnotesize \centering $-19764$ & \footnotesize \centering $-391988430$ & \footnotesize \centering $9750417432$ & \footnotesize \centering $13078424975076$ & \footnotesize \centering $-96701634737526$ & \footnotesize \centering $2452876322679990$ & \footnotesize \centering $-2642714743857924$ \tabularnewline
\hline
\small \centering $(25,13,7)$ & \footnotesize \centering $-112644$ & \footnotesize \centering $-559352430$ & \footnotesize \centering $-1243505928$ & \footnotesize \centering $7826821995636$ & \footnotesize \centering $107438724171114$ & \footnotesize \centering $-2831213421327690$ & \footnotesize \centering $-9749582433259284$ \tabularnewline
\hline
\small \centering $(25,15,5)$ & \footnotesize \centering $867132$ & \footnotesize \centering $-613050606$ & \footnotesize \centering $5377223544$ & \footnotesize \centering $-3134062555596$ & \footnotesize \centering $51842671522026$ & \footnotesize \centering $814881989695158$ & \footnotesize \centering $-2965210972182228$ \tabularnewline
\hline
\small \centering $(25,15,9)$ & \footnotesize \centering $-278964$ & \footnotesize \centering $533148210$ & \footnotesize \centering $-7056168168$ & \footnotesize \centering $2683226030436$ & \footnotesize \centering $-15864469792374$ & \footnotesize \centering $968124970032822$ & \footnotesize \centering $-2966903818822020$ \tabularnewline
\hline
\small \centering $(25,17,3)$ & \footnotesize \centering $-1478088$ & \footnotesize \centering $884141220$ & \footnotesize \centering $-9475591056$ & \footnotesize \centering $1338439935912$ & \footnotesize \centering $-114003342180780$ & \footnotesize \centering $827431528322412$ & \footnotesize \centering $9018803395859736$ \tabularnewline
\hline
\small \centering $(25,17,7)$ & \footnotesize \centering $1265112$ & \footnotesize \centering $626270820$ & \footnotesize \centering $-13034888016$ & \footnotesize \centering $-3063060887928$ & \footnotesize \centering $-34174702764780$ & \footnotesize \centering $2038338006384492$ & \footnotesize \centering $1506984152124216$ \tabularnewline
\hline
\small \centering $(25,17,11)$ & \footnotesize \centering $872316$ & \footnotesize \centering $-474730350$ & \footnotesize \centering $-9663808008$ & \footnotesize \centering $6996289229556$ & \footnotesize \centering $-123888344826774$ & \footnotesize \centering $197426191828662$ & \footnotesize \centering $-8092805263108500$ \tabularnewline
\hline
\small \centering $(25,19,1)$ & \footnotesize \centering $-106596$ & \footnotesize \centering $353216850$ & \footnotesize \centering $-17012565192$ & \footnotesize \centering $10854722172756$ & \footnotesize \centering $24295975183914$ & \footnotesize \centering $-2237898756283722$ & \footnotesize \centering $-1116669445539060$ \tabularnewline
\hline
\small \centering $(25,19,5)$ & \footnotesize \centering $90072$ & \footnotesize \centering $-334979100$ & \footnotesize \centering $-31105966416$ & \footnotesize \centering $-7883875892088$ & \footnotesize \centering $-105638103433068$ & \footnotesize \centering $-2537945828699796$ & \footnotesize \centering $10159571243517240$ \tabularnewline
\hline
\small \centering $(25,19,9)$ & \footnotesize \centering $-573192$ & \footnotesize \centering $927204900$ & \footnotesize \centering $62961605616$ & \footnotesize \centering $-3096943985688$ & \footnotesize \centering $-15467475516972$ & \footnotesize \centering $-508393328631444$ & \footnotesize \centering $821432168707800$ \tabularnewline
\hline
\small \centering $(25,19,13)$ & \footnotesize \centering $-702324$ & \footnotesize \centering $9404850$ & \footnotesize \centering $-14719266408$ & \footnotesize \centering $23152557649956$ & \footnotesize \centering $-10567857144054$ & \footnotesize \centering $3351056484428982$ & \footnotesize \centering $-4267132336471620$ \tabularnewline
\hline
\small \centering $(25,21,3)$ & \footnotesize \centering $-170280$ & \footnotesize \centering $-823542300$ & \footnotesize \centering $4910286000$ & \footnotesize \centering $-1405391636088$ & \footnotesize \centering $145190225249940$ & \footnotesize \centering $-842678842445460$ & \footnotesize \centering $-6403875311384520$ \tabularnewline
\hline
\small \centering $(25,21,7)$ & \footnotesize \centering $-108360$ & \footnotesize \centering $433601700$ & \footnotesize \centering $43209490800$ & \footnotesize \centering $12737766447912$ & \footnotesize \centering $-109920761915820$ & \footnotesize \centering $-1119504013993620$ & \footnotesize \centering $9538661136172440$ \tabularnewline
\hline
\small \centering $(25,21,11)$ & \footnotesize \centering $511128$ & \footnotesize \centering $-401727900$ & \footnotesize \centering $28143226416$ & \footnotesize \centering $9867684455112$ & \footnotesize \centering $90846882696468$ & \footnotesize \centering $2611978425209196$ & \footnotesize \centering $3887087995313400$ \tabularnewline
\hline
\small \centering $(25,21,15)$ & \footnotesize \centering $411516$ & \footnotesize \centering $-439386990$ & \footnotesize \centering $18155978232$ & \footnotesize \centering $-3315674449164$ & \footnotesize \centering $-10179464734614$ & \footnotesize \centering $657746166515382$ & \footnotesize \centering $19498517165502060$ \tabularnewline
\hline
\end{tabular} \caption{Liste des $p^{w_1 /2 }\cdot \sum_{\pi \in \Pi } \mathrm{Trace} \left( \mathrm{c}_p (\pi) \vert V_\mathrm{St} \right)$ pour $\Pi^\bot_{\mathrm{alg}}\left( \mathrm{PGL}_6\right)$ et $3\leq p\leq 19$} \label{tableau3SO7} \end{center} \end{sidewaystable}
\begin{sidewaystable} \begin{center} \renewcommand\arraystretch{1.2} \begin{tabular}{|c|c|c|c|c|c|}
\hline
\backslashbox{$\overline{w}$}{$p$} & \centering $23$ & \centering $29$ & \centering $31$  & \centering $37$ & \centering $41$  \tabularnewline
\hline
\small \centering $(23,13,5)$ & \footnotesize \centering $213729467233464$ & \footnotesize \centering $-48303125789698494$ & \footnotesize \centering $102404919986257056$ & \footnotesize \centering $-854573389531170582$ & \footnotesize \centering $3039963284658810510$ \tabularnewline
\hline
\small \centering $(23,15,3)$ & \footnotesize \centering $4636605523096344$ & \footnotesize \centering $38242077134129826$ & \footnotesize \centering $62411711982932256$ & \footnotesize \centering $194846871383803338$ & \footnotesize \centering $-2444768682516441330$ \tabularnewline
\hline
\small \centering $(23,15,7)$ & \footnotesize \centering $-483871286752776$ & \footnotesize \centering $31281904260703746$ & \footnotesize \centering $-99338149426357344$ & \footnotesize \centering $488935778497554858$ & \footnotesize \centering $-761251774743469170$ \tabularnewline
\hline
\small \centering $(23,17,5)$ & \footnotesize \centering $-1778909130889896$ & \footnotesize \centering $41065622154151266$ & \footnotesize \centering $-49354387962315744$ & \footnotesize \centering $-699393293745596022$ & \footnotesize \centering $675213588259710990$ \tabularnewline
\hline
\small \centering $(23,17,9)$ & \footnotesize \centering $2740073764616568$ & \footnotesize \centering $9074529911413890$ & \footnotesize \centering $-106519618777533024$ & \footnotesize \centering $-845122486221439446$ & \footnotesize \centering $-2038917131601784434$ \tabularnewline
\hline
\small \centering $(23,19,3)$ & \footnotesize \centering $9794542472491128$ & \footnotesize \centering $-92800925124117630$ & \footnotesize \centering $-102827107514992224$ & \footnotesize \centering $909641598021211434$ & \footnotesize \centering $-1012591870481195634$ \tabularnewline
\hline
\small \centering $(23,19,11)$ & \footnotesize \centering $-2457101779651272$ & \footnotesize \centering $74027182945751490$ & \footnotesize \centering $55350292141154976$ & \footnotesize \centering $84118925862153834$ & \footnotesize \centering $-1668889536698238834$ \tabularnewline
\hline
\small \centering $(25,13,3)$ & \footnotesize \centering $-1193162553976248$ & \footnotesize \centering $253606518017413434$ & \footnotesize \centering $2735597919197168736$ & \footnotesize \centering $4732933858242304146$ & \footnotesize \centering $-149824999263679701570$ \tabularnewline
\hline
\small \centering $(25,13,7)$ & \footnotesize \centering $20178334110978792$ & \footnotesize \centering $432436992664549914$ & \footnotesize \centering $-3130027620660754464$ & \footnotesize \centering $2522703019906092786$ & \footnotesize \centering $-93544655494895631810$ \tabularnewline
\hline
\small \centering $(25,15,5)$ & \footnotesize \centering $-88220277023194008$ & \footnotesize \centering $-1155443769692920422$ & \footnotesize \centering $9370888247092906464$ & \footnotesize \centering $-31117677722924636046$ & \footnotesize \centering $-1528438521983962050$ \tabularnewline
\hline
\small \centering $(25,15,9)$ & \footnotesize \centering $-80036596250977464$ & \footnotesize \centering $-1707876195338103750$ & \footnotesize \centering $-780170604309503904$ & \footnotesize \centering $61060941903027848082$ & \footnotesize \centering $-22751573768921753154$ \tabularnewline
\hline
\small \centering $(25,17,3)$ & \footnotesize \centering $-104847531874172592$ & \footnotesize \centering $2031320319236853684$ & \footnotesize \centering $-2368965684580243008$ & \footnotesize \centering $5344348460239232868$ & \footnotesize \centering $-74090640376161130884$ \tabularnewline
\hline
\small \centering $(25,17,7)$ & \footnotesize \centering $-80888535775088112$ & \footnotesize \centering $-5992183465309870476$ & \footnotesize \centering $3003950696884185792$ & \footnotesize \centering $34645711154077546788$ & \footnotesize \centering $-10358280803181806724$ \tabularnewline
\hline
\small \centering $(25,17,11)$ & \footnotesize \centering $-109137642615383064$ & \footnotesize \centering $1797713610306337050$ & \footnotesize \centering $6528516996995314656$ & \footnotesize \centering $16749083457180770802$ & \footnotesize \centering $6852460464769529406$ \tabularnewline
\hline
\small \centering $(25,19,1)$ & \footnotesize \centering $-36279912858303576$ & \footnotesize \centering $711971194422472410$ & \footnotesize \centering $2456722774222969056$ & \footnotesize \centering $-37665796944733211982$ & \footnotesize \centering $196046761265779865406$ \tabularnewline
\hline
\small \centering $(25,19,5)$ & \footnotesize \centering $72168216158130192$ & \footnotesize \centering $2501005156335478260$ & \footnotesize \centering $-1340464530776071488$ & \footnotesize \centering $-20702446507555550556$ & \footnotesize \centering $-18731680328982176388$ \tabularnewline
\hline
\small \centering $(25,19,9)$ & \footnotesize \centering $31165081511786448$ & \footnotesize \centering $-108342006282833100$ & \footnotesize \centering $-317484792913639488$ & \footnotesize \centering $-27388402339916500764$ & \footnotesize \centering $-76102667868305446788$ \tabularnewline
\hline
\small \centering $(25,19,13)$ & \footnotesize \centering $-6790004985987384$ & \footnotesize \centering $-76851310696194630$ & \footnotesize \centering $-5659730220769482144$ & \footnotesize \centering $-14458992118594150638$ & \footnotesize \centering $242176317466276764606$ \tabularnewline
\hline
\small \centering $(25,21,3)$ & \footnotesize \centering $-3967628067667440$ & \footnotesize \centering $1872480211861343220$ & \footnotesize \centering $-2037559218626004288$ & \footnotesize \centering $27487981729794451620$ & \footnotesize \centering $263156609307726841212$ \tabularnewline
\hline
\small \centering $(25,21,7)$ & \footnotesize \centering $222258068930775120$ & \footnotesize \centering $2495104422809767860$ & \footnotesize \centering $-2498495348289012288$ & \footnotesize \centering $-74513456812450965660$ & \footnotesize \centering $12981476072926873212$ \tabularnewline
\hline
\small \centering $(25,21,11)$ & \footnotesize \centering $-170808940064948592$ & \footnotesize \centering $376161108771818100$ & \footnotesize \centering $-6240617945672417088$ & \footnotesize \centering $946755778368702756$ & \footnotesize \centering $71026063736978463612$ \tabularnewline
\hline
\small \centering $(25,21,15)$ & \footnotesize \centering $120369062052633576$ & \footnotesize \centering $-202084440446118630$ & \footnotesize \centering $1398887694440035296$ & \footnotesize \centering $-33770972241982657038$ & \footnotesize \centering $64345458637891946046$ \tabularnewline
\hline
\end{tabular} \caption{Liste des $p^{w_1 /2 }\cdot \sum_{\pi \in \Pi } \mathrm{Trace} \left( \mathrm{c}_p (\pi) \vert V_\mathrm{St} \right)$ pour $\Pi^\bot_{\mathrm{alg}}\left( \mathrm{PGL}_6\right)$ et $23\leq p\leq 41$} \label{tableau4SO7} \end{center} \end{sidewaystable}
\clearpage
\begin{table} \begin{center} \renewcommand\arraystretch{1.2} \begin{tabular}{|m{1.5cm}|c|c|c|}
\hline
\backslashbox{$\overline{w}$}{p} & \centering $43$ & \centering $47$ & \centering $53$ \tabularnewline
\hline
\small \centering $(23,13,5)$ & \small \centering $1585197541121400492$ & \small \centering $2888879429822981616$ & \small \centering $-401934470208658758$ \tabularnewline
\hline
\small \centering $(23,15,3)$ & \small \centering $1251580056673244892$ & \small \centering $729807353383997616$ & \small \centering $-103182932449233424998$ \tabularnewline
\hline
\small \centering $(23,15,7)$ & \small \centering $-6758592609864707508$ & \small \centering $-20510124050426653584$ & \small \centering $48013741730657079162$ \tabularnewline
\hline
\small \centering $(23,17,5)$ & \small \centering $4997229047209559292$ & \small \centering $-3083068930104075984$ & \small \centering $50560941459854437722$ \tabularnewline
\hline
\small \centering $(23,17,9)$ & \small \centering $11890923043443050508$ & \small \centering $15431114760408787824$ & \small \centering $-7026502567848047622$ \tabularnewline
\hline
\small \centering $(23,19,3)$ & \small \centering $-3198945438336050292$ & \small \centering $4575412865015044464$ & \small \centering $-33888522555375856902$ \tabularnewline
\hline
\small \centering $(23,19,11)$ & \small \centering $-2386037760238127892$ & \small \centering $-41570441127723864336$ & \small \centering $-19076488865676636102$ \tabularnewline
\hline
\small \centering $(25,13,3)$ & \small \centering $-118592663540334048444$ & \small \centering $265471738731534187152$ & \small \centering $6529626819380030334786$ \tabularnewline
\hline
\small \centering $(25,13,7)$ & \small \centering $-27876108969519548844$ & \small \centering $483419531351826739152$ & \small \centering $-67841640042648419934$ \tabularnewline
\hline
\small \centering $(25,15,5)$ & \small \centering $-310395560121687358956$ & \small \centering $2481990812763404305104$ & \small \centering $989150772174783875874$ \tabularnewline
\hline
\small \centering $(25,15,9)$ & \small \centering $-174530596427091285564$ & \small \centering $39703282543066180752$ & \small \centering $-5079143986594630602174$ \tabularnewline
\hline
\small \centering $(25,17,3)$ & \small \centering $182208972814755659112$ & \small \centering $-103615919209859815776$ & \small \centering $-816797105524166216508$ \tabularnewline
\hline
\small \centering $(25,17,7)$ & \small \centering $-20453253350286370488$ & \small \centering $1741863267899807506464$ & \small \centering $-4172927152787349895548$ \tabularnewline
\hline
\small \centering $(25,17,11)$ & \small \centering $-140783944305361504044$ & \small \centering $407136532197503992272$ & \small \centering $3183793193891665327266$ \tabularnewline
\hline
\small \centering $(25,19,1)$ & \small \centering $-438706021055601207756$ & \small \centering $-559258375196038145712$ & \small \centering $1807317125273707699554$ \tabularnewline
\hline
\small \centering $(25,19,5)$ & \small \centering $184893881031217770312$ & \small \centering $625764793414851116064$ & \small \centering $2026224497868971399172$ \tabularnewline
\hline
\small \centering $(25,19,9)$ & \small \centering $-272331427509356263512$ & \small \centering $1798384539618122498976$ & \small \centering $-5663863869660148328892$ \tabularnewline
\hline
\small \centering $(25,19,13)$ & \small \centering $157365362411733901956$ & \small \centering $252625491987210302352$ & \small \centering $5089431783552918322626$ \tabularnewline
\hline
\small \centering $(25,21,3)$ & \small \centering $-240449199626663273400$ & \small \centering $-701221992491721039840$ & \small \centering $2592653972992766998020$ \tabularnewline
\hline
\small \centering $(25,21,7)$ & \small \centering $-660835618776165010200$ & \small \centering $1125591434635074114720$ & \small \centering $4607672947886504889540$ \tabularnewline
\hline
\small \centering $(25,21,11)$ & \small \centering $-48343195754042760312$ & \small \centering $1145954720828549941536$ & \small \centering $5888654435488579217028$ \tabularnewline
\hline
\small \centering $(25,21,15)$ & \small \centering $-282475205135353880364$ & \small \centering $163067533980263907792$ & \small \centering $-4351329192379786592094$ \tabularnewline
\hline
\end{tabular} \caption{Liste des $p^{w_1 /2 }\cdot \sum_{\pi \in \Pi } \mathrm{Trace} \left( \mathrm{c}_p (\pi) \vert V_\mathrm{St} \right)$ pour $\Pi^\bot_{\mathrm{alg}}\left( \mathrm{PGL}_6\right)$ et $43\leq p\leq 53$} \label{tableau5SO7} \end{center} \end{table}
\begin{table} \begin{center} \renewcommand\arraystretch{1.2} \begin{tabular}{|c|c|c|c|c|}
\hline
\backslashbox{$\overline{w}$}{$p$} & \centering $2$ & \centering $3$ & \centering $5$  & \centering $7$ \tabularnewline
\hline
\small \centering $(25,17,9,5)$ & \small \centering $1104$ & \small \centering $-671328$ & \small \centering $-35927880$ & \small \centering $19973315264$ \tabularnewline
\hline
\small \centering $(25,17,13,5)$ & \small \centering $1632$ & \small \centering $492912$ & \small \centering $-45088680$ & \small \centering $-45797041696$ \tabularnewline
\hline
\small \centering $(25,19,9,3)$ & \small \centering $-5280$ & \small \centering $-317520$ & \small \centering $71568600$ & \small \centering $36073032800$ \tabularnewline
\hline
\small \centering $(25,19,11,5)$ & \small \centering $4800$ & \small \centering $-302400$ & \small \centering $-765121800$ & \small \centering $29642547200$ \tabularnewline
\hline
\small \centering $(25,19,13,3)$ & \small \centering $2640$ & \small \centering $-483840$ & \small \centering $23969400$ & \small \centering $3255324800$ \tabularnewline
\hline
\small \centering $(25,19,13,5)$ & \small \centering $-4416$ & \small \centering $-913248$ & \small \centering $-155434440$ & \small \centering $1629650624$ \tabularnewline
\hline
\small \centering $(25,19,13,7)$ & \small \centering $-3840$ & \small \centering $753840$ & \small \centering $-132911400$ & \small \centering $83659503200$ \tabularnewline
\hline
\small \centering $(25,19,13,9)$ & \small \centering $960$ & \small \centering $-498960$ & \small \centering $-500274600$ & \small \centering $-34659738400$ \tabularnewline
\hline
\small \centering $(25,19,15,5)$ & \small \centering $-6432$ & \small \centering $444528$ & \small \centering $985329240$ & \small \centering $14967875552$ \tabularnewline
\hline
\small \centering $(25,21,11,7)$ & \small \centering $7920$ & \small \centering $-274320$ & \small \centering $181517400$ & \small \centering $-414752800$ \tabularnewline
\hline
\small \centering $(25,21,13,5)$ & \small \centering $8928$ & \small \centering $139968$ & \small \centering $-181179144$ & \small \centering $-9742673920$ \tabularnewline
\hline
\small \centering $(25,21,13,7)$ & \small \centering $-1920$ & \small \centering $58320$ & \small \centering $-511607400$ & \small \centering $-7597141600$ \tabularnewline
\hline
\small \centering $(25,21,15,3)$ & \small \centering $-3072$ & \small \centering $995568$ & \small \centering $148022616$ & \small \centering $-806421280$ \tabularnewline
\hline
\small \centering $(25,21,15,5)$ & \small \centering $-1152$ & \small \centering $-994032$ & \small \centering $-652925160$ & \small \centering $-48906846688$ \tabularnewline
\hline
\small \centering $(25,21,15,7)$ & \small \centering $-3408$ & \small \centering $1215360$ & \small \centering $204437616$ & \small \centering $15834248704$ \tabularnewline
\hline
\small \centering $(25,21,15,9)$ & \small \centering $-7200$ & \small \centering $631200$ & \small \centering $6175800$ & \small \centering $25981995200$ \tabularnewline
\hline
\small \centering $(25,21,17,5)$ & \small \centering $6528$ & \small \centering $301968$ & \small \centering $180352536$ & \small \centering $15716429600$ \tabularnewline
\hline
\small \centering $(25,21,17,7)$ & \small \centering $6240$ & \small \centering $-894240$ & \small \centering $-877974600$ & \small \centering $-16347755200$ \tabularnewline
\hline
\small \centering $(25,21,17,9)$ & \small \centering $480$ & \small \centering $421920$ & \small \centering $451865400$ & \small \centering $12240996800$ \tabularnewline
\hline
\small \centering $(25,23,9,3)$ & \small \centering $-240$ & \small \centering $-675360$ & \small \centering $76659000$ & \small \centering $-8636958400$ \tabularnewline
\hline
\small \centering $(25,23,11,1)$ & \small \centering $-7440$ & \small \centering $-574560$ & \small \centering $-258371400$ & \small \centering $45468651200$ \tabularnewline
\hline
\small \centering $(25,23,11,5)$ & \small \centering $2832$ & \small \centering $758880$ & \small \centering $16184496$ & \small \centering $-20980586816$ \tabularnewline
\hline
\small \centering $(25,23,13,3)$ & \small \centering $288$ & \small \centering $843696$ & \small \centering $80271576$ & \small \centering $-6565786528$ \tabularnewline
\hline
\small \centering $(25,23,13,7)$ & \small \centering $3888$ & \small \centering $-861984$ & \small \centering $1188954936$ & \small \centering $448814912$ \tabularnewline
\hline
\small \centering $(25,23,15,1)$ & \small \centering $48$ & \small \centering $-950832$ & \small \centering $1608216$ & \small \centering $-5559691360$ \tabularnewline
\hline
\small \centering $(25,23,15,5)$ & \small \centering $-4608$ & \small \centering $-495072$ & \small \centering $94477608$ & \small \centering $49773071040$ \tabularnewline
\hline
\small \centering $(25,23,15,9)$ & \small \centering $-1392$ & \small \centering $-382032$ & \small \centering $-172266024$ & \small \centering $-6105235360$ \tabularnewline
\hline
\small \centering $(25,23,15,11)$ & \small \centering $-1056$ & \small \centering $538272$ & \small \centering $-360152520$ & \small \centering $356506304$ \tabularnewline
\hline
\small \centering $(25,23,17,3)$ & \small \centering $1488$ & \small \centering $1040256$ & \small \centering $-758350344$ & \small \centering $11560030592$ \tabularnewline
\hline
\small \centering $(25,23,17,5)$ & \small \centering $-2976$ & \small \centering $97632$ & \small \centering $34950840$ & \small \centering $-31527057856$ \tabularnewline
\hline
\small \centering $(25,23,17,7)$ & \small \centering $1488$ & \small \centering $-1687824$ & \small \centering $-107874984$ & \small \centering $26073028832$ \tabularnewline
\hline
\small \centering $(25,23,17,11)$ & \small \centering $480$ & \small \centering $-211680$ & \small \centering $98316600$ & \small \centering $28990400$ \tabularnewline
\hline
\small \centering $(25,23,19,5)$ & \small \centering $-6432$ & \small \centering $-950832$ & \small \centering $-477237864$ & \small \centering $-7262923360$ \tabularnewline
\hline
\end{tabular} \caption{Liste des $p^{w_1 /2 }\cdot \sum_{\pi \in \Pi } \mathrm{Trace} \left( \mathrm{c}_p (\pi) \vert V_\mathrm{St} \right)$ pour $\Pi^\bot_{\mathrm{alg}}\left( \mathrm{PGL}_8\right)$ et $2\leq p\leq 7$} \label{tableau1SO9} \end{center} \end{table}
\begin{table} \begin{center} \renewcommand\arraystretch{1.2} \begin{tabular}{|c|m{12cm}|}
\hline
\rule[-0.4cm]{0mm}{1cm} $(w_1,w_2,w_3)$ & \centering $\mathrm{det}\left( 2^{w_1/2} X\cdot \mathrm{Id} - \mathrm{c}_2\left( \Delta^*_{w_1,w_2 ,w_3} \right) \right)$ \tabularnewline
\hline
\small \centering $(24,16,8)$ & \small \centering $ 2^{ 84 }\cdot X^7 + \dots -35053633536\cdot X^3 - 5300736\cdot X^2 - 3016\cdot X - 1$ \tabularnewline\hline
\small \centering $(26,16,10)$ & \small \centering $ 2^{ 91 }\cdot X^7 + \dots  + 187711881216\cdot X^3 + 10444800\cdot X^2 + 1312\cdot X - 1$ \tabularnewline\hline
\small \centering $(26,20,6)$ & \small \centering $ 2^{ 91 }\cdot X^7 + \dots -184113168384\cdot X^3 - 6881280\cdot X^2 - 3008\cdot X - 1$ \tabularnewline\hline
\small \centering $(26,20,10)$ & \small \centering $ 2^{ 91 }\cdot X^7 + \dots  + 847484289024\cdot X^3 - 116293632\cdot X^2 + 9088\cdot X - 1$ \tabularnewline\hline
\small \centering $(26,20,14)$ & \small \centering $ 2^{ 91 }\cdot X^7 + \dots -582362333184\cdot X^3 + 4546560\cdot X^2 + 2752\cdot X - 1$ \tabularnewline\hline
\small \centering $(26,24,10)$ & \small \centering $ 2^{ 91 }\cdot X^7 + \dots  + 281257967616\cdot X^3 + 53460480\cdot X^2 - 5168\cdot X - 1$ \tabularnewline\hline
\small \centering $(26,24,14)$ & \small \centering $ 2^{ 91 }\cdot X^7 + \dots -17485529088\cdot X^3 - 57698304\cdot X^2 + 9088\cdot X - 1$ \tabularnewline\hline
\small \centering $(26,24,18)$ & \small \centering $ 2^{ 91 }\cdot X^7 + \dots  + 29700390912\cdot X^3 + 3495936\cdot X^2 - 8192\cdot X - 1$ \tabularnewline\hline
\end{tabular} \caption{Polynômes caractéristiques des $\mathrm{c}_2 (\pi)$ pour $\pi \in \Pi^\bot_{\mathrm{alg}}\left( \mathrm{PGL}_7\right)$} \label{tableau1SO8} \end{center} \end{table}
\begin{table} \begin{center} \renewcommand\arraystretch{1.2} \begin{tabular}{|c|c|c|c|c|c|}
\hline
\backslashbox{$\overline{w}$}{$p$} & \centering $3$ & \centering $5$ & \centering $7$  & \centering $11$ & \centering $13$  \tabularnewline
\hline
\small \centering $(24,16,8)$ & \small \centering $-350001$ & \small \centering $124371575$ & \small \centering $93528799$ & \small \centering $-2714242598353$ & \small \centering $3657301688599$ \tabularnewline
\hline
\small \centering $(26,16,10)$ & \small \centering $-624051$ & \small \centering $-1326755021$ & \small \centering $16022951833$ & \small \centering $9888917076709$ & \small \centering $-93579285696245$ \tabularnewline
\hline
\small \centering $(26,20,6)$ & \small \centering $-1721331$ & \small \centering $905950579$ & \small \centering $-9930145127$ & \small \centering $30034826719909$ & \small \centering $-337487394517685$ \tabularnewline
\hline
\small \centering $(26,20,10)$ & \small \centering $-608499$ & \small \centering $-280183181$ & \small \centering $84407107225$ & \small \centering $-11018813856347$ & \small \centering $-147086616834485$ \tabularnewline
\hline
\small \centering $(26,20,14)$ & \small \centering $404109$ & \small \centering $1190523379$ & \small \centering $-15973145447$ & \small \centering $-57600800963291$ & \small \centering $-151745371034165$ \tabularnewline
\hline
\small \centering $(26,24,10)$ & \small \centering $170829$ & \small \centering $1280278579$ & \small \centering $-54192968807$ & \small \centering $27176421262309$ & \small \centering $-273154746313205$ \tabularnewline
\hline
\small \centering $(26,24,14)$ & \small \centering $1183437$ & \small \centering $-185269325$ & \small \centering $5344579993$ & \small \centering $-6837258083483$ & \small \centering $-80849598511733$ \tabularnewline
\hline
\small \centering $(26,24,18)$ & \small \centering $-175923$ & \small \centering $-813224525$ & \small \centering $59374762393$ & \small \centering $40761042089317$ & \small \centering $560921705611147$ \tabularnewline
\hline
\end{tabular} \caption{Liste des $p^{w_1 /2 }\cdot \sum_{\pi \in \Pi } \mathrm{Trace} \left( \mathrm{c}_p (\pi) \vert V_\mathrm{St} \right)$ pour $\Pi^\bot_{\mathrm{alg}}\left( \mathrm{PGL}_7\right)$ et $3\leq p\leq 13$} \label{tableau2SO8} \end{center} \end{table}
\begin{table} \begin{center} \renewcommand\arraystretch{1.2} \begin{tabular}{|c|m{12cm}|}
\hline
\rule[-0.4cm]{0mm}{1cm} $(w_1,w_2,w_3,w_4)$ & \centering $\mathrm{det}\left( 2^{w_1/2} X\cdot \mathrm{Id} - \mathrm{c}_2\left( \Delta_{w_1,w_2 ,w_3,w_4} \right) \right)$ \tabularnewline
\hline
\small \centering $(24,18,10,4)$ & \footnotesize \centering $ 2^{ 96 }\cdot X^8 + \dots -71176198029312\cdot X^4 - 5324800\cdot X^2 + 1$ \tabularnewline\hline
\small \centering $(24,20,14,2)$ & \footnotesize \centering $ 2^{ 96 }\cdot X^8 + \dots  + 18937584549888\cdot X^4 + 34233384960\cdot X^3 - 3020800\cdot X^2 + 1440\cdot X + 1$ \tabularnewline\hline
\small \centering $(26,18,10,2)$ & \footnotesize \centering $ 2^{ 104 }\cdot X^8 + \dots  + 1664854951723008\cdot X^4 + 96153108480\cdot X^3 + 33190400\cdot X^2 + 11880\cdot X + 1$ \tabularnewline\hline
\small \centering $(26,18,14,6)$ & \footnotesize \centering $ 2^{ 104 }\cdot X^8 + \dots  + 5195343838838784\cdot X^4 + 113224974336\cdot X^3 - 89271808\cdot X^2 - 3672\cdot X + 1$ \tabularnewline\hline
\small \centering $(26,20,10,4)$ & \footnotesize \centering $ 2^{ 104 }\cdot X^8 + \dots -5907882134470656\cdot X^4 - 380356263936\cdot X^3 - 2646016\cdot X^2 + 6336\cdot X + 1$ \tabularnewline\hline
\small \centering $(26,20,14,8)$ & \footnotesize \centering $ 2^{ 104 }\cdot X^8 + \dots  + 2055025887019008\cdot X^4 - 320675512320\cdot X^3 - 63692800\cdot X^2 + 2880\cdot X + 1$ \tabularnewline\hline
\small \centering $(26,22,10,6)$ & \footnotesize \centering $ 2^{ 104 }\cdot X^8 + \dots -5275597450248192\cdot X^4 + 586910269440\cdot X^3 + 29043200\cdot X^2 - 3960\cdot X + 1$ \tabularnewline\hline
\small \centering $(26,22,14,2)$ & \footnotesize \centering $ 2^{ 104 }\cdot X^8 + \dots  + 3431375000567808\cdot X^4 - 183848140800\cdot X^3 + 70054400\cdot X^2 - 5400\cdot X + 1$ \tabularnewline\hline
\small \centering $(26,24,14,4)$ & \footnotesize \centering $ 2^{ 104 }\cdot X^8 + \dots -5487318936846336\cdot X^4 - 113397202944\cdot X^3 + 101384192\cdot X^2 + 16128\cdot X + 1$ \tabularnewline\hline
\small \centering $(26,24,16,2)$ & \footnotesize \centering $ 2^{ 104 }\cdot X^8 + \dots  + 935933503340544\cdot X^4 - 391440236544\cdot X^3 + 7104512\cdot X^2 - 4032\cdot X + 1$ \tabularnewline\hline
\small \centering $(26,24,18,8)$ & \footnotesize \centering $ 2^{ 104 }\cdot X^8 + \dots  + 5006554256375808\cdot X^4 - 522171187200\cdot X^3 + 65446400\cdot X^2 - 10800\cdot X + 1$ \tabularnewline\hline
\small \centering $(26,24,20,6)$ & \footnotesize \centering $ 2^{ 104 }\cdot X^8 + \dots -1289572520558592\cdot X^4 - 121173442560\cdot X^3 + 819200\cdot X^2 + 8640\cdot X + 1$ \tabularnewline\hline
\end{tabular} \caption{Polynômes caractéristiques des $\mathrm{c}_2 (\pi)$ pour $\pi \in \Pi^\bot_{\mathrm{alg}}\left( \mathrm{PGL}_8\right)$} \label{tableau3SO8} \end{center} \end{table}
\begin{table} \begin{center} \renewcommand\arraystretch{1.2} \begin{tabular}{|c|c|c|c|c|c|}
\hline
\backslashbox{$\overline{w}$}{$p$} & \centering $3$ & \centering $5$ & \centering $7$  & \centering $11$ & \centering $13$  \tabularnewline
\hline
\small \centering $(24,18,10,4)$ & \small \centering $-453600$ & \small \centering $-119410200$ & \small \centering $12572892800$ & \small \centering $-57063064032$ & \small \centering $-25198577349400$ \tabularnewline
\hline
\small \centering $(24,20,14,2)$ & \small \centering $-90720$ & \small \centering $-381691800$ & \small \centering $15880860800$ & \small \centering $1429298110368$ & \small \centering $1852311565160$ \tabularnewline
\hline
\small \centering $(26,18,10,2)$ & \small \centering $-1028160$ & \small \centering $93177000$ & \small \centering $-37259756800$ & \small \centering $24070317594048$ & \small \centering $-181403983972120$ \tabularnewline
\hline
\small \centering $(26,18,14,6)$ & \small \centering $36288$ & \small \centering $-407597400$ & \small \centering $-13480246528$ & \small \centering $-50783707225152$ & \small \centering $-553105612803352$ \tabularnewline
\hline
\small \centering $(26,20,10,4)$ & \small \centering $2231712$ & \small \centering $-2103821496$ & \small \centering $-49948420480$ & \small \centering $-33016093688160$ & \small \centering $297288355585928$ \tabularnewline
\hline
\small \centering $(26,20,14,8)$ & \small \centering $-2086560$ & \small \centering $-923239800$ & \small \centering $90060118400$ & \small \centering $34658502500448$ & \small \centering $251026605281480$ \tabularnewline
\hline
\small \centering $(26,22,10,6)$ & \small \centering $1321920$ & \small \centering $611173800$ & \small \centering $-13077433600$ & \small \centering $-21930073906752$ & \small \centering $-207826052609560$ \tabularnewline
\hline
\small \centering $(26,22,14,2)$ & \small \centering $-1166400$ & \small \centering $156076200$ & \small \centering $25574009600$ & \small \centering $-40811535001152$ & \small \centering $145995515911400$ \tabularnewline
\hline
\small \centering $(26,24,14,4)$ & \small \centering $-1851552$ & \small \centering $313754760$ & \small \centering $34598801792$ & \small \centering $-25141764069792$ & \small \centering $232075615185608$ \tabularnewline
\hline
\small \centering $(26,24,16,2)$ & \small \centering $-132192$ & \small \centering $188771400$ & \small \centering $-162950201728$ & \small \centering $-32763087987552$ & \small \centering $180651961034888$ \tabularnewline
\hline
\small \centering $(26,24,18,8)$ & \small \centering $1360800$ & \small \centering $1350102600$ & \small \centering $-40839971200$ & \small \centering $-45290750221152$ & \small \centering $230082936830600$ \tabularnewline
\hline
\small \centering $(26,24,20,6)$ & \small \centering $-341280$ & \small \centering $795285000$ & \small \centering $-117493532800$ & \small \centering $198712532448$ & \small \centering $54899265210440$ \tabularnewline
\hline
\end{tabular} \caption{Liste des $p^{w_1 /2 }\cdot \sum_{\pi \in \Pi } \mathrm{Trace} \left( \mathrm{c}_p (\pi) \vert V_\mathrm{St} \right)$ pour $\Pi^\bot_{\mathrm{alg}}\left( \mathrm{PGL}_8\right)$ et $3\leq p\leq 13$} \label{tableau4SO8} \end{center} \end{table}

\clearpage


\begin{thebibliography}{}
\bibitem{AMR} N. Arancibia, C. Moeglin et D. Renard, \emph{Paquets d'Arthur des groupes classiques et unitaires}, disponible à l'url \url{http://arxiv.org/abs/1507.01432}.

\bibitem{Art89} J. Arthur, \emph{Unipotent automorphic representations : conjectures}, dans \emph{Orbites unipotentes et représentations II : groupes $p$-adiques et réels}, Astérisque 171–172, 13–71 (1989).

\bibitem{Art13} J. Arthur, \emph{The endoscopic classification of representations : orthogonal and symplectic groups}, Colloquium Publ. 61, Amer. Math. Soc. (2013).

\bibitem{BDM} J. Bergström, N. Dummigan et T. Mégarbané, \emph{Eisenstein congruences for $\mathrm{SO}(4,3)$, $\mathrm{SO}(4,4)$, spinor and triple product $L$-values}, et appendice par T. Ibukiyama et H. Katsurada, preprint, 2016, disponible à l'url \url{http://neil-dummigan.staff.shef.ac.uk/papers.html}.

\bibitem{BJ} A. Borel et H. Jacquet, \emph{Automorphic forms and automorphic representation}, Corvallis, P.S.P.M.33 vol. I (1979).

\bibitem{Bor77} A. Borel, \emph{Automorphic L-functions}, dans \cite{Cor77} vol. II, 27–61 (1977).

\bibitem{Bo} N. Bourbaki, \emph{\'Eléments de mathématiques, Groupes et algèbres de Lie, chapitres IV à VI}, Masson, Paris (1981).

\bibitem{Bry} R. K. Brylinski, \emph{Limits of weight spaces, Lusztig's $q$-analogs, and fiberings of adjoint orbits}, Journal of the A.M.S. 2, 517-533 (1989).

\bibitem{ChaL10} P.-H. Chaudouard et G. Laumon, \emph{Le lemme fondamental pondéré. I. Constructions géométriques}, Compositio Math. 146, 1416–1506 (2010).

\bibitem{ChaL12} P.-H. Chaudouard et G. Laumon, \emph{Le lemme fondamental pondéré. II. Énoncés cohomologiques}, Annals of Math. 176, 1647–1781 (2012).

\bibitem{CC} G. Chenevier et L. Clozel, \emph{Corps de nombres peu ramifiés et formes automorphes autoduales}, Journal of the A.M.S. 22 Vol 2, 467-519 (2009).

\bibitem{CL} G. Chenevier et J. Lannes, \emph{Formes automorphes et voisins de Kneser des réseaux de Niemeier}.

\bibitem{CR} G. Chenevier et D. Renard, \emph{Level one algebraic cusp forms of classical groups of small rank}, Mem. Amer Math. Soc. 1121, 128p (2015).

\bibitem{Cog} J. Cogdell, \emph{Lectures on L-functions, converse theorems, and functoriality for $\mathrm{GL}_n$}, in \emph{Lectures on automorphic L-functions}, A.M.S. , Fields Institute Monographs (2004).

\bibitem{Cor77} \emph{Automorphic forms, representations and L-functions, Part I \& II}, Proc. Symp. in Pure Math. XXXIII, Oregon State Univ., Corvallis, Ore., Providence, R.I., Amer. Math. Soc. (1977).

\bibitem{GC} J. Dieudonné, \emph{Sur les groupes classiques}, Hermann (1997).

\bibitem{Dix74} J. Dixmier, \emph{Algèbres enveloppantes}, Gauthier-Villars Éd. (1974).

\bibitem{Dum} N. Dummigan, \emph{A simple trace formula for algebraic modular forms}, Experimental Mathematics \textbf{22} (2013), no. 2, 123-131.

\bibitem{Fab} C. Faber, \emph{Modular forms and the cohomology of moduli spaces}, disponible à l'url \url{https://people.kth.se/~faber/publ.html}.

\bibitem{FH} W. Fulton et J. Harris, \emph{Representation theory}, Springer Verlag, New York (1991).

\bibitem{GGPS} I. M. Gel’fand, M. I. Graev et I. I. Pyatetskii-Shapiro, \emph{Representation theory and automorphic functions}, Academic Press (1990), 1st ed. 1966.

\bibitem{Gr} B. Gross, \emph{On the Satake isomorphism}, dans \emph{Galois representations in arithmetic algebraic geometry}, A. Scholl and R. Taylor Ed., Cambridge Univ. Press (1998).

\bibitem{HC} Harish-Chandra, \emph{Automorphic forms on semisimple Lie groups}, Springer Verlag, Lecture notes
in Math. (1968).

\bibitem{JS81b} H. Jacquet et J. Shalika, \emph{On Euler products and the classification of automorphic representations II}, Amer. J. of Math. 103, 777–815 (1981).

\bibitem{Kal} T. Kaletha, \emph{Rigid inner forms of real and $p$-adic groups}, disponible à l'url \url{https://web.math.princeton.edu/~tkaletha/}.

\bibitem{Kat} S.-I. Kato, \emph{Spherical functions and a $q$-analog of Kostant's weight multiplicity formula}, Ivent. Math. 66, 461-468 (1982).

\bibitem{LW13} J.-P. Labesse et J.-L. Waldspurger, \emph{La formule des traces tordue d'après le Friday Morning Seminar}, C. R. M. Monograph Series 31 (2013).

\bibitem{Lan} R. Langlands, \emph{Euler products}, Yale Math. Monographs, Yale Univ. Press, New Haven and
London (1971).

\bibitem{Lan79} R. Langlands, \emph{Automorphic representations, Shimura varieties, and motives (Ein Märchen)}, dans \cite{Cor77} vol. II, 205–246 (1979).

\bibitem{Lus} G. Lusztig, \emph{Singularities, character formulas, and a $q$-analog of weight multiplicities}, Astérisque 101, 208-227 (1983).

\bibitem{Meg} T. Mégarbané, tables des traces d'opérateurs de Hecke sur les espaces de formes automorphes disponibles à l'url \url{http://megarban.perso.math.cnrs.fr}.

\bibitem{Mez11} P. Mezo, \emph{Character identities in the twisted endoscopy of real reductive groups}, Mem. Amer. Math. Soc. 222 (2013).

\bibitem{Mez13} P. Mezo, \emph{Tempered spectral transfer in the twisted endoscopy of real groups}, prépublication disponible à l'url \url{http://people.math.carleton.ca/~mezo/} (2013).

\bibitem{MW14} C. Moeglin et J.-L. Waldspurger, \emph{Stabilisation de la formule des traces tordue VI \& X}, prépublication disponible à l'url \url{http://webusers.imj-prg.fr/~jean-loup.waldspurger/} (2014).

\bibitem{Ngo10} B. C. Ngô, \emph{Le lemme fondamental pour les algèbres de Lie}, Publ. Math. I.H.É.S. 111, 1–169 (2009).

\bibitem{Sat63} I. Satake, \emph{Theory of spherical functions on reductive algebraic groups over $p$-adic fields}, Publ. Math. I.H.É.S. 18, 5–69 (1963).

\bibitem{} J.-P. Serre, \emph{Complex semisimple Lie algebras}, Springer Verlag, New York (1987).

\bibitem{CA} J.-P. Serre, \emph{Cours d'arithmétique}, P.U.F., Paris (1970).

\bibitem{She12a} D. Shelstad, \emph{On geometric transfer in real twisted endoscopy}, Annals of Math. 176, 1919–1985 (2012).

\bibitem{She12b} D. Shelstad, \emph{On the structure of endoscopic transfer factors}, prépublication disponible à l’url \url{http://andromeda.rutgers.edu/~shelstad/}.

\bibitem{She14} D. Shelstad, \emph{On spectral transfer factors in real twisted endoscopy}, prépublication disponible à l’url \url{http://andromeda.rutgers.edu/~shelstad/}.

\bibitem{Spr79} T. Springer, \emph{Reductive groups}, dans \cite{Cor77} vol. I, 3–27 (1979).

\bibitem{Tai16} O. Taïbi, \emph{Arthur's multiplicity formula for certain inner forms of special orthogonal and symplectic groups}, à paraître dans Journal of the European Mathematical Society.

\bibitem{vdG} G. van der Geer, \emph{Siegel modular forms}, disponible à l'url \url{http://arxiv.org/abs/math/0605346}.

\bibitem{Wal06} J.-L. Waldspurger, \emph{Endoscopie et changement de caractéristique}, J. Inst. Math. Jussieu 5, 423-525 (2006).

\bibitem{Wal14} J.-L. Waldspurger, \emph{Stabilisation de la formule des traces tordues I, II, III, IV, V, VII, VIII, IX}, prépublications disponibles à l’url \url{http://webusers.imj-prg.fr/~jean-loup.waldspurger/} (2014).

\end{thebibliography}
\end{document}